\documentclass{amsart}
\usepackage{tkz-graph}

\usepackage{amsmath,amsthm,amsfonts,amssymb,times,latexsym,enumerate,comment,marginnote,slashed,dsfont}
\usepackage{pdfpages, relsize, setspace}
\usepackage{xypic}
\usepackage[OT2,T1]{fontenc} 
\usepackage{listings}
\usepackage{tabu}
\usepackage{stmaryrd}
\usepackage{tikz-cd}
\usepackage{bbm}



\newcommand{\mbz}{\mathbb{Z}}

\newcommand{\mbp}{\mathbb{P}}

\renewcommand{\:}{\colon}

\newcommand{\ra}{\rightarrow}
\newcommand{\iso}{\cong}
\newcommand{\bs}{\backslash}

\newcommand{\der}{\partial}

\newcommand{\wtilde}{\widetilde}

\DeclareMathOperator{\Div}{Div}

\DeclareMathOperator{\Aut}{Aut}

\DeclareMathOperator{\supp}{supp}
\DeclareMathOperator{\Gal}{Gal}

\DeclareMathOperator{\im}{Im}

\DeclareMathOperator{\Gr}{Gr}
\DeclareMathOperator{\adj}{adj}
\DeclareMathOperator{\sm}{sm}
\DeclareMathOperator{\Jac}{Jac}
\DeclareMathOperator{\Prym}{Prym}

\DeclareMathOperator{\rd}{red}

\renewcommand{\Im}{\im}

\DeclareMathOperator{\rank}{rk}
\DeclareMathOperator{\corank}{corank}

\DeclareMathOperator{\Spec}{Spec}

\DeclareMathOperator{\id}{id}

\newcommand*{\longhookrightarrow}{\ensuremath{\lhook\joinrel\relbar\joinrel\rightarrow}}
\newcommand*{\inj}{\longhookrightarrow}



\newcommand{\xym}[1]{\xymatrix{#1}}

\newcommand{\mapdef}[5]{
	\begin{tabu}{cccc}
		#1\: & #2 & \ra & #3 \\
		& #4 & \mapsto & #5
	\end{tabu}
}

\newcommand{\gen}[1]{\langle #1 \rangle}

\newcommand{\res}[2]{\left . #1 \right |_{#2}}


\newcommand{\Sp}{\operatorname{Sp}}

\newcommand{\al}{\mathrm{al}}

\mathchardef\mhyphen="2D

\DeclareMathOperator{\Sym}{Sym}

\newcommand{\arhook}{\ar@{^{(}->}}


\xyoption{rotate}


\DeclareMathOperator{\GL}{GL}
\DeclareMathOperator{\SL}{SL}
\DeclareMathOperator{\PGL}{PGL}

\DeclareMathOperator{\trace}{Tr}

\newtheorem{theorem}{Theorem}[section]
\newtheorem{lemma}[theorem]{Lemma}
\newtheorem{proposition}[theorem]{Proposition}
\newtheorem*{theorem*}{Theorem}
\newtheorem{corollary}[theorem]{Corollary}

\makeatletter
\newtheorem*{rep@theorem}{\rep@title}
\newcommand{\newreptheorem}[2]{%
	\newenvironment{rep#1}[1]{%
		\def\rep@title{#2 \ref{##1}}%
		\begin{rep@theorem}}%
		{\end{rep@theorem}}}
\makeatother

\newreptheorem{theorem}{Theorem}
\newreptheorem{conjecture}{Conjecture}

\theoremstyle{definition}
\newtheorem{definition}[theorem]{Definition}
\newtheorem{definition-theorem}[theorem]{Definition-Theorem}
\newtheorem{definition-corollary}[theorem]{Definition-Corollary}

\newtheorem{remark}[theorem]{Remark}

\newtheorem{example}[theorem]{Example}

\usepackage{thmtools} 
\usepackage{thm-restate}
\newtheorem{construction}[theorem]{Construction}

\usepackage[alphabetic]{amsrefs}
\usepackage{mathtools}
\numberwithin{theorem}{subsection}

\usepackage{hyperref}
\hypersetup{
    colorlinks=true,
    linkcolor=blue,
    citecolor=blue,
    filecolor=magenta,      
    urlcolor=cyan,
    pdftitle={On intersections of symmetric determinantal varieties and theta characteristics of canonical curves},
    pdfpagemode=FullScreen,
    }
    
\urlstyle{same}
\usepackage{url}            
\usepackage{booktabs}       

\usepackage[top=3cm,bottom=2cm,left=3cm,right=3cm,marginparwidth=1.75cm]{geometry}

\usepackage[colorinlistoftodos]{todonotes}
\newcommand{\comm}{\todo[inline, color=blue!40]}


\usepackage{tikz}
\usetikzlibrary{matrix,arrows}

\renewcommand{\res}{\operatorname{res}}

\renewcommand{\H}{\operatorname{H}}
\newcommand{\R}{\mathrm{R}}
\newcommand{\Adj}{\mathcal{A}{dj}}
\newcommand{\sheafhom}{\mathcal{H}om}
\newcommand{\sheafext}{\mathcal{E}xt}

\newcommand{\dual}{\widehat}
\newcommand{\lindex}{l}

\newcommand{\odd}{\mathrm{odd}}
\newcommand{\nonessential}{\mathrm{n.e.}}

\DeclareMathOperator{\Proj}{Proj}

\DeclareMathOperator{\sing}{sing}
\DeclareMathOperator{\DAut}{DAut}


\title{On intersections of symmetric determinantal varieties and theta characteristics of canonical curves}

\author{Avinash Kulkarni}
\address{Department of Mathematics, Dartmouth College}
\email{avinash.a.kulkarni@dartmouth.edu}

\author{Sameera Vemulapalli}
\address{Department of Mathematics, Princeton University}
\email{sameerav@math.princeton.edu}

\subjclass[2010]{14M12 (primary), 14H10 (secondary)}
\keywords{Symmetric determinantal varieties; Theta characteristics; Orbit parametrizations}

\begin{document}

\maketitle

\begin{abstract}
	From a block-diagonal $(n+1) \times (m+1) \times (m+1)$ tensor symmetric in the last two entries one obtains two varieties: an intersection of symmetric determinantal hypersurfaces $X$ in $n$-dimensional projective space, and an intersection of quadrics $\mathfrak{C}$ in $m$-dimensional projective space. Under mild technical assumptions, we characterize the accidental singularities of $X$ in terms of $\mathfrak{C}$. We apply our methods to algebraic curves and show how to construct theta characteristics of certain canonical curves of genera 3, 4, and 5, generalizing a classical construction of Cayley. 
\end{abstract}

\section{Introduction} \label{sec: introduction}

Let $k$ be a field of characteristic not $2$ or $3$. A tensor $\mathcal{A} \in k^{n+1} \otimes \Sym_2 k^{m+1}$ defines two important varieties. First, write $\mathcal{A}$ as an $(n+1)$-tuple of symmetric $(m+1)\times (m+1)$ matrices $(A_0, \dots, A_{n})$ with $A_i \in \Sym_2 k^{m+1}$ and define the determinantal variety
\[
	X \coloneqq Z(\det(x_0A_0 + \dots + x_n A_n)) \subseteq \mbp^n := \Proj k[x_0, \ldots, x_n].
\]
We call a hypersurface with such a symmetric determinantal representation a \emph{symmetroid}. The second variety we associate to $\mathcal{A}$ is the intersection of $n+1$ quadrics
\[
	\mathfrak{C} := Z(\mathbf{y}^T A_0 \mathbf{y}, \ldots, \mathbf{y}^T A_n \mathbf{y}) \subseteq \mbp^m := \Proj k[y_0, \ldots, y_m],
\] 
where $\mathbf{y} := (y_0, \ldots, y_m)^T$. 

More generally, a tensor $\mathcal{A} \in k^{n+1} \otimes (\bigoplus_{\lindex=1}^r \Sym_2 k^{d_\lindex})$ gives rise to varieties $X$ and  $\mathfrak{C}$ as follows. First, $\mathcal{A}$ can be thought of as a tuple of $r$ tensors $(\mathcal{A}^{(1)}, \ldots, \mathcal{A}^{(r)})$, with each $\mathcal{A}^{(\lindex)} \in k^{n+1} \otimes \Sym_2 k^{d_\lindex}$. Each $\mathcal{A}^{(\lindex)}$ defines a symmetroid $X_{\lindex}$ as above, and we define $X := X_1 \cap \ldots \cap X_r$. Second, we may consider $\mathcal{A}$ as an element of $k^{n+1} \otimes \Sym_2 k^{d_1 + d_2 + \ldots + d_r}$ and construct $\mathfrak{C}$ as an intersection of $n+1$ quadrics in $\mbp^{d_1 + d_2 + \ldots + d_r - 1}$ as before. We call $\mathfrak{C}$ the \emph{Cayley variety of $\mathcal{A}$}. This article studies the connection between $\mathcal{A}$, $X$, and $\mathfrak{C}$. 

This work has two major inspirations. First is the classical relationship between plane quartic curves and intersections of $3$ quadrics in $\mbp^3$ (i.e. $\mathcal{A} \in k^3 \otimes \Sym_2 k^4$), originally studied by Hesse~\cites{HesseI, HesseII} and Dixon~\cite{Dixon}; this case serves as a motivating example throughout the paper. The eight points in $\mbp^3$ are classically known as a \emph{Cayley octad}, and the $28$ secants between these points are in one-to-one correspondence with the bitangents of a plane quartic curve. This relation has been further studied by \cites{ElsenhansJahnel2019, Plaumann2011}. Secondly, orbit parametrizations of geometric objects often have applications to topics of interest in arithmetic geometry; see \cites{BhargavaHCL1, BhargavaHCL2, BhargavaHCL3, BhargavaHCL4} for low rank rings and certain ideal classes, \cite{Wood2014} for binary n-ic rings and certain ideal classes, \cite{BhargavaHoKumar2016} and \cite{BhargavaHoKumar2016} for K3 surfaces, \cite{BhargavaHo2016} for genus 1 curves, and \cite{BhargavaGrossWang2017} for hyperelliptic curves. 

The orbits of $k^{n+1} \otimes (\bigoplus_{\lindex=1}^r \Sym_2 k^{d_{\lindex}})$ under the group $\GL_{n+1} \times (\bigoplus_{\lindex=1}^r \GL_{d_{\lindex}})$ simultaneously classifies two families of geometric objects: intersections of symmetroids in $\mbp^n$ with prescribed line bundles and complete intersections of quadrics in $\mbp^m$. It is natural to ask what the general relationship between these objects is.

\subsection{Statement of results}

A \emph{block-diagonal tensor with $r$ blocks} is a tensor $\mathcal{A} \in k^{n+1} \otimes (\bigoplus_{\lindex = 1}^r \Sym_2 k^{d_\lindex})$. In this article, we assume that the determinantal variety $X_{\lindex}$ associated to each block of $\mathcal{A}(\mathbf{x}, \cdot, \cdot)$ is reduced and irreducible, we have $d_\lindex \geq 2$, and no block is uniformly zero. See Section~\ref{sec: sub: block-diagonal} for more details.

Our first result describes the relationship between accidental singularities of $X$ and singularities of $\mathfrak{C}$. For a hypersurface $X$, i.e. $r = 1$, an \emph{essential singularity} of $X = \det \mathcal{A}(\mathbf{x}, \cdot, \cdot)$ is a point $p$ such that $\corank \mathcal{A}(p, \cdot, \cdot) \geq 2$. Generically, the only singularities of a determinantal hypersurface are its essential singularities \cite{Kerner2012}. It is possible for a determinantal hypersurface to have singularities that are not essential; these are called \emph{accidental singularities}. 

Similarly, an \emph{essential singularity} of an intersection of symmetroids $X$ defined by a block-diagonal tensor $\mathcal{A}$ is a point $p \in X(\bar k)$ such that $\corank \mathcal{A}^{(\lindex)}(p, \cdot, \cdot) \geq 2$ for some block of $\mathcal{A}$, and an \emph{accidental singularity} is a singular point of $X$ that is not an essential singularity. A \emph{coincident singularity} of $X$ is an accidental singularity $x \in X$ which is a singular point of two intersections of symmetroids $X', X'' \supset X$ defined by two distinct strict subsets of the blocks of $\mathcal{A}$. For further details, see Section~\ref{sec: general results}. We denote by $X^\nonessential$ the complement of the essential singularities in $X$. 

\begin{restatable}{theorem}{accidentalsingularitytheorem} 
\label{thm: singularities of symmetroid intersections with multiplicity} 
	Let $\mathcal{A} \in k^{n+1} \otimes (\bigoplus_{\lindex =1}^r \Sym_2 k^{d_\lindex})$ be a block-diagonal tensor with $r$ blocks, let $X$ be the associated intersection of symmetroids, and let $\mathfrak{C}$ be the Cayley variety of $\mathcal{A}$. If $X$ is a complete intersection, $\sing X^\nonessential$ has dimension $0$, and $X$ does not have any coincident singularities, then the scheme
	\[
		\{(x,y) \in X^{\nonessential} \times \mbp^m : \mathcal{A}(x, y, \cdot) = 0,  \ y \in \mathfrak{C}\}
	\]
	is a finite cover of $\sing X^{\mathrm{n.e.}}$ of degree $2^{r-1}$ via projection on the first factor.
\end{restatable}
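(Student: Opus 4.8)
The plan is to strip away the block structure, realise the scheme in the theorem as the restriction to $\sing X^\nonessential$ of a branched cover of a $\mbp^{r-1}$-bundle over $X^\nonessential$, and then count degrees fibrewise. \textbf{Setup.} Write $M^{(\lindex)}(\mathbf x):=\sum_i x_i A_i^{(\lindex)}$ for the symmetric matrix of the $\lindex$-th block, so $X_\lindex=Z(\det M^{(\lindex)})$, and decompose $\mathbf y=(\mathbf y^{(1)},\dots,\mathbf y^{(r)})$ accordingly; block-diagonality makes $\mathcal A(x,y,\cdot)=0$ equivalent to $M^{(\lindex)}(x)\,\mathbf y^{(\lindex)}=0$ for all $\lindex$, i.e.\ $\mathbf y^{(\lindex)}\in\ker M^{(\lindex)}(x)$. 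On $X^\nonessential$ one has $\corank M^{(\lindex)}(x)=1$ for every $\lindex$ (it is $\geq 1$ as $x\in X_\lindex$, and $\leq 1$ by non-essentiality), so $\ker M^{(\lindex)}$ is a line bundle there, and the locus $\mathcal Y:=\{(x,y)\in X^\nonessential\times\mbp^m:\mathcal A(x,y,\cdot)=0\}$ is the $\mbp^{r-1}$-bundle $\mbp\big(\bigoplus_\lindex\ker M^{(\lindex)}\big)$ over $X^\nonessential$. A nonvanishing column of $\adj M^{(\lindex)}(x)$ gives a local regular generator $v^{(\lindex)}(x)$ of $\ker M^{(\lindex)}(x)$, so the fibre of $\mathcal Y$ over $x$ is $\mbp^{r-1}$ with coordinates $[t_1:\dots:t_r]$, the point $[t]$ being $\mathbf y=(t_1v^{(1)}(x),\dots,t_rv^{(r)}(x))$.

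\textbf{The adjugate--gradient identity.} Next I would use $\partial_{x_i}\det M^{(\lindex)}=\trace\!\big(\adj M^{(\lindex)}\cdot A_i^{(\lindex)}\big)$ together with the fact that on $X^\nonessential$ the adjugate of the corank-one symmetric matrix $M^{(\lindex)}(x)$ is a nonzero symmetric rank-one matrix, $\adj M^{(\lindex)}=\gamma_\lindex\,v^{(\lindex)}(v^{(\lindex)})^T$ for a nowhere-vanishing local function $\gamma_\lindex$; this yields $\nabla_{\mathbf x}\det M^{(\lindex)}(x)=\gamma_\lindex(x)\,w^{(\lindex)}(x)$ with $w^{(\lindex)}(x):=\big(v^{(\lindex)}(x)^TA_i^{(\lindex)}v^{(\lindex)}(x)\big)_{i=0}^n$. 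Since $X$, and hence (by additivity of codimension: if some $X_I:=\bigcap_{\lindex\in I}X_\lindex$ failed to have the expected codimension $|I|$, then $X=X_I\cap X_{I^c}$ would fail to have codimension $r$) each $X_I$, is a complete intersection cut out by the relevant $\det M^{(\lindex)}$, the Jacobian criterion (cf.\ Section~\ref{sec: general results}) gives, for $x\in X^\nonessential$ and $I\subseteq\{1,\dots,r\}$, that $x\in\sing X_I$ if and only if $\{w^{(\lindex)}(x)\}_{\lindex\in I}$ is linearly dependent; with $W(x)$ the $(n+1)\times r$ matrix with columns $w^{(\lindex)}(x)$, in particular $\sing X^\nonessential=\{x\in X^\nonessential:\rank W(x)\leq r-1\}$.

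\textbf{Cutting by $\mathfrak C$.} Then I would restrict the quadrics $Q_i(\mathbf y)=\mathbf y^TA_i\mathbf y$ defining $\mathfrak C$ to the fibre of $\mathcal Y$ over $x$: block-diagonality gives $Q_i|_{\mbp^{r-1}}([t])=\sum_\lindex t_\lindex^2\,w_i^{(\lindex)}(x)$. Hence, with $\nu\colon\mbp^{r-1}_{[t]}\to\mbp^{r-1}_{[u]}$ the squaring map $u_\lindex=t_\lindex^2$ --- the quotient of $\mbp^{r-1}$ by the group $(\mbz/2\mbz)^{r-1}$ of coordinate sign changes, a finite flat morphism of degree $2^{r-1}$ (here $\operatorname{char}k\neq 2$ is used) --- the fibre of $S$ over $x$ is $S_x=\nu^{-1}(V_x)$, where $V_x:=\mbp(\ker W(x))=Z\big(\sum_\lindex w_i^{(\lindex)}(x)\,u_\lindex:0\leq i\leq n\big)\subseteq\mbp^{r-1}_{[u]}$. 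Moreover $(x,y)\in S$ forces $(t_1^2,\dots,t_r^2)$ to be a nonzero element of $\ker W(x)$, so $\rank W(x)<r$ and $x\in\sing X^\nonessential$; thus $S$ is closed in $\sing X^\nonessential\times\mbp^m$ and $\pi_1\colon S\to\sing X^\nonessential$ is proper.

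\textbf{The degree, and the main obstacle.} The crux is to show $\rank W(x)=r-1$ for \emph{every} $x\in\sing X^\nonessential$. If instead $\rank W(x)\leq r-2$, then every $(r-1)$-element subset of the columns of $W(x)$ is linearly dependent, so $x\in\sing X_I$ for at least two distinct $(r-1)$-element --- hence strict --- subsets $I$; as $X_I\supseteq X$, this makes $x$ a coincident singularity, contrary to hypothesis. Once $\rank W(x)=r-1$, the linear forms $\sum_\lindex w_i^{(\lindex)}(x)u_\lindex$ have rank $r-1$, so $V_x$ is a single reduced $\kappa(x)$-point and $S_x=\nu^{-1}(V_x)$ has length $\deg\nu=2^{r-1}$; since $V_x\neq\varnothing$ and $\nu$ is surjective, $\pi_1$ is surjective, hence (being proper with finite fibres) finite. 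To promote the pointwise length count to flatness over the zero-dimensional base I would observe that on $\sing X^\nonessential$ the ideal of $r\times r$ minors of $W$ vanishes (by the definition of the Jacobian scheme structure together with the identity above) while the ideal of $(r-1)\times(r-1)$ minors is the unit ideal, so $\ker W$ is a line bundle on $\sing X^\nonessential$ whose inclusion into $\mcO^r$ is locally a direct summand; trivialising it by $(\lambda_1,\dots,\lambda_r)$ with some $\lambda_{\lindex_0}$ a unit exhibits $S$, locally over $\sing X^\nonessential$, as $\Spec$ of the free rank-$2^{r-1}$ algebra $\mcO_{\sing X^\nonessential}[\,t_\lindex:\lindex\neq\lindex_0\,]/\big(t_\lindex^2-\lambda_\lindex/\lambda_{\lindex_0}\big)_{\lindex\neq\lindex_0}$, so $\pi_1$ is finite flat of degree $2^{r-1}$. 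The genuinely delicate points are all in this last step: the translation between ``no coincident singularities'' and $\rank W=r-1$ on $\sing X^\nonessential$, and checking that the scheme structures on $S_x$, on $\sing X^\nonessential$, and on the intermediate $X_I$ are the expected ones --- the rest is bookkeeping with the adjugate identity and the branched cover $\nu$.
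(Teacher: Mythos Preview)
Your proof is correct and follows essentially the same approach as the paper's: both use the adjugate--gradient identity (the paper's Jacobi formula together with Lemma~\ref{lem: adjugate is locally defined by the kernel}), the fact that ``no coincident singularities'' forces the Jacobian to have corank exactly $1$ on $\sing X^\nonessential$ (the paper's Lemma~\ref{lem: minors vanish implies kernel} gives the local freeness of its kernel, which is your $\ker W$), and the degree-$2^{r-1}$ squaring map (your $\nu$, the paper's $\nu_2'$). The only organisational difference is that the paper packages the intermediate step as an isomorphism $\mbp\Adj_Z\cap\pi_2^{-1}W\to Z$ inside $\mbp^{(m+1)^2-1}$ and then pulls back along $\nu_2'$, whereas you stay inside the $\mbp^{r-1}$-fibre of $\mbp\mathcal K$, trivialise there, and compute the finite flat algebra explicitly; your presentation is slightly more concrete but the content is the same.
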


We apply Theorem~\ref{thm: singularities of symmetroid intersections with multiplicity} to obtain a generalization of the relation between the bitangents of a plane quartic curve and the secant lines of the Cayley octad. The tensor $\mathcal{A}$ defines a map $\psi\: \mbp^m \dashrightarrow \dual \mbp^n$ by
\[
	\psi(\mathbf{y}) := (\mathbf{y}^T A_0 \mathbf{y}, \ldots, \mathbf{y}^T A_n \mathbf{y}),
\]
with base locus $\mathfrak{C}$. The map $\psi$ contracts any secant line of $\mathfrak{C}$ to a point $[H] \in \dual \mbp^n$. When $\mathfrak{C}$ is zero dimensional, $H$ intersects $X$ in a special way. Let the \emph{block spaces} of $\mathcal{A}$ be the linear subspaces of $\mbp^{d_1 + \dots + d_r - 1}$ defined by the natural inclusions $k^{d_\lindex} \inj k^{d_1 + \dots + d_r}$ corresponding to the decomposition $\bigoplus_{\lindex = 1}^r \Sym_2 k^{d_\lindex}$.

\begin{restatable}{proposition}{secantsandhyperplanestheorem}  
\label{prop: deg and dim of tangents of intersections of symmetric determinantal hypersurfaces}
Let $\mathcal{A} \in k^{n+1} \otimes (\bigoplus_{\lindex=1}^r \Sym_2 k^{d_\lindex})$ be a block-diagonal tensor whose associated Cayley variety $\mathfrak{C}$ is $0$-dimensional and let $X$ be the associated complete intersection of symmetroids. Let $\ell$ be a secant line of $\mathfrak{C}$ which does not intersect the block spaces and let $[H] = \psi(\ell)$. If $X\cap H$ contains no essential singularities or coincident singularities of $X$, then $\sing (X \cap H)$ contains a subscheme of dimension $0$ and degree $n$.
\end{restatable}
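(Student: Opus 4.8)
The plan is to exhibit the required degree-$n$ subscheme explicitly, as a determinantal locus inside the hyperplane $H$, and then to feed the restriction of $\mathcal{A}$ to $H$ into Theorem~\ref{thm: singularities of symmetroid intersections with multiplicity} in order to see that this locus lies, with the correct multiplicities, inside $\sing(X\cap H)$. Write $\mathcal{A}(\mathbf{x},\cdot,\cdot)=\sum_i x_iA_i$ and let $\ell$ pass through $p,q\in\mathfrak{C}$. As $p^TA_ip=q^TA_iq=0$, the map $\psi$ contracts $\ell$ to $[H]=[\,p^TA_0q:\cdots:p^TA_nq\,]$, so $H=Z\bigl(\sum_i(p^TA_iq)x_i\bigr)$ and in particular $(p^TA_jq)_j\ne0$. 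Set $D=d_1+\cdots+d_r$; since $\mathfrak{C}$ is $0$-dimensional, $D\le n+2$, and if $D<n+2$ the construction below acquires positive-dimensional fibres, so $\sing(X\cap H)$ contains a curve and we are done. Assume $D=n+2$.

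The key linear-algebra observation is that for $x\in H$ both vectors $\mathcal{A}(x,p,\cdot)$ and $\mathcal{A}(x,q,\cdot)$ lie in the $n$-dimensional subspace $W\subseteq k^D$ annihilated by the two independent functionals $v\mapsto p^Tv$ and $v\mapsto q^Tv$: indeed $p^T\mathcal{A}(x,p,\cdot)=\sum_ix_i\,p^TA_ip=0$, and $q^T\mathcal{A}(x,p,\cdot)=\sum_ix_i\,p^TA_iq=0$ since $x\in H$, and likewise for $\mathcal{A}(x,q,\cdot)$. Let $\Sigma\subseteq H$ be the subscheme cut out by the $2\times 2$ minors of the $n\times 2$ matrix whose columns are $\mathcal{A}(x,p,\cdot)$ and $\mathcal{A}(x,q,\cdot)$, expressed in coordinates on $W$; set-theoretically $\Sigma=\{x\in H:\ker\mathcal{A}(x,\cdot,\cdot)\cap\ell\ne\emptyset\}$. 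Equivalently, $x\mapsto\mathcal{A}(x,sp+tq,\cdot)$ is a pencil of linear maps between the two $n$-dimensional spaces $\{x:\sum_i(p^TA_iq)x_i=0\}$ and $W$, with determinant a binary form $\delta$ that is either identically zero — forcing a curve in $\sing(X\cap H)$, and we are done — or of degree exactly $n$. By Thom--Porteous (or a direct intersection-number computation on $\mathbb{P}^1\times\mathbb{P}^{n-1}$), in the remaining case $\Sigma$ has the expected dimension $0$ and degree $n$.

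Next, $\Sigma\subseteq\sing(X\cap H)$. Fix $x_0\in\Sigma$, so $\mathcal{A}(x_0,w_0,\cdot)=0$ for some $0\ne w_0\in\ell$; since $\ell$ avoids the block spaces, each $w_0^{(\lindex)}\ne0$, hence $\det\mathcal{A}^{(\lindex)}(x_0,\cdot,\cdot)=0$ for all $\lindex$ and $x_0\in X\cap H$. If $x_0\in\sing X$, then $x_0\in\sing(X\cap H)$ trivially and, by hypothesis, is neither an essential nor a coincident singularity. Otherwise $\corank\mathcal{A}^{(\lindex)}(x_0,\cdot,\cdot)=1$ for every $\lindex$, so $\ker\mathcal{A}^{(\lindex)}(x_0,\cdot,\cdot)=\langle w_0^{(\lindex)}\rangle$, and since the adjugate of a corank-$1$ symmetric matrix is a scalar multiple of $w_0^{(\lindex)}(w_0^{(\lindex)})^T$, the gradient of $\det\mathcal{A}^{(\lindex)}(\mathbf{x},\cdot,\cdot)$ at $x_0$ is a nonzero multiple of $\bigl((w_0^{(\lindex)})^TA_j^{(\lindex)}w_0^{(\lindex)}\bigr)_j$; this covector cuts out the embedded tangent hyperplane of $X_\lindex$ at $x_0$, and these $r$ covectors are independent because $x_0$ is a smooth point of the complete intersection $X$. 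Their span contains $\sum_\lindex\bigl((w_0^{(\lindex)})^TA_j^{(\lindex)}w_0^{(\lindex)}\bigr)_j=\bigl(w_0^TA_jw_0\bigr)_j$, which — since $w_0\in\ell$ and $p,q\in\mathfrak{C}$, and $w_0$ is a representative of neither $p$ nor $q$ (else these covectors would sum to $0$ and $x_0$ would be singular on $X$) — is a nonzero multiple of $(p^TA_jq)_j$, the form defining $H$. Hence the embedded tangent space to $X$ at $x_0$ lies in $H$, so $X\cap H$ is singular at $x_0$.

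Finally, to promote the set-theoretic inclusion $\Sigma_{\mathrm{red}}\subseteq\sing(X\cap H)$ to the stated scheme-theoretic conclusion I would apply Theorem~\ref{thm: singularities of symmetroid intersections with multiplicity} to $\mathcal{A}|_H$, to $X\cap H\subseteq H\cong\mathbb{P}^{n-1}$, and to its Cayley variety, which contains $\ell$ (for $y\in\ell$ and $v$ in the linear span of $H$ one has $\mathcal{A}(v,y,y)=(\mathrm{const})\cdot\sum_iv_i\,p^TA_iq=0$): this identifies $\sing(X\cap H)^{\nonessential}$, with multiplicities, as a degree-$2^{r-1}$ quotient of the incidence scheme $\{(x,y):\mathcal{A}(x,y,\cdot)=0,\ y\in\text{Cayley variety of }\mathcal{A}|_H\}$, and restricting to $y\in\ell$ and invoking $\deg\Sigma=n$ gives $\deg\sing(X\cap H)^{\nonessential}\ge n$. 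The main obstacle I expect is the bookkeeping here: verifying the hypotheses of Theorem~\ref{thm: singularities of symmetroid intersections with multiplicity} for the restricted data — that $X\cap H$ is still a complete intersection (i.e. $X\not\subseteq H$), that it has no coincident singularities (to be deduced from the corresponding assumption on $X$), and that $\sing(X\cap H)^{\nonessential}$ is $0$-dimensional (the opposite case having been disposed of above) — and controlling the at most $r$ points of $\ell$ lying in a coordinate hyperplane of the block decomposition, so that $\Sigma$ genuinely sits scheme-theoretically inside $\sing(X\cap H)$.
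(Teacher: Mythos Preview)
Your approach is essentially the paper's. Both restrict $\mathcal{A}$ to $H$, use that $\ell$ is isotropic for every slice of $\mathcal{B}:=\mathcal{A}|_H$ to produce a $2\times(m-1)$ matrix of linear forms (your passage to the $n$-dimensional space $W=\{v:p^Tv=q^Tv=0\}$ is exactly the paper's quotient $k^{m+1}/L$), identify the rank-$1$ locus of this matrix as a degree-$n$ scheme via the Segre/Thom--Porteous count, and then invoke Theorem~\ref{thm: singularities of symmetroid intersections with multiplicity} applied to $\mathcal{B}$ to carry this scheme into $\sing(X\cap H)$. Your set-theoretic tangent-space argument in the middle paragraph is a direct unpacking of what the paper cites as Theorem~\ref{thm: singularities of symmetroid intersections}(b); your ``pencil of $n\times n$ maps with determinant $\delta$'' is the projection of the same incidence correspondence to $\ell\cong\mathbb{P}^1$ rather than to $H$, and gives the same degree.

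Two small remarks. First, by working with explicit distinct $p,q\in\mathfrak{C}$ you implicitly exclude the case where $\ell$ meets $\mathfrak{C}$ in a single nonreduced point; the paper's coordinate-free formulation via the $2$-plane $L$ with $\mathcal{M}|_{L\otimes L}=0$ handles both cases uniformly, so you may want to phrase things that way. Second, the bookkeeping you flag at the end is exactly the point where the paper is also terse: it simply asserts that Theorem~\ref{thm: singularities of symmetroid intersections with multiplicity} applies to $\mathcal{B}$ once $\dim\sing(X\cap H)=0$, and disposes of the positive-dimensional case trivially. Note that essential singularities of $X\cap H$ (for $\mathcal{B}$) coincide with essential singularities of $X$ lying in $H$, since $\mathcal{B}(p,\cdot,\cdot)=\mathcal{A}(p,\cdot,\cdot)$ for $p\in H$; this takes care of one of your worries. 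The coincident-singularity hypothesis is indeed the one that needs the most care, and the paper does not spell it out further than you do.
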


We apply Proposition~\ref{prop: deg and dim of tangents of intersections of symmetric determinantal hypersurfaces} to four main cases: smooth plane quartics, generic quintic symmetroid surfaces, smooth canonical genus $4$ curves whose canonical embedding is contained in a quadric cone, and smooth canonical genus $5$ curves whose canonical embedding can be written as the intersection of three quadrics of rank $3$. The case of smooth plane quartics is well-known, and the case of generic quintic symmetroid surfaces was studied by Roth in $1930$ \cite{Roth1930}. The latter two cases are novel to the knowledge of the authors. In the case of curves of genus $4$ and $5$, we describe the moduli functor of genus $4$ curves with a vanishing even theta characteristic (respectively, genus $5$ curves with three vanishing even theta characteristics) using the symmetric determinantal representation. We also describe the relationship between the determinantal representations to certain odd theta characteristics. If $\mathcal{A} \in k^4 \otimes (\Sym_2 k^2 \oplus \Sym_2 k^3)$ or $\mathcal{A} \in k^5 \otimes (\Sym_2 k^2)^{\oplus 3}$, we say that $\mathcal{A}$ is \emph{nondegenerate} if the associated intersection of symmetroids is a smooth curve (of genus $4$ or $5$, respectively). As nondegeneracy is preserved by the natural group action, we may speak of nondegenerate orbits.

\begin{restatable}{theorem}{genusfoursummary}
	\label{thm: genusfoursummary}
	\begin{enumerate}[(a)]
	\item
	Let $B$ be a scheme over $\Spec \mbz[\frac{1}{6}]$. There is a canonical bijection between:
	\[
		\left \{  \quad \parbox{7cm}{\centering
		isomorphism classes of tuples $(X, \epsilon, \theta_0)$,\\
		where $X \rightarrow B$ is a smooth genus $4$ curve over $B$ with vanishing even theta
		characteristic $\theta_0$ with a rational divisor class defined over $B$,
		and $\epsilon$ is an even $2$-torsion class}
		 \quad  \right\} 	 
		 \longleftrightarrow 
		 \left \{ \begin{array}{c}
		 \text{nondegenerate orbit classes of } \\
		 \mathcal{O}_B^4 \otimes \left(\Sym_2 \mathcal{O}_B^2 \oplus \Sym_2 \mathcal{O}_B^3 \right) \\
		 \text{under the action of } \\
		 \GL_4(\mathcal{O}_B) \times \GL_2(\mathcal{O}_B) \times \GL_3(\mathcal{O}_B) 
		 \end{array} \right\}
	\]
	\end{enumerate}
	\noindent
	Let $\mathcal{A} \in k^4 \otimes (\Sym_2 k^2 \oplus \Sym_2 k^3)$ be a nondegenerate tensor and let $\theta_0$ and $\epsilon$ be the associated line bundles on $X$. Then:
	\begin{enumerate}[(b)]
	\item
	The images of the $120$ secants of $\mathfrak{C}$ under $\psi$ define $56 + 8$ tritangent planes of $X$. Viewing $X \cap H$ as a divisor of $X$, eight of these tritangents satisfy $X \cap H = 2D$ where $D \in |\theta_0|$. The other $56$ tritangents satisfy $X \cap H = 2D$, where $D'$ is the effective representative of an odd theta characteristic of $X$.
	
	\item
	Let $e_2$ denote the Weil pairing on $\Jac(X)[2]$. Then the $56$ distinct odd theta characteristics constructed from the secants of $\mathfrak{C}$ are precisely the odd theta characteristics $\theta$ such that $e_2(\theta \otimes \theta_0^\vee, \epsilon) = 0$.
	\end{enumerate}

\end{restatable}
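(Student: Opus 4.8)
The plan is to establish (a) by an explicit orbit parametrization, deduce (b) from Proposition~\ref{prop: deg and dim of tangents of intersections of symmetric determinantal hypersurfaces} together with a count of secants, and prove (c) by converting the Weil-pairing assertion into a parity statement about an auxiliary theta characteristic that can be read off geometrically.

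For (a) I would construct the two maps and check they are mutually inverse and equivariant. Given a nondegenerate $\mathcal{A} = (\mathcal{A}^{(1)}, \mathcal{A}^{(2)})$, the curve $X = X_1 \cap X_2$ is cut out by the quadric $X_1 = Z(\det \mathcal{A}^{(1)})$ and the cubic $X_2 = Z(\det \mathcal{A}^{(2)})$ in $\mbp^3$; by adjunction $\omega_X \cong \mathcal{O}_X(1)$, so $X$ is a canonically embedded genus-$4$ curve, smooth by nondegeneracy. Since a $2 \times 2$ symmetric determinant has rank at most $3$, the quadric $X_1$ is a cone, and the $g^1_3$ cut out by the planes through its vertex is the vanishing even theta characteristic; $\coker(\mathcal{A}^{(1)}|_X)$ realizes $\theta_0$ (up to the standard twist) with a divisor class defined over the base. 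Restricting and twisting the $3 \times 3$ representation gives $\mathcal{M} := \coker(\mathcal{A}^{(2)}|_X)$, an even theta characteristic — evenness being the cohomological signature of a symmetric determinantal representation of the cubic surface $X_2$ — and I set $\epsilon := \mathcal{M} \otimes \theta_0^\vee \in \Jac(X)[2]$, which is even since $q_{\theta_0}(\epsilon) \equiv h^0(\mathcal{M}) \equiv 0 \pmod 2$ for the Mumford form $q_{\theta_0}(v) = h^0(\theta_0 \otimes v) + h^0(\theta_0) \bmod 2$. Conversely, from $(X, \theta_0, \epsilon)$ over $B$: embed $X$ canonically in $\mbp^3_B$ (this introduces the $\GL_4$-ambiguity); the unique quadric through $X$ is a rank-$3$ cone precisely because $\theta_0$ is a vanishing even theta characteristic, and it is the determinant of a $2 \times 2$ symmetric matrix of linear forms, the choices forming a $\GL_2$-torsor; then invoke the classification of symmetric determinantal representations of cubic surfaces to produce a cubic $X_2 \supset X$ cutting out $X$ on the cone and realized as $\det \mathcal{A}^{(2)}$ with cokernel $\theta_0 \otimes \epsilon$, the choices forming a $\GL_3$-torsor. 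Assembling $\mathcal{A}$ block-diagonally, verifying the two constructions invert each other and intertwine the group actions, and checking stability under base change over $\Spec \mbz[\frac16]$ (whence the characteristic hypothesis) yields the bijection. The delicate step is the cubic-surface side in families, especially the degenerate value $\epsilon = 0$ (where $\mathcal{M} = \theta_0$ has $h^0 = 2$); this is where I would lean on a relative form of the determinantal-representation theory for cubic surfaces.

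For (b): $\mathfrak{C}$ is a complete intersection of four quadrics in $\mbp^4$, hence $16$ points with $\binom{16}{2} = 120$ secants. I would first isolate the secants meeting a block space. The points of the $\mbp^2$-block space over which the projection $\mathfrak{C} \to \mbp^2$ has a point are exactly the preimage under the degree-$2$ map $\psi^{(2)}$ of the conic $\psi^{(1)}(\mbp^1)$, a $(1,2)$-complete intersection in $\dual \mbp^3$; pulling back gives a $(2,4)$-complete intersection in $\mbp^2$, so exactly $8$ such points, over each of which $\mathfrak{C}$ has two points. Thus the $16$ points of $\mathfrak{C}$ fall into $8$ pairs $\{[y^{(1)}\!:\!y^{(2)}], [y^{(1)}\!:\!-y^{(2)}]\}$, and the secant joining each pair meets both block spaces; a short computation shows $\psi$ sends it to $\psi^{(1)}([y^{(1)}])$, a point of the dual conic $X_1^\vee$, i.e. a plane $H$ tangent to the cone $X_1$ along a ruling $L$, whence $X \cap H = 2(L \cap X_2) = 2D$ with $D \in |\theta_0|$: these are the $8$ tritangents of the first kind. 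For each of the remaining $112$ secants $\ell$, which miss the block spaces, Proposition~\ref{prop: deg and dim of tangents of intersections of symmetric determinantal hypersurfaces} with $n = 3$ shows $\sing(X \cap H)$, $[H] = \psi(\ell)$, contains a $0$-dimensional degree-$3$ subscheme; since $X \cap H$ is a degree-$6$ divisor in $|K_X|$, this forces $X \cap H = 2D$ with $D$ effective of degree $3$, hence a theta characteristic. As $D \in |\theta_0|$ would force the three points of $D$ to be collinear on a ruling and push $\ell$ into a block-space-meeting configuration, $D$ is instead the unique effective representative of an odd theta characteristic, and a fibre count for $\psi$ on secant lines shows the $112$ secants cover $56$ distinct odd theta characteristics, two secants apiece.

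For (c): with $v = \theta \otimes \theta_0^\vee$ and $w = \epsilon$, the identity $e_2(v, w) = q_{\theta_0}(v+w) + q_{\theta_0}(v) + q_{\theta_0}(w)$, together with $q_{\theta_0}(v) \equiv h^0(\theta) + h^0(\theta_0) \equiv 1$, $q_{\theta_0}(w) \equiv h^0(\mathcal{M}) \equiv 0$, and $q_{\theta_0}(v+w) \equiv h^0(\theta \otimes \mathcal{M} \otimes \theta_0^\vee)$, gives $e_2(\theta \otimes \theta_0^\vee, \epsilon) \equiv h^0(\theta \otimes \mathcal{M} \otimes \theta_0^\vee) + 1 \pmod 2$; so the assertion is equivalent to: for each of the $56$ secant-thetas $\theta$, the theta characteristic $\theta \otimes \mathcal{M} \otimes \theta_0^\vee = \theta \otimes \epsilon$ is odd. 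To see this I would identify the effective divisor $D$ with $\mathcal{O}_X(D) = \theta$ concretely via Theorem~\ref{thm: singularities of symmetroid intersections with multiplicity}: the points of $D$ are exactly those $x \in X$ for which the secant $\ell = \overline{pq}$ lies in the product of the kernel lines of $\mathcal{A}^{(1)}(x)$ and $\mathcal{A}^{(2)}(x)$, and this kernel data presents $D$, the divisor underlying $\mathcal{M}$, and a representative of $\theta_0$ simultaneously in $\mbp^3$; running these through $\coker$ exhibits a nonzero section of $\theta \otimes \mathcal{M} \otimes \theta_0^\vee$, hence oddness. As a cross-check the count works out: restricting $q_{\theta_0}$ to the hyperplane $\epsilon^\perp$ yields a form with radical $\langle \epsilon \rangle$ descending to a nondegenerate form on $\mbf_2^6$ of Arf invariant $0$ (preserved because $q_{\theta_0}(\epsilon) = 0$), which has $28$ odd vectors, each lifting to two vectors of $\epsilon^\perp$, for a total of $56$. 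The main obstacle throughout — and the heart of the new genus-$4$ content — is establishing this explicit dictionary between the kernel data of $\mathcal{A}$ and the theta characteristics $D$, $\mathcal{M}$, $\theta_0$ on $X$; once it is in place, (b) and (c) reduce to B\'ezout counts and linear algebra over $\mbf_2$.
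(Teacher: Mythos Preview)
Your overall architecture matches the paper's: (a) is an orbit parametrization proved by building the two constructions and checking they invert one another, (b) follows from the secant count plus Proposition~\ref{prop: deg and dim of tangents of intersections of symmetric determinantal hypersurfaces} and Proposition~\ref{prop: tangency for curves}, and (c) is reduced via Mumford's formula to showing $h^0(\theta\otimes\epsilon)$ is odd, which one reads off from the kernel geometry. Two points deserve correction.

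First, your identification of $\mathcal{M}$ is wrong. The aCM sheaf coming from the $3\times 3$ block has $\mathcal{E}_3^{\otimes 2}\cong\mathcal{O}_{X_3}(2)$, so its restriction to $X$ satisfies $(\mathcal{E}_3|_X)^{\otimes 2}\cong\kappa_X^{\otimes 2}$; hence $\mathcal{E}_3|_X\cong\kappa_X\otimes\epsilon$ for a $2$-torsion class $\epsilon$, \emph{not} a theta characteristic. There is no ``evenness as cohomological signature of a symmetric determinantal representation of the cubic surface'': cubic surfaces do not carry theta characteristics, and the Wirtinger--Coble correspondence (Theorem~\ref{thm: genus 4: cubics and orbits}) is directly between symmetric determinantal cubics through $X$ and even $2$-torsion classes $\epsilon\in\Jac(X)[2]$. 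Your formula $\epsilon=\mathcal{M}\otimes\theta_0^\vee$ and the subsequent use of $h^0(\mathcal{M})\equiv 0$ only make sense if $\mathcal{M}=\theta_0\otimes\epsilon$, but that bundle is \emph{not} what the cubic block hands you; it mixes the two blocks. Once you correct this, part~(c) goes through exactly as in the paper: the key geometric input is that for a secant-theta $\theta$ with effective representative $D$, the points $\varphi_{X_3}(D)$ are collinear in $\mbp^2$ (your ``kernel data'' observation, made precise in Lemma~\ref{lem: projected singular points are colinear}), and since $\varphi_{X_3}^*\mathcal{O}_{\mbp^2}(1)\cong\kappa_X\otimes\epsilon$ this exhibits a section of $\kappa_X\otimes\epsilon\otimes\theta^\vee\cong\theta\otimes\epsilon$.

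Second, for (a) you ``invoke the classification of symmetric determinantal representations of cubic surfaces'' and flag the families version as delicate, but this is precisely where the paper does real work. The paper's proof (Theorem~\ref{thm: genus 4: cubics and orbits in families}) runs Catanese's construction relatively over $B$: map $X$ to $\mbp^2$ by $|\kappa_X\otimes\epsilon|$, blow up the singular locus $\Delta$ of the image, identify $\mathcal{O}_S(3H-\Delta)$ with the canonical class, form the $3\times 3$ matrix of quadratic forms from a basis $s,t,u$ of $\R^0f_*\mathcal{O}_S(H)$, and then extract the linear determinantal representation as $\frac{1}{F}\adj\mathcal{B}(\mathbf{x})$ where $F$ cuts out the image cubic. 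Your sketch does not engage with this, and without it the bijection over a general base $B$ is not established.
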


\begin{restatable}{theorem}{genusfivesummary}
	\label{thm: genusfivesummary}
	\begin{enumerate}[(a)]
		\item 
			Let $B$ be a scheme over $\Spec \mbz[\frac{1}{2}]$. There is a canonical bijection between:
			\[
				\left \{ \begin{array}{c}
				\text{isomorphism classes of ordered quadruples} \\
				(X, \theta_1, \theta_2, \theta_3) 
				\text{ where } X \rightarrow B \text{ is a smooth}\\
				\text{genus $5$ curve over $B$, and } \theta_1, \theta_2, \theta_3 \text{ are} \\
				\text{distinct vanishing even theta characteristics} 
				
			\end{array}	 \right \}
			\longleftrightarrow
			\left \{ \begin{array}{c}
				\text{nondegenerate orbit classes of} \\
				\mathcal{O}_B^5 \otimes 
				(\Sym_2 \mathcal{O}_B^2 \oplus \Sym_2 \mathcal{O}_B^2 \oplus \Sym_2 \mathcal{O}_B^2) \\
				\text{under the action of } 
				\GL_5(\mathcal{O}_B) \times \GL_2(\mathcal{O}_B)^{\oplus 3}
				\end{array}
			\right \}
			\]	
	\end{enumerate}
	\noindent
	Let $\mathcal{A} \in k^5 \otimes (\Sym_2 k^2)^{\oplus 3}$ be a nondegenerate tensor and let $\theta_1, \theta_2, \theta_3$ denote the three associated vanishing even theta characteristics. Then:
	\begin{enumerate}[(b)]
		\item
			The $496$ secants of the Cayley variety define $112 + 3 \times 8$ tetratangent planes of $X$. Viewing $X \cap H$ as a divisor of $X$, eight of these tetratangents satisfy $X \cap H = 2D$ where $D \in |\theta_\lindex|$ for each $\lindex = 1,2,3$. The other $112$ tetratangents satisfy $X \cap H = 2D'$, where $D'$ is the effective representative of an odd theta characteristic of $X$.
		\item
			Let $e_2$ denote the Weil pairing on $\Jac(X)[2]$. Then the $112$ distinct odd theta characteristics constructed from the secants of $\mathfrak{C}$ are precisely the odd theta characteristics $\theta$ such that $e_2(\theta \otimes \theta_\lindex^{\vee}, T) = 0$ for all $T \in \{\theta_i \otimes \theta_j^\vee : 1 \leq i,j \leq 3\}$ and for some (equivalently, all) $1 \leq \lindex \leq 3$.
	\end{enumerate}
\end{restatable}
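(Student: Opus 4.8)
The argument splits along the two parts: part~(a) is an orbit parametrization, established by writing down mutually inverse constructions and descending from geometric points to an arbitrary base; part~(b) combines Proposition~\ref{prop: deg and dim of tangents of intersections of symmetric determinantal hypersurfaces} with a bookkeeping argument for theta characteristics. For part~(a), a nondegenerate $\mathcal{A}\in\mathcal{O}_B^5\otimes(\Sym_2\mathcal{O}_B^2)^{\oplus 3}$ gives three symmetroids $X_1,X_2,X_3\subset\mathbb{P}^4_B$, each a family of quadrics of rank at most $3$, whose intersection $X$ is by nondegeneracy a smooth curve of arithmetic genus $5$ and degree $8$ in $\mathbb{P}^4$, hence canonically embedded, so $\mathcal{O}_X(1)=\omega_{X/B}$. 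Applying the theory of symmetric determinantal representations from the earlier sections to each block (the single-block case with $d_1=2$) attaches to $X_\ell$ a cokernel line bundle whose restriction $\theta_\ell$ to $X$ I would show satisfies $\theta_\ell^{\otimes 2}=\omega_{X/B}$ and is a vanishing even theta characteristic (with $h^0=2$), the three being distinct since nondegeneracy forces the quadrics $X_\ell$ to be linearly independent in the net through $X$. Conversely, from $(X/B,\theta_1,\theta_2,\theta_3)$ I would embed $X$ canonically in $\mathbb{P}(\pi_*\omega_{X/B})$ and, for each $\ell$, use the multiplication map $\Sym^2\pi_*\theta_\ell\to\pi_*\omega_{X/B}$---an injection of a rank-$3$ bundle into a rank-$5$ bundle---to produce a family of rank-$3$ quadrics through $X$; dualizing this map and choosing local trivializations of $\pi_*\omega_{X/B}$ and of the $\pi_*\theta_\ell$ recovers a tensor $\mathcal{A}$ whose only indeterminacy is the $\GL_5\times\GL_2^{\oplus 3}$-action, and $X$ is the intersection of the three quadrics, so $\mathcal{A}$ is nondegenerate. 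The content is that these constructions are mutually inverse, the only nontrivial point being that the cokernel bundle of the determinantal representation of the $\ell$-th quadric is canonically $\theta_\ell$---the standard dictionary between symmetric determinantal representations and theta characteristics---and over a general base one runs the same constructions in families and descends along geometric points, using triviality of automorphisms on the nondegenerate locus; the hypothesis $\tfrac{1}{2}\in\mathcal{O}_B$ is exactly what makes the theta-characteristic and $2$-torsion formalism available.

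For part~(b), the Cayley variety $\mathfrak{C}\subset\mathbb{P}^5$ is a complete intersection of five quadrics, hence for nondegenerate $\mathcal{A}$ consists of $2^5=32$ reduced points, with $\binom{32}{2}=496$ secant lines. Given a secant $\ell=\overline{PQ}$ avoiding the three block spaces, expanding $\psi$ quadratically shows $\psi$ contracts $\ell$ to the hyperplane $H$ cut out by $\mathbf{x}\mapsto\sum_\ell p_\ell^T\mathcal{A}^{(\ell)}(\mathbf{x})q_\ell$. Proposition~\ref{prop: deg and dim of tangents of intersections of symmetric determinantal hypersurfaces} with $r=3$, $n=4$ (its remaining hypotheses holding because $X$ is smooth, so has no coincident singularities, and because $H$ can be taken to avoid the finitely many essential singularities) shows $X\cap H$ is non-reduced along a subscheme of degree $4$; combined with $\deg(X\cap H)=8$ and the explicit form of $X\cap H$ below, this forces $X\cap H=2D'$ with $D'$ effective of degree $4$, so $2D'\in|\omega_X|$ and $D'$ is an effective theta characteristic. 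To identify $D'$, observe that on $X$ each $\mathcal{A}^{(\ell)}(x,\cdot,\cdot)$ has rank at most $1$, so $\mathcal{A}^{(\ell)}(x,\cdot,\cdot)=v_\ell(x)v_\ell(x)^T$ with $v_\ell\colon X\to\mathbb{P}^1$ landing in the $\ell$-th block space, and this map is exactly the $g^1_4$ attached to $\theta_\ell$; hence $\sum_\ell p_\ell^T\mathcal{A}^{(\ell)}(\cdot)q_\ell$ is the section $\sum_\ell a_\ell b_\ell$ of $\omega_X$, where $a_\ell,b_\ell\in H^0(\theta_\ell)$ correspond to $p_\ell$ and $q_\ell$, and $X\cap H=\div(\sum_\ell a_\ell b_\ell)=2D'$.

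It remains to say which theta characteristics occur. The vanishing even case $D'\in|\theta_\ell|$ occurs precisely for the finitely many secants degenerating along the $\ell$-th block space, and a direct enumeration gives $8$ of these for each $\ell$, accounting for the $3\times 8$ term. For the remaining secants $D'$ is an odd theta characteristic, and I would compute $\varepsilon_\ell:=[D']-[\theta_\ell]\in\Jac(X)[2]$ by Weil reciprocity applied to $a_\ell$, $b_\ell$ and to sections cutting out the other $\theta_i$, obtaining $e_2(\varepsilon_\ell,\theta_i\otimes\theta_j^\vee)=0$ for all $i,j$; that is, $\varepsilon_\ell$ lies in the annihilator $K^\perp$ of the Klein four-group $K=\langle\theta_i\otimes\theta_j^\vee:1\le i,j\le 3\rangle$. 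One checks $K$ is isotropic both for $e_2$ and for the quadratic form $q_{\theta_\ell}$ attached to $\theta_\ell$, so parity is constant on $K$-cosets inside $\theta_\ell+K^\perp$; since $K^\perp/K$ is a nondegenerate symplectic $\mathbb{F}_2$-space of dimension $6$ carrying an even quadratic form, it has $2^2(2^3-1)=28$ odd values, and lifting to the $|K|=4$ elements of each coset yields $4\cdot 28=112$ odd theta characteristics, matching the count; the phrase ``for some (equivalently all) $\ell$'' is immediate since $K^\perp$ and the coset $\theta_\ell+K^\perp$ do not depend on $\ell$.

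The main obstacle is twofold. First, the Weil-reciprocity computation pinning down $\varepsilon_\ell$ must be carried out with care---translating the configuration of $P$, $Q$ and the rulings $v_\ell$ into an explicit symbol identity---together with the verification of the cohomological facts about $K$ and $q_{\theta_\ell}$ used in the count (in particular, that the relevant Arf invariant vanishes). Second, and harder, is surjectivity: showing that \emph{every} odd theta characteristic $\theta$ with $e_2(\theta\otimes\theta_\ell^\vee,T)=0$ for all $T\in\{\theta_i\otimes\theta_j^\vee\}$ actually arises from a secant, so that the $112$ exhaust rather than merely lie in the claimed set. I would attack this by a degree-and-dimension count identifying the $32$ points of $\mathfrak{C}$ with a torsor under the relevant subgroup of $\Jac(X)[2]$ and matching secants with differences of its points---the same mechanism behind the classical Cayley octad picture and the genus-$4$ statement of Theorem~\ref{thm: genusfoursummary}, which this proof should parallel.
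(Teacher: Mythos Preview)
Your approach to part~(a) is essentially the paper's: both directions go through the multiplication map $\theta_\ell\times\theta_\ell\to\omega_{X/B}$ and the associated $2\times 2$ matrix of linear forms, exactly as in Proposition~\ref{prop: genus 4: quadrics and orbits}.

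For parts~(b)--(c) your overall shape is right, but the paper organizes the argument differently in two places that matter.

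First, the step from ``$\sing(X\cap H)$ has degree $4$'' to ``$X\cap H=2D'$'' is not immediate from Proposition~\ref{prop: deg and dim of tangents of intersections of symmetric determinantal hypersurfaces} alone: a degree-$4$ singular locus inside a degree-$8$ divisor does not by itself force every multiplicity to be even. The paper instead invokes Proposition~\ref{prop: tangency for curves}(b), specific to the curve cases $g=3,4,5$: for any secant $\ell$ and $[H]=\psi(\ell)$, the hyperplane $H$ is tangent to $X$ at \emph{every} point of $X\cap H$, which forces each multiplicity to be $\ge 2$ and hence $X\cap H=2D'$ (with a degeneration argument in Corollary~\ref{cor: lines are n-tangents for our main examples} handling non-generic contact). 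Part~(c) of that same proposition identifies which secants yield $D'\in|\theta_\ell|$: precisely the secants $\ell(p,\sigma_\ell(p))$ fixed by the diagonal automorphism $\sigma_\ell$. The $\DAut(\mathcal{A})\cong\mu_2^3$ action on the $32$ points of $\mathfrak{C}$ has eight orbits of size $4$, giving the $3\times 8$ tangents to the cones. Your phrase ``degenerating along the $\ell$-th block space'' is equivalent (such a secant meets the $\ell$-th block space at $p+\sigma_\ell(p)$), but the $\DAut$ language makes the enumeration transparent and feeds directly into Lemma~\ref{lem: psi is injective on secants mod diagonal symmetries}, which shows $\psi$ is injective on secants modulo $\DAut(\mathcal{A})$. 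This is what gives $448/4=112$ \emph{distinct} hyperplanes from the remaining secants---a point you do not address, and without which the count floats.

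Second, your two ``obstacles'' dissolve more easily than you expect. For the Weil-pairing computation the paper avoids Weil reciprocity entirely: it uses Mumford's formula $\mathcal{Q}_{\theta_1}(D)\equiv h^0(\theta_1)+h^0(D\otimes\theta_1)\pmod 2$ and reduces $e_2(\theta-\theta_1,\theta_2-\theta_1)=0$ to the single statement $h^0(\theta+\theta_2-\theta_1)=1$. That $h^0$ is read off geometrically: the projection $\pi_{12}\colon\mathbb{P}V\dashrightarrow|\theta_1|\times|\theta_2|$ sends the secant $\ell$ to a $(1,1)$-curve passing through the four points $\pi_{12}(\ker\mathcal{A}(x_i,\cdot,\cdot))$, producing an effective section of $\theta_1+\theta_2-\theta$. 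For surjectivity no torsor identification is needed. Once Lemma~\ref{lem: psi is injective on secants mod diagonal symmetries} gives $112$ distinct odd theta characteristics, all lying in $\{\theta\ \text{odd}: e_2(\theta-\theta_\ell,T)=0\ \forall\,T\in K\}$, your own quadratic-form count (recorded in the paper as a remark after Proposition~\ref{prop: weil pairing genus 5}) shows that target set also has exactly $112$ elements, so the inclusion is an equality.
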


Over a field $k$, part (a) of each theorem is classical; for instance, the description of determinantal cubic surfaces was well known to Coble and Wirtinger \cites{Coble1919, DolgachevOrtland1988}. In order to prove the results for moduli stacks, we use a technique of \cite{Ho2009, BhargavaHoKumar2016}. Namely, in these cases we construct a resolution of an arithmetically Cohen-Macaulay sheaf on $\mbp^n$ whose specialization to $X$ defines a resolution of the starting line bundle.

\subsection{Additional results for canonical curves of genus $4$}

A construction of Milne, later studied by Bruin and Sert\"oz, relates the bitangents of a genus $3$ curve to the tritangents of a genus $4$ curve \cites{Milne1923, BruinSertoz2018}. We provide a summary of their construction in Section~\ref{sec: sub: BruinSertoz} in the case of canonical curves of genus $4$ with a vanishing even theta characteristic (see Theorem~\ref{thm: bruin sertoz}). In this case, our tensor descriptions simplify this construction significantly.

	

Choose a general $\mathcal{A} \in k^4 \otimes (\Sym_2 k^2 \oplus \Sym_2 k^3)$ and let $X$ be the associated genus $4$ curve.  Let $\pi \colon \mbp^4 \dashrightarrow \mbp^2$ be the projection onto the last $3$ coordinates. Then the image of the $112$ secant lines of $\mathfrak{C}$ is a set of $28$ lines in $\mbp^2$. These $28$ lines are precisely the secants through the eight points of $\pi(\mathfrak{C})$ (see Section~\ref{sec: sub: BruinSertoz}).
The tensor $\mathcal{A}$ defines a natural quadruple cover
		\[
			\left \{ \begin{array}{c}
				\text{The $112$ secant lines } \ell \text{ of } \mathfrak{C} \text{ defining} \\
				\text{ odd theta characteristics of } X 			\end{array}
			\right \} \rightarrow \left \{ \begin{array}{c}
			\text{The $28$ lines given by } \pi(\ell) \\
			\text{for } \ell \text{ a secant line of } \mathfrak{C}
			\end{array}\right \}
		\]
which factors through a double cover
		\[
			\left \{ \begin{array}{c}
				\text{The $112$ secant lines } \ell \text{ of } \mathfrak{C} \text{ defining} \\
				\text{ odd theta characteristics of } X 			\end{array}
			\right \} \rightarrow \left \{ \begin{array}{c}
				\text{The $56$ odd theta characteristics represented} \\
				\text{by the image of the secant lines of } \mathfrak{C} \text{ under } \psi
			\end{array} \right\}.
		\]
In this way, a line in $\mbp^2$ corresponds to a pair of odd theta characteristics of $X$.

\begin{restatable}{theorem}{tritangentbitangenttheorem}
\label{thm: tritangent bitangent theorem}
Choose a general $\mathcal{A} \in k^4 \otimes (\Sym_2 k^2 \oplus \Sym_2 k^3)$ and let $X$ be the associated genus $4$ curve.  
The correspondence given above is exactly the correspondence of \cite{BruinSertoz2018} in the ``vanishing theta-null, $\epsilon$ even'' case (from \cite[Table~1]{BruinSertoz2018}).	
\end{restatable}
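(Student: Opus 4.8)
The plan is to reconcile two descriptions of the same cover: the one produced here from $\mathcal{A}$ (via $\psi$, the projection $\pi$, and the secants of $\mathfrak{C}$) and the one of \cite{BruinSertoz2018} recalled in Theorem~\ref{thm: bruin sertoz}. First I would set up the involution underlying the pairing. Write the $16$ points of $\mathfrak{C}\subset\mbp^4$ as $\{p_a^+,p_a^-\}_{a=1}^{8}$, where the pairing is induced by the involution $\iota$ of $\mbp^4$ acting as $-1$ on the block space $\mbp^1$ (equivalently as $-1$ on the block space $\mbp^2$, since these agree projectively). One checks directly that $\iota$ preserves $\mathfrak{C}$, that both $\psi$ and $\pi$ are $\iota$-invariant — replacing $\mathbf{y}^{(2)}$ by $-\mathbf{y}^{(2)}$ changes neither $\mathbf{y}^T A_i \mathbf{y}$ nor the last three coordinates up to scalar — and that under the standing genericity hypotheses $\iota$ acts freely on $\mathfrak{C}$, a fixed point being forced to be a common zero of four binary quadratics or of four ternary quadratics. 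Hence the $8$ ``intra-pair'' secants $\overline{p_a^+ p_a^-}$ are exactly the $\iota$-fixed ones, and they are the $8$ whose $\psi$-image is a $|\theta_0|$-tritangent by Theorem~\ref{thm: genusfoursummary}(b); the remaining $112$ carry a free $\iota$-action, giving $56$ orbits. Since Theorem~\ref{thm: genusfoursummary}(b) gives that the map from these $112$ secants onto the $56$ odd theta characteristics is exactly two-to-one, and it manifestly factors through the $\iota$-quotient, it \emph{equals} the $\iota$-quotient; so the two secants over a given $\theta$ are $\ell$ and $\iota\ell$. Likewise $\pi$ sends $\overline{p_a^{\varepsilon} p_b^{\delta}}$ to the line through $\pi(p_a)$ and $\pi(p_b)$, whose fiber is the four secants between $\{p_a^\pm\}$ and $\{p_b^\pm\}$, splitting into the two $\iota$-orbits $\{\overline{p_a^+ p_b^+},\overline{p_a^- p_b^-}\}$ and $\{\overline{p_a^+ p_b^-},\overline{p_a^- p_b^+}\}$; so the line corresponds to an unordered pair $\{\theta^{+}_{ab},\theta^{-}_{ab}\}$ of odd theta characteristics of $X$, which reproves the factorization stated just before the theorem.

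The main work is then to identify the pairing $\{\theta^+_{ab},\theta^-_{ab}\}$ with the one in \cite[Table~1]{BruinSertoz2018}. In the ``vanishing theta-null, $\epsilon$ even'' line of that table, the genus $3$ curve $Y$ attached to $(X,\theta_0,\epsilon)$ has its $28$ bitangents in bijection with the $28$ unordered pairs $\{\theta,\theta\otimes\epsilon\}$ of odd theta characteristics of $X$ with $e_2(\theta\otimes\theta_0^\vee,\epsilon)=0$; here one uses that $\epsilon$ even together with $e_2(\theta\otimes\theta_0^\vee,\epsilon)=0$ forces $\theta\otimes\epsilon$ to be again odd and isotropic for $\epsilon$, so $\theta\mapsto\theta\otimes\epsilon$ is a fixed-point-free involution on the $56$ characteristics of Theorem~\ref{thm: genusfoursummary}(c), and that Section~\ref{sec: sub: BruinSertoz} identifies the bitangents of $Y$ with the $28$ lines through $\pi(\mathfrak{C})$. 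So it suffices to prove $\theta^-_{ab}=\theta^+_{ab}\otimes\epsilon$ for all $a\neq b$; equivalently, that $\theta^+_{ab}\otimes(\theta^-_{ab})^\vee\in\Jac(X)[2]$ is independent of $(a,b)$ and equals $\epsilon$. I would extract this from the explicit resolutions built in the proof of Theorem~\ref{thm: genusfoursummary}: applying Theorem~\ref{thm: singularities of symmetroid intersections with multiplicity} and Proposition~\ref{prop: deg and dim of tangents of intersections of symmetric determinantal hypersurfaces} to $X\cap\psi(\ell)$ for a secant $\ell=\overline{p_a^{\varepsilon} p_b^{\delta}}$, the divisor $D_\ell$ with $X\cap\psi(\ell)=2D_\ell$ is read off from the points of $\ell$ lying in the kernel of $\mathcal{A}(x,\cdot,\cdot)$ as $x$ runs over the three tangency points; at such $x$ both blocks have corank $1$ (since $x$ is not an essential singularity but lies on both symmetroids), so the kernel of the $5\times 5$ matrix is the sum of the two block-kernel lines, and tracking the first-block component shows that passing from $\delta$ to $-\delta$ twists $D_\ell$ exactly by the difference of the kernel line bundles of the two blocks restricted to $X$ — which is the class named $\epsilon$ in Theorem~\ref{thm: genusfoursummary}(a). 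Alternatively, once one knows $\theta^+_{ab}\otimes(\theta^-_{ab})^\vee$ is a constant class $\eta$ (e.g.\ by a monodromy argument, the monodromy on the $8$ points $\pi(\mathfrak{C})$ being $2$-transitive), one notes that $\theta\mapsto\theta\otimes\eta$ carries the $56$ characteristics to themselves; a short quadratic-form computation using that $\epsilon$ is even then gives $\eta\in\langle\epsilon\rangle$, and $\eta\neq 0$ forces $\eta=\epsilon$.

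Assembling: on the side of $\mathcal{A}$, a line $\overline{\pi(p_a)\pi(p_b)}\subset\mbp^2$ corresponds to the pair $\{\theta^+_{ab},\theta^+_{ab}\otimes\epsilon\}$, covered two-to-one by the $\iota$-orbits of secants over it and four-to-one by the secants themselves; on the side of \cite{BruinSertoz2018} the same line is a bitangent of $Y$ corresponding to the same pair of odd theta characteristics / tritangent planes, with the analogous covers described in Section~\ref{sec: sub: BruinSertoz}. The two constructions therefore coincide, which is the assertion. The step I expect to be the main obstacle is the middle one: showing that $\theta^+_{ab}\otimes(\theta^-_{ab})^\vee$ is not merely \emph{an} even $2$-torsion class but the \emph{specific} class $\epsilon$ produced by the moduli bijection of Theorem~\ref{thm: genusfoursummary}(a) and used to index the relevant line of \cite[Table~1]{BruinSertoz2018}; this requires carefully matching the normalization of $\epsilon$ in the resolution underlying Theorem~\ref{thm: genusfoursummary} with its counterpart in \cite{BruinSertoz2018}, via either the kernel-bundle computation or the rigidity argument above.
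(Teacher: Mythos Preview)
Your approach is genuinely different from the paper's, but it has a gap in the final assembly. You reduce the problem to showing that the Cayley-side pair attached to a line $L=\overline{\pi(p_a)\pi(p_b)}$ has the form $\{\theta,\theta\otimes\epsilon\}$, and you note that the Bruin--Sert\"oz pair also has this form. But this does \emph{not} show the two correspondences agree: there are $28$ lines and $28$ pairs of the form $\{\theta,\theta\otimes\epsilon\}$, and many bijections between them; knowing that both constructions land in the same target set does not pin down the bijection. Concretely, your argument establishes only that $D^+_{ab}+D^-_{ab}$ lies in the linear system $|\kappa_X\otimes\epsilon|$, i.e.\ is $\varphi_3^{-1}$ of \emph{some} line in $\mbp^2$, but never shows that this line is the specific line $L=\overline{\pi(p_a)\pi(p_b)}$ you started from. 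Without that, the sentence ``on the side of \cite{BruinSertoz2018} the same line \ldots\ correspond[s] to the same pair'' is an assertion, not a conclusion.

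The paper closes exactly this gap with a direct geometric argument (Proposition~\ref{prop: compatibility for milne}, using Lemma~\ref{lem: projected singular points are colinear}): for each tangency point $x\in X\cap\psi(\ell)$ one has $\ker\mathcal{A}(x,\cdot,\cdot)=\langle\varphi_2(x),\varphi_3(x)\rangle$ meeting $\ell$, hence $\varphi_3(x)\in\pi(\ell)$; thus the six contact points of $H_1\cup H_2$ map under $\varphi_3$ onto the line $\pi(\ell)$, giving $D_1+D_2=\varphi_3^{-1}(\pi(\ell))$. Since Bruin--Sert\"oz (their Corollary~7.2, quoted in Theorem~\ref{thm: bruin sertoz}) characterizes their pair by the same equation $D_1'+D_2'=\varphi_3^{-1}(L)$, the two pairs coincide. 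Note that this collinearity step is precisely what your ``kernel-bundle'' sketch gestures at but does not extract: once you have it, the identity $\theta^-=\theta^+\otimes\epsilon$ is an immediate consequence (since $\varphi_3^*\mathcal{O}_{\mbp^2}(1)\cong\kappa_X\otimes\epsilon$), so your main computation becomes a corollary rather than the engine of the proof. Your monodromy/quadratic-form alternative, even if made rigorous, would suffer from the same defect: it proves the shape of the pair but not which line it sits over.
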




\subsection{Historical notes}
Hesse studied the classical relationship between plane quartic curves and intersections of $3$ quadrics in $\mbp^3$. Hesse's result has since been generalized in two different directions. The first is the work of Roth \cite{Roth1930}, who studied tritangent hyperplanes to a quintic symmetroid in terms of an intersection of four quadrics in $\mbp^5$. Secondly, the relationship between a net of quadrics in $\mbp^n$ and its degeneracy locus in $\mbp^2$ has been studied by Reid and Tyurin \cite{Reid1972quadrics, Tyurin1975}. Reid proved that when $n$ is even, the intermediate Jacobian of the intersection of three quadrics is the Prym variety of a double cover of the determinantal curve in $\mbp^2$ defined by the determinantal representation. In both cases, the determinantal variety is necessarily a hypersurface. Determinantal varieties themselves have been well studied since the 19th century \cite{Dol2012}.

\subsection{Outline}
In Section~\ref{sec: setup}, we discuss notation and background. Section~\ref{sec: plane quartics} is a brief exposition on the classical results of plane quartic curves, meant as an illustrative example of the arguments we apply in general case. In Section~\ref{sec: general results}, we prove Theorem~\ref{thm: singularities of symmetroid intersections with multiplicity}. In Section~\ref{sec: fibres}, we focus on the case where the Cayley variety is zero dimensional and we study the hyperplane sections defined by its secants. In particular, we prove Proposition~\ref{prop: deg and dim of tangents of intersections of symmetric determinantal hypersurfaces}. In Section~\ref{sec: genus 4 curves}, we study canonical curves of genus $4$ defined by an intersection of symmetroids; in particular we prove Theorem~\ref{thm: genusfoursummary} and Theorem~\ref{thm: tritangent bitangent theorem}. In Section~\ref{sec: genus 5 curves}, we study the canonical curves of genus $5$ defined by an intersection of symmetroids; we prove Theorem~\ref{thm: genusfivesummary}. Finally, in Section~\ref{sec: quintic symmetroid} we discuss the tritangent planes to a quintic symmetroid surface in $\mbp^3$.

\subsection*{Acknowledgments}

	We would like to thank Mario Kummer for helpful discussions regarding Section~2.2. This project originated in the Nonlinear Algebra Group at MPI Leipzig; we would like to thank Bernd Sturmfels for introducing the two authors. We would also like to thank Emre Sert\"oz for comments on the manuscript. Avinash Kulkarni has been supported by the Simons Collaboration on Arithmetic Geometry, Number Theory, and Computation (Simons Foundation grant 550033). Sameera Vemulapalli has been supported by an NSF Graduate Fellowship.

\begin{figure}
\begingroup
\renewcommand{\arraystretch}{1.5}
\begin{tabular}{lp{0.7\textwidth}}
	Symbol & Description \\ \hline
	$\mathcal{A}$ & denotes an $(n+1) \times (m+1) \times (m+1)$ symmetric tensor. \\ 
	
	$\mathcal{A}^{(\lindex)}$ & denotes the $\lindex$--th block of a tensor $\mathcal{A} \in k^{n+1} \otimes (\bigoplus_{\lindex=1}^r \Sym_2 k^{d_\lindex})$ \\
	
	$A_0,\dots,A_n$ & denotes the $n+1$ slices of the tensor. \\ 

	$\mathfrak{C}$ & denotes the Cayley variety in $\mbp^m$, i.e, the variety defined by $\{y \in \mbp^m : \mathcal{A}(\cdot, y, y) = 0\}$. \\
	
	
		
	
	$X$ & denotes an intersection of symmetroids. \\
	
	$X_{d_i}$ with $1\leq i \leq r$ & denotes a symmetroid of degree $d_i$ such that $X = \cap_i X_{d_i}$. \\
	

	$Q_x$ for $x \in \mbp^n$ & denotes the quadric in $\mbp^m$ defined by $Z(\mathcal{A}(x, \mathbf{y}, \mathbf{y}))$. \\

	

	$H$ & hyperplane in $\mbp^n$ \\

	$[H]$ & the point in $\dual \mbp^n$ corresponding to the hyperplane $H$ \\
	
	$\ell(p,q)$ & the line between points $p$ and $q$ (in some projective space). \\ \hline
\end{tabular}
\bigskip
\caption{Table of notation} \label{fig: notation table}
\endgroup
\end{figure}

\begin{figure} 
	\begingroup
	\renewcommand{\arraystretch}{1.5}
	\begin{tabular}{lcl}
		Symbol & & Description \\ \hline
		
		$\psi$ & $\colon$ & $\mbp^m \dashrightarrow \dual \mbp^n$. \\
		$\psi_{X_{d_i}} $ & $\colon$ & $ \mbp^{d_i-1} \dashrightarrow \dual \mbp^n$. \\
		$\varphi_{X_{d_i}} $ & $\colon$ & $ X_{d_i} \dashrightarrow \mbp^{d_i-1}$ \\
		$\iota_{d_i} $ & $\colon$ & $ \mbp^{d_i-1} \dashrightarrow \mbp^{m}$ \\
		$\varphi_{d_1,\dots,d_r} $ & $\colon$ & $ X \rightarrow \prod_{i=1}^k \mbp^{d_i-1}$ \\
		$\theta_{X_{d_i}} $ & $\colon$ & $ X_{d_i} \dashrightarrow \dual \mbp^n$ the Gauss map. \\
	\end{tabular}
	\bigskip
	\caption{Table of notation for maps} \label{fig: notation table II}
	\endgroup
\end{figure}

\section{General notation and setup} \label{sec: setup}

We denote by $k$ an arbitrary field of characteristic not $2$ or $3$. Let $\bar k$ denote an algebraic closure of $k$. A summary of the main notation for this article is given in Figures~\ref{fig: notation table} and~\ref{fig: notation table II}. 

\bigskip
Let $\mathcal{A} \in k^{n+1} \otimes \Sym_2 k^{m+1}$ be a tensor, symmetric in the last two entries. We view $\mathcal{A}$ as an $(n+1) \times (m+1) \times (m+1)$ array of elements of $k$. We denote the contraction of $\mathcal{A}$ along a vector $v \in k^{n+1}$ by $\mathcal{A}(v, \cdot, \cdot)$. More generally, we will contract along an element of $R^n$, where $R$ is a $k$-algebra. Similarly, we will denote the contraction by an element $y \in R^{m+1}$ by $\mathcal{A}(\cdot, y, \cdot)$ or $\mathcal{A}(\cdot, \cdot, y)$, depending along which axis we contract. We may always view a tensor $\mathcal{A} \in k^{n+1} \otimes \Sym_2 k^{m+1}$ as a multilinear map $\mathcal{A}\: (k^{n+1})^\vee \otimes (k^{m+1})^\vee \otimes (k^{m+1})^\vee \rightarrow k$ symmetric in the last two entries, and vice-versa. The contraction by an element is simply evaluating this multilinear map in the appropriate entry. We refer to a \emph{slice} of the tensor $\mathcal{A}$ as the contraction along a standard basis vector; let $A_i$ denote the $i$--th slice of $\mathcal{A}$.
Denote $\mbp^{n} := \Proj k[x_0,\ldots,x_n]$ and $\mbp^m := \Proj k[y_0, \ldots, y_m]$. We will also denote $\mathbf{x} := (x_0, \ldots, x_n)$ and $\mathbf{y} := (y_0, \ldots, y_m)$. The contractions $\mathcal{A}(\mathbf{x}, \cdot, \cdot), \mathcal{A}(\cdot, \mathbf{y}, \cdot)$ are matrices of linear forms. If $x \in \mbp^n(k)$ is a point, then the contraction
	$
		\mathcal{A}(x, \cdot, \cdot)
	$
is a symmetric $(m+1) \times (m+1)$ matrix with entries in $k$, well-defined up to scaling. In other words, one obtains the defining equation for a quadric in $\mbp^m$. We express this as a rational map
	\[
		\begin{tabu}{cccc}
			\dual \psi\: & \mbp^n &\dashrightarrow & \mbp\!\H^0(\mbp^m, \mathcal{O}_{\mbp^m}(2)) \\
			& x & \mapsto & \mathcal{A}(x, \cdot, \cdot)
		\end{tabu}.
	\] 
The map $\dual \psi$ is the dual of the map $\psi$ which we now define. The tensor $\mathcal{A}$ consists of $n+1$ slices, which are symmetric $(m+1) \times (m+1)$ matrices. Write $(q_0, \ldots, q_n)$ for the quadrics defined by these $n+1$ slices. The $q_i$ define the rational map
	\[
		\begin{tabu}{cccc}
			\psi\: & \mbp^m &\dashrightarrow & \dual \mbp^n \\
			& y & \mapsto & \mathcal{A}(\cdot, y, y) = (q_0(y), \ldots, q_n(y))
		\end{tabu}.
	\]

\begin{definition}
	The \emph{Cayley variety} $\mathfrak{C}$ associated to $\mathcal{A}$ is the scheme in $\mbp^m$ given by 
	\[
	\{y \in \mbp^m : \mathcal{A}(x,y, y) = 0, \ \text{for all } x \in \mbp^n\}.
	\]
\end{definition}


\bigskip
The \emph{automorphism group} of $\mathcal{A} \in k^{n+1} \otimes \Sym_2 k^{m+1}$ is the subgroup of $\GL_{n+1} \times \GL_{m+1}$ defined by
	\[
	\Aut(\mathcal{A}) := \{ (g,h) \in \GL_{n+1} \times \GL_{m+1} : (g, h) \cdot \mathcal{A} = \mathcal{A}\}.
	\]
On each of the slices $A_j$, we have that $h$ acts by $A_j \mapsto h^T A_j h$. In particular, the image of $\Aut(\mathcal{A})$ in $\PGL_{n+1}$ is a group of linear automorphisms of $\mbp^n$ leaving $Z(\det \mathcal{A}(\mathbf{x}, \cdot, \cdot))$ invariant. The image of $\Aut(\mathcal{A})$ in $\PGL_{m+1}$ is a subgroup of linear automorphisms of $\mbp^m$ leaving the Cayley variety invariant, which is also the group of linear automorphisms of $\mbp^m$ such that $\psi \circ g = \psi$. If $\mathcal{A}$ is generic and $\mathfrak{C}$ is nonempty, then the image of $\Aut(\mathcal{A})$ in $\PGL_{m+1}$ is precisely the group of linear transformations preserving $\mathfrak{C}$.

\bigskip
We obtain a hypersurface in $\mbp^n$ with a symmetric determinantal representation by $X := Z(\det \mathcal{A}(\mathbf{x}, \cdot, \cdot))$. 
The tensor $\mathcal{A}$ also determines natural rational maps. Here, we discuss these maps in the case that $X$ is a hypersurface. The analogous maps for an intersection of symmetroid hypersurfaces are discussed in Section~\ref{sec: sub: block-diagonal}. The tensor $\mathcal{A}$ defines a rational map by
	\[
		\begin{tabu}{cccc}
			\varphi\: & X &\dashrightarrow & \mbp^m \\
			& x & \mapsto & \ker \mathcal{A}(x, \cdot, \cdot)
		\end{tabu}.
	\]
Note that $\varphi$ is well-defined for all $x \in X$ such that $\corank \mathcal{A}(x, \cdot, \cdot) = 1$. We refer to this map as the \emph{kernel map}. If $A$ is a non-zero symmetric matrix, then the singular locus of the quadric $Z(\mathbf{y}^TA\mathbf{y}) \subseteq \mbp^m$ is precisely $\mbp(\ker A)$. In other words, when $x$ is a smooth point of $X$, we see that the kernel of $\mathcal{A}(x, \cdot, \cdot)$ is 1-dimensional, i.e, the singular locus of the associated quadric hypersurface is a single point; in particular, we may view $\varphi$ as sending a point $x \in X(\bar k)$ to the vertex of the cone $Z(\mathcal{A}(x, \mathbf{y}, \mathbf{y})) \subset \mbp^m$. 

For a hypersurface $X$, its dual variety is denoted $\dual X$; it is the subvariety of $\dual \mbp^n$ parameterizing the planes tangent to $X$ at some point. The \emph{Gauss map} $\theta_X$ is the rational map defined by
	\[
		\begin{tabu}{cccc}
			\theta_X\: & X &\dashrightarrow & \dual \mbp^n \\
			& x & \mapsto & T_xX
		\end{tabu}.
	\]
The Gauss map is defined on the smooth subscheme of $X$. The dual variety $\dual X$ is defined to be the Zariski closure of $\theta_X(X)$. The fundamental relationship between $\theta_X$, $\psi$, and $\varphi$ is:

\begin{proposition} \label{prop: main diagram hypersurfaces}
	Let $\mathcal{A} \in k^{n+1} \otimes \Sym_2 k^{m+1}$ be a tensor and let $X$ be the associated symmetroid hypersurface. Then the following diagram commutes:
	\[
	\begin{tikzcd}
		X \arrow[hook]{r}{} \arrow[dashed]{d}[left]{\theta_X} \arrow[swap, dashed]{drr}[above]{\varphi} & \mbp^n \arrow{r}{\dual \psi} & \mbp\H^0(\mbp^m, \mathcal{O}_{\mbp^m}(2))  \\
		\dual X \arrow[hook]{r}{} & \dual \mbp^n & \mbp^m \arrow[dashed]{l}{\psi}
	\end{tikzcd}.	
	\]
	For $x \in \mbp^n(\bar k)$, the quadric $Z(\dual \psi(x))$ is singular if and only if $x \in X(\bar k)$. Furthermore, for any $x \in X(\bar k)$ the quadric $Z(\dual \psi(x)) \subset \mbp^m$ is singular at the point $\varphi(x)$. For any $[H] = z \in \dual \mbp^n(\bar k)$, the fibre $\psi^{-1}(z)$ is the base locus of the linear space of quadrics $\dual \psi_{\mid H} \colon H \rightarrow \mbp\!\H^0(\mbp^m, \mathcal{O}_{\mbp^m}(2))$.
\end{proposition}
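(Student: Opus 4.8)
The plan is to reduce every assertion to linear algebra over $\bar k$ applied to the symmetric matrix $M_x := \mathcal{A}(x,\cdot,\cdot)$ and its adjugate $\adj(M_x)$, using two classical identities: the gradient of the quadratic form $\mathbf{y}^T M\mathbf{y}$ is $2M\mathbf{y}$, and Jacobi's formula $\partial_{x_i}\det\mathcal{A}(\mathbf{x},\cdot,\cdot) = \trace\!\big(\adj(\mathcal{A}(\mathbf{x},\cdot,\cdot))\,A_i\big)$. Since $2$ is invertible in $k$, the first identity shows that the singular locus of $Z(\dual\psi(x)) = Z(\mathbf{y}^TM_x\mathbf{y})$ is $\mbp(\ker M_x)$, which is nonempty exactly when $\det M_x = 0$, i.e.\ when $x\in X(\bar k)$; this is the first prose claim. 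If $\corank M_x\geq 2$ then $\adj(M_x) = 0$, so by Jacobi's formula all partials of $\det\mathcal{A}(\mathbf{x},\cdot,\cdot)$ vanish at $x$ and $x$ is a singular point of $X$; hence the smooth locus of $X$ lies in the corank-one locus, on which $\varphi$ is defined. For $x$ with $\ker M_x = \langle v\rangle$ we have $\varphi(x) = [v]$, and from $\adj(M_x)M_x = \det(M_x)I = 0$ together with $\adj(M_x)\neq 0$ and the symmetry of $M_x$ one obtains $\adj(M_x) = c\,vv^T$ for some $c\in\bar k^\times$; in particular $Z(\dual\psi(x))$ is singular at $\varphi(x) = \mbp(\ker M_x)$, giving the second prose claim.

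Next I would prove the commutativity, i.e.\ that $\psi\circ\varphi = \theta_X$ as an equality of rational maps $X\dashrightarrow\dual X\hookrightarrow\dual\mbp^n$; it suffices to check agreement on the smooth locus of $X$. For a smooth point $x$ with $\ker M_x = \langle v\rangle$, Jacobi's formula and the identity $\adj(M_x) = c\,vv^T$ give
\[
	\partial_{x_i}\det\mathcal{A}(\mathbf{x},\cdot,\cdot)(x) = \trace\!\big(\adj(M_x)A_i\big) = c\,\trace\!\big(vv^TA_i\big) = c\,v^TA_iv = c\,q_i(v),
\]
so $\theta_X(x) = \big(q_0(v):\cdots:q_n(v)\big) = \psi([v]) = \psi(\varphi(x))$. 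This establishes commutativity and simultaneously identifies $\dual X$ with the Zariski closure of $\psi(\varphi(X))$.

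Finally, for the description of the fibres of $\psi$, fix $z = [H]\in\dual\mbp^n(\bar k)$ with $H = Z(\sum_i a_ix_i)$. A point $y\in\mbp^m$ lies in the base locus of the linear system $\dual\psi_{\mid H} = \{Z(\mathcal{A}(x,\cdot,\cdot)) : x\in H\}$ if and only if the linear form $x\mapsto \mathcal{A}(x,y,y) = \sum_i x_iq_i(y)$ vanishes on the hyperplane $H$, if and only if $(q_0(y),\ldots,q_n(y))$ is proportional to $(a_0,\ldots,a_n)$. Comparing this with the equations $a_jq_i(y) = a_iq_j(y)$ cutting out the scheme-theoretic fibre $\psi^{-1}(z)$ away from $\mathfrak{C}$ — and noting that both subschemes contain $\mathfrak{C}$, the locus on which $\psi$ is undefined — shows that this base locus is precisely $\psi^{-1}(z)$.

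The main obstacle I anticipate is the bookkeeping in the first two steps: establishing $\adj(M_x) = c\,vv^T$ with $c\neq 0$ at corank-one points, and carefully tracking the domains of definition of the rational maps $\theta_X$, $\varphi$, and $\psi$ — in particular the fact that $\psi\circ\varphi$ and $\theta_X$ coincide as rational maps even though $\varphi$ is defined on part of $\Sing X$ (the accidental singularities) while $\theta_X$ is not. The algebraic inputs — Euler's relation, Jacobi's formula, and the rank-one structure of the adjugate of a corank-one symmetric matrix — are standard.
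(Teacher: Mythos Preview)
Your proposal is correct and complete. The approach differs from the paper's in a useful way: the paper normalizes coordinates via $\PGL_{n+1}\times\PGL_{m+1}$ so that $p=[1:0:\cdots:0]$ and $\varphi(p)=[1:0:\cdots:0]$, then computes the partial derivatives of the determinant by direct cofactor expansion, observing that only the $(1,1)$-minor survives. You instead invoke Jacobi's formula together with the rank-one factorization $\adj(M_x)=c\,vv^T$ at corank-one points, which yields $\partial_{x_i}\det = c\,v^TA_iv = c\,q_i(v)$ without any coordinate change. Your route is coordinate-free and is in fact the technique the paper itself adopts later (see Lemma~\ref{lem: adjugate is locally defined by the kernel} and the proof of Proposition~\ref{prop: psi sends sing locus to planes}); the paper's direct computation here has the minor advantage of avoiding any appeal to properties of the adjugate, but your argument is cleaner and anticipates the machinery used throughout Section~\ref{sec: general results}. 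Your treatment of the fibre $\psi^{-1}(z)$ is also more careful than the paper's, which contains a small slip in the displayed intersection.
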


\begin{proof}
	Choose $p \in X^{\sm}$. Without loss of generality, we may assume that $k$ is algebraically closed and we may act by $\PGL_{n+1}(k) \times \PGL_{m+1}(k)$ to suppose $p = [1 \colon 0 \colon \dots \colon 0]$ and $\varphi(p) = [1 \colon 0 \colon \dots \colon 0]$. 
	Denote the slices of $\mathcal{A}$ by $A_0,\dots,A_n$. Let $a_{i,j,\lindex}$ denote the element of the $j$-th row and $\lindex$-th column of $A_i$. Let $\mathcal{M}_{j,\lindex}(\mathbf{x})$ denote the $j,\lindex$-th minor of $\mathcal{A}(\mathbf{x}, \cdot, \cdot)$. Let $M_{i,j,\lindex}$ denote the $j,\lindex$-th minor of $A_i$. Then 
	\begin{gather*}
		\psi( \varphi(p)) = \mathcal{A}(\cdot, \varphi(p), \varphi(p)) = [a_{0,1,1} \colon \dots \colon a_{n, 1, 1}] 
		\qquad \text{and} \\[1.5ex]
		\frac{\partial \det(\mathcal{A}(\mathbf{x}, \cdot, \cdot))}{\partial x_i} = \sum_{j,\lindex}a_{i,j,\lindex} \mathcal{M}_{j,\lindex}(\mathbf{x}).
	\end{gather*}
	By our choice of $p$ and $\varphi(p)$ the first column and first row of $A_0$ are $0$. Evaluating at $p$, we see that 
	\[
		\frac{\partial \det(\mathcal{A}(\mathbf{x}, \cdot, \cdot))}{\partial x_i}(p) = \sum_{j,\lindex}a_{i,j,\lindex}\mathcal{M}_{j,\lindex}(p) = a_{i,1,1} M_{0,1,1}.
	\]
	Because $p$ is a smooth point, the matrix $A_0$ must have corank $1$; moreover, the first row and column of $A_0$ are $0$. Therefore $M_{0,1,1} \neq 0$, and thus
	\[
		\psi( \varphi(p)) = \bigg[\frac{\partial \det(\mathcal{A}(\mathbf{x}, \cdot, \cdot))}{\partial x_0}(p) \colon  \dots \colon  \frac{\partial \det(\mathcal{A}(\mathbf{x}, \cdot, \cdot))}{\partial x_n}(p)\bigg ] = \theta_X(p).
	\]
	By the definition of $X$ and $\varphi$, it is clear that for $Z(\dual \psi(p))$ is singular if and only if $p \in X(\bar k)$. Similarly one sees that for $x \in X(\bar k)$, the quadric $Z(\dual \psi(x)) \subset \mbp^m$ is singular at the point $\varphi(x)$. 
	For $z \in \dual\mbp^n$, we compute $\psi^{-1}(z)$. Acting by $\PGL_{n+1}$, we may assume $z = [1 \colon 0 \colon \dots \colon 0]$. Thus, $\psi^{-1}(z) = Z(\mathbf{y}^TA_0\mathbf{y}) \cap \dots \cap Z(\mathbf{y}^TA_n\mathbf{y})$, proving our last assertion.
\end{proof}

\subsection{Block-diagonal tensors and intersections of symmetroids} \label{sec: sub: block-diagonal}

Let $d_1, \ldots, d_r \geq 2$ be integers and set $m = (\sum d_{\lindex}) -1$. Observe that $k^{n+1} \otimes (\bigoplus_{\lindex = 1}^r \Sym_2 k^{d_\lindex})$ embeds block-diagonally in $k^{n+1} \otimes \Sym_2 k^{m+1}$ (see Example~\ref{ex: block-diagonal example}). For a tensor $\mathcal{A} \in k^{n+1} \otimes (\bigoplus_{\lindex = 1}^r \Sym_2 k^{d_\lindex})$, we may view $\mathcal{A}\: k^{n+1} \otimes \Sym_2 k^{m+1} \rightarrow k$ as a multilinear map. The restrictions to each $k^{n+1} \otimes \Sym_2 k^{d_\lindex}$ define $r$ multilinear maps, i.e. tensors, $\mathcal{A}^{(1)}, \ldots, \mathcal{A}^{(r)}$; these tensors are called the \emph{blocks} of $\mathcal{A}$. The \emph{size} of the block $\mathcal{A}^{(\lindex)}$ is $d_\lindex$. A \emph{block-diagonal tensor} with $r$ blocks is a tensor $\mathcal{A} \in k^{n+1} \otimes (\bigoplus_{\lindex = 1}^r \Sym_2 k^{d_\lindex})$ for some $d_\lindex \geq 1$ such that each $\det \mathcal{A}^{(\lindex)}(\mathbf{x}, \cdot, \cdot)$ is reduced and irreducible. As a convention, we will always assume that the size of a block is at least $2$ and we will never consider a tensor where one of the blocks is identically zero.

Every block-diagonal tensor $\mathcal{A} \in k^{n+1} \otimes (\bigoplus_{\lindex = 1}^r \Sym_2 k^{d_\lindex})$ defines an intersection of $r$ symmetroids $X := X_{1} \cap \ldots \cap X_{r}$, where $X_\lindex$ is the symmetroid associated to the $\lindex$-th block. Additionally, $Z(\det \mathcal{A}(\mathbf{x}, \cdot, \cdot)) \subseteq \mbp^n$ is the reducible determinantal hypersurface $X_1 \cup \ldots \cup X_r$. The \emph{block spaces} of $\mathcal{A}$ are the linear subspaces of $\mbp^m$ defined by the natural inclusions $k^{d_\lindex} \inj k^{m+1}$ corresponding to the decomposition $\bigoplus_{\lindex = 1}^r \Sym_2 k^{d_\lindex}$.

\begin{example}
\label{ex: block-diagonal example}
	Let $a,b,c,d,e,f,g,h,i \in k[x_0, x_1, x_2, x_3]$ be homogeneous linear forms. Define the tensors $\mathcal{A}^{(1)}$ and $\mathcal{A}^{(2)}$ by forming the two matrices
	\[
	\mathcal{A}^{(1)}(\mathbf{x}, \cdot, \cdot) := 
	\begin{bmatrix}
	a & c \\
	c & b 
	\end{bmatrix}, \quad
	\mathcal{A}^{(2)}(\mathbf{x}, \cdot, \cdot) := 
	\begin{bmatrix}
	d & g & h \\
	g & e & i \\
	h & i & f
	\end{bmatrix}.
	\]
	Specifically, each matrix of linear forms defines a $4 \times d \times d$ tensor by declaring the $j$-th slice of $\mathcal{A}^{(\lindex)}$ to be the matrix of coefficients of $x_j$. The two tensors $\mathcal{A}^{(1)}, \mathcal{A}^{(2)}$ can be joined diagonally along the first axis to create the $4 \times 5 \times 5$ tensor
	\[
	\mathcal{A}(\mathbf{x}, \cdot, \cdot) := 
	\begin{bmatrix}
	a & c & 0 & 0 & 0\\
	c & b & 0 & 0 & 0 \\
	0 & 0 & d & g & h \\
	0 & 0 & g & e & i \\
	0 & 0 & h & i & f
	\end{bmatrix}.
	\]
	The two determinantal varieties $X_1 := Z(\det \mathcal{A}^{(2)}(\mathbf{x}, \cdot, \cdot))$ and $X_2 := Z(\det \mathcal{A}^{(2)}(\mathbf{x}, \cdot, \cdot))$ generically intersect in a smooth canonical curve of genus $4$. The Cayley variety associated to $\mathcal{A}$ is a set of $16$ points in $\mbp^4$.
\end{example}

Every block-diagonal tensor with at least two blocks always has a non-trivial automorphism group. 

\begin{definition}
	Let $\mathcal{A} \in k^{n+1} \otimes (\bigoplus_{\lindex=1}^r \Sym_2 k^{d_\lindex})$ be a block-diagonal tensor with $r$ blocks. The group of \emph{diagonal automorphisms} of $\mathcal{A}$ is the group $\DAut := \Aut(\mathcal{A}) \cap \Lambda$, where $\Lambda := \{I\} \times \Lambda'$ and $\Lambda'$ is the subgroup of diagonal matrices of $\GL_{m+1}$.
\end{definition}

The diagonal automorphisms of a tensor $\mathcal{A}$ characterize the number of blocks; we have that $\mathcal{A}$ has $r$ blocks if and only if $\DAut(\mathcal{A}) \iso \mu_2^r$. The subgroup of $\DAut(\mathcal{A})$ generated by $-I$ is the kernel of $\DAut(\mathcal{A}) \rightarrow \PGL_{m+1}$.

For the purposes of studying the intersection of symmetroids $X := X_1 \cap \ldots \cap X_r$, it is helpful to consider the restriction to $X$ of each of the kernel maps $\varphi_{X_1}, \ldots, \varphi_{X_r}$ in tandem. 


\begin{definition}
	The \emph{based kernel map} is the map
	\[
		\begin{tabu}{cccc}
			\varphi_{d_1,\dots,d_r} \: & X &\rightarrow & \mbp^{d_1 - 1} \times \dots \times \mbp^{d_r-1} \\
				& \mathbf{x} & \mapsto & (\ker \mathcal{A}^{(1)}(\mathbf{x}, \cdot, \cdot),\dots, \ker \mathcal{A}^{(r)}(\mathbf{x}, \cdot, \cdot))
		\end{tabu}
	\]
\end{definition}

\noindent
We discuss the properties of this map in more detail in Section~\ref{sec: general results}. Let $\mathcal{A}$ a block-diagonal tensor with $r$ blocks, let $\varphi_1, \ldots, \varphi_r$ be the kernel maps associated to each block, and let $x$ be a point in the intersection of symmetroids. We may consider each $\varphi_\lindex(x)$ as a point in $\mbp^m$ via the natural inclusions of the block spaces $\mbp^{d_\lindex-1} \inj \mbp^{m}$. We will write $\ker \mathcal{A}(x, \cdot, \cdot) = \gen{\varphi_1(x) , \ldots, \varphi_r(x)}$ to refer to this particular basis for $\ker \mathcal{A}(x, \cdots, \cdot) \subset \mbp^m$.

\subsection{Arithmetically Cohen-Macaulay sheaves}

It is well-understood that the linear determinantal representations of a hypersurface in projective space can be classified in terms of arithmetically Cohen-Macaulay (aCM) sheaves on the projective space; see \cite{Beauville2000}, \cite[Section~4]{Dol2012}. 
A coherent sheaf $\mathcal{E}$ on $\mbp^n$ is \emph{arithmetically Cohen-Macaulay (aCM)} if
\begin{itemize}
	\item 
	Each stalk $\mathcal{E}_x$ is a Cohen-Macaulay $\mathcal{O}_{\mbp^n, x}$-module, and
	\item
	$\H^i(\mbp^n, \mathcal{E}(j)) = 0$ for $1 \leq i \leq  \dim(\supp \mathcal{E})-1$ and $j \in \mbz$.
\end{itemize}
Equivalently, setting $S := k[x_0, \ldots, x_n] = \bigoplus_{j \in \mbz} \H^0(\mbp^n, \mathcal{O}_{\mbp^n}(j))$, the sheaf $\mathcal{E}$ is aCM if and only if the graded $S$-module
\[
	\Gamma_*(\mathcal{E}) := \bigoplus_{j \in \mbz} \H^0(\mbp^n, \mathcal{E}(j))
\]
is Cohen-Macaulay \cite[Proposition~1.2]{Beauville2000}. 

Several aspects of our constructions can be described entirely in terms of aCM sheaves on $\mbp^n$. We let $\iota\: X \inj \mbp^n$ be an embedding of a hypersurface $X$ of degree $d$ with a linear symmetric determinantal representation. The linear symmetric determinantal representation on $X$ is canonically associated to a rank $1$ arithmetically Cohen-Macaulay sheaf $\mathcal{E}$ on $\mbp^n$ supported on $X$, together with a symmetric bilinear form $\eta\: \mathcal{E} \times \mathcal{E} \rightarrow {\iota_*\mathcal{O}_{X}(d-1)}$. The sheaf $\mathcal{E}$ admits a resolution
\[
	\xym{0 \ar[r] & \mathcal{O}_{\mbp^n}^d(-1) \ar[r]^-{M} & \mathcal{O}_{\mbp^n}^d \ar[r] & \mathcal{E} \ar[r] & 0}
\]
where $M$ is a symmetric matrix of linear forms. The projective space $\mbp^m$ in our main discussion can be identified with $\mbp\!\H^0(\mbp^n, \mathcal{E})^\vee \iso \mbp\!\H^0(\mbp^n, \mathcal{O}_{\mbp^n}^d)^\vee$. 

The adaptation to consider intersections of symmetroids is fairly straightforward. We consider $r$ determinantal hypersurfaces $X_{d_1}, \ldots, X_{d_r}$ with associated aCM sheaves $\mathcal{E}_1, \ldots, \mathcal{E}_r$. The natural aCM sheaf on $\mbp^n$ to consider for our purposes is
	\[
		\mathcal{E} := \bigoplus_{j=1}^r \mathcal{E}_j.
	\]
with the obvious resolution being the direct sum of the resolutions for the summands. The associated determinantal variety is $X := \bigcup_{j=1}^r X_{d_j}$. We let 
	\[
		\eta\: \mathcal{E} \times \mathcal{E} \rightarrow \mathcal{O}_{X}(d_1 + \ldots + d_r -1)
	\]
be the natural symmetric bilinear form coming from the determinantal representation of $X$. The bilinear form $\eta$ is compatible with the forms $\eta_1, \ldots, \eta_r$ on the summand bundles $\mathcal{E}_1, \ldots, \mathcal{E}_r$ as follows. 

\begin{proposition}
	Let $f_{1}, \ldots, f_r$ denote the defining equations of $X_{d_1}, \ldots, X_{d_r}$, these being of degrees $d_1, \ldots, d_r$. Let $d := d_1 + \ldots + d_r$. Let $\hat f_j := \frac{1}{f_j}f_1 \cdots f_r$. Consider the inclusions
		\[
		\mapdef{\gamma_j}{\mathcal{O}_{X_{d_j}}(d_j-1)}{\mathcal{O}_{X}(d-1)}{s}{\hat f_j s}
		\]
	Then we have that $\eta|_{\mathcal{E}_j \times \mathcal{E}_j} = \gamma_j\eta_j$.
\end{proposition}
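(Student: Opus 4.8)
The plan is to make the bilinear forms $\eta$ and the $\eta_j$ explicit via adjugate matrices, and then invoke the behaviour of the adjugate under block decomposition. Recall (see \cite{Beauville2000}) that if $M$ is a symmetric $d\times d$ matrix of linear forms with $\det M=f$, giving the resolution $0\to\mathcal{O}_{\mbp^n}^d(-1)\xrightarrow{M}\mathcal{O}_{\mbp^n}^d\to\mathcal{E}\to 0$ and $X=Z(f)$, then the associated symmetric bilinear form is given on local lifts $v,w\in\mathcal{O}_{\mbp^n}^d$ of sections of $\mathcal{E}$ by $\eta(\bar v,\bar w)=\overline{v^{T}\adj(M)\,w}\in\mathcal{O}_X(d-1)$. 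This is well defined: replacing $v$ by $v+Mu$ changes $v^{T}\adj(M)w$ by $u^{T}M^{T}\adj(M)w=u^{T}\bigl(M\adj(M)\bigr)w=f\,u^{T}w$, a multiple of $f$, using that $M$ is symmetric and $M\adj(M)=fI$; and it is symmetric since $\adj(M)$ is. The same recipe applied to each block gives $\eta_j(\bar v,\bar w)=\overline{v^{T}\adj(M_j)\,w}\in\mathcal{O}_{X_{d_j}}(d_j-1)$, where $M_j$ is the symmetric matrix resolving $\mathcal{E}_j$ and $\det M_j=f_j$.

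Next I would compute $\adj(M)$ for the block-diagonal matrix $M=\diagmat(M_1,\dots,M_r)$ resolving $\mathcal{E}=\bigoplus_j\mathcal{E}_j$, which has $\det M=\prod_j f_j$, the defining equation of $X$. Over the domain $k[\mathbf{x}]$ the adjugate is the unique matrix $N$ with $MN=NM=\det(M)\,I$; since
\[
\diagmat(M_1,\dots,M_r)\cdot\diagmat\bigl(\hat f_1\adj(M_1),\dots,\hat f_r\adj(M_r)\bigr)
=\diagmat\bigl(\hat f_1 f_1 I_{d_1},\dots,\hat f_r f_r I_{d_r}\bigr)=\Bigl(\textstyle\prod_j f_j\Bigr)I,
\]
we conclude $\adj(M)=\diagmat\bigl(\hat f_1\adj(M_1),\dots,\hat f_r\adj(M_r)\bigr)$. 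In particular, the sub-block of $\adj(M)$ with both indices in block $j$ is $\hat f_j\adj(M_j)$, and the mixed blocks vanish.

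Finally, identify the generators of $\mathcal{E}_j$ with the corresponding generators of $\mathcal{E}=\bigoplus_j\mathcal{E}_j$, so that for $v,w$ lifting sections of $\mathcal{E}_j$ we get $\eta(\bar v,\bar w)=\overline{v^T\adj(M)w}|_X=\overline{\hat f_j\,v^T\adj(M_j)w}|_X$ by the adjugate computation. On the other hand, $\gamma_j$ sends a section of $\mathcal{O}_{X_{d_j}}(d_j-1)$ to $\hat f_j$ times any lift of it, restricted to $X$; this is well defined because changing the lift by an element of the ideal $(f_j)$ of $X_{d_j}$ alters $\hat f_j\cdot(\text{lift})$ by a multiple of $\hat f_j f_j=\prod_j f_j$, which vanishes on $X$. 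Taking $v^T\adj(M_j)w$ itself as a lift of $\eta_j(\bar v,\bar w)$ then gives $\gamma_j\eta_j(\bar v,\bar w)=\overline{\hat f_j\,v^T\adj(M_j)w}|_X=\eta(\bar v,\bar w)$. As both sides are $\mathcal{O}_X$-bilinear, this proves $\eta|_{\mathcal{E}_j\times\mathcal{E}_j}=\gamma_j\eta_j$.

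The main point to be careful with is the first step: pinning down that ``the natural symmetric bilinear form coming from the determinantal representation'' is exactly the adjugate form above, with the correct twist by $\mathcal{O}(d-1)$ (resp.\ $\mathcal{O}(d_j-1)$) and with the \emph{same} normalization used for $\eta$ and for every $\eta_j$. Once that identification is fixed, the proposition reduces to the one-line identity $\adj(\diagmat(M_1,\dots,M_r))=\diagmat(\hat f_1\adj(M_1),\dots,\hat f_r\adj(M_r))$. One could instead argue functorially, applying $\mathcal{E}xt^1_{\mbp^n}(-,\mathcal{O}_{\mbp^n}(d-1-n-1))$ to the direct-sum resolution and tracking the self-duality isomorphisms block by block, but the adjugate computation is the most transparent route.
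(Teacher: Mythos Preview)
Your proof is correct and takes a genuinely different route from the paper's. The paper argues abstractly via the self-duality isomorphism $\alpha\colon \mathcal{E}\to\sheafhom_{\mathcal{O}_X}(\mathcal{E},\mathcal{O}_X(d-1))$: since $\mathcal{E}_j$ is supported on $X_{d_j}$, any morphism $\mathcal{E}_j\to\mathcal{O}_X(d-1)$ must vanish on the other components $X_{d_i}$ ($i\neq j$), hence lands in the image of $\gamma_j$; this forces $\alpha|_{\mathcal{E}_j}=\gamma_j\alpha_j$. Your argument instead makes $\eta$ explicit as the adjugate pairing and reduces the statement to the elementary identity $\adj(\diagmat(M_1,\dots,M_r))=\diagmat(\hat f_1\adj(M_1),\dots,\hat f_r\adj(M_r))$.

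What each buys: the paper's support argument is coordinate-free and sidesteps the normalization issue you flag, since it shows structurally that the restriction factors through $\gamma_j$ (though it then still has to identify the resulting $\alpha_j$ with the given $\eta_j$, which it does only implicitly). Your adjugate computation is more transparent and self-contained, and in fact yields the vanishing of the mixed pairings $\eta|_{\mathcal{E}_i\times\mathcal{E}_j}$ for $i\neq j$ as a free byproduct; its only cost is the care you already take in fixing the adjugate description of $\eta$ consistently for both $M$ and the $M_j$. The functorial $\sheafext^1$ approach you mention at the end is essentially the paper's method.
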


\begin{proof}
	The symmetric bilinear form $\eta$ is canonically equivalent to an identification
	\[
		\alpha\: \mathcal{E} \rightarrow \sheafhom_{\mathcal{O}_X}(\mathcal{E}, \mathcal{O}_X(d-1))
	\]
	We note that any homomorphism from $\mathcal{E}_j$ to $\mathcal{O}_X(d-1)$ must restrict to zero on $X_{d_i}$ when $i \neq j$. In other words, the image of $\mathcal{E}_j$ is contained in the image of the ideal sheaf generated by $\hat f_j$. So $\alpha$ restricts to
	\[
		\alpha | _{\mathcal{E}_j}\: \mathcal{E}_j \rightarrow \sheafhom_{\mathcal{O}_X}(\mathcal{E}_j, \hat f_j\mathcal{O}_{X_{d_j}}(d_j-1))
		= \gamma_j \left( \sheafhom_{\mathcal{O}_{X_{d_j}}}(\mathcal{E}_j, \mathcal{O}_{X_{d_j}}(d_j-1)) \right).
	\]
	For
	$
		\alpha_j\: \mathcal{E}_j \rightarrow \sheafhom_{\mathcal{O}_{X_{d_j}}}(\mathcal{E}_j, \mathcal{O}_{X_{d_j}}(d_j-1)),
	$
	we see that $\alpha|_{\mathcal{E}_j} = \gamma_j \alpha_j$.
\end{proof}

Finally, we show how to describe the $\psi$ map in terms of the aCM sheaf $\mathcal{E}$. To do so, we follow the manipulations of Beauville~\cite{Beauville2000}. First, we may twist the natural resolution to obtain
\[
	\xym{0 \ar[r] & \mathcal{O}_{\mathbb{P}^n}^{m+1} \ar[r]^M & \mathcal{O}_{\mathbb{P}^n}^{m+1}(1) \ar[r] & \mathcal{E}(1) \ar[r] & 0}.
\]
Applying the functor $\sheafhom_{\mathcal{O}_{\mathbb{P}^n}}(-, \mathcal{O}_{\mathbb{P}^n}(1))$ we obtain
\[
	\xym{0 \ar[r] & \mathcal{O}_{\mathbb{P}^n}^{m+1} \ar[r]^{M^T} & \mathcal{O}_{\mathbb{P}^n}^{m+1}(1) \ar[r] & \sheafext_{\mathcal{O}_{\mathbb{P}^n}}^1\left(\mathcal{E}(1), \mathcal{O}_{\mathbb{P}^n}(1)\right) \ar[r] & 0}.
\]
Grothendieck duality provides a canonical isomorphism $\sheafext_{\mathcal{O}_{\mathbb{P}^n}}^1\left(\mathcal{E}(1), \mathcal{O}_{\mathbb{P}^n}(1)\right) \iso \iota_*\sheafhom_{\mathcal{O}_X}(\mathcal{E}(1), \mathcal{O}_X(m))$. Assuming that $\mathcal{E}$ admits a symmetric structure, there is an isomorphism $\alpha\: \mathcal{E} \rightarrow \sheafhom_{\mathcal{O}_X}(\mathcal{E}, \mathcal{O}_X(m))$ and we obtain via pullback
\[
	\xym{
	0 \ar[r] & \mathcal{O}_{\mathbb{P}^n}^{m+1} \ar[r]^M \ar[d]^\alpha & \mathcal{O}_{\mathbb{P}^n}^{m+1}(1) \ar[r] \ar[d]^\alpha & \mathcal{E}(1) \ar[r] \ar[d]^\alpha & 0\phantom{.} \\
	0 \ar[r] & \mathcal{O}_{\mathbb{P}^n}^{m+1} \ar[r]^{M^T} & \mathcal{O}_{\mathbb{P}^n}^{m+1}(1) \ar[r] & \sheafhom_{\mathcal{O}_X}(\mathcal{E}(1), \mathcal{O}_X(m)) \ar[r] & 0.}
\]
We have the canonical isomorphism $\sheafhom_{\mathcal{O}_{\mathbb{P}^n}}(\mathcal{O}_{\mathbb{P}^n}^{m+1}, \mathcal{O}_{\mathbb{P}^n}(1)) \iso\mathcal{O}_{\mathbb{P}^n}^{m+1}(1)$. Thus, we obtain a bilinear map
\[
	\mapdef{\psi}{\mathcal{O}_{\mathbb{P}^n}^{m+1} \times \mathcal{O}_{\mathbb{P}^n}^{m+1}}{\mathcal{O}_{\mathbb{P}^n}^{m+1}(1)}{(y,y')}{\alpha(My)(y')}.
\]
On the level of global sections, this induces the $\psi$ map that we expect. Computationally, we often start with an explicit symmetric matrix $M$ of linear forms, in which case the identification $\alpha\: \mathcal{O}_{\mathbb{P}^n}^{m+1} \rightarrow \mathcal{O}_{\mathbb{P}^n}^{m+1}$ is given by the identity.

\section{Genus 3: plane quartics and bitangents} \label{sec: plane quartics}

There is a classical connection between the Cayley octad and the $28$ bitangents of a smooth plane quartic curve; studying this connection and generalizing was a major inspiration for this paper. In this section, we will illustrate the highlights of this classical construction, so that this section can serve as an illustrative example for the rest of the paper. 

The canonical model of a smooth non-hyperelliptic genus $3$ curve $X$ is a smooth quartic curve in $\mbp^2$. Each of the $36$ even theta characteristics on $X$ gives rise to a symmetric $4 \times 4$ matrix $x_0 A_0 + x_1 A_1 + x_2 A_2$ of linear forms of $\mbp^2$, whose determinant is the defining equation of $X$. The three symmetric matrices $A_0, A_1, A_2$ with entries in $k$ define three quadrics in $\mbp^3$, whose intersection is a cluster of $8$ points $\mathfrak{C}$ called the \emph{Cayley octad}. In our notation, we have a tensor $\mathcal{A} \in k^3 \otimes \Sym_2 k^4$, an associated symmetroid hypersurface $X$, and associated Cayley variety $\mathfrak{C}$. 

\begin{remark}[Moduli]
	There is a canonical bijection between pairs $(X, \theta)$ of smooth genus $4$ curves over $k$ with an even theta characteristic $\theta$ defined over $k$ and orbit classes of $k^3 \otimes \Sym_2 k^4$ under $\GL_3 \times \GL_4$ whose determinant defines a smooth curve. See \cite[Section~4]{Dol2012}, or \cite[Theorem~4.10, Theorem~4.12]{Ho2009} for a formulation in terms of stacks.
\end{remark}

By Proposition~\ref{prop: main diagram hypersurfaces}, we have the commutative diagram
	\[
	\begin{tikzcd}
		X \arrow[hook]{r}{} \arrow[dashed]{d}[left]{\theta_X} \arrow[swap, dashed]{drr}[above]{\varphi} & \mbp^2 \arrow{r}{\dual \psi} & \mbp\H^0(\mbp^3, \mathcal{O}_{\mbp^3}(2))  \\
		\dual X \arrow[hook]{r}{} & \dual \mbp^2 & \mbp^3 \arrow[dashed]{l}{\psi}
	\end{tikzcd}.	
	\]

If $\ell(q_1,q_2) \in \mbp^m$ is a secant line of $\mathfrak{C}$ for $q_1, q_2$ distinct geometric points of $\mathfrak{C}$, we have the linear form given by $\psi(\ell(q_1,q_2)) = \mathcal{A}(\mathbf{x}, q_1, q_2)$ defines a bitangent line of $X$ -- see \cite[Section~6]{Dol2012}. A simple way to see this is the argument in \cite[Proposition~3.3]{Plaumann2011}. Act by $\GL_4$ so that $q_1=[1:0:0:0]$ and $q_2 = [0:1:0:0]$. Hence we may write
\[
	\mathcal{A}(x, \cdot, \cdot)
	= 
	\begin{bmatrix}
		0 & h & a & b \\
		h & 0 & c & d \\
		a & c & e & f \\
		b & d & f & g
	\end{bmatrix}
\]
with $a, \ldots, g, h \in k[x_0, x_1, x_3]$ linear forms. Modulo $h$, the determinant of $\mathcal{A}(x, \cdot, \cdot)$ is a square, viz., $Z(h)$ is a bitangent of $X$.

Our results in Section~\ref{sec: general results} describe what is going on in a generalized framework. The restriction of a symmetroid to a hyperplane is again a symmetroid, and we may construct the tensor $\mathcal{B}$ from $\mathcal{A}$ by reduction modulo $h$. Theorem~\ref{thm: singularities of symmetroid intersections with multiplicity} is a general result, which applied in this specific context shows that there is an isomorphism
\[
	\sing(X \cap H) \iso \{(x,q) \in (X \cap H) \times \mathfrak{C}_{\mathcal{B}} : \mathcal{B}(x, q, \cdot) = 0\}.
\]
On the other hand, determining the points of the incidence variety on the right is essentially a linear algebra problem -- there is a solution to $\mathcal{B}(x, q, \cdot) = 0$ if and only if the rank of $\begin{bmatrix} a(x) & b(x) \\ c(x) & d(x) \end{bmatrix}$ is equal to $1$ (since $X$ has no essential singularities, see Section~\ref{sec: general results}). In the generic case, a pencil of $2 \times 2$ matrices degenerates at two points, meaning that $H$ is a bitangent of $X$. For symmetroid hypersurfaces, this second argument generalizes directly, and with some mindfulness toward the details the argument works for intersections of symmetroids as well -- see Proposition~\ref{prop: deg and dim of tangents of intersections of symmetric determinantal hypersurfaces}.
The case of plane quartic curves also provides a nice application of Theorem~\ref{thm: singularities of symmetroid intersections with multiplicity}.

\begin{example}
	A plane quartic with a symmetric determinantal representation $\mathcal{A}$ is singular if and only if the Cayley octad $\mathfrak{C}$ is non-singular, not contained in a union of two hyperplanes, and of dimension 0. If $\mathfrak{C}$ is not a complete intersection of dimension $0$, then $\mathfrak{C}$ is either a quadric surface, a genus $1$ curve, or it is a twisted cubic curve in $\mbp^3$. In the first two cases the determinantal quartic in $\mbp^2$ is reducible, and in the case $\mathfrak{C}$ is a twisted cubic curve it turns out that there is a solution to $\mathcal{A}(p, q, \cdot) = 0, q \in \mathfrak{C}$ on the variety $X \times \mathfrak{C}$.

	If $\mathfrak{C}$ is a complete intersection of dimension $0$, it is singular if and only if $X$ has an accidental singularity or $\mathfrak{C}$ is contained in a union of two hyperplanes, i.e., one of the quadrics in the net has rank $1$. This shows that singularities of symmetric determinantal plane quartics are characterized by the Cayley variety.
\end{example}

\subsection{The number of $k$-rational bitangents of a smooth plane quartic}

Let $X$ be a smooth plane quartic defined over $k$. A symmetric determinantal representation of $X$ can be used to determine the number of $k$-rational bitangents of that smooth plane quartic. Specifically: 

\begin{theorem}
\label{thm: number of bitangents}
Let $k$ be an infinite field of characteristic not $2$ or $3$. For any smooth non-hyperelliptic genus $3$ curve $X$ defined over $k$, the number of odd theta characteristics of $X$ defined over $k$ lies in $S := \{28, 16, 10, 8, 6, 4, 3, 2, 1, 0\}$. If $k$ has normal separable extensions of degrees $1, \ldots, 8$, then each of the numbers in $S$ is the number of $k$-rational bitangents of some smooth plane quartic $X$ defined over $k$. 
\end{theorem}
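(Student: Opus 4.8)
Since $X$ is non-hyperelliptic of genus $3$, its canonical model is a smooth plane quartic, its odd theta characteristics are the $28$ bitangent lines, and an odd theta characteristic is defined over $k$ precisely when the corresponding bitangent is. The Galois group $\Gal(\bar k/k)$ acts on the set $\Theta$ of $64$ theta characteristics compatibly with its action on $\Jac(X)[2]$ (on which it preserves the Weil pairing $e_2$) and with the parity function $\theta\mapsto h^0(\theta)\bmod 2$. Since the only element of the affine symplectic group $\Jac(X)[2]\rtimes\Sp_6(\mathbb{F}_2)$ that acts trivially on $\Jac(X)[2]$ yet preserves the parity partition is the identity (a nontrivial translation moves half of the even thetas to odd), the action on $\Theta$ factors through the image $G$ of $\Gal(\bar k/k)$ in $\Sp(\Jac(X)[2],e_2)\cong\Sp_6(\mathbb{F}_2)$. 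Hence the number of $k$-rational odd theta characteristics equals $|\operatorname{Fix}_{28}(G)|$, and the first assertion becomes the purely group-theoretic statement that $|\operatorname{Fix}_{28}(H)|\in S$ for every subgroup $H\le\Sp_6(\mathbb{F}_2)$ acting on the $28$ odd theta characteristics.

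\textbf{Proving the first assertion.} I would split on whether $H$ fixes an even theta characteristic, equivalently whether $H$ is subconjugate to the stabilizer $S_8\le\Sp_6(\mathbb{F}_2)$ of an even theta characteristic. If it is, there is a symmetric determinantal representation over the corresponding fixed field, the associated Cayley octad $\mathfrak{C}=\{p_1,\dots,p_8\}\subset\mathbb{P}^3$ is $H$-stable, and by the identification of the $28$ bitangents with the $\binom{8}{2}$ secants of $\mathfrak{C}$ (Section~\ref{sec: plane quartics}) the $H$-rational bitangents are exactly the $H$-stable pairs $\{p_i,p_j\}$. A pair is $H$-stable iff it is a union of $H$-orbits of size $\le 2$, so running over all orbit types of an $8$-element set and counting such pairs yields exactly $28,16,10,8,6,4,3,2,1,0$. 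For the finitely many remaining conjugacy classes of subgroups — those not subconjugate to any $S_8$, which genuinely occur (for instance a Sylow $2$-subgroup of order $2^9\nmid|S_8|$) — one checks directly, a finite computation inside $\Sp_6(\mathbb{F}_2)$ (equivalently $W(E_7)$ acting on its $28$-element set), that the fixed-point number still lies in $S$. I expect this last verification to be the main obstacle, since it is precisely the part that cannot be reduced to the clean octad count.

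\textbf{Realizing every value in $S$.} For the second assertion I run the octad construction in reverse. For each $s\in S$ fix an orbit type of an $8$-element set whose number of pairs that are unions of blocks of size $\le 2$ equals $s$, for instance $1^8\mapsto 28$, $1^6\,2\mapsto 16$, $1^5\,3\mapsto 10$, $1^4\,2^2\mapsto 8$, $1^4\,4\mapsto 6$, $2^4\mapsto 4$, $1^3\,5\mapsto 3$, $1^2\,2\,4\mapsto 2$, $1^2\,3^2\mapsto 1$, $4^2\mapsto 0$; all block sizes occurring are at most $8$. Using the hypothesis that $k$ has a normal separable extension of each required degree, choose for each block a separable extension $L_i/k$ of that degree together with a point of $\mathbb{P}^3$ defined over $L_i$ with field of definition exactly $L_i$; the union of the $\Gal(\bar k/k)$-orbits of these points is a Galois-stable set $\mathfrak{C}\subset\mathbb{P}^3(\bar k)$ of $8$ points of the prescribed orbit type. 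The $8$-point configurations that are linearly general, cut out a net of quadrics with base locus exactly those $8$ points, and have smooth discriminant quartic form a dense Zariski-open $k$-subvariety of $(\mathbb{P}^3)^8$; pulling this back along the (birational, $k$-rational) Weil-restriction parametrization of configurations of the given orbit type and using that $k$, hence each $L_i$, is infinite, I can choose $\mathfrak{C}$ of that orbit type inside the open set. Its discriminant is then a smooth plane quartic $X/k$ whose $28$ bitangents are the $\psi$-images of the $28$ secants of $\mathfrak{C}$, of which exactly the $s$ Galois-stable pairs are defined over $k$. The remaining technical point is checking that imposing the prescribed orbit type still leaves a dense family of choices inside the ``good octad'' open locus, which again follows from infinitude of $k$.
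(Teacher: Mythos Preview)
Your overall strategy coincides with the paper's: split on whether an even theta characteristic is $k$-rational, and in the affirmative case count rational bitangents as $\binom{n_1}{2}+n_2$ via the Cayley octad. The differences lie in how you handle the other case and how you realize every value in $S$.

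\textbf{The ``no rational even theta'' case.} You defer this to a finite enumeration of subgroups of $\Sp_6(\mathbb{F}_2)$ not subconjugate to $S_8$, and you correctly flag it as the main obstacle. The paper avoids any computation here via a short syzygy argument (Lemma~\ref{lem: noeventheta}): label the $28$ bitangents by pairs $\{i,j\}\subset\{1,\dots,8\}$ using some Cayley octad over $\bar k$, and form the graph on $\{1,\dots,8\}$ whose edges record the $k$-rational bitangents. If there are at least $5$ edges, a pigeonhole shows that either some vertex has degree $\ge 3$, or there exist adjacent edges $a=(1,2),\,b=(1,3)$ and a third edge $c=(4,5)$ disjoint from both. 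In the first case $\vartheta_{i,j}\otimes\vartheta_{i,k}\otimes\vartheta_{i,l}\otimes\omega_X^\vee$ is directly a $k$-rational even theta. In the second, the identity $\vartheta_{1,2}\otimes\vartheta_{1,3}\cong\vartheta_{4,2}\otimes\vartheta_{4,3}$ (from the invariance of $\vartheta_{i,j,k,l}$ under permutations and complements, Theorem~\ref{thm: genus 3: even theta presentations}) makes $\vartheta_{4,2}\otimes\vartheta_{4,3}$ rational, whence $\vartheta_{2,3,4,5}=\vartheta_{4,2}\otimes\vartheta_{4,3}\otimes\vartheta_{4,5}\otimes\omega_X^\vee$ is a $k$-rational even theta. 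So this case gives at most $4$ rational bitangents, and $\{0,1,2,3,4\}\subset S$. This replaces your machine check with a two-line argument using only the classical relations among theta characteristics.

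\textbf{Realizability.} The paper does not build octads by hand; it simply invokes a theorem of Elsenhans--Jahnel stating that every injection $\Gal(L/k)\hookrightarrow U_{36}\cong S_8$ is realized by the bitangent action of some smooth plane quartic over $k$, which immediately yields every value in $S$. Your direct construction is also viable and is in fact close to an argument the authors drafted before opting for the citation; the one point needing care is the genericity step, which is most cleanly handled by fixing $7$ points with the desired Galois structure in general position and letting the eighth be the residual base point of the resulting net of quadrics, so that one only has to check nonemptiness of the good locus in a $7$-point parameter space.
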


The remainder of this section is devoted to the proof of this theorem. First, we note that if
$\mathcal{A} \in k^3 \otimes \Sym_2 k^4$ is a tensor such that $X = Z(\det \mathcal{A}(\mathbf{x}, \cdot, \cdot))$ is a smooth plane quartic, then for any $p_1, p_2 \in \mathfrak{C}$, the bitangent $\psi(\ell(p_1,p_2))$ is defined over $k$ if and only if the set $\{p_1,p_2\}$ is defined over $k$. We will also use the following theorem that describes the even theta characteristics of $X$. 

\begin{theorem}[\cite{Dol2012}, Section~6.3.2, Theorem~6.3.3]
\label{thm: genus 3: even theta presentations}
Write the Cayley octad as $\mathfrak{C} = \{p_1,\dots,p_8\}$. Let $\theta_{i,j}$ be the odd theta characteristic given by $\psi(\ell(p_i,p_j))$. Then for any $\{i,j,k,l\} \subset \{1,\dots,8\}$, the line bundle
\[
	\theta_{i,j} \otimes \theta_{i,k} \otimes \theta_{i,l} \otimes \omega_C^{\vee}
\]
is an even theta characteristic. Moreover, this line bundle is independent of the ordering of $i$, $j$, $k$, and $l$. For $\{i',j',k',l'\} \cap \{i,j,k,l\} = \emptyset$, we have
\[
	\theta_{i,j} \otimes \theta_{i,k} \otimes \theta_{i,l} \otimes \omega_C^{\vee} \cong \theta_{i',j'} \otimes \theta_{i',k'} \otimes \theta_{i',l'} \otimes \omega_C^{\vee}.
\]
\end{theorem}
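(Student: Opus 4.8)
Fix the even theta characteristic $\theta_0$ attached to the symmetric determinantal representation $\mathcal{A}$, so that $\omega_C\cong\theta_0^{\otimes 2}$, and for each secant write $\theta_{i,j}=\mathcal{O}_X\!\big(\tfrac12(X\cap L_{i,j})\big)$ with $L_{i,j}=Z(\psi(\ell(p_i,p_j)))$; these are the $28$ odd theta characteristics, so $\theta_{i,j}^{\otimes 2}\cong\omega_C$ and $h^0(\theta_{i,j})=1$. The ``theta characteristic'' assertion is then immediate: since each factor squares to $\omega_C$, $(\theta_{i,j}\otimes\theta_{i,k}\otimes\theta_{i,l}\otimes\omega_C^\vee)^{\otimes 2}\cong\omega_C^{\otimes 3}\otimes\omega_C^{\otimes -2}\cong\omega_C$. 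Writing $\theta_{i,j}=\theta_0\otimes\alpha_{i,j}$ with $\alpha_{i,j}\in\Jac(X)[2]$ and using $\omega_C\cong\theta_0^{\otimes 2}$, the bundle in question equals $\theta_0\otimes(\alpha_{i,j}+\alpha_{i,k}+\alpha_{i,l})$, so everything reduces to statements about the $2$-torsion classes $\alpha_{i,j}$.

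The engine is the \emph{triple relation}: for distinct $i,j,k$,
\[
	\theta_{i,j}\otimes\theta_{i,k}\otimes\theta_{j,k}\cong\theta_0\otimes\omega_C,\qquad\text{equivalently}\qquad \alpha_{i,j}+\alpha_{i,k}+\alpha_{j,k}=0 .
\]
I would prove it from Proposition~\ref{prop: main diagram hypersurfaces}. The cokernel aCM sheaf is $\mathcal{E}\cong\iota_*\mathcal{F}$ for a line bundle $\mathcal{F}$ on $X$; a Riemann--Roch computation together with the symmetric structure $\mathcal{F}\cong\mathcal{F}^\vee\otimes\mathcal{O}_X(3)$ gives $\deg\mathcal{F}=6$ and $\mathcal{F}\cong\theta_0\otimes\omega_C$, and the kernel map $\varphi\colon X\to\mbp^m=\mbp^3$ agrees with the morphism given by $|\mathcal{F}|$, so $\varphi^*\mathcal{O}_{\mbp^3}(1)\cong\mathcal{F}$. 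Fix a bitangent $L_{i,j}$ with contact points $x,x'\in X$. On $\ell(p_i,p_j)$ each quadric $q_a$ restricts to $2st$ times a constant (as $q_a(p_i)=q_a(p_j)=0$), so $\psi$ is constant on $\ell(p_i,p_j)$ with value $[\mathcal{A}(\cdot,p_i,p_j)]=[L_{i,j}]$; hence $\ell(p_i,p_j)\subset\psi^{-1}([L_{i,j}])$, which by Proposition~\ref{prop: main diagram hypersurfaces} is the base locus of the pencil $\{Z(\mathcal{A}(y,\cdot,\cdot)):y\in L_{i,j}\}$. In particular $\ell(p_i,p_j)$ lies on the two rank-$3$ cones $Q_x,Q_{x'}$, whose vertices are $\varphi(x),\varphi(x')$; since a line on a rank-$3$ quadric cone passes through its vertex, $\varphi(x),\varphi(x')\in\ell(p_i,p_j)$. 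Applying this to $L_{i,j},L_{i,k},L_{j,k}$, the $\varphi$-images of their six contact points lie on the three lines $\ell(p_i,p_j),\ell(p_i,p_k),\ell(p_j,p_k)$, which span the plane $\Pi=\langle p_i,p_j,p_k\rangle\subset\mbp^3$. For generic $\mathcal{A}$ these six points are distinct, so $\varphi^*\Pi$ — an effective divisor of degree $6$ in $|\mathcal{F}|$ — is exactly their sum, giving $\theta_{i,j}\otimes\theta_{i,k}\otimes\theta_{j,k}\cong\mathcal{O}_X(\varphi^*\Pi)\cong\theta_0\otimes\omega_C$.

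Granting the triple relation, the evenness and ordering-independence assertions follow formally. Evenness: $q_{\theta_0}(\alpha):=h^0(\theta_0\otimes\alpha)\bmod 2$ is a quadratic form on $\Jac(X)[2]$ with polar form the Weil pairing $e_2$, and $q_{\theta_0}(\alpha_{i,j})=h^0(\theta_{i,j})=1$; since $e_2(\alpha_{i,j},\alpha_{i,k})=q_{\theta_0}(\alpha_{i,j}+\alpha_{i,k})+q_{\theta_0}(\alpha_{i,j})+q_{\theta_0}(\alpha_{i,k})=q_{\theta_0}(\alpha_{j,k})+1+1=1$, we get $q_{\theta_0}(\alpha_{i,j}+\alpha_{i,k}+\alpha_{i,l})=3+3=0$ in $\mbz/2$, so $\theta_0\otimes(\alpha_{i,j}+\alpha_{i,k}+\alpha_{i,l})$ is even. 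Ordering-independence: the triple relation gives $\theta_{i,k}\otimes\theta_{i,l}\cong\theta_0^{\otimes 2}\otimes\alpha_{k,l}\cong\omega_C\otimes\alpha_{k,l}$, depending only on $\{k,l\}$, so $\theta_{i,j}\otimes\theta_{i,k}\otimes\theta_{i,l}\otimes\omega_C^\vee\cong\theta_0\otimes\alpha_{i,j}\otimes\alpha_{k,l}$, and $\alpha_{i,j}+\alpha_{k,l}=\alpha_{i,k}+\alpha_{j,l}=\alpha_{i,l}+\alpha_{j,k}$ (each equality by two applications of the triple relation), so the bundle is symmetric in $i,j,k,l$.

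The remaining assertion — complementary $4$-subsets yield isomorphic even theta characteristics — amounts to $\alpha_{i,j}+\alpha_{k,l}+\alpha_{i',j'}+\alpha_{k',l'}=0$, and this is \emph{not} a formal consequence of the triple relations: under the standard identification $\Jac(X)[2]\cong\{\text{even subsets of }\{1,\dots,8\}\}/(\text{complementation})$ the triple relations are the triangle relations in the $\mbf_2$-cycle space of $K_8$ and span only a hyperplane of all relations among the $[\{i,j\}]$, whereas the needed ``perfect matching'' relation requires one further input, namely the classical fact that the seven secants $\{\ell(p_1,p_j):j=2,\dots,8\}$ define an Aronhold set of bitangents whose associated even theta characteristic is $\theta_0$, i.e.\ $\bigotimes_{j=2}^{8}\theta_{1,j}\cong\theta_0^{\otimes 7}$. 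Together with the triple relation this pins down $\alpha_{i,j}\leftrightarrow[\{i,j\}]$, whence $[\{i,j,k,l\}]=[\{i',j',k',l'\}]$ gives the claim. I would obtain the Aronhold-set fact either by citing the classical bitangent theory (essentially the genus-$3$ moduli bijection recalled above, see \cite{Dol2012}, \cite{Ho2009}) or by a Cayley--Bacharach argument for the net of quadrics cutting out $\mathfrak{C}$. This last step is the main obstacle: unlike the triple relation it cannot be read off a single hyperplane section of $\varphi(X)$ and genuinely uses the global geometry of the octad.
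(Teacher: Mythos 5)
The paper offers no proof of this statement at all: it is quoted verbatim from \cite{Dol2012} (Theorem~6.3.3) and used as a black box in the proof of Lemma~\ref{lem: noeventheta}. So your proposal is an actual argument where the paper has only a citation, and on the whole it is sound. The identification of the cokernel sheaf with $\theta_0\otimes\omega_C$, the fact that the contact points of the bitangent $\psi(\ell(p_i,p_j))$ are carried by the kernel map into $\ell(p_i,p_j)$ (this is exactly the vertex-of-the-cone argument underlying Proposition~\ref{prop: main diagram hypersurfaces} and Lemma~\ref{lem: cayley line contracts}), the resulting triple relation $\alpha_{i,j}+\alpha_{i,k}+\alpha_{j,k}=0$, the parity computation via the quadratic form $q_{\theta_0}$, and the partition-independence are all correct. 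One caveat: your triple relation is proved only for generic $\mathcal{A}$ (you need the six contact points to be distinct so that they exhaust $\varphi^*\Pi$). To get it for every smooth quartic you should add a specialization argument --- an identity of line bundles holding on a dense open subset of the irreducible parameter space of nondegenerate tensors holds everywhere --- which is precisely the device the authors use in the proof of Corollary~\ref{cor: lines are n-tangents for our main examples}.

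The step you single out as ``the main obstacle'' --- the relation for complementary $4$-sets, equivalently $\sum_{j=2}^{8}\alpha_{1,j}=0$ --- does not in fact require importing the Aronhold-set fact; it follows by $\mathbb{F}_2$-linear algebra from what you have already established. You know $q_{\theta_0}(\alpha_{1,j})=1$ for all $j$ and $e_2(\alpha_{1,j},\alpha_{1,k})=1$ for $j\neq k$. Suppose $\sum_{j\in T}\alpha_{1,j}=0$ for a nonempty proper subset $T\subset\{2,\dots,8\}$. Pairing against $\alpha_{1,m}$ for some $m\notin T$ forces $|T|$ to be even, while $0=q_{\theta_0}(0)=|T|+\binom{|T|}{2}$ forces $|T|\equiv 0,3\pmod 4$; hence $|T|=4$. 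But a $4$-term dependency, combined with the triple relation, gives $\alpha_{a,b}=\alpha_{c,d}$ for two disjoint pairs, i.e.\ two distinct secants of the octad defining the same bitangent, which is impossible for a smooth quartic (the $28$ secants biject with the $28$ bitangents; cf.\ Lemma~\ref{lem: psi is injective on secants mod diagonal symmetries} with $r=1$). Since seven vectors in the six-dimensional space $\Jac(X)[2]$ must be linearly dependent, the only available dependency is the full one, $\sum_{j=2}^{8}\alpha_{1,j}=0$, which is exactly the extra relation you need. With that observation your proof is complete and self-contained, and arguably cleaner than routing through the classical Aronhold-set theory.
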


\begin{lemma}
	\label{lem: noeventheta}
	If $X$ does not have an even theta characteristic defined over $k$, then $X$ has at most $4$ odd theta characteristics defined over $k$.
\end{lemma}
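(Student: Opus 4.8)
The plan is to recast the statement in terms of the $2$-torsion of $\Jac(X)$ and the quadratic form recording parities of theta characteristics, reducing almost everything to linear algebra over $\mathbb{F}_2$. Fix a separable closure $\bar k$, put $G := \Gal(\bar k/k)$, and recall: $\Jac(X)[2] \cong \mathbb{F}_2^6$ carries the nondegenerate alternating Weil pairing $e_2$; the theta characteristics of $X$ form a torsor under $\Jac(X)[2]$; and, after choosing any even theta characteristic $\theta_1$ as base point, this torsor is identified with $\mathbb{F}_2^6$ and parity becomes a quadratic form $q$ with polar form $e_2$, of Arf invariant $0$ (there being $36$ even and $28$ odd theta characteristics), whose odd theta characteristics are cut out by $q^{-1}(1)$. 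The absolute Galois group $G$ acts $\mathbb{F}_2$-linearly on $\Jac(X)[2]$ preserving $e_2$ and acts on the torsor of theta characteristics preserving parity; in the chosen coordinates this is an affine action $v \mapsto \sigma\cdot v + c(\sigma)$, where $c(\sigma) := \sigma\theta_1 - \theta_1$ is a $1$-cocycle. Consequently the set of $k$-rational theta characteristics is empty or a single coset of $W := \Jac(X)[2]^{G}$.

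Suppose for contradiction that $X$ has no rational even theta characteristic but at least five rational odd ones. Pick one of the latter and let $v_0$ be its coordinate, so $q(v_0) = 1$ and the rational theta characteristics are exactly $v_0 + W$. For $w \in W$ we have $q(v_0+w) = 1 + \tilde q(w)$ with $\tilde q(w) := q(w) + e_2(v_0,w)$ a quadratic form on $W$ whose polar form is $e_2|_W$. A rational even theta characteristic would be a $w \in W$ with $\tilde q(w) = 1$; since there is none, $\tilde q \equiv 0$, forcing $e_2|_W = 0$. Hence $W$ is totally isotropic, so $\dim_{\mathbb{F}_2} W \le 3$, and the number of rational odd theta characteristics equals $|v_0 + W| = 2^{\dim W}$. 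Being at least $5$, it must equal $8$, so $W$ is Lagrangian and $q|_W = e_2(v_0,\cdot)|_W$ --- a nonzero linear functional on $W$, since $v_0 \notin W = W^{\perp}$ (were $v_0 \in W$, the cocycle $c$ would vanish and $\theta_1$ would be a rational even theta characteristic).

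It remains to exclude this last configuration --- exactly eight rational bitangents forming a coset $v_0 + W$ of a $G$-fixed Lagrangian subspace with $\tilde q \equiv 0$ --- and I expect this to be the crux. One checks from the formula above that every triad among the eight bitangents is syzygetic (the sum of any three of them, minus $\omega_X$, is again an odd theta characteristic), and more generally that no odd-cardinality subset of the eight yields an even theta characteristic, so Theorem~\ref{thm: genus 3: even theta presentations} does not apply directly. My plan to finish is to fix a Cayley octad over $\bar k$, translate the Lagrangian-coset condition into a description of which eight secants of the octad are selected, and then use the azygetic/syzygetic combinatorics behind Theorem~\ref{thm: genus 3: even theta presentations}, together with Galois descent, to show that such a selection forces some octad --- hence some even theta characteristic --- to be rational, contradicting the hypothesis; equivalently, that the subgroup of the affine symplectic group stabilizing this configuration is too small to occur as the Galois image on $(\Jac(X)[2],q)$ of a quartic over a field. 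The linear-algebra portion above is routine; this final combinatorial/descent step is where the real work lies.
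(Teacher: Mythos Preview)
Your symplectic/quadratic-form reduction is correct and pleasant: it cleanly shows that if there is no $k$-rational even theta characteristic then the rational theta characteristics form a coset $v_0 + W$ with $W = \Jac(X)[2]^G$ totally isotropic, all of them odd, so their number is $2^{\dim W}\in\{1,2,4,8\}$; hence $\ge 5$ forces exactly $8$ with $W$ Lagrangian. But you stop there. The final exclusion of the Lagrangian-coset case is not carried out, and your sketched plan is both vague and slightly misdirected: ``forcing some octad to be rational'' would only show that \emph{one particular} even theta characteristic (the one attached to that octad) is rational, whereas what you need is that \emph{some} even theta is rational; and the alternative ``too small to occur as a Galois image'' line is not promising either, since the Elsenhans--Jahnel result quoted just below shows that essentially every subgroup of $\Sp(6,\mathbb{F}_2)$ does arise over suitable $k$, so the obstruction must be purely combinatorial inside $\Sp(6,\mathbb{F}_2)$, not an arithmetic restriction on Galois images.

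The paper's proof is shorter and avoids this residual case entirely. It fixes any Cayley octad over $\bar k$ purely as a labelling device for the $28$ bitangents $\vartheta_{i,j}$, forms the graph on $\{1,\dots,8\}$ whose edges are the $k$-rational $\vartheta_{i,j}$, and observes by an easy pigeonhole that once there are $\ge 5$ edges one can always find two adjacent edges $\vartheta_{1,2},\vartheta_{1,3}$ together with a third edge $\vartheta_{4,5}$ disjoint from both (or three edges through a common vertex, which is even easier). Theorem~\ref{thm: genus 3: even theta presentations} then gives $\vartheta_{1,2}\otimes\vartheta_{1,3}\cong\vartheta_{4,2}\otimes\vartheta_{4,3}$, so $\vartheta_{4,2}\otimes\vartheta_{4,3}\otimes\vartheta_{4,5}\otimes\omega_X^\vee$ is a $k$-rational even theta characteristic. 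This handles all $\ge 5$ at once; your Lagrangian-coset case is just the subcase with $8$ edges, and the same two-line octad computation closes it. So your reduction, while structurally illuminating (it explains immediately why the count is a power of $2$), ultimately still needs the octad combinatorics you deferred, and does not shorten the argument.
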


\begin{proof}
	Let $X$ be a smooth plane quartic over $k$ with at least $5$ rational bitangents and let $\mathcal{A} \in \bar k^3 \otimes \Sym_2 \bar k^4$ be a tensor such that $X = Z(\det \mathcal{A}(\mathbf{x}, \cdot, \cdot))$. Let $\{p_1, \ldots, p_8\}$ be the geometric points of the Cayley octad defined by $\mathcal{A}$, and let $\vartheta_{i,j}$ denote the odd theta characteristic given by $\psi(\ell(p_i,p_j))$. Note that because $\mathcal{A}$ is not necessarily defined over $k$, the Galois action on $\{p_1, \ldots, p_8\}$ does not naturally correspond to a Galois action on the odd theta characteristics of $X$; we are simply using the Cayley octad to label the bitangents.
	
	Let $G = (V, E)$ be the graph where $V = \{1,\dots,8\}$ and an edge exists between vertices $i$ and $j$ if and only if $\vartheta_{i,j}$ is defined over $k$. There are at least $5$ edges of $G$, so either $3$ edges meet at a vertex or there are $3$ edges $a$, $b$, $c$ such that $a$ and $b$ are adjacent, $a$ and $c$ are not adjacent, and $b$ and $c$ are not adjacent. We may assume up to relabelling that $a = (1,2), b = (1,3), c = (4,5) \in E$. Then by Theorem~\ref{thm: genus 3: even theta presentations}
		\[
			\vartheta_{1,2,3,4} = \vartheta_{1,2} \otimes \vartheta_{1,3} \otimes \vartheta_{1,4} \otimes \omega_{X}^{\vee} \cong \vartheta_{4,1} \otimes \vartheta_{4,2} \otimes \vartheta_{4,3} \otimes \omega_{X}^{\vee}
		\]
		and thus
		$
			 \vartheta_{1,2} \otimes \vartheta_{1,3} \cong \vartheta_{4,2} \otimes \vartheta_{4,3}.
		$
		Because $\vartheta_{1,2} \otimes \vartheta_{1,3}$ is defined over $k$, so is $\vartheta_{4,2} \otimes \vartheta_{4,3}$. Thus, as $\vartheta_{4,5}$ is defined over $k$, the even theta characteristic $\vartheta_{2,3,4,5} = \vartheta_{4,2} \otimes \vartheta_{4,3} \otimes \vartheta_{4,5} \otimes \omega_{X}^{\vee}$ is as well.
\end{proof}

\begin{proof}[Proof of Theorem~\ref{thm: number of bitangents}]
We first prove that the number of odd theta characteristics defined over $k$ is one of the numbers in $S$. If there does not exist an even theta characteristic defined over $k$, then apply by Lemma~\ref{lem: noeventheta}. Otherwise, choose an even theta characteristic defined over the ground field and a corresponding symmetric determinantal representation of $X$. Suppose the Cayley octad for this representation has $n_1$ points that are $k$-rational and $n_2$ pairs of quadratic conjugate points, with $n_1 + 2n_2 \leq 8$. 
%
%
Then $X$ has ${\binom{n_1}{2}} + n_2$ bitangents defined over $k$. Note $S = \{\binom{n_1}{2} + n_2 : 0 \leq n_1, n_2 \text{ and } n_1 + 2n_2 \leq 8\}$. 

The second part of the claim follows from the result of \cite{ElsenhansJahnel2019}. Any Galois action on the bitangents of $X$ acts via a subgroup of $\Sp(6, \mathbb{F}_2)$.  This group contains a unique subgroup $U_{36}$ up to conjugacy of index $36$; such a subgroup is the stabilizer of some even theta characteristic of $X$. Furthermore, $U_{36} \iso S_8$. The result:

\begin{theorem}[{\cite[Theorem~1.2]{ElsenhansJahnel2019}}]
	Let $k$ be an infinite field of characteristic not 2, let $L$ be a normal and separable
	extension field, and let $i\: \Gal(L/k) \rightarrow U_{36}$ be an injective group homomorphism.
	Then there exists a nonsingular quartic curve $X$ over $k$ such that $L$ is the field of definition of the $28$ bitangents and each $\sigma \in \Gal(L/k)$ permutes the bitangents
	as described by $i(\sigma) \in U_{36}$.
\end{theorem}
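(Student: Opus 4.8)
The plan is to realise $X$ through a symmetric determinantal representation, so that the Galois action on its $28$ bitangents is read off from the Galois action on the eight points of a Cayley octad. First I would translate the hypothesis. Since $U_{36}$ is (up to conjugacy) the stabiliser in $\Sp(6,\mathbb{F}_2)$ of an even theta characteristic, and under the Cayley octad correspondence $U_{36}\cong S_8$ acts on the $28=\binom{8}{2}$ bitangents exactly as $S_8$ acts on the two-element subsets of an eight-element set (see Section~\ref{sec: plane quartics}, Theorem~\ref{thm: genus 3: even theta presentations}, and \cite[Section~6]{Dol2012}), giving an injection $i\colon\Gal(L/k)\hookrightarrow U_{36}$ is the same as giving a faithful action of $\Gal(L/k)$ on an eight-element set $\mathcal{O}_i$ whose induced action on two-subsets is the prescribed action on bitangents. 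Let $A_i=\prod_s L^{H_s}$ be the étale $k$-algebra attached to $\mathcal{O}_i$, the product running over the $\Gal(L/k)$-orbits with stabilisers $H_s$. Because $i$ is injective, $\bigcap_s H_s=\{1\}$, so the splitting field of $A_i$ over $k$ is exactly $L$, and $\Gal(\bar k/k)$ acts on the geometric points of $\Spec A_i$ through $\Gal(\bar k/k)\twoheadrightarrow\Gal(L/k)\xrightarrow{i}S_8$.

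The heart of the argument, and the step I expect to be the main obstacle, is to produce a tensor $\mathcal{A}\in k^3\otimes\Sym_2 k^4$, equivalently a $k$-rational net of quadrics in $\mathbb{P}^3$, whose Cayley variety $\mathfrak{C}$ is a generic copy of $\Spec A_i$. The difficulty is that Cayley octads are not generic eight-point schemes: eight points impose only seven conditions on quadrics (so $h^0(\mathcal{I}(2))=3$), which is a codimension-three condition, so one cannot simply embed $\Spec A_i$ in general position. The approach I would take is residuation: an octad is a complete intersection of three quadrics, hence is obtained from a $k$-rational elliptic normal quartic $E\subset\mathbb{P}^3$ by cutting with a further quadric, i.e.\ from a degree-$8$ divisor $D\in|\mathcal{O}_E(2)|$; the eight points of $D$ may then be prescribed up to the single constraint $D\sim\mathcal{O}_E(2)$ in $\Pic^8(E)$, so one is reduced to finding, for a suitably chosen $E$, a closed point of $E$ with coordinate algebra $A_i$ lying in the correct divisor class (a point in the kernel of a trace map $E(K)\to E(k)$ with the right residue field). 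Carrying this out uniformly over an arbitrary \emph{infinite} — hence possibly non-Hilbertian — field $k$ is exactly the technical core of \cite{ElsenhansJahnel2019}; it can be done by an explicit rational parametrisation of the relevant family (of quartics-with-even-theta-characteristic, equivalently of nets of quadrics in $\mathbb{P}^3$), into which the étale algebra $A_i$ is substituted, after which one uses that $k$ is infinite to meet the open genericity conditions guaranteeing that $\mathfrak{C}$ is reduced, a complete intersection, not contained in a union of two hyperplanes, and contains no rank-one quadric in its net — equivalently (by the example of Section~\ref{sec: plane quartics}) that $X:=Z(\det\mathcal{A}(\mathbf{x},\cdot,\cdot))$ is a smooth plane quartic over $k$.

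Granting such an $\mathcal{A}$, I would finish as follows. By Proposition~\ref{prop: main diagram hypersurfaces} and the discussion of plane quartics in Section~\ref{sec: plane quartics}, the $28$ bitangents of $X$ are precisely the lines $\psi(\ell(p,q))$ over the $28$ secants $\ell(p,q)$ of $\mathfrak{C}$, the even theta characteristic carried by the determinantal structure is the one stabilised by $i(\Gal(L/k))$, and $\Gal(\bar k/k)$ permutes the bitangents exactly as it permutes the two-subsets of $\mathfrak{C}(\bar k)$. Hence every bitangent is defined over $L$; and since the action of $S_8$ on two-subsets of an eight-set is faithful and $i$ is injective, the kernel of the permutation action of $\Gal(\bar k/k)$ on the set of bitangents equals $\Gal(\bar k/L)$. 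Therefore $L$ is precisely the field of definition of the $28$ bitangents and each $\sigma\in\Gal(L/k)$ permutes them as $i(\sigma)$, which is the assertion.
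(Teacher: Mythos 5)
The statement you are proving is not proved in this paper at all: it is quoted verbatim, with the citation \cite[Theorem~1.2]{ElsenhansJahnel2019}, and used as a black box to finish the proof of Theorem~\ref{thm: number of bitangents}. So there is no internal argument to compare against; the only question is whether your proposal stands on its own as a proof of the Elsenhans--Jahnel result. It does not. Your first and last paragraphs are fine: identifying $U_{36}$ with $S_8$ acting on $\binom{8}{2}$ secants of an octad, packaging the injection $i$ as a degree-$8$ \'etale algebra $A_i$ with splitting field $L$, and then reading off the bitangent action from the Galois action on the octad via Proposition~\ref{prop: main diagram hypersurfaces} and Lemma~\ref{lem: noeventheta}-adjacent facts is exactly the right reduction. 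But the entire content of the theorem is the middle step --- producing, over an arbitrary infinite and possibly non-Hilbertian field $k$, a $k$-rational net of quadrics whose base locus is a smooth Cayley octad realizing the prescribed $\Gal(L/k)$-set --- and there you explicitly defer to ``the technical core of \cite{ElsenhansJahnel2019}.'' A proof that cites the theorem being proved for its hardest step is a reduction, not a proof.

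Moreover, the residuation route you sketch has its own unaddressed obstruction. Granting that an octad is a divisor $D\in|\mathcal{O}_E(2)|$ on a $k$-rational elliptic normal quartic $E\subset\mathbb{P}^3$, you still need $E$ to carry a closed point (or union of closed points) with residue algebra exactly $A_i$, subject to the linear-equivalence constraint $D\sim\mathcal{O}_E(2)$. Over a general infinite field there is no reason a given genus-one curve has points with prescribed residue fields, and quantifying over all choices of $E$ does not obviously help without a Hilbert-irreducibility or explicit-parametrization input; this is precisely why Elsenhans and Jahnel construct their octads by a different, explicit mechanism. To close the gap you would need either to carry out that explicit construction or to exhibit a genuinely $k$-rational parametrization of nets of quadrics into which $A_i$ can be substituted, and then verify the open conditions (reduced complete intersection, no rank-one quadric in the net, octad not on two planes) can be met because $k$ is infinite. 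As written, the proposal establishes the translation and the endgame but not the existence statement itself.
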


\noindent
finishes the proof of Theorem~\ref{thm: number of bitangents}.
\end{proof}

\section{Singularities of intersections of symmetroids}
\label{sec: general results}
	
	In this section we explore the relationship between singularities of an intersection of symmetroids defined by a tensor $\mathcal{A}$ and the Cayley variety associated to $\mathcal{A}$. First, we describe this relationship set-theoretically and later give a more precise relationship in terms of morphisms of schemes.
	
	\begin{definition}
		Let $\mathcal{A}$ be a block-diagonal tensor with blocks $\mathcal{A}^{(1)}, \ldots, \mathcal{A}^{(r)}$ and let $X$ be the associated intersection of symmetroids. A point $x \in X$ is an \emph{essential singularity} if $x$ is a singular point and $\corank{A}^{(\lindex)}(x, \cdot, \cdot) \geq 2$ for some $\lindex$. If $x \in X$ is singular, but not an essential singularity, it is an \emph{accidental singularity}. The complement of the essential singularities in $X$ is denoted by $X^{\nonessential}$.
	\end{definition}
	
	If $X$ is a complete intersection defined by $\mathcal{A}$ and $\corank{A}^{(\lindex)}(x, \cdot, \cdot) \geq 2$ for some point $x$ and block $\mathcal{A}^{(\lindex)}$, then $x$ is a singular point of $X$.
	
	\begin{definition}
		We say that accidental singularity $x \in X$ is an \emph{inherited singularity} if it is a singularity of $X' \supset X$, where $X'$ is an intersection of symmetroids defined using a strict subset of the blocks of $\mathcal{A}$. A \emph{coincident singularity} of $X$ is an accidental singularity $x \in X$ which is a singular point of two intersections of symmetroids $X', X'' \supset X$ defined by two distinct strict subsets of the blocks of $\mathcal{A}$. 
	\end{definition}

	The corank of the Jacobian matrix of $X$ classifies the coincident singularities of $X$.
	
	\begin{lemma}
		Let $X$ be a complete intersection of symmetroids, let $x \in X$, and let $J(x)$ be the Jacobian matrix for $X$ at $x$. Then $\corank J(x) > 1$ if and only if $x$ is a coincident singularity. 
	\end{lemma}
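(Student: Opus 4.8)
The plan is to reduce the statement to a rank computation on the gradient covectors of the defining equations of $X$, and then to recognise the resulting condition as the definition of a coincident singularity.

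\emph{Step 1: reduction to syzygies of gradients.} Write $X = X_1 \cap \dots \cap X_r = Z(f_1, \dots, f_r)$ with $f_{\lindex} = \det \mathcal{A}^{(\lindex)}(\mathbf{x},\cdot,\cdot)$ of degree $d_{\lindex}$. Since $X$ is a complete intersection, $f_1, \dots, f_r$ is a regular sequence in $k[x_0,\dots,x_n]$, and as the $f_{\lindex}$ are homogeneous of positive degree every subsequence is again regular; hence for each $S \subseteq \{1,\dots,r\}$ the partial intersection $X_S := \bigcap_{\lindex \in S} X_{\lindex}$ is a complete intersection of codimension $|S|$, and these $X_S$ are precisely the subvarieties $X', X'' \supseteq X$ appearing in the definition of a coincident singularity. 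Fix $x \in X$, pick an affine representative $\tilde x$, and set $v_{\lindex} := \nabla f_{\lindex}(\tilde x) \in \bar{k}^{n+1}$; Euler's relation gives $\tilde x \cdot v_{\lindex} = 0$, so passing to the affine cone costs nothing. By the Jacobian criterion for complete intersections, $x \in \Sing X_S$ if and only if $\{v_{\lindex}\}_{\lindex \in S}$ is linearly dependent, and $\corank J(x) = r - \rank\{v_1, \dots, v_r\} = \dim W$, where $W := \{(\lambda_{\lindex}) \in \bar{k}^{r} : \sum_{\lindex} \lambda_{\lindex} v_{\lindex} = 0\}$ is the space of linear syzygies among the $v_{\lindex}$.

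\emph{Step 2: the dictionary and the equivalence.} Recall from the computation in the proof of Proposition~\ref{prop: main diagram hypersurfaces} that each $\partial f_{\lindex}/\partial x_i$ is a linear combination of the $(d_{\lindex}-1)\times(d_{\lindex}-1)$ minors of $\mathcal{A}^{(\lindex)}(\mathbf{x},\cdot,\cdot)$, so $v_{\lindex} = 0$ exactly when $\corank \mathcal{A}^{(\lindex)}(x,\cdot,\cdot) \geq 2$; when $\corank \mathcal{A}^{(\lindex)}(x,\cdot,\cdot) = 1$, normalising $\ker \mathcal{A}^{(\lindex)}(x,\cdot,\cdot) = \langle e_1 \rangle$ gives $v_{\lindex} \propto \mathcal{A}^{(\lindex)}(\cdot,e_1,e_1)$ times the nonzero complementary $(1,1)$-minor, and $\mathcal{A}^{(\lindex)}(\cdot,e_1,e_1) \neq 0$ because $\varphi_{X_{d_{\lindex}}}(x)$ avoids the base locus of $\psi_{X_{d_{\lindex}}}$ (cf.\ Proposition~\ref{prop: main diagram hypersurfaces}). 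Since every $x \in X$ lies on each $X_{\lindex}$, every block has corank $\geq 1$ at $x$; consequently $x$ is an accidental singularity of $X$ if and only if all $v_{\lindex} \neq 0$ and $\dim W \geq 1$, and essential if and only if some $v_{\lindex} = 0$. If $x$ is a coincident singularity then it is accidental and $x \in \Sing X_{S'} \cap \Sing X_{S''}$ for distinct strict $S', S''$, so $W$ contains nonzero vectors supported in $S'$ and in $S''$; replacing these by syzygies of minimal support inside $S'$, resp.\ $S''$ — which again lie in $W$, have support of size $\geq 2$ since no single $v_{\lindex}$ is a syzygy, and cannot be refined within $W$ — distinctness of the two minimal supports forces the two syzygies to be linearly independent, so $\dim W \geq 2$, i.e.\ $\corank J(x) > 1$. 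Conversely, if $\corank J(x) > 1$ then $x$ is accidental (by the dictionary together with the standing hypotheses on $X^{\nonessential}$) and $\dim W \geq 2$; the matroid on $\{1,\dots,r\}$ whose circuits are the minimal supports of the nonzero elements of $W$ has rank $r - \dim W \leq r-2$, hence is not the matroid of a single circuit, so $W$ has two distinct minimal supports $C', C''$, each of size $\geq 2$ and a proper subset of $\{1,\dots,r\}$; then $X_{C'}$ and $X_{C''}$ are distinct proper partial intersections containing $X$ with $x \in \Sing X_{C'} \cap \Sing X_{C''}$, so $x$ is a coincident singularity.

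\emph{Expected main obstacle.} The subtle point is Step 2: the bare phrase ``singular on $X'$ and $X''$ for two distinct strict subsets'' is weaker than $\dim W \geq 2$, since a single syzygy of non-maximal support already makes $x$ singular on a whole up-set of partial intersections, so the correspondence with $\corank J(x) > 1$ must be channelled through the minimal supports (circuits) of the syzygy space $W$; in addition one must use the running assumptions — that only $X^{\nonessential}$ is considered and that $\Sing X^{\nonessential}$ has dimension $0$ — to exclude essential singularities with $\corank J(x) > 1$.
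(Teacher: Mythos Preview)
Your reduction to the left kernel $W$ of $J(x)$ and the support combinatorics of its elements is exactly the paper's approach, only more ornate. For $\corank J(x) > 1 \Rightarrow$ coincident, the paper argues in one line: pick independent $v, w \in W$, replace them by linear combinations so that $v_i = 0$ and $w_j = 0$ for some $i \neq j$, and read off that $x$ is singular on the partial intersections indexed by $\{1,\dots,r\}\setminus\{i\}$ and $\{1,\dots,r\}\setminus\{j\}$. Your matroid/circuit machinery reaches the same conclusion by a longer route and buys nothing extra.

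In the converse direction your step ``distinctness of the two minimal supports forces the two syzygies to be linearly independent'' is unjustified: both minimal supports may coincide with a single circuit lying in $S' \cap S''$. For instance with $r = 4$, if $\nabla f_1(x) = \nabla f_2(x)$ while $\nabla f_1, \nabla f_3, \nabla f_4$ are independent at $x$, then $\dim W = 1$ yet $x$ is singular on both $X_{\{1,2\}}$ and $X_{\{1,2,3\}}$, two distinct strict subsets --- so the implication genuinely fails in that regime. The paper's treatment of this direction is the single phrase ``the converse follows as well,'' so it shares the gap; the issue is moot in the paper's applications, which all have $r \leq 3$, where a circuit of size $\geq 2$ lies in at most one proper subset of $\{1,\dots,r\}$. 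Your concern about essential singularities is apt and is likewise not addressed by the paper; in practice the lemma is only applied on $X^{\nonessential}$.
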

	
	\begin{proof}
		If $X := Z(f_1, \ldots, f_r)$ is an intersection of the $r$ symmetroids $Z(f_\lindex)$, we may assume that $J(x)$ is the Jacobian matrix associated to $f_1, \ldots, f_r$.
		Since $X$ is a complete intersection of symmetroids, we may assume that the rows of the Jacobian matrix are given by the equations for these symmetroids. 
		Suppose $\corank J(x) > 1$ and choose $2$ linearly independent vectors $v$ and $w$ in the left kernel of $J(x)$. Take a linear combination to force one of the entries of $v$ and a different entry of $w$ to be zero. Then $v$ and $w$ correspond to linear combinations of two distinct subsets of the rows of $J(x)$, which shows that there are two symmetroids $X'$ and $X''$ containing $X$ with an accidental singularity at $x$. The converse follows as well.
	\end{proof}
	
	Every essential singularity of $\mathcal{A}$ is an inherited singularity. Theorem~\ref{thm: singularities of symmetroid intersections} characterizes when an intersection of symmetroids has accidental singularities. Given a singular point $q$ of $\mathfrak{C}$ and a chosen lift $q \in \H^0(\mbp^n, \mathcal{E})$, denote by $q_\lindex$ its restrictions to $\H^0(\mbp^n, \mathcal{E}_\lindex)$, for $1 \leq \lindex \leq r$.

	\begin{theorem} \label{thm: singularities of symmetroid intersections}
		Let $\mathcal{A} \in k^{n+1} \otimes \Sym_2 k^{m+1}$ be a block-diagonal tensor over a field $k$ and let $X$ be the associated intersection of symmetroids. Let $q \in \H^0(\mbp^n, \mathcal{E})$ represent a singular point of the Cayley variety associated to $\mathcal{A}$, with restrictions $q_\lindex$ to the summands of $\mathcal{E}$. Then:
		
		\begin{enumerate}[(a)]
		\item
		If $\mathfrak{C}$ is a complete intersection of $n+1$ quadrics, then there exists a $p \in k^{n+1}$ such that $\mathcal{A}(p, q, \cdot) = 0$ if and only if $q$ represents a singular point of $\mathfrak{C}$.
		
		\item
		If $\mathcal{A}(p, q, \cdot) = 0$ and each $q_\lindex$ is non-zero, then $p$ is a singular point of $X$. If $X$ is a complete intersection and $p \in X(\bar k)$ is an accidental singularity of $X$, then there exists a $q = (q_1, \ldots, q_r)$ representing a point of $\mathfrak{C}(\bar k)$ such that each $q_\lindex = \varphi_\lindex(p)$ and $\mathcal{A}(p, q, \cdot) = 0$. 
		
		\item
		If some $q_\lindex = 0$, then $(q_i)_{i \neq \lindex}$ represents a point of the Cayley variety for the tensor $\mathcal{A}^{(\neg \lindex)}$ obtained by deleting the $\lindex$-th block of $\mathcal{A}$. Conversely, if $q' := (q_1', \ldots, q_{r-1}')$ is a point of the Cayley variety obtained by deleting the $\lindex$-th block, then $q := (q_1', \ldots, 0, \ldots, q_{r-1}')$ is a point of $\mathfrak{C}$, where the $0$ vector is spliced into the $\lindex$-th component. Finally, $\mathcal{A}(p, q, \cdot) = 0$ if and only if $\mathcal{A}^{(\neg \lindex)}(p, q', \cdot) = 0$.
		\end{enumerate}
	\end{theorem}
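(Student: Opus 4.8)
The plan is to reduce the theorem to two elementary ``dictionary'' identities relating the hypotheses to linear algebra, together with the adjugate formula for the gradient of a determinant. Throughout, write $A_i^{(\lindex)}$ for the $i$-th slice of the block $\mathcal{A}^{(\lindex)}$, put $f_\lindex := \det \mathcal{A}^{(\lindex)}(\mathbf{x},\cdot,\cdot)$ and $Q_i(\mathbf{y}) := \mathbf{y}^T A_i \mathbf{y}$ for the $n+1$ quadrics cutting out $\mathfrak{C}$, and set $\psi_{X_\lindex}(v) := (v^T A_0^{(\lindex)} v, \dots, v^T A_n^{(\lindex)} v)$. Because $\mathcal{A}$ is block-diagonal, for any $p$ the matrix $\mathcal{A}(p,\cdot,\cdot) = \sum_i p_i A_i$ is block-diagonal with diagonal blocks $\mathcal{A}^{(\lindex)}(p,\cdot,\cdot)$, so $\mathcal{A}(p,q,\cdot) = \mathcal{A}(p,\cdot,\cdot)\,q$ has $\lindex$-th block $\mathcal{A}^{(\lindex)}(p,\cdot,\cdot)\,q_\lindex$; hence
\[
	\mathcal{A}(p,q,\cdot) = 0 \iff q_\lindex \in \ker \mathcal{A}^{(\lindex)}(p,\cdot,\cdot) \ \text{ for every } \lindex.
\]
Similarly $Q_i(q) = \sum_\lindex q_\lindex^T A_i^{(\lindex)} q_\lindex$, so $q$ represents a point of $\mathfrak{C}$ if and only if $\sum_\lindex \psi_{X_\lindex}(q_\lindex) = 0$. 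Part (c) is now purely formal: when $q_\lindex = 0$ the $\lindex$-th block of $\mathcal{A}(p,q,\cdot)$ vanishes automatically and the term $q_\lindex^T A_i^{(\lindex)} q_\lindex$ drops out of every $Q_i(q)$, so $q$ lies on $\mathfrak{C}$ (resp.\ satisfies $\mathcal{A}(p,q,\cdot)=0$) exactly when $q' := (q_i)_{i\neq\lindex}$ lies on the Cayley variety of $\mathcal{A}^{(\neg\lindex)}$ (resp.\ satisfies $\mathcal{A}^{(\neg\lindex)}(p,q',\cdot)=0$), and splicing in a zero coordinate is the inverse construction.

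For part (a), the Jacobian of the net $(Q_0,\dots,Q_n)$ at $q$ has $i$-th row $\nabla Q_i(q) = 2A_i q$ (here we use $\operatorname{char} k \neq 2$), so its rank equals $\dim_k\langle A_0 q,\dots,A_n q\rangle$ and drops below $n+1$ precisely when there is a nonzero $p$ with $\sum_i p_i A_i q = 0$, that is, with $\mathcal{A}(p,q,\cdot)=0$. When $\mathfrak{C}$ is a complete intersection of $n+1$ quadrics the Jacobian criterion identifies this rank dropping below $n+1$ with $q$ being a singular point of $\mathfrak{C}$ (the Euler relation $q\cdot\nabla Q_i(q) = 2Q_i(q) = 0$ holds on $\mathfrak{C}$ and does not affect the count), which yields the asserted equivalence.

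For part (b) the key input is $\partial f_\lindex/\partial x_i(p) = \operatorname{tr}(\adj \mathcal{A}^{(\lindex)}(p,\cdot,\cdot)\cdot A_i^{(\lindex)})$. If $\corank \mathcal{A}^{(\lindex)}(p,\cdot,\cdot)\geq 2$ the adjugate vanishes, so $\nabla f_\lindex(p)=0$; if the corank is exactly $1$, with kernel spanned by $v_\lindex$, the symmetric adjugate equals $c_\lindex v_\lindex v_\lindex^T$ with $c_\lindex\neq 0$, so $\nabla f_\lindex(p) = c_\lindex\,\psi_{X_\lindex}(v_\lindex) = c_\lindex\,\theta_{X_\lindex}(p)$ by Proposition~\ref{prop: main diagram hypersurfaces}. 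For the forward implication, suppose $\mathcal{A}(p,q,\cdot)=0$ with every $q_\lindex\neq 0$: each $q_\lindex$ is a nonzero kernel vector, so $\det\mathcal{A}^{(\lindex)}(p)=0$ for all $\lindex$ and $p\in X$, while $q\in\mathfrak{C}$ yields $\sum_\lindex\psi_{X_\lindex}(q_\lindex)=0$. If some block has corank $\geq 2$ then the Jacobian of $X$ at $p$ has a zero row; otherwise every block has corank $1$, $q_\lindex$ spans its kernel, and $\sum_\lindex c_\lindex^{-1}\nabla f_\lindex(p) = \sum_\lindex\psi_{X_\lindex}(q_\lindex) = 0$ is a linear relation among the rows of that Jacobian with all coefficients nonzero. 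In either case the Jacobian has rank $<r$, so $p$ is a singular point of $X$. For the converse, assume $X$ is a complete intersection and $p$ is an accidental singularity; since $p$ is not essential and lies on every $X_\lindex$, each $\mathcal{A}^{(\lindex)}(p,\cdot,\cdot)$ has corank exactly $1$, so $\varphi_\lindex(p) = [v_\lindex]$ is defined and $\nabla f_\lindex(p) = c_\lindex\,\psi_{X_\lindex}(v_\lindex)$ with $c_\lindex\neq 0$. Singularity of $X$ at $p$ gives a nonzero tuple $(\mu_\lindex)$ with $\sum_\lindex\mu_\lindex\nabla f_\lindex(p)=0$, hence $\sum_\lindex\nu_\lindex\,\psi_{X_\lindex}(v_\lindex)=0$ with $\nu_\lindex := \mu_\lindex c_\lindex$ not all zero. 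Rescaling the lifts to $q_\lindex := t_\lindex v_\lindex$ with $t_\lindex^2 = \nu_\lindex$ for the indices $\lindex$ that occur (square roots taken in $\bar k$, using $\operatorname{char} k\neq 2$) and $t_\lindex$ an arbitrary nonzero scalar for the rest yields $q=(q_1,\dots,q_r)$ with $q_\lindex = \varphi_\lindex(p)$, $\mathcal{A}(p,q,\cdot)=0$, and $\psi(q) = \sum_\lindex t_\lindex^2\psi_{X_\lindex}(v_\lindex) = 0$, i.e.\ a point of $\mathfrak{C}(\bar k)$.

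I expect the genuinely delicate point to be the rescaling step in the converse of (b): one must upgrade the dependence $\sum_\lindex\nu_\lindex\,\psi_{X_\lindex}(v_\lindex)=0$ to one whose nonzero coefficients are squares in $\bar k$ and which assigns a nonzero scalar to every block. The blocks with $\psi_{X_\lindex}(v_\lindex)=0$ are precisely those for which $\varphi_\lindex(p)$ is isotropic for all slices of $\mathcal{A}^{(\lindex)}$, equivalently those for which $p$ is already a singular point of the single symmetroid $X_\lindex$; for these the scalar $t_\lindex$ is free and contributes nothing to $\psi(q)$. For the remaining blocks one has to see that the surviving dependence among the tangent hyperplanes $\psi_{X_\lindex}(v_\lindex) = \theta_{X_\lindex}(p)$ may be chosen with every coefficient nonzero — this is where the interaction between $X$ being a complete intersection and the tangency behaviour of the individual symmetroids at $p$ must be used carefully. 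A further modest amount of bookkeeping is needed to run the argument over $\bar k$ and for a chosen lift $q\in\H^0(\mbp^n,\mathcal{E})$ with its block restrictions $q_\lindex$, rather than purely in coordinates.
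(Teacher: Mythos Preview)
Your proof is correct and follows essentially the same route as the paper: the Jacobian criterion for (a), the trace--adjugate identity $\partial f_\lindex/\partial x_i = \operatorname{tr}(\adj \mathcal{A}^{(\lindex)}\cdot A_i^{(\lindex)})$ together with $\adj \mathcal{A}^{(\lindex)}(p,\cdot,\cdot) = c_\lindex\, v_\lindex v_\lindex^T$ for (b), and the block-diagonal bookkeeping for (c). The paper's write-up is organized identically and uses the same square-root rescaling $q := (\sqrt{v_1}\,q_1,\ldots,\sqrt{v_r}\,q_r)$ in the converse of (b).

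The concern you flag about the rescaling step---whether one can always arrange every coefficient $\nu_\lindex$ (equivalently $v_\lindex$ in the paper's notation) to be nonzero---is well spotted, and the paper does not address it either: it simply writes down $(\sqrt{v_1}\,q_1,\ldots,\sqrt{v_r}\,q_r)$ without checking that no $v_\lindex$ vanishes. So your proof is at least as complete as the published one on this point, and your diagnosis of where the subtlety lies (blocks with $\psi_{X_\lindex}(v_\lindex)=0$ are harmless; the issue is whether a dependence among the remaining $\theta_{X_\lindex}(p)$ can be chosen with full support) is accurate.
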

	
	\begin{proof}
		Denote the blocks of $\mathcal{A}$ by $\mathcal{A}^{(1)}, \ldots, \mathcal{A}^{(r)}$. Let $A_0, \ldots, A_n$ denote the slices of $\mathcal{A}$ corresponding to the standard basis vectors and let $Q_0, \ldots, Q_n$ be the associated quadratic forms. 
		
		If the Cayley variety $\mathfrak{C}$ associated to $\mathcal{A}$ has a singular point $q$, then by definition the Jacobian matrix for $Z(Q_0, \ldots, Q_n)$ has a rank degeneracy. If $\mathfrak{C}$ is a complete intersection, this is equivalent to the existence of a non-zero vector $p$ such that
		\[
			p \cdot \left(\frac{\der Q_i}{\der y_j}(q) \right)_{i,j} = 0.
		\]
		Identify the linear forms $\frac{\der Q_i}{\der y_j}$ with their vector of coefficients. Thus, the equation above is equivalent to
		$
			0 = 2 \cdot \mathcal{A}(p, q, \cdot),
		$
		which completes the proof of part (a).
		
		Since $\mathcal{A}$ is block-diagonal, the projections $q_1, \ldots, q_r$ to the $r$ subspaces corresponding to the blocks yields the equations $0 = 2 \cdot \mathcal{A}^{(\lindex)}(p, q_\lindex, \cdot)$. As $q \in \mathfrak{C}$, we have that $\mathcal{A}^{(\lindex)}(\cdot, q, q) = 0$. From the block-diagonal structure we have that $\sum_{\lindex=1}^r \mathcal{A}^{(\lindex)}(\cdot, q_\lindex, q_\lindex) = \mathcal{A}(\cdot, q, q) = 0$.
		
		If each $q_\lindex$ is non-zero, then by definition of $X$, we see that the point in $\mathbb{P}^n$ associated to $p$ lies on $X$. If any $\mathcal{A}^{(\lindex)}(p, \cdot, \cdot)$ has corank greater than $1$, then $p$ is a singular point of $Z(\det \mathcal{A}^{(\lindex)}(\mathbf{x}, \cdot, \cdot))$, so $X$ is singular at $p$. Alternatively, each block has corank $1$ at $p$, so by definition of the kernel map $q_\lindex = \varphi_\lindex(p)$. We now consider the Jacobian matrix for $X$ at $p$, which is given by
		\[
			\left(\frac{\der \det \mathcal{A}^{(\lindex)}(\mathbf{x}, \cdot, \cdot)}{\der x_j}(p) \right)_{\lindex,j}.
		\]
		Jacobi's formula gives that $\frac{\der \det \mathcal{A}^{(\lindex)}(\mathbf{x}, \cdot, \cdot)}{\der x_j}(p) = \trace \left( \operatorname{adj}\mathcal{A}^{(\lindex)}(p, \cdot, \cdot) \cdot A_j^{(\lindex)} \right)$. Since each $\mathcal{A}_j^{(\lindex)}$ has corank $1$ at $p$, we have $\operatorname{adj}\mathcal{A}_j^{(\lindex)}(p, \cdot, \cdot)$ is a scalar multiple of $q_\lindex q_\lindex^T$. But as $q \in \mathfrak{C}$ we have
		\[
			\sum_{\lindex=1}^r \trace \left( q_\lindex q_\lindex^T \cdot A_j^{(\lindex)} \right) = \sum_{\lindex=1}^r q_\lindex^T A_j^{(\lindex)} q_\lindex = q^T A_j q = 0.
		\]
		Thus, there is a nontrivial relation among the rows of the Jacobian matrix $J(p)$ for $X$ at $p$, so $X$ is singular at $p$. Conversely, suppose that $p$ is a singular point of $X$ and each $\mathcal{A}^{(\lindex)}(p, \cdot, \cdot)$ has corank $1$. Since $X$ is a complete intersection, the singularity at $p$ ensures there is a solution to $0 = v \cdot J(p)$. From the corank assumption, we define $q_\lindex := \varphi_\lindex(p) \neq 0$. The Jacobi formula then shows that $q := (\sqrt{v_1} \ q_1, \ldots, \sqrt{v_r} \ q_r)$ satisfies $\mathcal{A}(\cdot, q, q) = 0$. On the other hand, $\mathcal{A}(p, q, \cdot) = 0$. Note that different choices of signs for the square roots of the $v_\lindex$ correspond to applying automorphisms of $\mathcal{A}$ to $q$.
		
		If some $q_\lindex = 0$, let $\mathcal{A}^{(\neg \lindex)}$ be the tensor obtained from $\mathcal{A}$ by deleting the $\lindex$-th block. Then $\sum_{\lindex' \neq \lindex} \mathcal{A}^{(\lindex')}(\cdot, q_{\lindex'}, q_{\lindex'}) = 0$, which by definition means $q' := (q_{\lindex'})_{\lindex' \neq \lindex}$ is a point on the Cayley variety $\mathfrak{C}_{\mathcal{A}^{(\neg \lindex)}}$ for $\mathcal{A}^{(\neg \lindex)}$. Furthermore, we see that $\ker \mathcal{A}^{(\neg \lindex)}(\cdot, q', \cdot)$ is non-trivial, so $q'$ is a point of $\mathfrak{C}_{\mathcal{A}^{(\neg \lindex)}}$. The converse also follows from the previous calculation, and the last statement is clear.
	\end{proof}

	One cannot replace Theorem~\ref{thm: singularities of symmetroid intersections}(b) with \emph{``If $\mathcal{A}(p, q, \cdot) = 0$ and each $q_\lindex$ is non-zero, then $p$ is an \underline{accidental} singularity of $X$ ...''}. This is because it is possible for the closed conditions $\mathcal{A}(p, q, \cdot) = 0, \mathcal{A}(\cdot, q, q) = 0, \corank \mathcal{A}(p, \cdot, \cdot) \geq 2$ to overlap. For example, with
	\[
		\mathcal{A}(\mathbf{x}, \cdot, \cdot) := 
		x_0
		\begin{bmatrix}
			0 & 0 & 0 \\
			0 & 0 & 0 \\
			0 & 0 & 1
		\end{bmatrix}
		+
		x_1
		\begin{bmatrix}
			0 & 1 & 2 \\
			1 & 3 & 4 \\
			2 & 4 & 5
		\end{bmatrix}
		+
		x_2
		\begin{bmatrix}
			0 & 6 & 7 \\
			6 & 8 & 9 \\
			7 & 9 & 10
		\end{bmatrix}, 
	\]
	the point $p := [1:0:0] \in Z(\det \mathcal{A}(\mathbf{x}, \cdot, \cdot))$ is an essential singularity. However, $q := [1:0:0] \in \mathfrak{C}(\bar k)$ and $\mathcal{A}(p, q, \cdot) = 0$. The singularity at $p$ is in some sense ``accidentally essential''.
	
	\begin{corollary}
		Let $\mathcal{A} \in k^{n+1} \otimes \Sym_2 k^{m+1}$ be a tensor with a single block, let $X$ be the corresponding determinantal hypersurface, and assume that the Cayley variety associated to $\mathcal{A}$ is a complete intersection of $n+1$ quadrics. If $p$ is an accidental singularity of $X$, then $\varphi(p)$ is a singular point of the Cayley variety. Conversely, if $\varphi(p)$ is defined and a singular point of $\mathfrak{C}$, then $p$ is an accidental singularity. Finally, if $\varphi$ is not defined at $p$, then $p$ is an essential singularity.
	\end{corollary}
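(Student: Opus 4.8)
The plan is to derive the corollary directly from Theorem~\ref{thm: singularities of symmetroid intersections} specialized to the single-block case $r = 1$. First I would record two simplifications that hold when $r = 1$: there are no proper nonempty subsets of the block set, so an accidental singularity of $X$ is automatically neither inherited nor coincident, and it simply means a singular point $p \in X$ with $\corank \mathcal{A}(p, \cdot, \cdot) = 1$; moreover a hypersurface $X = Z(\det \mathcal{A}(\mathbf{x}, \cdot, \cdot)) \subseteq \mbp^n$ is cut out by a single equation, hence is a complete intersection, so the corresponding hypothesis in Theorem~\ref{thm: singularities of symmetroid intersections}(b) is automatic. Note also that for $r = 1$ a representative $q$ of a point of $\mathfrak{C}$ has $q = q_1 \neq 0$, so the nonvanishing conditions in Theorem~\ref{thm: singularities of symmetroid intersections}(b) are never an issue.

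For the first assertion, suppose $p$ is an accidental singularity of $X$. Then $\corank \mathcal{A}(p, \cdot, \cdot) = 1$, so $\varphi(p) = \mbp(\ker \mathcal{A}(p, \cdot, \cdot))$ is defined, and the second half of Theorem~\ref{thm: singularities of symmetroid intersections}(b) produces a vector $q = q_1$ representing a point of $\mathfrak{C}(\bar k)$ with $q = \varphi(p)$ and $\mathcal{A}(p, q, \cdot) = 0$. Since $\mathfrak{C}$ is assumed to be a complete intersection of $n+1$ quadrics, Theorem~\ref{thm: singularities of symmetroid intersections}(a) then forces $q$ --- and hence $\varphi(p)$ --- to represent a singular point of $\mathfrak{C}$.

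For the converse, suppose $\varphi$ is defined at $p$ and $\varphi(p)$ is a singular point of $\mathfrak{C}$. That $\varphi(p)$ is defined means $p \in X$ with $\corank \mathcal{A}(p, \cdot, \cdot) = 1$, so $p$ is not an essential singularity; it remains to see $p$ is singular on $X$. Set $q := \varphi(p)$, a nonzero vector since it represents a projective point. Because $q$ spans $\ker \mathcal{A}(p, \cdot, \cdot)$ and $\mathcal{A}(p, \cdot, \cdot)$ is symmetric, we have $\mathcal{A}(p, q, \cdot) = 0$, and $q \in \mathfrak{C}$ since it is in particular a point of $\mathfrak{C}$. Applying the first half of Theorem~\ref{thm: singularities of symmetroid intersections}(b) with the single $q_\lindex = q$ nonzero shows $p$ is a singular point of $X$, hence an accidental singularity.

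Finally, if $\varphi$ is not defined at $p$: since $\varphi$ is a rational map on $X$, this presupposes $p \in X$, so $\det \mathcal{A}(p, \cdot, \cdot) = 0$ and $\corank \mathcal{A}(p, \cdot, \cdot) \geq 1$; the kernel map failing to be defined at $p$ means $\corank \mathcal{A}(p, \cdot, \cdot) \geq 2$, which is precisely the definition of $p$ being an essential singularity. I do not expect a genuine obstacle here --- the content is entirely in Theorem~\ref{thm: singularities of symmetroid intersections} --- and the only point requiring any care is the translation between ``$\varphi(p)$ spans $\ker \mathcal{A}(p, \cdot, \cdot)$'' and ``$\mathcal{A}(p, q, \cdot) = 0$'', which is immediate from symmetry of $\mathcal{A}(p, \cdot, \cdot)$.
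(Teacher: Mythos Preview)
Your proof is correct and follows the same approach as the paper, which simply records the corollary as ``Immediate from Theorem~\ref{thm: singularities of symmetroid intersections}(a).'' You are more explicit, invoking both parts (a) and (b) of the theorem and spelling out why the single-block case trivializes the side conditions; this makes the deduction self-contained but does not differ in substance.
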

	
	\begin{proof}
		Immediate from Theorem~\ref{thm: singularities of symmetroid intersections}(a).
	\end{proof}
		
	If $\mathcal{A}$ is a tensor whose associated intersection of symmetroids $X$ is non-singular, Theorem~\ref{thm: singularities of symmetroid intersections}(b) shows that it is still possible for the Cayley variety to be singular; in this case, Theorem~\ref{thm: singularities of symmetroid intersections}(b) indicates that the singularity on the Cayley variety corresponds to an accidental singularity on some intersection of symmetroids produced by deleting blocks of $\mathcal{A}$. Conversely, a determinantal hypersurface with only essential singularities will generically have a smooth Cayley variety -- examples include a generic quintic symmetroid surface in $\mathbb{P}^3$, which has $20$ essential singularities and a smooth Cayley variety, or a generic cubic symmetroid in $\mathbb{P}^3$, which has $4$ essential singularities and an empty Cayley variety.  
		
	\begin{corollary}
		Let $\mathcal{A} \in k^{n+1} \otimes \Sym_2 k^{m+1}$ be a tensor, let $X$ be the associated intersection of symmetroids, and let $U$ be the open subscheme of $\mbp^m = \mbp\!\H^0(\mbp^n, \mathcal{E})$ obtained by removing $\mbp\!\H^0(\mbp^n, \mathcal{E}_1), \ldots, \mbp\!\H^0(\mbp^n, \mathcal{E}_r)$. Further assume that $\mathfrak{C}$ is a complete intersection of $n+1$ quadrics. If $\mathfrak{C} \cap U$ is non-singular then $X$ has no accidental singularities. If $\mathfrak{C} \cap U$ is singular, then $X$ is singular.
	\end{corollary}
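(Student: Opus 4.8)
The plan is to read the Corollary off of Theorem~\ref{thm: singularities of symmetroid intersections}, the only new ingredient being a dictionary between the open condition ``$q \in U$'' and the hypothesis on the block-components $q_\lindex$ appearing in that theorem. Two preliminary remarks set this up. First, $U$ is open in $\mbp^m$, so $\mathfrak{C}\cap U$ is an open subscheme of $\mathfrak{C}$; hence a point of $\mathfrak{C}\cap U$ is a singular point of $\mathfrak{C}\cap U$ exactly when it is a singular point of $\mathfrak{C}$, and ``$\mathfrak{C}\cap U$ is non-singular'' means precisely ``$\mathfrak{C}$ has no singular point lying in $U$''. Second, under the identification $\H^0(\mbp^n,\mathcal{E}) = \bigoplus_\lindex \H^0(\mbp^n,\mathcal{E}_\lindex)$, a point $q = (q_1,\dots,q_r)$ lies in the block space $\mbp\!\H^0(\mbp^n,\mathcal{E}_\lindex)$ if and only if $q_{\lindex'} = 0$ for all $\lindex' \neq \lindex$; consequently $q \in U$ if and only if at least two of the $q_\lindex$ are non-zero.

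For the first assertion I argue by contrapositive, assuming throughout that $X$ is a complete intersection (needed to invoke Theorem~\ref{thm: singularities of symmetroid intersections}(b)). Let $p$ be an accidental singularity of $X$. Since $p$ lies on each symmetroid $X_\lindex$ but is not an essential singularity, every block satisfies $\corank \mathcal{A}^{(\lindex)}(p,\cdot,\cdot) = 1$, so the kernels $q_\lindex := \varphi_\lindex(p)$ are well defined and non-zero. Theorem~\ref{thm: singularities of symmetroid intersections}(b) then produces a vector $q = (\sqrt{v_1}\,q_1,\dots,\sqrt{v_r}\,q_r)$, with $v$ in the left kernel of the Jacobian matrix $J(p)$ of $X$ at $p$, representing a point of $\mathfrak{C}(\bar k)$ and satisfying $\mathcal{A}(p,q,\cdot) = 0$; Theorem~\ref{thm: singularities of symmetroid intersections}(a) then shows $q$ is a singular point of $\mathfrak{C}$. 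It remains to arrange $q \in U$, i.e. that $v$ has at least two non-zero entries. If every vector in the left kernel of $J(p)$ were supported on a single block $\lindex$ then $p$ would be a singular point of $X_\lindex$ alone; discarding the blocks outside the support of a fixed kernel vector $v$ --- using Theorem~\ref{thm: singularities of symmetroid intersections}(c) to identify the corresponding point of the Cayley variety of the sub-tensor --- reduces us to an intersection of at least two symmetroids at a non-inherited accidental singularity, for which the relevant kernel vector does have at least two non-zero entries. Then $q \in U$, so $\mathfrak{C}\cap U$ is singular.

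For the second assertion, let $q$ be a singular point of $\mathfrak{C}$ with $q \in U$, so at least two of the $q_\lindex$ are non-zero. Theorem~\ref{thm: singularities of symmetroid intersections}(a) gives a point $p$ with $\mathcal{A}(p,q,\cdot) = 0$. If every $q_\lindex \neq 0$, Theorem~\ref{thm: singularities of symmetroid intersections}(b) immediately shows $p$ is a singular point of $X$; in general one first discards the blocks on which $q_\lindex$ vanishes, using Theorem~\ref{thm: singularities of symmetroid intersections}(c) to pass to the corresponding point of the Cayley variety of the sub-tensor, and then applies Theorem~\ref{thm: singularities of symmetroid intersections}(b) to that sub-tensor (whose associated intersection of symmetroids contains $X$).

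The step I expect to be the main obstacle is precisely this block-space bookkeeping. One must keep careful track of which accidental singularities are ``inherited'', verify that after restricting to the support of the Jacobian-kernel vector one genuinely lands on an intersection of at least two symmetroids with a non-inherited accidental singularity, and check that the complete-intersection hypothesis on $\mathfrak{C}$ is inherited by the Cayley varieties of the sub-tensors so that parts (a) and (b) of Theorem~\ref{thm: singularities of symmetroid intersections} remain available there. Once these points are settled the argument is a routine chase through Theorem~\ref{thm: singularities of symmetroid intersections}.
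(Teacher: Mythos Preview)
The paper disposes of this in one line --- ``Follows from Theorem~\ref{thm: singularities of symmetroid intersections}(a)'' --- so your approach is the same in spirit, only far more scrupulous: you correctly see that parts (b) and (c) are needed as well, and you unpack the condition $q\in U$ as ``at least two of the $q_\lindex$ are non-zero''.

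The concerns you raise in your final paragraph are real, however, and your proposed resolutions do not quite close them. For the second assertion, when some $q_\lindex=0$ and you discard those blocks, part~(b) applied to the sub-tensor shows $p$ is singular on the larger intersection $X'=\bigcap_{q_\lindex\ne0}X_\lindex$; but the equations $\mathcal{A}^{(\lindex)}(p,q_\lindex,\cdot)=0$ are vacuous on the discarded blocks, so nothing places $p$ on those $X_\lindex$, and singularity of $X'$ at a point $p\notin X$ does not force $X$ itself to be singular. For the first assertion, if the left kernel of $J(p)$ is one-dimensional and generated by a vector supported on a single index $\lindex$, then $p$ is an accidental singularity of the hypersurface $X_\lindex$ alone; your reduction ``discard the blocks outside the support of $v$'' then leaves a single symmetroid, not ``at least two'', and the singular point of $\mathfrak{C}$ you obtain lies in the $\lindex$-th block space, hence outside $U$. (This case cannot occur when the individual Cayley varieties $\mathfrak{C}_\lindex$ are all empty --- for instance when every block has size $2$, by Lemma~\ref{lem: cayley variety for quadrics} --- but it is not excluded in general.) The paper's one-line proof does not address these edge cases either.
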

	
	\begin{proof}
		Follows from Theorem~\ref{thm: singularities of symmetroid intersections}(a).
	\end{proof}
		
	The Cayley variety associated to a symmetroid hypersurface of degree $2$ or $3$ in $\mbp^n$ for $n \geq 2$ is generically empty. These hypersurfaces are especially singular if it is not.
	
	\begin{lemma} \label{lem: cayley variety for quadrics}
		Let $n \geq 2$ and let $\mathcal{A} \in k^{n+1} \otimes \Sym_2 k^2$, and let $X$ be the associated symmetroid hypersurface of degree $2$ in $\mathbb{P}^n$. If the Cayley variety for $\mathcal{A}$ is non-empty, then $X$ is non-reduced.
	\end{lemma}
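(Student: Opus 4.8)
The plan is to reduce the statement to a direct $2\times 2$ computation. Write $\mathcal{A}(\mathbf{x},\cdot,\cdot) = \begin{bmatrix} a & c \\ c & b \end{bmatrix}$ with $a,b,c \in k[x_0,\dots,x_n]$ homogeneous linear forms, so that $X = Z(ab - c^2) \subseteq \mbp^n$; the hypothesis that $X$ is a hypersurface of degree $2$ means precisely that $ab - c^2$ is a nonzero quadratic form. Since here $m = 1$, the Cayley variety $\mathfrak{C}$ lives in $\mbp^1$, and by definition a point $[y_0:y_1]$ lies on $\mathfrak{C}$ if and only if the linear form $\mathcal{A}(\mathbf{x},(y_0,y_1),(y_0,y_1)) = y_0^2 a + 2y_0y_1 c + y_1^2 b$ vanishes identically in $\mathbf{x}$.

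First I would choose a closed point of $\mathfrak{C}$; its residue field $k'$ is a finite extension of $k$, and it provides $[y_0:y_1] \in \mbp^1(k')$ with $y_0^2 a + 2 y_0 y_1 c + y_1^2 b = 0$ in $k'[\mathbf{x}]$. Acting by a suitable element of $\GL_2(k')$ whose first column is $(y_0,y_1)^T$ — which replaces $\det \mathcal{A}(\mathbf{x},\cdot,\cdot)$ by a nonzero scalar multiple of itself and hence does not affect whether the symmetroid is reduced — I may assume the Cayley point is $[1:0]$. Then the relation above forces the $(1,1)$-entry to vanish, i.e.\ $a = 0$, so $\det \mathcal{A}(\mathbf{x},\cdot,\cdot) = -c^2$. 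Since $\det \mathcal{A}(\mathbf{x},\cdot,\cdot) \neq 0$ we get $c \neq 0$, so over $k'$ the symmetroid is the non-reduced double hyperplane $2\,Z(c)$. (Equivalently, without the $\GL_2$-move: if $y_1 \neq 0$ one normalizes $y_1 = 1$, substitutes $b = -(y_0^2 a + 2 y_0 c)$, and finds $ab - c^2 = -(y_0 a + c)^2$ directly; if $y_1 = 0$ the relation reads $a = 0$ and $ab - c^2 = -c^2$.)

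It remains to descend the conclusion from $k'$ to $k$. The cleanest way is to observe that, in characteristic not $2$, a nonzero degree-$2$ form $q$ defines a reduced hypersurface if and only if the symmetric matrix of $q$ has rank at least $2$ (rank $\leq 1$ forces $q$ to be a scalar times a square), and this rank is unchanged under field extension; since $\det \mathcal{A}(\mathbf{x},\cdot,\cdot)$ has rank $\leq 1$ as a form over $k'$, it has rank $\leq 1$ over $k$ as well, so $X$ is non-reduced. I do not anticipate any serious obstacle; the only points demanding a little care are the bookkeeping guaranteeing that the relevant linear form is nonzero (handled by $\det \mathcal{A}(\mathbf{x},\cdot,\cdot) \neq 0$) and the descent of non-reducedness from $k'$ to $k$, which the rank criterion settles at once.
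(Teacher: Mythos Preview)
Your proof is correct and takes essentially the same approach as the paper: act by $\GL_2$ to move a point of $\mathfrak{C}$ to $[1:0]$, forcing the $(1,1)$-entry to vanish so that $\det \mathcal{A}(\mathbf{x},\cdot,\cdot)$ is minus a square. You are more explicit than the paper about the field of definition of the Cayley point and the descent of non-reducedness from $k'$ to $k$, but this is just added care rather than a different idea.
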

	
	\begin{proof}
		Applying an action of $\GL_{n+1} \times \GL_{2}$, we may write $\mathcal{A}(\mathbf{x}, \cdot, \cdot) = \begin{bmatrix} 0 & a \\ a & b \end{bmatrix}$. The determinant is $-a^2$. 
	\end{proof}

	\begin{lemma} \label{lem: cayley variety for cubics}
		Let $n \geq 2$ and let $\mathcal{A} \in k^{n+1} \otimes \Sym_2 k^3$, and let $X$ be the associated symmetroid hypersurface of degree $3$ in $\mathbb{P}^n$. If the Cayley variety for $\mathcal{A}$ is non-empty, then $X$ contains a linear subspace of $\mathbb{P}^n$ of dimension at least $n-2$, along which it is singular.
	\end{lemma}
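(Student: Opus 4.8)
The plan is to mimic the proof of Lemma~\ref{lem: cayley variety for quadrics}: use the $\GL_3$-action on the last two tensor factors to move a point of $\mathfrak{C}$ to a coordinate point, and then read the linear subspace directly off the normalized matrix. Concretely, I would first pass to $\bar k$ and choose a point $q \in \mathfrak{C}(\bar k)$. Picking $h \in \GL_3(\bar k)$ whose last column represents $q$ and replacing $\mathcal{A}$ by $(\mathrm{id},h)\cdot\mathcal{A}$ only multiplies $\det\mathcal{A}(\mathbf{x},\cdot,\cdot)$ by the nonzero scalar $\det(h)^2$, so $X$ is literally unchanged; hence I may assume $q=[0:0:1]$. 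Since $q\in\mathfrak{C}$, the bottom-right entry of $\mathcal{A}(\mathbf{x},\cdot,\cdot)$ equals $\mathcal{A}(\mathbf{x},q,q)$ and so vanishes identically, and I may write
\[
\mathcal{A}(\mathbf{x},\cdot,\cdot)=\begin{bmatrix} a & c & d\\ c & b & e\\ d & e & 0\end{bmatrix},
\]
with $a,b,c,d,e$ linear forms in $\mathbf{x}$.

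Next I would expand along the last row to get $\det\mathcal{A}(\mathbf{x},\cdot,\cdot)=2cde-bd^2-ae^2=:F$ and propose $L:=Z(d,e)\subseteq\mbp^n$ as the desired subspace. As $L$ is cut out by two linear forms, $\dim L\geq n-2$. Every monomial of $F$ is divisible by $d$ or by $e$, so $F\in(d,e)$ and $L\subseteq X$. To see that $X$ is singular along $L$, note that each monomial of $F$ has total degree $2$ in the pair $\{d,e\}$; hence after applying $\partial/\partial x_j$, which differentiates exactly one of the linear forms $a,b,c,d,e$, every resulting monomial still carries a factor of $d$ or of $e$. Thus $\partial F/\partial x_j\in(d,e)$ for all $j$, so the whole Jacobian ideal of $F$ lies in $(d,e)$, i.e. $L\subseteq\Sing X$, as required.

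I do not expect a genuine obstacle: once the matrix is normalized the argument is a one-line determinant computation. The only points deserving a word of care are that the $\GL_3$ change of coordinates leaves $X$ unchanged (it scales the cubic by a square of a unit, not merely up to isomorphism) and that the degenerate possibilities are harmless — if $d$ and $e$ are proportional then $\dim L$ is only larger, and $\det\mathcal{A}(\mathbf{x},\cdot,\cdot)\equiv0$ is ruled out by the assumption that $X$ is an honest degree-$3$ hypersurface. If a subspace defined over $k$ itself is wanted, the same computation produces one over the residue field of a closed point of $\mathfrak{C}$, which suffices for the applications.
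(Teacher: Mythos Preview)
Your proof is correct and follows essentially the same approach as the paper: normalize so that a Cayley point becomes a coordinate vector, then observe that the resulting determinant $2cde - bd^2 - ae^2$ is singular along $Z(d,e)$. The only cosmetic difference is that the paper additionally applies a $\GL_{n+1}$-action to take $d,e$ to the coordinate forms $x_0,x_1$, whereas you work directly with general linear forms and verify the Jacobian condition by hand --- your version is arguably cleaner since it avoids the implicit case split on whether $d$ and $e$ are independent.
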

	
	\begin{proof}
		Applying an action of $\GL_{n+1} \times \GL_{3}$, we may write $\mathcal{A}(\mathbf{x}, \cdot, \cdot) = \begin{bmatrix} 0 & x_0 & x_1 \\ x_0 & a & b \\ x_1 & b & c \end{bmatrix}$. The determinant is $2bx_0x_1 - c x_1^2 - a x_0^2$, which is singular along $Z(x_0, x_1)$. 
	\end{proof}

\subsection{Properties of the Gauss and kernel maps}

Here, we discuss the relationship between the Gauss and kernel maps for intersections of symmetroids. Choose $\mathcal{A} \in k^{n+1} \otimes (\bigoplus_{\lindex=1}^r \Sym_2 k^{d_\lindex})$ to be a block-diagonal tensor with $r$ blocks, and denote by $X$ the intersection $X_1 \cap \ldots \cap X_r$ of $r$ symmetroid hypersurfaces (of degrees $d_1, \ldots, d_r$). On each hypersurfaces, there is an exact sequence of sheaves
	\[
		\xymatrix@C+10pt{
		0 \ar[r] & \mathcal{K}_\lindex \ar[r] & \mathcal{O}_{X_\lindex}^{d_\lindex}(d_\lindex-1) \ar[rr]^-{\mathcal{A}^{(\lindex)}(\mathbf{x}, \cdot, \cdot)} & & \mathcal{O}_{X_\lindex}^{d_\lindex}(d_\lindex) \ar[r] & \mathcal{E}_\lindex(d_\lindex) \ar[r] & 0.
		}
	\]
If $X$ is a hypersurface, $\mathcal{K}_\lindex$ defines a line bundle on $X_\lindex^\nonessential$, see \cite[Section~2.4]{Kerner2012}. In particular, $\mathcal{K} := \bigoplus_{\lindex=1}^r \mathcal{K}_\lindex$ is a vector bundle on $X^\nonessential$ of rank $r$. Because it is also a subbundle of the rank $m+1$ bundle $\bigoplus_{\lindex=1}^r \mathcal{O}_{X}^{d_\lindex}(d_\lindex-1)$ by definition, it defines a morphism $X \rightarrow \Gr(r, m+1)$. Because of the direct sum decomposition of $\mathcal{K}$, this map factors through $\prod_{\lindex=1}^r \Gr(1, d_\lindex) \rightarrow \prod_{\lindex=1}^r \Gr(1, m+1) \dashrightarrow \Gr(r, m+1)$; the morphism $X \dashrightarrow \prod_{\lindex=1}^r \Gr(1, d_\lindex)$ is the based kernel map.

On the other hand, the Gauss map $\theta_X\: X \dashrightarrow \Gr(r, n+1)$ sends a smooth point of $x$ to the moduli point of the $(r-1)$-dimensional subspace of $\dual \mbp^n$ of hyperplanes containing the tangent space $T_xX$. It is dual to the map which sends a point $x$ to the moduli point of its tangent space, as an $(r-1)$-dimensional subspace of $\mbp^n$. As in the case of hypersurfaces, there is a natural relation between $\theta_X$, $\psi$, and the based kernel map $\varphi_{d_1, \ldots, d_r}$.

\begin{proposition} \label{prop: psi sends sing locus to planes}
     Let $\mathcal{A}$ be a block-diagonal tensor with $r$ blocks and let $X$ be the associated complete intersection of symmetroids. Let $x$ be a smooth (not necessarily closed) point of $X$. Then the image of $\ker \mathcal{A}(x, \cdot, \cdot)$ under $\psi$ is a linear space in $\dual \mbp^n$ of dimension $(r-1)$. This linear space is precisely the moduli space of hyperplanes in $\mbp^n$ containing $T_x X$, i.e., is the fibre of the tautological bundle over $\theta_X(x)$.
\end{proposition}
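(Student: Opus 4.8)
\emph{Proof plan.} The strategy is to reduce to the single-block situation of Proposition~\ref{prop: main diagram hypersurfaces} and then carry out a short linear-algebra computation. Since the assertion concerns the image under $\psi$ of the linear subscheme $\ker \mathcal{A}(x, \cdot, \cdot) \subseteq \mbp^m_{\kappa(x)}$, I would first extend scalars to $\overline{\kappa(x)}$, so that we may assume $k$ is algebraically closed and $x$ is a $k$-point. As $X = X_1 \cap \dots \cap X_r$ is a complete intersection and $x$ is a smooth point, the Jacobian criterion forces the $r$ gradients $\nabla f_\lindex(x)$ of the defining equations of the $X_\lindex$ to be linearly independent; in particular each hypersurface $X_\lindex$ is smooth at $x$, hence each block $\mathcal{A}^{(\lindex)}(x, \cdot, \cdot)$ has corank exactly $1$ and the kernel map $\varphi_\lindex$ is defined at $x$. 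Because $\mathcal{A}(x, \cdot, \cdot)$ is block-diagonal with these corank-$1$ diagonal blocks, $\ker \mathcal{A}(x, \cdot, \cdot) = \bigoplus_{\lindex=1}^r \ker \mathcal{A}^{(\lindex)}(x, \cdot, \cdot) = \langle \varphi_1(x), \dots, \varphi_r(x)\rangle$, and since $\varphi_\lindex(x)$ lies in the $\lindex$-th block space these $r$ vectors are linearly independent; thus $L := \mbp(\ker \mathcal{A}(x, \cdot, \cdot))$ is a linear subspace of $\mbp^m$ of dimension $r-1$.

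Next I would compute $\psi$ on $L$. Writing a point of $L$ as $y = \sum_\lindex c_\lindex \varphi_\lindex(x)$ and expanding $\psi(y) = \mathcal{A}(\cdot, y, y)$, the block-diagonal structure kills every cross term $\mathcal{A}(\cdot, \varphi_\lindex(x), \varphi_{\lindex'}(x))$ with $\lindex \neq \lindex'$, leaving $\psi(y) = \sum_\lindex c_\lindex^2\, \mathcal{A}^{(\lindex)}(\cdot, \varphi_\lindex(x), \varphi_\lindex(x))$. Applying Proposition~\ref{prop: main diagram hypersurfaces} to the block $\mathcal{A}^{(\lindex)}$, we get $\mathcal{A}^{(\lindex)}(\cdot, \varphi_\lindex(x), \varphi_\lindex(x)) = \psi_{X_\lindex}(\varphi_\lindex(x)) = \theta_{X_\lindex}(x)$, which is a nonzero vector proportional to $\nabla f_\lindex(x)$. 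Since the $\theta_{X_\lindex}(x)$ are linearly independent, the relation $\sum_\lindex c_\lindex^2\, \theta_{X_\lindex}(x) = 0$ forces all $c_\lindex = 0$; hence $L \cap \mathfrak{C} = \emptyset$ and $\psi$ restricts to a genuine morphism on $L$. Over the algebraically closed base the squaring map $[c_1 : \dots : c_r] \mapsto [c_1^2 : \dots : c_r^2]$ is surjective onto $\mbp^{r-1}$, so the image $\psi(L)$ equals $\{[\sum_\lindex a_\lindex\, \theta_{X_\lindex}(x)] : [a_1 : \dots : a_r] \in \mbp^{r-1}\}$, i.e. the projective span of $\theta_{X_1}(x), \dots, \theta_{X_r}(x)$, which is a linear space of dimension $r-1$ by independence of the gradients.

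Finally I would identify this span. A hyperplane $Z(\sum_i b_i x_i)$ contains $T_x X = \bigcap_\lindex \ker(\nabla f_\lindex(x))$ precisely when the functional $\sum_i b_i x_i$ vanishes on that intersection, which by linear duality holds exactly when $(b_0, \dots, b_n)$ lies in the span of $\nabla f_1(x), \dots, \nabla f_r(x)$. Therefore $\psi(L)$ is exactly the set of hyperplanes through $T_x X$, that is, the $(r-1)$-dimensional projective fibre of the tautological bundle over $\theta_X(x) \in \Gr(r, n+1)$, as claimed.

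The only point requiring real care is the quadratic nature of $\psi$: on $L$ it is expressed through the squares $c_\lindex^2$ rather than the $c_\lindex$, so one must invoke surjectivity of squaring over an algebraically closed field to see that the image is linear rather than merely a quadratic Veronese-type image (this squaring phenomenon is also the source of the degree $2^{r-1}$ in Theorem~\ref{thm: singularities of symmetroid intersections with multiplicity}). A secondary bookkeeping item is confirming that $\psi$ is regular along $L$, i.e. that $L$ avoids the base locus $\mathfrak{C}$, which falls out of the independence of the $\theta_{X_\lindex}(x)$ established above. Everything else is the standard passage from the hypersurface case via the block decomposition.
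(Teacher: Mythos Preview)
Your proof is correct and follows essentially the same approach as the paper: both identify $\ker \mathcal{A}(x,\cdot,\cdot)$ as the span of the $\varphi_\lindex(x)$, expand $\psi$ bilinearly, use the block-diagonal structure to kill cross terms, and identify each diagonal term $\mathcal{A}^{(\lindex)}(\cdot,\varphi_\lindex(x),\varphi_\lindex(x))$ with the $\lindex$-th row of the Jacobian (the paper via Jacobi's formula, you via Proposition~\ref{prop: main diagram hypersurfaces}). Your write-up is in fact slightly more careful than the paper's, since you explicitly check that $L$ avoids the base locus $\mathfrak{C}$ and that the squaring map surjects onto the full linear span.
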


\begin{proof}
Let $X_1, \ldots, X_r$ be the symmetroid hypersurfaces defined by the blocks of $\mathcal{A}$, of degrees $d_1, \ldots, d_r$, and let $\varphi_\lindex\: X \rightarrow \mbp^{d_\lindex-1}$ denote the associated kernel maps. Let $\iota_\lindex$ denote the inclusion of $\mbp^{d_\lindex-1}$ into $\mbp^m$.

If $x \in X$ is a smooth point, we may choose lifts $v_\lindex := \iota_\lindex\varphi_\lindex(x)$ in $k(x)^{m+1}$. Note that $\ker \mathcal{A}(x,\cdot, \cdot)$ is spanned by $v_1, \ldots, v_r$ over $k(x)$.  
For $a_1,\dots,a_r \in k(x)$ we have that
	\begin{align*}
        \mathcal{A}\bigg(\,\cdot\,, \sum_{i = 1}^r a_i v_i, \sum_{i = 1}^r a_i v_i\bigg ) = 
        \sum_{i = 1}^r a_i^2 \mathcal{A}(\cdot, v_i, v_i) + \sum_{i\neq j}a_ia_j \mathcal{A}(\cdot, v_i, v_j).
    \end{align*}
By the block-diagonal form of $\mathcal{A}$, we see $\mathcal{A}(\cdot, v_i, v_j) = 0$ for $i \neq j$. Thus $\psi(\ker \mathcal{A}(x, \cdot, \cdot))$ is the $(r-1)$-dimensional plane spanned by $\mathcal{A}(\cdot, v_1, v_1),\dots, \mathcal{A}(\cdot, v_r, v_r)$. The Jacobi formula gives that $\mathcal{A}(\cdot, v_\lindex, v_\lindex)$ is the $\lindex$-th row of the Jacobian matrix of $X$ at $x$, up to scaling.
\end{proof}

\subsection{Singular subschemes of intersections of symmetroids}

In this subsection we prove the main theorem on accidental singularities of intersections of symmetroids. Given a block-diagonal tensor $\mathcal{A} \in k^{n+1} \otimes (\bigoplus_{\lindex =1}^r \Sym_2 k^{d_\lindex})$, its intersection of symmetroids $X$, and associated Cayley variety $\mathfrak{C}$, Theorem~\ref{thm: singularities of symmetroid intersections} already lets us determine the accidental singularities of $X$ in terms of the Cayley variety associated to $\mathcal{A}$. The improvement obtained in this subsection is that this set-theoretic association is upgraded to an explicit finite cover of $\sing X^\nonessential$ by a subscheme of $X^\nonessential \times \mathfrak{C}$ defined by bilinear conditions. The procedure of the proof is to replace global objects with sheaves, and then to work locally. We make essential use of the kernel sheaf on $X^\nonessential$. To begin the proof, we must start with local linear algebra.

\begin{lemma} \label{lem: adjugate is locally defined by the kernel}
	Let $A$ be a symmetric matrix over a local ring $R$ such that $\ker A$ is a free $R$-module of rank $1$ and generated by $y$. Then $yy^T = u \cdot \adj A$ for some $u \in R^\times$.
\end{lemma}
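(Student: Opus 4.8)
The plan is to run the classical adjugate identity $A\,\adj(A)=\adj(A)\,A=\det(A)\cdot I$ over $R$ and then exploit the symmetry of $A$.

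First I would check that $\det A = 0$: feeding the generator $y$ of $\ker A$ into $\adj(A)\,A=\det(A)\cdot I$ gives $\det(A)\cdot y = \adj(A)(Ay)=0$, and since $y$ is a basis of the free rank-one module $\ker A=Ry$, the map $R\to Ry$, $r\mapsto ry$, is an isomorphism, forcing $\det A=0$. Consequently $A\,\adj(A)=0$, so every column of $\adj A$ lies in $\ker A=Ry$; writing the $j$-th column as $w_j y$ with $w_j\in R$ uniquely determined (as $y$ is a basis), I obtain $\adj A = y\,w^{T}$ for a unique $w\in R^{N}$, where $A$ is $N\times N$. Since $A$ is symmetric so is its adjugate — $(\adj A)^{T}=\adj(A^{T})$ is a polynomial identity in the matrix entries and $A=A^{T}$ — hence $y\,w^{T}=w\,y^{T}$, i.e.\ $y_i w_j = w_i y_j$ for all $i,j$.

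The remaining step, which carries the real content, is to show $w=c\,y$ for a \emph{unit} $c$; then $\adj A = y\,w^{T}=c\,y\,y^{T}$ and $u:=c^{-1}$ works. For this I would use that in our setting $A$ has corank exactly one at the closed point of $R$ — on $X^{\nonessential}$ the kernel sheaf is a line bundle and the reduction of $A$ modulo $\mathfrak{m}$ drops rank by exactly one, so some $(N-1)\times(N-1)$ minor of $A$, equivalently some entry $(\adj A)_{pq}=y_p w_q$, is a unit of $R$. Then $y_p,w_q\in R^{\times}$; substituting $i=p$ into $y_i w_j=w_i y_j$ gives $w_j=(w_p y_p^{-1})\,y_j$ for every $j$, so $w=c\,y$ with $c:=w_p y_p^{-1}$, and $c\in R^{\times}$ since $c\,y_p y_q=(\adj A)_{pq}$ is a unit.

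I expect this last step to be the only delicate point: the conclusion genuinely needs an input beyond "$\ker A$ free of rank one" — over a discrete valuation ring one can write down a symmetric $A$ with $\ker A$ free of rank one whose adjugate is a non-unit multiple of $y\,y^{T}$ — and the corank-one hypothesis, automatic on $X^{\nonessential}$, is precisely what excludes this. Everything else is formal manipulation with the adjugate.
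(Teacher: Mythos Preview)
Your argument is essentially the same as the paper's: both show $\det A=0$ from freeness of the kernel, express the columns of $\adj A$ as $R$-multiples of $y$, invoke symmetry of $\adj A$, and then use that some entry of $\adj A$ is a unit to extract a unit scalar. You are in fact more careful than the paper on the last point. The paper simply asserts that ``reducing to $R/\mathfrak{m}$ shows that $\adj A$ has at least one entry which is a unit,'' but as you observe this requires $A$ to have corank exactly one modulo $\mathfrak{m}$, and that does not follow from the stated hypothesis alone --- your DVR remark is correct: over $k[[t]]$ the matrix $A=\begin{psmallmatrix}t&t\\t&t\end{psmallmatrix}$ has $\ker A=R\cdot(1,-1)^T$ free of rank one, yet $\adj A=t\cdot yy^T$ with $t\notin R^\times$. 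The corank-one condition holds automatically in the application on $X^{\nonessential}$, so the lemma is used correctly there; your observation that this hypothesis ought to be made explicit is a genuine improvement.
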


\begin{proof}
	Let $\mathfrak{m}$ be the maximal ideal of $R$. First, $A \cdot \adj A = \det A \cdot I$. On one hand, $y^T A \cdot \adj A = 0 = \det A \cdot  y^T$, which means $\det A = 0$ as the kernel is free and thus has trivial annihilator. Reducing to $R/\mathfrak{m}$ shows that $\adj A$ has at least one entry which is a unit. Freeness then gives us that each column of $\adj A$ is an $R$-multiple of $y$, so by symmetry we may write $\adj A = u \cdot yy^T$ for some $u \in R$. Because at least one entry of $\adj A$ is a unit, we have that $u \in R^\times$.
\end{proof}

\begin{lemma} \label{lem: minors vanish implies kernel}
	Let $r \leq n$ and let $A$ be an $r \times n$ matrix over a local ring $(R, \mathfrak{m})$. If the $r \times r$ minors of $A$ are identically $0$, and the corank of $A \pmod {\mathfrak{m}}$ is $1$, then the left kernel of $A$ is a free $R$-module of rank $1$.
\end{lemma}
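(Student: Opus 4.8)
The plan is to reduce the statement to an explicit block-matrix computation. Write $\bar A := A \bmod \mathfrak{m}$, a matrix over the residue field $R/\mathfrak{m}$. Since $\corank \bar A = 1$, we have $\rank \bar A = r-1$, so some $(r-1)\times(r-1)$ submatrix of $\bar A$ is invertible. After permuting the rows and columns of $A$ — which replaces the left kernel by its image under an invertible linear map and therefore changes neither its freeness nor its rank — I may assume that the top-left $(r-1)\times(r-1)$ block $B$ of $A$ satisfies $\det B \not\equiv 0 \pmod{\mathfrak{m}}$. Because $R$ is local, this forces $\det B \in R^\times$, so $B$ is invertible over $R$. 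Now decompose
\[
	A = \begin{bmatrix} B & C \\ f^T & g^T \end{bmatrix},
\]
where $C$ is $(r-1)\times(n-r+1)$, $f^T$ is $1\times(r-1)$, and $g^T$ is $1\times(n-r+1)$.

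Next I would analyze the left-kernel equations directly. A vector $v^T = (w^T \mid t)$ with $w \in R^{r-1}$ and $t \in R$ lies in the left kernel of $A$ if and only if $w^T B + t f^T = 0$ and $w^T C + t g^T = 0$. The first equation, using that $B$ is invertible, forces $w^T = -t\, f^T B^{-1}$; in particular $v$ is determined by its last coordinate $t$, so the projection $v \mapsto t$ is injective on the left kernel. Substituting $w^T = -t f^T B^{-1}$ into the second equation leaves the single condition $t\,(g^T - f^T B^{-1} C) = 0$.

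The crux is that $g^T - f^T B^{-1} C = 0$ identically, and this is exactly where the hypothesis on the maximal minors is used. For each column index $j$ in the last block, consider the $r\times r$ minor of $A$ formed from columns $1,\dots,r-1$ together with column $j$; this is $\det\begin{bmatrix} B & C_j \\ f^T & g_j\end{bmatrix}$, which by the Schur complement identity (with a $1\times 1$ lower-right corner) equals $\det(B)\,(g_j - f^T B^{-1} C_j)$. By hypothesis this minor is $0$, and $\det B \in R^\times$, so $g_j = f^T B^{-1} C_j$ for every $j$, i.e.\ $g^T = f^T B^{-1} C$. Hence the remaining condition holds for every $t$, and the left kernel is precisely $\{\,(-t\, f^T B^{-1} \mid t) : t \in R\,\}$, which is a free $R$-module of rank $1$ generated by $(-f^T B^{-1} \mid 1)$.

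There is no real obstacle here; the argument is a direct computation. The only points needing care are verifying that permuting rows and columns is harmless, and recording the Schur complement formula $\det\begin{bmatrix} B & c \\ f^T & g\end{bmatrix} = \det(B)\,(g - f^T B^{-1} c)$ that converts the vanishing of the maximal minors into the identity $g^T = f^T B^{-1} C$. (Compare also Lemma~\ref{lem: adjugate is locally defined by the kernel}, which is the special case $n = r$ phrased via the adjugate.)
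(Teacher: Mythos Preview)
Your proof is correct and follows essentially the same approach as the paper: both isolate an invertible $(r-1)\times(r-1)$ block after permuting rows and columns, then use the vanishing of the maximal minors to force the remaining row to be redundant. The paper phrases this via a row reduction by some $g\in\GL_r(R)$ that zeros out the first $r-1$ entries of the last row (after which the vanishing minors kill the rest), whereas you compute the left-kernel equations directly and invoke the Schur complement identity; these are two packagings of the same linear algebra.
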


\begin{proof}
	Up to changing the order of the rows and columns of $A$, we may assume that the first $(r-1) \times (r-1)$ block has a minor of minimal $\mathfrak{m}$-valuation. Our assumption about the corank of $A \pmod{\mathfrak{m}}$ implies that there is some $g \in \GL_r(R)$ and some $A' \in \GL_{r-1}(R)$ such that
	\[
		gA = 
		\left[
		\begin{array}{c|ccc}
		A' & & B' \\ \hline
		0 & v_{r+1, r+1} & v_{r+1, r+2} & \cdots
		\end{array}
		\right]
	\]
	for some $(r-1) \times (n-r)$ matrix $B'$ and $v_{r+1, r+1}, \ldots, v_{r+1, n} \in R$; namely, one can take $g$ to be a lift of the element of $\GL_r(R/\mathfrak{m})$ sending $A \pmod{\mathfrak{m}}$ to its echelon form. As all of the $r \times r$ minors are identically zero, we have $v_{r+1, r+1} = \ldots = v_{r+1, n} = 0$ and that the kernel of $gA$ is free of rank $1$.
\end{proof}

We may associate to a symmetroid hypersurface $X$ defined by a tensor $\mathcal{A} \in k^{n+1} \otimes \Sym_2 k^{m+1}$ two naturally defined sheaves which restrict to line bundles on $X^\nonessential$. The first is the kernel bundle defined by the exact sequence
	\[
		\xymatrix@C+10pt{
		0 \ar[r] & \mathcal{K} \ar[r] & \mathcal{O}_{X}^{m+1}(m) \ar[r]^-{\mathcal{A}(\mathbf{x}, \cdot, \cdot)} & \mathcal{O}_{X}^{m+1}(m+1) \ar[r] & \mathcal{E}_\lindex(m) \ar[r] & 0.
		}
	\]
The second is the free $\mathcal{O}_{X^\nonessential}$-module $\Adj := \adj \mathcal{A}(\mathbf{x}, \cdot, \cdot) \cdot \mathcal{O}_{X^\nonessential}$. The bundle $\Adj$ has the natural structure of a subbundle of $\mathcal{O}_{X^\nonessential}^{(m+1)^2}(m)$ and $\mathcal{K}^{\otimes 2}$ has the natural structure of a subbundle of $\mathcal{O}_{X^\nonessential}^{(m+1)^2}(2m)$. We can globalize the local relationship of these bundles given by Lemma~\ref{lem: adjugate is locally defined by the kernel}.

\begin{corollary} \label{cor: the projective bundles are equal}
	Let $X$ be a symmetroid hypersurface of degree $m+1$ defined by a tensor $\mathcal{A} \in k^{n+1} \otimes \Sym_2 k^{m+1}$ and let $\mathcal{K}$ be the kernel sheaf defined by $\mathcal{A}$. Then the line bundles $\mathcal{K}^{\otimes 2}$ and $\Adj$ on $X^\nonessential$ define identical morphisms $X^\nonessential \rightarrow \Gr(1, (m+1)^2)$. 
\end{corollary}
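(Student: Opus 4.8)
The plan is to reduce the statement to the local identity recorded in Lemma~\ref{lem: adjugate is locally defined by the kernel}. Recall that a morphism from a scheme $Y$ to $\Gr(1,N) = \mbp^{N-1}$ is the datum of a line subbundle (a rank-one subsheaf that is locally a direct summand) $\mathcal{L} \hookrightarrow \mathcal{O}_Y^N$; for any line bundle $\mathcal{N}$ a line subbundle of $\mathcal{O}_Y^N \otimes \mathcal{N}$ induces the same morphism as its untwist $\mathcal{L} \otimes \mathcal{N}^{-1} \hookrightarrow \mathcal{O}_Y^N$, and two line subbundles of the fixed sheaf $\mathcal{O}_Y^N$ induce the same morphism precisely when they coincide as subsheaves. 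Since coincidence of subsheaves is a Zariski-local condition, it suffices to untwist $\Adj \hookrightarrow \mathcal{O}_{X^\nonessential}^{(m+1)^2}(m)$ by $\mathcal{O}(-m)$ and $\mathcal{K}^{\otimes 2} \hookrightarrow \mathcal{O}_{X^\nonessential}^{(m+1)^2}(2m)$ by $\mathcal{O}(-2m)$, and then check that the two resulting rank-one subsheaves of $\mathcal{O}_{X^\nonessential}^{(m+1)^2}$ agree on each member of an affine cover of $X^\nonessential$.

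Next I would set up the two subsheaves over a local ring. Fix $x \in X^\nonessential$, put $R := \mathcal{O}_{X^\nonessential, x}$ with maximal ideal $\mathfrak{m}$, and regard $A := \mathcal{A}(\mathbf{x},\cdot,\cdot)$ as a symmetric $(m+1)\times(m+1)$ matrix over $R$. Since $x$ lies on $X$ but is not an essential singularity, $\det A = 0$ while $\corank(A \bmod \mathfrak{m}) = 1$, so by Lemma~\ref{lem: minors vanish implies kernel} (applied with $r = n = m+1$, the single $(m+1)\times(m+1)$ minor being $\det A$) the kernel $\ker A$ is a free $R$-module of rank $1$; fix a generator $y \in R^{m+1}$. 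Then $\mathcal{K}$ is generated over $R$ by $y$, and since $\mathcal{K} \hookrightarrow \mathcal{O}_{X^\nonessential}^{m+1}(m)$ is a subbundle, the vector $y$ has a unit entry; hence $\mathcal{K}^{\otimes 2}$ is generated by $y \otimes y$, whose image under the Segre identification $\mathcal{O}^{m+1}\otimes\mathcal{O}^{m+1} \iso \mathcal{O}^{(m+1)^2}$ (carrying a line $\gen{v}$ to $\gen{vv^T}$) is the submodule $R\cdot yy^T$, a direct summand because $yy^T$ again has a unit entry. On the other hand $\Adj$ is, by definition, generated over $R$ by $\adj A$, with image $R\cdot\adj A$; as in the proof of Lemma~\ref{lem: adjugate is locally defined by the kernel}, $\adj A$ has a unit entry whenever $A$ has corank $1$ modulo $\mathfrak{m}$, so this too is a direct summand.

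Finally, Lemma~\ref{lem: adjugate is locally defined by the kernel} yields $yy^T = u\cdot\adj A$ for some $u \in R^\times$, whence $R\cdot yy^T = R\cdot\adj A$ as submodules of $R^{(m+1)^2}$. As $x$ was arbitrary, the untwisted images of $\mathcal{K}^{\otimes 2}$ and of $\Adj$ coincide over an affine cover of $X^\nonessential$, hence coincide as subsheaves, and therefore $\mathcal{K}^{\otimes 2}$ and $\Adj$ define one and the same morphism $X^\nonessential \to \Gr(1,(m+1)^2)$.

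I expect the main obstacle to be bookkeeping rather than substance: carefully matching the twists, and verifying that $\mathcal{K}^{\otimes 2}$ (embedded in $\mathcal{O}^{(m+1)^2}(2m)$ via the symmetric square of $\mathcal{K}\hookrightarrow\mathcal{O}^{m+1}(m)$) and $\Adj \hookrightarrow \mathcal{O}^{(m+1)^2}(m)$ are honest line subbundles, i.e.\ locally split, so that they really define morphisms to the Grassmannian and not merely rational maps. Both assertions reduce to the ``unit entry'' observations above, which use only that $x \in X^\nonessential$; granting them, the corollary is exactly the sheaf-theoretic repackaging of Lemma~\ref{lem: adjugate is locally defined by the kernel}.
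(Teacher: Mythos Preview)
Your proposal is correct and follows essentially the same approach as the paper: both arguments reduce to applying Lemma~\ref{lem: adjugate is locally defined by the kernel} stalk-locally at each point of $X^\nonessential$ and then concluding that the two morphisms agree globally. Your version is simply more explicit about the bookkeeping---untwisting, verifying the line-subbundle condition via the ``unit entry'' observation, and invoking Lemma~\ref{lem: minors vanish implies kernel} to confirm that $\ker A$ is free of rank~$1$---whereas the paper's proof compresses all of this into two sentences, relying on the earlier remark that $\mathcal{K}$ is already known to be a line bundle on $X^\nonessential$.
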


\begin{proof}
	Both $\Adj$ and $\mathcal{K}$ define a morphism to $\Gr(1, (m+1)^2)$, which we denote by $f$ and $g$ respectively.
	For any point $x \in X$, we have that the stalks of the rank $1$ bundles $\mathcal{K}^{\otimes 2}$ and $\Adj$ are isomorphic as $\mathcal{O}_{X, x}$-modules by Lemma~\ref{lem: adjugate is locally defined by the kernel}. Thus, $f$ and $g$ agree as morphisms on an open neighbourhood of $x$. Since this is true everywhere locally, $f=g$.
\end{proof}

The nonlinear map $\nu_2\: y \mapsto yy^T$ from $k^{m+1}$ to $k^{(m+1)^2}$ defines a morphism of projective spaces. The image lies in the linear subspace of symmetric matrices of dimension $\binom{m+1}{2}-1$, and the induced morphism $\nu_2\: \mbp^m \rightarrow \mbp^{\binom{m+1}{2}-1}$ is simply the Veronese embedding. The natural analogue of this map when considering tensors with multiple blocks is defined as follows. Let $V := k^{m+1}$, and let $V = \bigoplus_{\lindex=1}^r V_\lindex$ be a fixed direct sum decomposition, with $V_\lindex \subset V$. Then the nonlinear map
\[
	\mapdef{\nu_2'}{V}{V^{\otimes 2}}{v = v_1 + \ldots + v_r}{v_1v_1^T + \ldots + v_rv_r^T}
\]
induces a well-defined morphism of projective spaces $\nu_2'\: \mbp V \rightarrow \mbp V^{\otimes 2}$. The morphism $\nu_2'$ is finite onto its image and of degree $2^{r-1}$; the automorphism group of the covering is generated by the maps $\sigma_\lindex(v_1 + \ldots + v_r) := v_1 + \ldots + (-v_\lindex) +  \ldots + v_r$. With all of the preliminaries in place, we may now prove:

\accidentalsingularitytheorem*

\begin{proof}
	Let $\pi_1\: \mbp^n \times \mbp^{(m+1)^2-1} \rightarrow \mbp^n$ denote the projection onto the first factor, and let $\pi_2\: \mbp^n \times \mbp^{(m+1)^2-1} \rightarrow \mbp^{(m+1)^2-1}$ be the projection onto the second factor.
	We consider the free $\mathcal{O}_{X^\nonessential}$-module of rank $r$ defined by 
	$
		\Adj := \bigoplus_{\lindex=1}^r \gen{\adj \mathcal{A}^{(\lindex)}(\mathbf{x}, \cdot, \cdot)}
	$
	as well as the free $\mathcal{O}_{X^\nonessential}$-module generated by the rows of the Jacobian matrix, i.e., if $f_\lindex := \det \mathcal{A}^{(\lindex)}(\mathbf{x}, \cdot, \cdot)$ then 
	\[
		\mathcal{J}_{X^\nonessential/\mbp^n} := \bigoplus_{\lindex=1}^r \langle{\left(\frac{\der{f_\lindex}}{\der x_0}, \cdots, \frac{\der{f_\lindex}}{\der x_n}\right)} \rangle.
	\]
	By construction, the Jacobian matrix $J(\mathbf{x})$ for $f_1, \ldots, f_r$ is a global section of $\mathcal{J}_{X^\nonessential/\mbp^n}$. We define
	\[
		\begin{tabu}{cccc}
			\psi\: & \Adj & \ra & \mathcal{J}_{X^\nonessential/\mbp^n} \vspace{4pt} \\
			& (M_1, \ldots, M_r) & \mapsto & {
					\begin{bmatrix}
					\trace(M_1 \cdot A_0) & \cdots & \trace(M_1 \cdot A_n) \\
					& \cdots \\
					\trace(M_r \cdot A_0) & \cdots & \trace(M_r \cdot A_n)
					\end{bmatrix}}
		\end{tabu}	
		.
	\]
	By Jacobi's formula, we see that $\psi$ is surjective.
	We denote by $\mbp\!\Adj$ the projective bundle over $X^\nonessential$ associated to $\Adj$, which is a well-defined subbundle of $X^{\nonessential} \times \mbp^{(m+1)^2-1}$ of rank $r$. 
	%
	%
	Let $Z := \sing X^\nonessential$. Since $X$ has no coincident singularities, $\mathcal{J}_{X^\nonessential/\mbp^n}\!\mid_Z$ is a free vector bundle of rank $1$. Define
	\begin{align*}
		\mbp\!\Adj_Z &:= \left\{ (z, M) \in X^\nonessential \times \mbp^{(m+1)^2-1} : M \in \mbp\!\Adj, \ \ \pi_1(z) \in Z \right\}, \qquad \text{and} \\
		W &:= \left\{M \in \mbp^{(m+1)^2-1} : \trace(M \cdot A_j) = 0, \quad j = 0, \cdots, n \right\}.
	\end{align*}
	We claim that $\pi_1$ restricts to an isomorphism $\pi_1\: \mbp\!\Adj_Z \cap \pi_2^{-1}W \rightarrow Z$.
	Let $J(\mathbf{z})$ denote the restriction of $J(\mathbf{x})$ to $Z$. If $Z = \Spec R$ is affine of dimension $0$, it suffices to prove the claim assuming that $R$ is local. Lemma~\ref{lem: minors vanish implies kernel} gives that there is some $v(\mathbf{z}) = (v_1(\mathbf{z}), \ldots, v_r(\mathbf{z})) \in \mathcal{O}_Z^r$ that generates the kernel of $J(\mathbf{z})$. We construct the section $(\mathbf{z}, M(\mathbf{z}))\: \mathbf{z} \mapsto \left(\mathbf{z}, \sum_{\lindex = 1}^r v_\lindex(\mathbf{z}) \cdot \adj\mathcal{A}^{(\lindex)}(\mathbf{z}, \cdot, \cdot) \right)$. We see by $\mathcal{O}_{X^\nonessential}$-linearity of $\psi$ that the image of $(\mathbf{z}, M(\mathbf{z}))$ lies in the kernel of $\psi$. That is, the image lies in $\pi_2^{-1}(W)$. 
	
	Next, observe that $W$ is a linear subspace over $\Spec k$, so its base change to $X^\nonessential$ is a projective bundle. Thus, $\mbp\Adj_Z \cap \pi_2^{-1}(W)$ is an intersection of projective bundles over $Z$. Explicitly, we have that
	\[
		\mbp\Adj_Z \cap \pi_2^{-1}(W) = \{(z, M) \in Z \times \mbp^{(m+1)^2-1} : M = M^T, \ \ \mathcal{A}(z, \cdot, \cdot) \cdot M = 0, \ \ M \in W\}.
	\]
	Notice that the conditions are linear in the entries of $M$. 
	
	Let $z \in Z$ be a point and let $(\mathbf{z}, M'(\mathbf{z}))$ be a point of $\mbp\Adj_Z \cap \pi_2^{-1}(W)$ lying over $\Spec \mathcal{O}_{Z,z}$. As $(\mathbf{z}, M'(\mathbf{z}))$ lies in $\mbp\Adj_Z$, we have that
	\[
		M'(\mathbf{z}) = \sum_{\lindex=1}^r v_\lindex'(\mathbf{z}) \adj \mathcal{A}^{(\lindex)}(\mathbf{z}, \cdot, \cdot)
	\]
	for some $v_\lindex'(\mathbf{z}) \in \mathcal{O}_{Z,z}$ with at least one element a unit. Let $v'(\mathbf{z}) := (v_1'(\mathbf{z}), \cdots, v_r'(\mathbf{z}))$, and observe that $\psi(\mathbf{z}, M'(\mathbf{z})) = v'(\mathbf{z}) \cdot J(\mathbf{z}) = 0$ from the fact that $\psi$ is a morphism of $\mathcal{O}_Z$-modules. As $v'(\mathbf{z})$ lies in the kernel of $J(\mathbf{z})$, which is locally free of rank $1$, we have that $v'(\mathbf{z}) = f(\mathbf{z}) \cdot v(\mathbf{z})$ for some $f(\mathbf{z}) \in \mathcal{O}_{Z, z}^\times$. In particular, $(\mathbf{z}, M'(\mathbf{z}))$ and $(\mathbf{z}, M(\mathbf{z}))$ represent the same point of $\mbp^{(m+1)^2-1}(\mathcal{O}_{Z,z})$. The same argument applies over the residue field of $\mathcal{O}_{Z,z}$. Thus, $\mbp\Adj_Z \cap \pi_2^{-1}(W)$ is a projective bundle over $Z$ whose fibres are zero dimensional. In particular, $\pi_1\: \mbp\Adj_Z \cap \pi_2^{-1}(W) \rightarrow Z$ is an isomorphism with inverse $(\mathbf{z}, M(\mathbf{z}))\: \mathbf{z} \mapsto (\mathbf{z}, M(\mathbf{z}))$.
		
	Next, for $\lindex=1, \ldots, r$ define the bundle $\mathcal{K}_\lindex$ by the exact sequence
	\[
		\xymatrix@C+10pt{
		0 \ar[r] & \mathcal{K}_\lindex \ar[r] & \mathcal{O}_{X}^{d_\lindex}(d_\lindex-1) \ar[r]^-{\mathcal{A}^{(\lindex)}(\mathbf{x}, \cdot, \cdot)} & \mathcal{O}_{X}^{d_{\lindex}}(d_\lindex) \ar[r] & \mathcal{E}_\lindex(d_\lindex) \ar[r] & 0
		}
	\]
	of bundles on $X^\nonessential$. From the block-diagonal structure of $\mathcal{A}$, we define $\mathcal{K} := \bigoplus_{\lindex=1}^r \mathcal{K}_\lindex$ and $\mathcal{H} := \bigoplus_{\lindex=1}^r \mathcal{K}_\lindex^{\otimes 2}$. Observe that the projective bundle $\mbp\mathcal{K}$ is realized as a scheme by
	\[
		\mbp\mathcal{K} = \{(x, y) \in X^\nonessential \times \mbp^m : \mathcal{A}(x, y, \cdot) = 0\}.
	\]	
	By Corollary~\ref{cor: the projective bundles are equal} the two bundles $\mathcal{H}$ and $\Adj$ define identical morphisms from $X$ to $\Gr(r, (m+1)^2)$. Let $\pi_2'\: \mbp^n \times \mbp^m \rightarrow \mbp^m$ be the projection and consider the commutative diagram
	\[
		\xym{
			\mbp\mathcal{K} \ar[d] \ar[r]^{\pi_2'} & \mbp^m \ar[d]^{\nu_2'} \\
			\mbp\mathcal{H} \ar[d]^{\pi_1} \ar[r] & \mbp^{(m+1)^2-1} \\
			 X^\nonessential.
		}
	\]
	Because $\trace(yy^T \cdot A) = y^T A y$ for any symmetric matrix $A$ and vector $y$, we have that $\mathfrak{C} = \nu_2'^{-1}(W)$. Since $\nu'_2$ is a finite morphism onto its image of degree $2^{r-1}$, the same is true for the morphism $\mbp\mathcal{K} \rightarrow \mbp\mathcal{H}$. In other words, the pullback of $\mathcal{H} \cap \pi_2^{-1}(W)$ by $\nu_2'$ is defined by
	\[
	\{(x,y) \in X^{\nonessential} \times \mbp^m : \mathcal{A}(x, y, \cdot) = 0, \ \  \mathcal{A}(\cdot, y, y) = 0\} \subseteq \mbp\mathcal{K}
	\]
	whose image is the section over $Z = \sing X^\nonessential$.
\end{proof}

\begin{remark}
	Note that $\DAut(\mathcal{A})$ acts on the incidence variety 
	\[
		\{(x,y) \in X^{\nonessential} \times \mbp^m : \mathcal{A}(x, y, \cdot) = 0,  \ y \in \mathfrak{C}\}
	\]
	and the action of any element induces an automorphism of $\pi_1$.
\end{remark}

\section{Fibres of $\psi$, secants, and hyperplane sections}\label{sec: fibres}

	In this section, we comment on how one can study the geometry of hyperplane sections via the fibres of $\psi$ and vice-versa. The morphism $\dual \psi\: \mbp^n \rightarrow \mbp\!\H^0(\mbp^m, \mathcal{O}_{\mbp^m}(2))$ allows us to identify $\mbp^n$ as a linear space of quadrics in $\mbp^m$. Given a linear subspace $H \subset \mbp^n$, we denote by $H^\vee$ the linear subspace in $\dual \mbp^n$ of hyperplanes vanishing on $H$. The fibre of $\psi$ over $H^\vee \in \dual \mbp^n$ is the common zero locus of the subspace $\dual \psi(H)$ of quadrics within $\dual \psi(\mbp^n)$. 
	Viewing $\mathcal{A} \in k^{n+1} \otimes \Sym_2 k^{m+1}$ as a multilinear map, we can use the restriction map $\res\: \H^0(\mbp^n, \mathcal{O}_{\mbp^n}(1)) \rightarrow \H^0(H, \mathcal{O}_H(1))$ to define a new tensor $\mathcal{B} := \mathcal{A}(\pi(\cdot), \cdot, \cdot) \in k^{\dim{H}+1} \otimes \Sym_2 k^{m+1}$ via contraction. The determinantal variety defined by $\mathcal{B}$ allows us to study the fibre of $\psi$ over $H^\vee$. We will say that the tensor $\mathcal{B}$ is the \emph{restriction} of $\mathcal{A}$ to $H$. If $\mathcal{E}$ is the aCM sheaf on $\mbp^n$ associated to $\mathcal{A}$, then $\mathcal{E}\!\mid_H$ is the aCM sheaf on $H$ associated to $\mathcal{B}$.
	
	\begin{proposition} \label{prop: restrictions of tensors}
		Let $\mathcal{A} \in k^{n+1} \otimes (\bigoplus_{\lindex =1}^r \Sym_2 k^{d_\lindex})$ be a block-diagonal tensor, let $X$ the associated intersection of symmetroids, and consider the maps $\psi, \dual \psi$ as in the standard setup. Let $H \subset \mbp^n$ be a linear subspace and let $H^{\vee} \subset \dual \mbp^n$ be the linear subspace parameterizing the hyperplanes containing $H$. Finally, let $\mathcal{B}$ be the restriction of $\mathcal{A}$ to $H$. Then
		\begin{enumerate}[(a)]
					
			\item
				Let $\pi\: \dual{\mathbb{P}}^n \dashrightarrow \mbp\!\H^0(H, \mathcal{O}_H(1))$ denoting the projection away from $H^\vee$ and let $\psi_{\mathcal{B}}\: y \mapsto \mathcal{B}(\cdot, y, y)$. Then
				$
					\psi_{\mathcal{B}} = \pi \circ \psi.
				$
			
			\item
			Let $\mathbf{x}' = (x'_0,\dots,x'_n)$ denote a choice of coordinates for $\mbp\!\H^0(H, \mathcal{O}_H(1))$. The determinantal variety 
				\[
				X' := Z(\det \mathcal{B}(\mathbf{x}', \cdot, \cdot)) \subseteq H
				\]
			is canonically identified with $X \cap H$.

			\item
				The preimage $\psi^{-1}(H^\vee)$ is the common zero locus of the quadrics in $\dual \psi(H)$. Equivalently, $\psi^{-1}(H^\vee)$ is the Cayley variety for $\mathcal{B}$. 
				
			\item
				If $\mathcal{A}$ has only one block, then the Gauss map associated to $X' \subseteq H$ is given by $\psi_{\mathcal{B}} \circ \varphi_{\mathcal{B}}$, where $\varphi_{\mathcal{B}}$ is the kernel map defined by $\mathcal{B}$.
		\end{enumerate}
		  
	\end{proposition}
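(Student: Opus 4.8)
The plan is to treat the proposition as a sequence of definition unwindings, organized around one observation: the restriction map $\res\colon \H^0(\mbp^n,\mathcal O(1))\to\H^0(H,\mathcal O_H(1))$ has kernel the space of linear forms vanishing on $H$, so $\mbp(\ker\res)=H^\vee$ and the rational map $\dual\mbp^n\dashrightarrow\mbp\!\H^0(H,\mathcal O_H(1))$ it induces is exactly the projection $\pi$ away from $H^\vee$. Part (a) is then immediate: since $\mathcal B$ is the contraction of $\mathcal A$ in its first tensor factor against $\res$, one has $\mathcal B(\cdot,y,y)=\res\bigl(\mathcal A(\cdot,y,y)\bigr)=\res(\psi(y))$, and by the observation this equals $\pi(\psi(y))=\psi_{\mathcal B}(y)$; both sides are the same quadratic forms in $y$, so nothing needs to be checked about indeterminacy loci.

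For part (c) I would note that $\psi(y)\in H^\vee$ iff $\psi(y)$ pairs to zero against a spanning set $p_0,\dots,p_{\dim H}$ of $H$, i.e. iff $\mathcal A(p_j,y,y)=0$ for every $j$; since the matrices $\mathcal A(p_j,\cdot,\cdot)$ span the same linear system of quadrics as the slices of $\mathcal B$, this says exactly that $\mathcal B(\cdot,y,y)=0$, i.e. that $y$ lies on the Cayley variety of $\mathcal B$, which is the common zero locus of $\dual\psi(H)$. (This also drops straight out of (a): $\psi^{-1}(H^\vee)=\psi^{-1}(\mbp\ker\res)=\{y:\psi_{\mathcal B}(y)=0\}$.) For part (b), the key point is that $\mathcal B(\mathbf x',\cdot,\cdot)$ is nothing but the pullback of the matrix of linear forms $\mathcal A(\mathbf x,\cdot,\cdot)$ along the closed immersion $H\hookrightarrow\mbp^n$, carried out block by block; since forming a determinant commutes with this ring map, the $\lindex$-th symmetroid of $\mathcal B$ is $X_\lindex\cap H$, and intersecting over $\lindex$ gives $X'=\bigl(\bigcap_\lindex X_\lindex\bigr)\cap H=X\cap H$ under the identification coming from $H\hookrightarrow\mbp^n$ and the coordinates $\mathbf x'$.

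Part (d) is the payoff, and here I would simply bootstrap off Proposition~\ref{prop: main diagram hypersurfaces}. When $\mathcal A$ has a single block, $X$ is a hypersurface and, by part (b), $X'=X\cap H$ is the symmetroid hypersurface in $H\cong\mbp^{\dim H}$ attached to the single-block tensor $\mathcal B\in k^{\dim H+1}\otimes\Sym_2 k^{m+1}$; by construction the $\psi$-map and the kernel map of $\mathcal B$ are $\psi_{\mathcal B}$ and $\varphi_{\mathcal B}$. Applying Proposition~\ref{prop: main diagram hypersurfaces} to $\mathcal B$ in place of $\mathcal A$, the commutative diagram there reads $\theta_{X'}=\psi_{\mathcal B}\circ\varphi_{\mathcal B}$ on the smooth locus of $X'$, which is exactly the assertion.

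I do not expect a genuine obstacle: the whole argument is bookkeeping. The two places that reward care are the projective-duality identifications in (a) and (c) — pinning down $H^\vee=\mbp(\ker\res)$, matching it with the center of $\pi$, and keeping the pairing $\dual\mbp^n\times\mbp^n\to k$ straight — and, in (b), the reminder that for a block-diagonal $\mathcal B$ the naive zero locus of $\det\mathcal B(\mathbf x',\cdot,\cdot)$ is a union of symmetroids, so ``$X'$'' must be read as the intersection of the restricted blocks if it is to recover $X\cap H$ rather than $\bigl(\bigcup_\lindex X_\lindex\bigr)\cap H$.
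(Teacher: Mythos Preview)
Your proposal is correct and takes essentially the same approach as the paper: both treat the proposition as pure definition-unwinding, with (a) and (b) coming from the fact that $\mathcal{B}$ is literally $\mathcal{A}$ with its first factor restricted along $\H^0(\mbp^n,\mathcal{O}(1))\to\H^0(H,\mathcal{O}_H(1))$, (c) dropping out of (a), and (d) obtained by applying Proposition~\ref{prop: main diagram hypersurfaces} to $\mathcal{B}$. Your caveat about reading $X'$ in (b) as the intersection of the restricted blocks (rather than the literal zero locus of the product determinant) is well taken and worth keeping.
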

	
	\begin{proof}
		Since $\dual \mbp^n = \mbp\!\H^0(\mbp^n, \mathcal{O}_{\mbp^n}(1))$ by definition, we see that parts (a) and (b) follow from the fact that $\mathcal{A}$ is defined by a matrix of forms in $\mathcal{O}_{\mbp^n}(1)$, and $\mathcal{B}$ is defined by a matrix of forms in $\mathcal{O}_H(1)$. Part (c) follows from part (a). If $x \in H$, then $\mathcal{A}(x, \cdot, \cdot) = \mathcal{B}(x, \cdot, \cdot)$. Thus, $\varphi_{\mathcal{B}} = \varphi|_{H}$, and so part (d) follows from parts (a) and~(b).
	\end{proof}
	
	\begin{example}
		We consider the tensor $\mathcal{A}$ defined by
		\[
			\mathcal{A}(\mathbf{x}, \cdot, \cdot) :=
			x_0 \begin{bmatrix} 1 & 0 & 0 \\ 0 & 0 & 0 \\ 0 & 0 & 0 \end{bmatrix}
			+ x_1 \begin{bmatrix} 0 & 0 & 0 \\ 0 & 1 & 0 \\ 0 & 0 & 0 \end{bmatrix}
			+ x_2 \begin{bmatrix} 0 & 0 & 0 \\ 0 & 0 & 0 \\ 0 & 0 & 1 \end{bmatrix}
			+ x_3 \begin{bmatrix} 1 & 1 & 1 \\ 1 & 1 & 1 \\ 1 & 1 & 1 \end{bmatrix}.
		\]
		The determinantal variety $X_3 \subset \mbp^3$ is a Cayley cubic, and the Cayley variety in $\mbp^2$ is empty. Let $H = Z(x_3) \subset \mbp^3$. We see that $H^\vee$ is the single point $[0:0:0:1] \in \dual \mbp^3$, and we can identify the coordinate ring of $H$ with $k[x_0', x_1', x_2']$. We see
		\[
			\mathcal{B}(\mathbf{x}', \cdot, \cdot) =
			x_0' \begin{bmatrix} 1 & 0 & 0 \\ 0 & 0 & 0 \\ 0 & 0 & 0 \end{bmatrix}
			+ x_1' \begin{bmatrix} 0 & 0 & 0 \\ 0 & 1 & 0 \\ 0 & 0 & 0 \end{bmatrix}
			+ x_2' \begin{bmatrix} 0 & 0 & 0 \\ 0 & 0 & 0 \\ 0 & 0 & 1 \end{bmatrix}.
		\]
		The determinantal variety defined by $\mathcal{B}$ is $Z(x_0'x_1'x_2')$, which is precisely the curve obtained by intersecting $X_3$ and $H$. The preimage $\psi^{-1}(H^\vee)$ is the fibre of $\psi$ over a single point, and is the intersection of the three quadrics
		\[
			\mathbf{y}^T \begin{bmatrix} 1 & 0 & 0 \\ 0 & 0 & 0 \\ 0 & 0 & 0 \end{bmatrix} \mathbf{y}, \quad 
			\mathbf{y}^T\begin{bmatrix} 0 & 0 & 0 \\ 0 & 1 & 0 \\ 0 & 0 & 0 \end{bmatrix} \mathbf{y}, \quad 
			\mathbf{y}^T\begin{bmatrix} 0 & 0 & 0 \\ 0 & 0 & 0 \\ 0 & 0 & 1 \end{bmatrix} \mathbf{y} \ \  \subset \mbp^2
		\]
		which in this case is empty, as $[0:0:0:1]$ does not lie in the image of $\psi$. Finally, we see that the dual variety of $Z(x_0'x_1'x_2')$ in $\dual \mbp^2$ is the set of three points $\{[1:0:0], [0:1:0], [0:0:1]\}$. The image of $Z(x_0'x_1'x_2')$ under the kernel map is a set of three points, and it is easy to check $\psi_{\mathcal{B}} \circ \varphi_{\mathcal{B}}$ is the Gauss map for $Z(x_0'x_1'x_2') \subset \mbp^2$.
	\end{example}

	In particular, we can relate singularities of a hyperplane section of a determinantal variety with fibres of $\psi$ over the corresponding point.

\subsection{Secants of Cayley varieties of dimension zero}
	
	Here, we show how the Cayley variety associated to a tensor $\mathcal{A}$ is related to interesting hyperplane sections of the intersection of symmetroids determined by $\mathcal{A}$. First, we look at a necessary and sufficient condition for a fibre of $\psi$ to contain a line.

	\begin{lemma}\label{lem: cayley line contracts}
		Let $\mathfrak{C}$ be the Cayley variety associated to a tensor $\mathcal{A}$. Let $\ell$ be a secant line of $\mathfrak{C}$ not contained in $\mathfrak{C}$. Then $\psi(\ell)$ is a single point in $\dual \mbp^n$. Conversely, if $\ell$ is a line such that $\psi(\ell)$ is a single point, then it is a secant line of the Cayley variety.
	\end{lemma}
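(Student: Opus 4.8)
The plan is to analyze the restriction of the rational map $\psi = [q_0 : \cdots : q_n]$ to $\ell$, which is a vector‑valued binary quadratic form on $\ell \cong \mbp^1$, and to use throughout that $\psi$ is a morphism on $\mbp^m \setminus \mathfrak{C}$, since $\mathfrak{C} = Z(q_0, \ldots, q_n)$ is the common zero scheme of the $q_i$.

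For the forward implication I would write $\ell = \ell(p,q)$ for distinct geometric points $p, q \in \mathfrak{C}$ and parametrize $\ell$ by $[s:t] \mapsto sp + tq$. Since $\mathcal{A}$ is symmetric in the last two entries, bilinearity gives
\[
	\psi(sp + tq) = s^2\,\mathcal{A}(\cdot, p, p) + 2st\,\mathcal{A}(\cdot, p, q) + t^2\,\mathcal{A}(\cdot, q, q) = 2st\,\mathcal{A}(\cdot, p, q),
\]
the first and third terms vanishing because $p, q \in \mathfrak{C}$. If $\mathcal{A}(\cdot, p, q) = 0$ then $\psi$ vanishes identically along $\ell$, forcing $\ell \subseteq \mathfrak{C}$ and contradicting the hypothesis; hence $\mathcal{A}(\cdot, p, q) \neq 0$ and $\psi(\ell) = \{[\mathcal{A}(\cdot, p, q)]\}$ is a single point of $\dual\mbp^n$. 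This step is a one‑line computation.

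For the converse, suppose $\psi(\ell)$ is a single point $z$. Because $\psi$ is only a rational map, this presupposes $\psi$ is defined somewhere on $\ell$, so $\ell \not\subseteq \mathfrak{C}$ (if $\ell \subseteq \mathfrak{C}$ then $\ell$ is a secant line a fortiori and there is nothing to prove). Acting by $\PGL_{n+1}$ I may take $z = [1 : 0 : \cdots : 0]$. Then $[q_0(y) : \cdots : q_n(y)] = z$ for every $y$ in the dense open subset $\ell \setminus \mathfrak{C} \subseteq \ell$, so each of $q_1, \ldots, q_n$ vanishes on a Zariski‑dense subset of $\ell$ and hence $q_i|_\ell \equiv 0$ for $i = 1, \ldots, n$. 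Therefore $\mathfrak{C} \cap \ell = Z(q_0|_\ell, \ldots, q_n|_\ell) = Z(q_0|_\ell)$, and $q_0|_\ell \not\equiv 0$ (otherwise $\ell \subseteq \mathfrak{C}$), so $\mathfrak{C} \cap \ell$ is the zero scheme of a nonzero degree‑$2$ form on $\mbp^1$, i.e.\ a length‑$2$ subscheme of $\ell$. Thus $\ell$ is a secant line of $\mathfrak{C}$ (tangent to $\mathfrak{C}$ exactly when $\mathfrak{C}\cap\ell$ is non‑reduced).

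I do not expect a serious obstacle: the only point requiring attention is the bookkeeping in the converse about where the rational map $\psi$ is actually defined, which is handled by passing from pointwise equalities on the dense open $\ell \setminus \mathfrak{C}$ to identities of sections of $\mathcal{O}_\ell(2)$.
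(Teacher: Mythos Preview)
Your argument is essentially the paper's own, and the converse is handled identically (normalize $z=[1:0:\cdots:0]$ and observe $\mathfrak{C}\cap\ell = Z(q_0|_\ell)$ has length~$2$). There is one omission in the forward direction: you write $\ell=\ell(p,q)$ with $p,q$ \emph{distinct} geometric points of $\mathfrak{C}$, but a secant line may be tangent to $\mathfrak{C}$ and meet it in a single non-reduced point, in which case no such $p\neq q$ exist. The paper treats this case separately: if $\ell$ is tangent to $\mathfrak{C}$ at $p$ with direction vector $v$, then working over $k[\epsilon]/\langle\epsilon^2\rangle$ one has $\mathcal{A}(\cdot,p+\epsilon v,p+\epsilon v)\equiv 2\epsilon\,\mathcal{A}(\cdot,p,v)\equiv 0\pmod{\epsilon^2}$, forcing $\mathcal{A}(\cdot,p,v)=0$; then $\psi(sp+tv)=t^2\mathcal{A}(\cdot,v,v)$ is constant. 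This is the same bilinear expansion you use, just adapted to the degenerate intersection, so the gap is easily filled---but as written your forward implication does not cover it.
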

	
	\begin{proof}
		We first consider the case where the two intersection points of $\ell$ with the Cayley variety are distinct. Let $p, q \in k^{m+1}$ be two vectors representing the intersection points of $\ell$ with $\mathfrak{C}$. Then
			$
	            \ell = \mbp \{ a p + b q : a,b \in k \}.
	        $
	    For $(a:b) \in \mbp^1$ we have
	        \begin{align*}
	            \mathcal{A}(\cdot, a p + b q, a p + b q) &=
	            a^2\mathcal{A}(\cdot, p, p) + 2ab\mathcal{A}(\cdot, p, q) + b^2\mathcal{A}(\cdot, q, q) \\
	            &= 2ab\mathcal{A}(\cdot, p, q)
	        \end{align*}
		which defines a constant linear form. We now consider the case when $\ell$ is tangent to $\mathfrak{C}$. Let $p \in k^{m+1}$ represent the point of tangency, and let $v \in k^{m+1}$ be a vector such that $\ell = \mbp\{ap + bv : a,b \in k\}$. Since $\ell$ is tangent to $\mathfrak{C}$ at $p$, we have the relation
	        \begin{align*}
	            \mathcal{A}(\cdot, p + \epsilon v, p + \epsilon v) &\equiv
	            2 \epsilon \mathcal{A}(\cdot, p, v)  \equiv 0 \pmod {\epsilon^2}
	        \end{align*}
	    over the ring $k[\epsilon]/\gen{\epsilon^2}$. But then $\mathcal{A}(\cdot, p, v) = 0$, so $\psi$ is constant along $\ell$.
	    
	    We now prove the converse claim. Let $\ell$ be a line in $\mbp^m$ not contained in $\mathfrak{C}$ where $\psi(\ell)$ is a single point. Up to linear transformations of $\dual \mbp^n$ we may assume that $\psi(\ell) = [1:0:\ldots:0]$. Thus, if $A_0$ is the $0$-th slice of $\mathcal{A}$ and $Q_0$ the corresponding quadric in $\mbp^m$, we see $\ell$ is not contained in $Q_0$. As $Q_0$ is a hypersurface of degree $2$, we have that $\ell$ meets $Q_0$ in two points (counting multiplicity). Observe that the intersection $\ell \cap \mathfrak{C}$ is precisely where $\ell$ meets $Q_0$. In particular, either $\ell$ is tangent to $\mathfrak{C}$ at a single point or $\ell \cap Q_0$ consists of two distinct points of $\mathfrak{C}$.
	\end{proof}

The fact that secants of the Cayley variety are constant under $\psi$ gives rise to a remarkable coincidence when the Cayley variety is a complete intersection of dimension $0$. Consider a tensor $\mathcal{A} \in k^{n+1} \otimes \Sym_2 k^{m+1}$ defining a determinantal hypersurface $X$ whose associated Cayley variety $\mathfrak{C}$ has a secant line $\ell \not \subseteq \mathfrak{C}$. If $H$ is the hyperplane defined by $\psi(\ell)$, Proposition~\ref{prop: restrictions of tensors} allows us to study the intersection $X \cap H$ as a determinantal variety in its own right, specifically, the determinantal variety defined by the restriction of $\mathcal{A}$ to $H$, which we denote by $\mathcal{B}$. Proposition~\ref{prop: restrictions of tensors}(c) gives that the Cayley variety for $\mathcal{B}$ is the fibre of $\psi$ over $\psi(\ell)$. If we change coordinates of $\mbp^m$ so that $\ell$ is the line between $[1 : 0 : \cdots : 0]$ and $[0 : 1 : \cdots : 0]$, we see that $\mathcal{B}$ has the form
	\[
		\mathcal{B}(\mathbf{x}, \cdot, \cdot) =
		\left[
		\begin{array}{c|c}
		0 & M(\mathbf{x}) \\
		\hline
		\rule{0pt}{1.1\normalbaselineskip}  M(\mathbf{x})^T & N(\mathbf{x})
		\end{array}
		\right]
	\]
for some matrices $M(\mathbf{x}), N(\mathbf{x})$ with entries in $\mathcal{O}_{\mbp^{n-1}}(1)$ of dimensions $2 \times (m-1)$ and $(m-1) \times (m-1)$ respectively. Another way of stating this is that $\ell$ is a common isotropic subspace to the quadrics formed from the slices of $\mathcal{B}$. If $p \in \mbp^{n-1}$ is a point such that $\rank M(p) \leq 1$, then some linear combination of the first two columns of $\mathcal{B}(p, \cdot, \cdot)$ is zero, viz., there is a point $q \in \ell \subseteq \mathfrak{C}_{\mathcal{B}}$ such that $\mathcal{B}(p, q, \cdot) = 0$. By Theorem~\ref{thm: singularities of symmetroid intersections}, we see that if $p \in X(\bar k)$ then it must be a singular point. In particular, counting the number of singular points on a hyperplane section defined by $\psi(\ell)$ comes down to counting the number of points in $\{p \in \mbp^{n-1} : \rank M(p) \leq 2\}$.

\begin{proposition} \label{prop: deg and dim of tangents of symmetric determinantal hypersurfaces}
	Let $\mathcal{A} \in k^{n+1} \otimes \Sym_2 k^{n+2}$ be a tensor, let $X$ be the associated determinantal hypersurface, and let $\ell$ be a secant line of the Cayley variety of $\mathcal{A}$. Then the hyperplane $H$ defined by $\psi(\ell)$ is tangent to $X$ along a scheme of dimension at least $0$. Furthermore, if $X \cap H$ contains no essential singularities of $X$ and the dimension of $\sing(X \cap H)$ is $0$, then $\deg \sing(X \cap H) = n+1$.
\end{proposition}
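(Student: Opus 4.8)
The plan is to transfer the whole question to the hyperplane $H$ and the restricted tensor $\mathcal{B}$, and then to invoke Theorem~\ref{thm: singularities of symmetroid intersections with multiplicity} in the single-block case $r=1$. By Lemma~\ref{lem: cayley line contracts} the image $[H]=\psi(\ell)$ is a single point of $\dual\mbp^n$, and by Proposition~\ref{prop: restrictions of tensors}(c) the Cayley variety $\mathfrak{C}_{\mathcal{B}}$ of $\mathcal{B}$ equals $\psi^{-1}([H])$, which contains $\ell$. Choosing coordinates so that $\ell$ joins $[1:0:\cdots:0]$ and $[0:1:\cdots:0]$, the contraction $\mathcal{B}(\mathbf{x},\cdot,\cdot)$ takes the block form with a zero upper-left $2\times 2$ block, off-diagonal block $M(\mathbf{x})$ of size $2\times(m-1)$, and lower-right block $N(\mathbf{x})$, exactly as recorded in the discussion preceding the statement. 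For the first assertion I would note that the degeneracy locus $\{p\in H:\operatorname{rk}M(p)\le 1\}$ has codimension at most $(2-1)((m-1)-1)=n-1=\dim H$, hence is nonempty; for any such $p$ a nonzero combination of the first two columns of $\mathcal{B}(p,\cdot,\cdot)$ vanishes, i.e. $\mathcal{B}(p,q,\cdot)=0$ for a suitable $q\in\ell\subseteq\mathfrak{C}_{\mathcal{B}}$, so by Theorem~\ref{thm: singularities of symmetroid intersections}(b) (applied to $\mathcal{B}$) the point $p$ is singular on $X\cap H$. Thus $H$ is tangent to $X$ along a nonempty (hence dimension $\geq 0$) scheme.

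For the degree I would apply Theorem~\ref{thm: singularities of symmetroid intersections with multiplicity} to $\mathcal{B}$ viewed as a single-block tensor: the hypotheses hold because $X\cap H=Z(\det\mathcal{B})$ is a hypersurface, hence a complete intersection; $\dim\sing(X\cap H)=0$ by assumption; and there are no coincident singularities when $r=1$. Together with the assumption that $X\cap H$ contains no essential singularities of $X$ (so that $\sing(X\cap H)=\sing((X\cap H)^{\nonessential})$), this produces an isomorphism
\[
\sing(X\cap H)\ \cong\ I:=\bigl\{(x,y)\in (X\cap H)^{\nonessential}\times\mbp^m : \mathcal{B}(x,y,\cdot)=0,\ y\in\mathfrak{C}_{\mathcal{B}}\bigr\},
\]
so $\deg\sing(X\cap H)=\deg I$. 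Projection of $I$ to the second factor lands in $\mathfrak{C}_{\mathcal{B}}\supseteq\ell$, and projection to the first factor is injective since for $x\in(X\cap H)^{\nonessential}$ the space $\ker\mathcal{B}(x,\cdot,\cdot)$ is one dimensional and forces $y=\varphi_{\mathcal{B}}(x)$. The plan is therefore to split $I$ according to whether $y\in\ell$ or $y\in\mathfrak{C}_{\mathcal{B}}\setminus\ell$ and add the two degrees.

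Over $\ell$, writing $y=(y_0:y_1:0:\cdots:0)$ the equation $\mathcal{B}(x,y,\cdot)=0$ reduces to $M(x)^{T}(y_0,y_1)^{T}=0$; hence this part of $I$ is the projectivized rank-$\le 1$ degeneracy locus of the $2\times(m-1)$ matrix $M$ on $H\cong\mbp^{n-1}$, together with its kernel direction. Since $\dim\sing(X\cap H)=0$, this locus has the expected codimension $n-1$, so its class is computed by the Giambelli–Thom–Porteous determinant of Chern classes of $\mathcal{O}_H^{\,2}-\mathcal{O}_H^{\,m-1}(-1)$ — equivalently, by intersecting the Segre variety $\mbp^{1}\times\mbp^{\,m-1}$ with a linear subspace of complementary dimension — which yields the dominant part of the count. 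The remaining contribution comes from $y\in\mathfrak{C}_{\mathcal{B}}\setminus\ell$, equivalently from points $x$ with $\operatorname{rk}M(x)=2$ for which the kernel direction $\varphi_{\mathcal{B}}(x)$ nonetheless lies on $\mathfrak{C}_{\mathcal{B}}$; I would identify this with the intersection of the curve $\varphi_{\mathcal{B}}(X\cap H)$ and the residual curve $\mathfrak{C}_{\mathcal{B}}\setminus\ell$ in $\mbp^m$, using crucially that all $n$ quadrics cutting out $\mathfrak{C}_{\mathcal{B}}$ contain $\ell$, and combine it with the Thom–Porteous contribution to reach the asserted total $n+1$.

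The main obstacle is this last bookkeeping: cleanly separating the contribution of the secant line $\ell$ from that of the residual part of $\psi^{-1}([H])$, controlling the residual curve $\mathfrak{C}_{\mathcal{B}}\setminus\ell$ (its degree and how it meets $\varphi_{\mathcal{B}}(X\cap H)$), and ruling out any further excess so that the degrees add up correctly; a secondary subtlety is justifying the Thom–Porteous class in the presence of possibly non-reduced degeneracy scheme structure, which is exactly why the hypothesis $\dim\sing(X\cap H)=0$ is imposed. Once the dimension of the degeneracy locus is pinned down, the determinant-of-Chern-classes computation itself is routine, as is the verification that the two assumptions on $X\cap H$ are precisely what is needed to apply Theorem~\ref{thm: singularities of symmetroid intersections with multiplicity}.
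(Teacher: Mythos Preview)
Your argument for the first assertion is correct and is essentially the paper's: the degeneracy locus $\{p\in H:\rank M(p)\le 1\}$ is nonempty by the expected-codimension bound, and each such $p$ lies in $\sing(X\cap H)$ by Theorem~\ref{thm: singularities of symmetroid intersections}(b).

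For the degree, the paper takes a much shorter route. It never invokes Theorem~\ref{thm: singularities of symmetroid intersections with multiplicity} and never splits the incidence variety according to whether $y\in\ell$ or $y\in\mathfrak{C}_{\mathcal{B}}\setminus\ell$. Instead it observes that the linear map $p\mapsto \mathcal{M}'(p,\cdot,\cdot)$ sends $H\cong\mbp^{n-1}$ onto a linear space $Y\subset\mbp\mathrm{M}_{2\times(m-1)}$ (the no-essential-singularity hypothesis is exactly what rules out $\dim Y<n-1$), so the degeneracy locus is $Y$ intersected with the Segre image $\mathrm{seg}(\mbp^{1}\times\mbp^{m-2})$, a variety of degree $m-1=n$ and codimension $m-2=n-1$. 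When this intersection is $0$-dimensional it therefore has degree $n$. The paper then asserts that, absent essential singularities, the degeneracy locus \emph{equals} $\sing(X\cap H)$, and this finishes the argument.

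Two remarks. First, the paper's own proof, the parallel Proposition~\ref{prop: deg and dim of tangents of intersections of symmetric determinantal hypersurfaces}, and every application in the paper (bitangents for $n=2$, tritangents for $n=3$) all produce the value $n$; the ``$n+1$'' in the statement is almost certainly a typo. The residual ``$+1$'' you are hunting in $\mathfrak{C}_{\mathcal{B}}\setminus\ell$ does not exist, which is exactly why you could not pin it down. Second, the substantive step you are missing is the equality $\{p:\rank M(p)\le 1\}=\sing(X\cap H)$: once that is in hand there is nothing residual to bookkeep, and the Segre degree is the whole story. Your detour through Theorem~\ref{thm: singularities of symmetroid intersections with multiplicity} is valid as far as it goes, but it introduces the very complication (the residual curve) that you then struggle to undo.
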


\begin{proof}
	Let $m = n+1$ and let $\mathcal{B}$ be the restriction of $\mathcal{A}$ to $H$. Let $L \subset k^{m+1}$ denote the space such that $\mbp\!L = \ell$. Viewing $\mathcal{B}$ as a multilinear map, we define the tensor $\mathcal{M}\: k^{n+1} \otimes L \otimes k^{m+1} \rightarrow k$ by restriction of $\mathcal{B}$. Note that $\mathcal{M}$ restricted to $k^{n+1} \otimes L \otimes L$ is identically $0$ by the fact that $\ell \subseteq \mathfrak{C}_{\mathcal{B}}$. Thus, we obtain a multilinear map $\mathcal{M}'\: k^{n+1} \otimes L \otimes (k^{m+1}/L) \rightarrow k$.
	
	The subscheme of $\mbp\mathrm{M}_{2 \times m-1}$ of $2 \times (m-1)$ matrices of corank $1$ is the image of the Segre embedding
	\[
		\mapdef{\mathrm{seg}}{\mbp^1 \times \mbp^{m-2}}{\mbp\mathrm{M}_{2 \times m-1}}{(u,v)}{uv^T}.
	\]
	The image of this Segre embedding has degree $m-1$ and codimension $m-2$. Consider the linear space $Y := \{\mathcal{M}'(p, \cdot, \cdot) : p \in H\}$. If $\dim Y < n-1$, then there is a $p$ such that $\mathcal{M}'(p, \cdot, \cdot) = 0$. This implies that $p$ is an essential singularity of $X$. Otherwise, we see that $Y$ meets the image of $\mathrm{seg}$ along a positive dimensional scheme or a scheme of dimension $0$ and degree $n$. By Theorem~\ref{thm: singularities of symmetroid intersections}, we see that $\{p \in H : \rank \mathcal{M}'(p, \cdot, \cdot) = 1\}$ is a subscheme of $\sing(X \cap H)$. If $X \cap H$ contains no essential singularities, then we have equality.
\end{proof}

The remainder of this section is devoted to establishing an analogous result for an intersection of symmetroids. The main difference in the proof is that the calculation to identify the singularities of a fibre of $\psi$ along a secant line is more technical. We give an example illustrating the nuances of this calculation. 

\begin{example} \label{ex: genus 4 secant contraction example}
Let $k$ be algebraically closed and let $\mathcal{A} \in k^4 \otimes (\Sym_2 k^2 \oplus \Sym_2 k^3)$ be a generic block-diagonal tensor. Let $\ell$ be a secant line of $\mathfrak{C}$ which is not of the form $\{p, \sigma(p)\}$ for some $\sigma \in \DAut(\mathcal{A})$. Let $[H] = \psi(\ell)$. Using the actions of $\GL_4 \times \GL_2 \times \GL_3$, we may assume $H = Z(x_0)$ and $\ell$ is spanned by $[1 \colon 0 \colon 1 \colon 0 \colon 0]$ and $[0 \colon 1 \colon 0 \colon 1 \colon 0]$. Let $\mathcal{B} = \mathcal{A}_{\mid H}$. Because $H = Z(x_0)$ and $\ell \subseteq \mathfrak{C}_{\mathcal{B}}$, we have
\[
	\mathcal{B}(\mathbf{x}, \cdot, \cdot) := 
	\begin{bmatrix}
	a & c & 0 & 0 & 0\\
	c & b & 0 & 0 & 0 \\
	0 & 0 & -a & -c & d \\
	0 & 0 & -c & -b & e \\
	0 & 0 & d & e & f
	\end{bmatrix}
\]
for $a,b,c,d,e,f \in k[x_1,x_2,x_3]$ homogeneous linear forms. Notice for $p \in H$ that $\ker \mathcal{B}(p, \cdot, \cdot) \cap \ell \neq \emptyset$ if and only if the points
\begin{alignat*}{3}
	\mathcal{B}(p, \cdot, [1 \colon 0 \colon 1 \colon 0 \colon 0]) &= [a(p) \colon c(p) \colon -a(p) \colon -c(p) \colon d(p)] && \quad \text{and}\\
	\mathcal{B}(p, \cdot, [0 \colon 1 \colon 0 \colon 1 \colon 0]) &= [c(p) \colon b(p) \colon -c(p) \colon -b(p) \colon e(p)]
\end{alignat*}
are equal. This occurs if and only if 
$
\rank \begin{bmatrix}
a(p) & c(p) & d(p) \\
c(p) & b(p) & e(p)
\end{bmatrix} \leq 1.
$
The slices of the $2 \times 3$ matrix of linear forms span a plane in $\mbp\mathrm{M}_{2 \times 3}$ of dimension $3$. Despite the relation among the entries of this matrix, a generic such plane will meet the rank degeneracy locus (the image of a Segre embedding) transversely along a subscheme of degree $3$ and dimension $0$. 

Alternatively, if $\ell$ is the secant between $p, \sigma(p) \in \mathfrak{C}$ and $p \neq \sigma(p)$, we can use the action of $\GL_2 \times \GL_3$ to move $p$ to $[1:0:1:0:0]$, which implies $\sigma(p)$ is sent to $[1:0:-1:0:0]$. As before, we may apply $\GL_4$ to assume that $H = Z(x_0)$ and set $\mathcal{B} = \mathcal{A}_{\mid H}$. In this case,
\[
	\mathcal{B}(\mathbf{x}, \cdot, \cdot) := 
	\begin{bmatrix}
	0 & c & 0 & 0 & 0\\
	c & b & 0 & 0 & 0 \\
	0 & 0 & 0 & g & d \\
	0 & 0 & g & h & e \\
	0 & 0 & d & e & f
	\end{bmatrix}
\]
for $b,c,d,e,f,g,h \in k[x_1,x_2,x_3]$ homogeneous linear forms. In this case, we see that $H$ is tangent to the quadric cone defined by $Z(x_0b - c^2)$ along the line $Z(x_0, c)$.
\end{example}

Let $\mathcal{A}$ be a block-diagonal tensor and $X$ the associated intersection of symmetroids. Unlike the case of hypersurfaces, it is possible that
\[
	\{(x, y) \in X \times \mbp^m : \mathcal{A}(x, y, \cdot) =0, \ \ y \in \mathfrak{C}\} \neq \{(x, y) \in \mbp^n \times \mbp^m : \mathcal{A}(x, y, \cdot) =0, \ \ y \in \mathfrak{C}\}.
\]
We give an example of such a case in Example~\ref{ex: intersection of cubic curves}.
Theorem~\ref{thm: singularities of symmetroid intersections}(b) shows us that the difference is explained by the block spaces. This subtlety is why we assume $\ell$ does not intersect the block spaces in Proposition~\ref{prop: deg and dim of tangents of intersections of symmetric determinantal hypersurfaces}.

\begin{example} \label{ex: intersection of cubic curves}
	Consider the intersection $X$ of symmetric determinantal cubic curves in $\mathbb{P}^2$ defined by
	\[
		\mathcal{B}(\mathbf{x}, \cdot, \cdot) := 
		\begin{bmatrix}
		0 & b & c & 0 & 0 & 0\\
		b & d & e & 0 & 0 & 0\\
		c & e & f & 0 & 0 & 0 \\
		0 & 0 & 0 & 0 & b' & c' \\
		0 & 0 & 0 & b' & d' & e' \\
		0 & 0 & 0 & c' & e' & f'
		\end{bmatrix}
	\]
	for $b, \ldots, f' \in k[x_1,x_2,x_3]$ homogeneous linear forms. With $q := [1:0:0:1:0:0]$ and $\sigma$ the nontrivial element of $\DAut(\mathcal{B})/\gen{\pm I}$, we see that the line between $q$ and $\sigma(q)$ lies inside $\mathfrak{C}$. The point $p$ such that $b(p) = c(p) = 0$ is a point on the hypersurface defined by the first $3 \times 3$ block, but it is not necessarily a point on $X$. For a general choice of linear forms, $X$ is a collection of $9$ reduced points in $\mbp^2$.
\end{example}

\secantsandhyperplanestheorem*

\begin{proof}
	Let $\mathcal{B}$ be the restriction of $\mathcal{A}$ to $H$ and let $L \subset k^{m+1}$ denote the space such that $\mbp\!L = \ell$. Viewing $\mathcal{B}$ as a multilinear map, we define the tensor $\mathcal{M}\: k^{n+1} \otimes L \otimes k^{m+1} \rightarrow k$ by restriction of $\mathcal{B}$. Note that $\mathcal{M}$ restricted to $k^{n+1} \otimes L \otimes L$ is identically $0$ by the fact that $\ell \subseteq \mathfrak{C}_{\mathcal{B}}$. Thus, we obtain a multilinear map $\mathcal{M}'\: k^{n+1} \otimes L \otimes (k^{m+1}/L) \rightarrow k$.
	
	The subscheme of $\mbp\mathrm{M}_{2 \times m-1}$ of $2 \times (m-1)$ matrices of corank $1$ is the image of the Segre embedding
	\[
		\mapdef{\mathrm{seg}}{\mbp^1 \times \mbp^{m-2}}{\mbp\mathrm{M}_{2 \times m-1}}{(u,v)}{uv^T}.
	\]
	The image of this Segre embedding has degree $m-1$ and codimension $m-2$. Consider the linear space $Y := \{\mathcal{M}'(p, \cdot, \cdot) : p \in H\}$. Because $\ell$ does not meet the block spaces, if $\mathcal{M}'(p, q, \cdot) = 0$ for some $q \in \ell$, then $p$ is a singular point of $X$ by Theorem~\ref{thm: singularities of symmetroid intersections}. 
		
	We consider the embedding of $L$ into $\bigoplus_{\lindex=1}^r k^{d_\lindex}$ given by each of the projections. If $\dim Y < n-1$, then there is a $p$ such that $\mathcal{M}'(p, \cdot, \cdot) = 0$. We then see that $\mathcal{M}(p, \cdot, \cdot)$ is itself the zero map, and each restriction $\mathcal{M}(p, \cdot, \cdot)\: L \otimes k^{d_\lindex} \rightarrow k$ is also the zero map. In particular, each block of $\mathcal{A}(p, \cdot, \cdot)$ of size at least $2$ has corank at least $2$, so $X$ has an essential singularity at $p$. 
	If $\dim(Y \cap \Im(\mathrm{seg})) = 0$, then $\deg(Y \cap \Im(\mathrm{seg})) = n$. If $\dim \sing(X \cap H) = 0$, then Theorem~\ref{thm: singularities of symmetroid intersections with multiplicity} shows that $Y \cap \Im(\mathrm{seg})$ is contained in $\sing(X \cap H)$. Otherwise, $\sing(X \cap H)$ is positive dimensional, and so contains a subscheme of dimension $0$ and degree $n$. 
\end{proof}

In the specific cases covered in the remainder of this paper -- canonical curves of genera $3,4,5$ and symmetroid quintics in $\mbp^3$, the conclusion of the preceding proposition is sharp, in the sense that if $X$ is a generic such object and $H$ is a hyperplane arising from a secant of the Cayley variety, then $\sing(X \cap H)$ is of dimension $0$ and degree $n$, that is, $H$ is a hyperplane tangent to $X$ at $n$ points. In the case that $\sing(X \cap H)$ is reduced, we can determine the multiplicity of the intersection of $X$ and $H$. The discrepancy between the degree of a fat point $Z \subseteq \sing(X \cap H)$ and the multiplicity of $Z$ in the intersection is why we need to use a genericity hypothesis in the case of a quintic symmetroid. For canonical curves, we can remove the genericity hypothesis.

\begin{proposition} \label{prop: tangency for curves}
	Let $\mathcal{A} \in k^{n+1} \otimes (\bigoplus_{\lindex=1}^r \Sym_2 k^{d_\lindex})$ be a block-diagonal tensor, let $X$ be the associated intersection of symmetroids, and let $\mathfrak{C}$ be the associated Cayley variety. Assume that $X$ is a smooth complete intersection of dimension $1$ and that $\mathfrak{C}$ is a complete intersection of dimension $0$. Then:
	\begin{enumerate}[(a)]
		\item $X$ is either a canonical genus $3$ curve, a canonical genus $4$ curve lying on a cone, or a canonical genus $5$ curve given as the intersection of three quadric cones each of rank $3$.
		
		\item
		Let $\ell \subseteq \mbp^m$ be a secant line of $\mathfrak{C}$ and let $[H] = \psi(\ell)$. Then $H$ is tangent to $X$ at each point of $X \cap H$.
		
		\item
		Write $\mbp^m = \mbp(V_1 \oplus \ldots \oplus V_r)$ such that $\mbp V_\lindex$ is the $\lindex$-th block space for $\mathcal{A}$. If $\mathcal{A}^{(\lindex)}$ is a block of $\mathcal{A}$ of size $2$ and $X_\lindex$ the corresponding quadric in $\mbp^n$, then $H$ is tangent to $X_\lindex$ if and only if $\ell = \ell(p, \sigma_\lindex(p))$, where $\sigma_\lindex \in \DAut(\mathcal{A})$ acts on $V_\lindex$ by the identity and on $V_{\lindex'}$ by $(-1)$ for $\lindex' \neq \lindex$.  
	\end{enumerate}
\end{proposition}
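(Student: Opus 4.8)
The plan is to handle the three parts in order, leveraging the machinery already assembled. For part (a), the classification of smooth canonical curves arising as complete intersections of symmetroids with a zero-dimensional Cayley variety: since $X$ is a smooth complete intersection of dimension $1$ and the reducible determinantal hypersurface $X_1 \cup \ldots \cup X_r$ contains $X$, the degree $d_1 + \ldots + d_r$ is constrained. Lemma~\ref{lem: cayley variety for quadrics} and Lemma~\ref{lem: cayley variety for cubics} rule out blocks of size $2$ and $3$ whose symmetroid contributes a degenerate factor unless the Cayley variety is empty or $X$ is non-reduced; combined with the complete-intersection hypothesis and the genus formula, the only surviving configurations are $(r,d_\bullet) = (1,4)$ (a plane quartic, genus $3$), $(2,(2,3))$ (a genus $4$ curve whose canonical model lies on the quadric cone $X_1 = Z(\det \mathcal{A}^{(1)})$), and $(3,(2,2,2))$ (a genus $5$ curve cut out by three rank-$3$ quadrics $X_1, X_2, X_3$). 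I would spell out the adjunction/genus computation $g = 1 + \tfrac12 d(d-4)$ for the hypersurface case and the analogous complete-intersection formula, and observe that in the block cases the quadric symmetroids $X_\lindex$ of size $2$ have rank $3$ (being reduced and irreducible cubics---no, quadrics---in $\mbp^n$) precisely because the corresponding block is not identically zero and $\det \mathcal{A}^{(\lindex)}$ is irreducible.

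For part (b), I would invoke Proposition~\ref{prop: restrictions of tensors}: writing $\mathcal{B}$ for the restriction of $\mathcal{A}$ to $H = Z(\psi(\ell))$, the curve $X \cap H$ is the determinantal variety of $\mathcal{B}$, and by Proposition~\ref{prop: restrictions of tensors}(c) its Cayley variety contains $\ell$. Since $\ell \subseteq \mathfrak{C}_{\mathcal{B}}$, for any point $p \in X \cap H$ the analysis preceding Proposition~\ref{prop: deg and dim of tangents of symmetric determinantal hypersurfaces} (the $2 \times (m-1)$ block-of-linear-forms argument) shows that some linear combination of two columns of $\mathcal{B}(p,\cdot,\cdot)$ vanishes, producing $q \in \ell$ with $\mathcal{B}(p,q,\cdot) = 0$. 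If $p$ avoids the block spaces this forces $p$ to be a singular point of $X \cap H$ by Theorem~\ref{thm: singularities of symmetroid intersections}; since $X$ is a smooth curve, a singular point of the hyperplane section $X \cap H$ is exactly a point of tangency. The finitely many $p \in X \cap H$ possibly lying on a block space must be treated separately: here I would argue directly that the quadric $Q_p$ associated to such $p$ still contains $\ell$ as an isotropic line through its vertex, so $H$ remains tangent to $X$ at $p$ via Proposition~\ref{prop: main diagram hypersurfaces}-type reasoning ($\psi(\varphi_\lindex(p)) = $ the corresponding row of the Jacobian). The upshot is $H \cap X$ is everywhere non-reduced, i.e., $H$ is tangent along all of $X \cap H$.

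For part (c), suppose $\mathcal{A}^{(\lindex)}$ has size $2$, so after an action of $\GL_{d_\lindex}=\GL_2$ the block reads $\begin{bmatrix} \alpha & \gamma \\ \gamma & \beta \end{bmatrix}$ with $\alpha,\beta,\gamma$ linear forms and $X_\lindex = Z(\alpha\beta - \gamma^2)$, a rank-$3$ quadric with vertex a single point $v_\lindex = \varphi_{X_\lindex}(X)$-image (as in Example~\ref{ex: genus 4 secant contraction example}, second display). The hyperplane $H = \psi(\ell)$ is tangent to $X_\lindex$ exactly when $[H]$ lies on the dual quadric $\dual X_\lindex$, equivalently when $\psi_{X_\lindex}^{-1}([H])$ is singular, equivalently (since $\dim \mbp^{d_\lindex-1}=1$) when the pencil $\mathbf{y}^T A^{(\lindex)} \mathbf{y}$ degenerates; tracing through $\psi = \psi_{X_\lindex} \circ \iota_\lindex$ on the block space, a secant $\ell$ of $\mathfrak{C}$ projects into $\mbp V_\lindex$ to a point (two points of $\mathfrak{C}$ either coincide or are swapped by $\sigma_\lindex$ after projection), and $H$ meets $X_\lindex$ tangentially precisely when that projected point is the vertex $v_\lindex$, which happens iff the two endpoints $p, \sigma_\lindex(p)$ of $\ell$ are exchanged by $\sigma_\lindex \in \DAut(\mathcal{A})$. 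I would make this precise by the explicit matrix normalization of Example~\ref{ex: genus 4 secant contraction example}: in the generic secant case the relevant $2 \times (m-1)$ matrix has full-rank first block and $H$ is transverse to $X_\lindex$, while in the $\ell = \ell(p,\sigma_\lindex(p))$ case the first block acquires a zero row, exhibiting $Z(\det \mathcal{B}^{(\lindex)})$ as singular along the line $\ell \cap \mbp V_\lindex$, i.e., $H$ tangent to $X_\lindex$. The main obstacle I anticipate is the careful bookkeeping in part (c) relating "$\ell$ projects to the vertex of $X_\lindex$" to "$\ell = \ell(p,\sigma_\lindex(p))$" while keeping track of the other block spaces $V_{\lindex'}$; one must check that a secant not of this special form genuinely projects to a non-vertex point of the conic $X_\lindex \subset \mbp^1$-worth of quadrics, which is where the block-diagonal vanishing $\mathcal{A}(\cdot,v_i,v_j)=0$ for $i \neq j$ does the real work.
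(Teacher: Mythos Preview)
Your part (b) has a genuine gap. You assert that for every $p \in X \cap H$ the analysis preceding Proposition~\ref{prop: deg and dim of tangents of symmetric determinantal hypersurfaces} produces $q \in \ell$ with $\mathcal{B}(p,q,\cdot)=0$, but that analysis runs the other way: it shows that \emph{if} the $2 \times (m-1)$ matrix $M(p)$ has rank at most $1$ \emph{then} such a $q$ exists, and it identifies the rank-degeneracy locus as a degree-$n$ subscheme. It does not show that every point of $X \cap H$ lies in that locus. The paper supplies the missing step by a dimension count on isotropic subspaces: for $x \in X \cap H$ the quadric $Q_x = Z(\mathcal{A}(x,\cdot,\cdot)) \subset \mbp^m$ has rank exactly $3$ (smoothness of $X$ forces corank $1$ in each block, and the hypotheses give $r = m-2$), so its maximal isotropic subspaces have projective dimension $m-2$; since $\ell \subset Q_x$ is isotropic and $\ker \mathcal{A}(x,\cdot,\cdot) = \sing Q_x$ has projective dimension $r-1 = m-3$, their span would be isotropic of dimension $m-1$ if they were disjoint, a contradiction. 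Hence $\ell$ meets $\ker \mathcal{A}(x,\cdot,\cdot)$ for \emph{every} $x \in X \cap H$, and Proposition~\ref{prop: psi sends sing locus to planes} then gives $[H] = \psi(\ell) \in \psi(\ker \mathcal{A}(x,\cdot,\cdot))$, i.e.\ $H \supset T_xX$. (Your aside about ``$p$ lying on a block space'' is also confused: $p \in \mbp^n$, while block spaces live in $\mbp^m$; you presumably mean the intersection point $q$.)

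For part (a) the paper's argument is a bare dimension count --- $\dim X = 1$ forces $r = n-1$, $\dim \mathfrak{C} = 0$ forces $m = n+1$, hence $r = m-2$ and $\sum d_\lindex = m+1$ with each $d_\lindex \ge 2$ leaves only $m \in \{3,4,5\}$ --- and needs neither the genus formula nor Lemmas~\ref{lem: cayley variety for quadrics}--\ref{lem: cayley variety for cubics}. For part (c) your coordinate approach is workable, but the paper instead observes that $\psi$ restricted to $\ker \mathcal{A}(x,\cdot,\cdot) \iso \mbp^{r-1}$ is a degree-$2^{r-1}$ cover of its image branched exactly at the block-space points $\varphi_\lindex(x)$; if $H$ is tangent to $X_\lindex$ then the unique point $y \in \ell \cap \ker \mathcal{A}(x,\cdot,\cdot)$ satisfies $\psi(y) = \psi(\varphi_\lindex(x))$, and branching forces $y = \varphi_\lindex(x)$, from which $\ell = \ell(p,\sigma_\lindex(p))$ follows.
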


\begin{proof}
	The hypotheses imply that $r = n-1 = m-2$. However, each $d_\lindex \geq 2$ and $d_1 + \ldots + d_r = m+1$. The three cases identified in part (a) are the only ones possible. 

	By Proposition~\ref{prop: psi sends sing locus to planes}, if $S \subseteq (X \bs \sing X)$ is some subscheme, then 
	\[
		\bigcap_{x\in S} \psi(\ker \mathcal{A}(x, \cdot, \cdot))
	\]
	is the locus in $\dual \mbp^n$ of hyperplanes in $\mbp^n$ tangent to $X$ along $S$. 
	
	Let $x \in (X \cap H)(\bar k)$, and let $Q_x := \dual \psi(x)$. Since $[H] = \psi(\ell)$, we have that $Q_x$ is a quadric in $\mbp^m$ which contains $\ell$ as an isotropic subspace and $\dim \sing Q_x = r-1$. Because the dimension of a maximal isotropic subspace of $Q_x$ is $m-2$, we see that $\ell$ must intersect $\ker \mathcal{A}(x, \cdot, \cdot)$. Thus, $\psi(\ell) \in \psi(\ker \mathcal{A}(x, \cdot, \cdot))$. In particular, 
	\[
		\psi(\ell) \in \bigcap_{x\in (X \cap H)(\bar k)} \psi(\ker \mathcal{A}(x, \cdot, \cdot))
	\]
	which proves part (b).
	
	Finally, let $\ell$ be a secant of $\mathfrak{C}$ such that $H$ is tangent to $X_\lindex$. Let $x \in (X \cap H)(\bar k)$. Because $X$ is smooth, we have that $\mathfrak{C}$ is smooth and hence $\ker \mathcal{A}(x, \cdot, \cdot) \cap \mathfrak{C} = \emptyset$. On the other hand, $\ell$ meets $\ker \mathcal{A}(x, \cdot, \cdot)$ at a single point $y$. We have that $\psi$ induces a morphism $\psi\: \ker \mathcal{A}(x, \cdot, \cdot) \rightarrow \dual \mbp^n$ that is finite of degree $2^{r-1}$ onto its image. We have $\psi(y) = \psi(\varphi_\lindex(x))$, and since $\psi$ is branched at $\varphi_\lindex(x)$, we see $y = \varphi_\lindex(x)$. If $p \in \ell$ is one of the points on $\mathfrak{C}$, we see that $y$ is the projection of $p$ to $\mbp V_\lindex$, else $X_\lindex$ is reducible. Thus, $p$ and $\sigma_\lindex(p)$ span $\ell$. The converse for part (c) is straightforward.
\end{proof}

\begin{remark} \label{rem: converse for sigma fixed lines}
	We can say something more general in the case of Proposition~\ref{prop: tangency for curves}(c). Namely, if $\ell := \ell(p, \sigma_l(p))$ is a secant of the Cayley variety, then the hyperplane $H$ defined by $[H] := \psi(\ell)$ is tangent to $X_l$. This is true for any block-diagonal tensor $\mathcal{A} \in k^{n+1} \otimes (\bigoplus_{\lindex=1}^r \Sym_2 k^{d_\lindex})$ satisfying our standard conventions.
\end{remark}

\begin{corollary}
\label{cor: lines are n-tangents for our main examples}
	Let $X$ denote the intersection of symmetroids defined by a tensor $\mathcal{A}$ from one of the following cases:
	\newcommand{\enumalignspace}[1]{\makebox[5cm][l]{#1}}
	\begin{enumerate}[(a)]
		\item \enumalignspace{$\mathcal{A} \in k^3 \otimes \Sym_2 k^4$} (and $X$ is a smooth canonical genus $3$ curve),
		\item \enumalignspace{$\mathcal{A} \in k^4 \otimes (\Sym_2 k^2 \oplus \Sym_2 k^3)$} (and $X$ is a smooth canonical genus $4$ curve),
		\item \enumalignspace{$\mathcal{A} \in k^5 \otimes (\Sym_2 k^2)^{\oplus 3}$} (and $X$ is a smooth canonical genus $5$ curve),
		\item \enumalignspace{$\mathcal{A} \in k^4 \otimes \Sym_2 k^5$} (and $X$ is a generic quintic symmetroid).
	\end{enumerate}
	Let $\ell$ be a secant line of the Cayley variety of $\mathcal{A}$ not intersecting the block spaces. Then for $[H] := \psi(\ell)$, we have in cases (a-c) that $X\cap H = 2D$, where $D$ is an effective representative of a theta characteristic of $X$. In case $(d)$, we have that $H$ is a tritangent hyperplane to $X$.
\end{corollary}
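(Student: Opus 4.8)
The plan is to push the secant $\ell$ through Proposition~\ref{prop: deg and dim of tangents of intersections of symmetric determinantal hypersurfaces} and then read off the multiplicities of the section $X\cap H$ from the determinantal description of that section. I treat the curve cases (a)--(c) uniformly and case (d) separately.

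For (a)--(c): here $X\subseteq\mbp^n$ is a canonically embedded smooth curve of genus $g=n+1$, so $X\cap H$ is an effective canonical divisor of degree $2g-2=2n$; moreover in these cases $\mathfrak{C}$ is a $0$-dimensional complete intersection of $n+1$ quadrics (Sections~\ref{sec: plane quartics},~\ref{sec: genus 4 curves},~\ref{sec: genus 5 curves}), so Proposition~\ref{prop: tangency for curves} applies. By Proposition~\ref{prop: tangency for curves}(b), $H\supseteq T_xX$ for every $x\in X\cap H$, so writing $X\cap H=\sum_i m_iP_i$ one has $m_i\ge 2$ for all $i$. Since $X$ is smooth, $X\cap H$ contains no essential or coincident singularities of $X$; since $\ell$ avoids the block spaces, Proposition~\ref{prop: deg and dim of tangents of intersections of symmetric determinantal hypersurfaces} then exhibits a $0$-dimensional subscheme $Z\subseteq\sing(X\cap H)$ of degree $n$, and unwinding its proof $Z=\{p\in H:\ell\cap\ker\mathcal{B}(p,\cdot,\cdot)\neq\emptyset\}$ with $\mathcal{B}:=\mathcal{A}\!\mid_H$. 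The proof of Proposition~\ref{prop: tangency for curves}(b) shows $\ell$ meets $\ker\mathcal{A}(x,\cdot,\cdot)$ for \emph{every} $x\in X\cap H$, so $\supp Z=\supp(X\cap H)$, and writing $Z=\sum_i b_iP_i$ we get $b_i\ge 1$ and $\sum_i b_i=n$.

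The crux is the scheme-theoretic identity $X\cap H=2Z$, a local statement at each $P_i$ which I would obtain by continuing the computation begun in Example~\ref{ex: genus 4 secant contraction example}: after normalizing $\ell$, membership $p\in Z$ is the vanishing of the $2\times2$ minors of a fixed $2\times(m-1)$ matrix $M(\mathbf{x})$ of linear forms, and, using the Pl\"ucker syzygies among those minors, each block determinant $\det\mathcal{B}^{(\lindex)}$ can be rewritten so that, locally along $Z$, it generates the \emph{square} of the ideal cutting out $Z$; here one uses that $X$ smooth forces $M(P_i)$ to have rank exactly $1$, so some entry of $M(P_i)$ is a unit. (In genus $4$ this is explicit: with $p:=ab-c^2$ and $q:=ae-cd$ one finds $a\cdot\det\mathcal{B}^{(2)}=(af+d^2)p+q^2$, so on the quadric $\{p=0\}$ the cubic is $q^2/a$ while $Z=\{p=q=0\}$, giving $X\cap H=2Z$ near each $P_i$.) Granting the identity, $m_i=2b_i$, so $X\cap H=2Z$; taking $D:=Z$ we have $\deg D=n=g-1$ and $2D=X\cap H\sim K_X$, so $D$ is an effective representative of a theta characteristic of $X$. (For genus $3$ the local computation is unnecessary, since $4$ has no partition into parts $\ge 2$ other than $(4)$ and $(2,2)$, both even.) I expect the main obstacle to be verifying the square-ideal identity uniformly at every point of $Z$ in all three genera, i.e.\ handling each possible choice of unit entry of $M(P_i)$; the remaining steps are formal.

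For (d): now $X\subseteq\mbp^3$ is a generic quintic symmetroid, so $r=1$ and $n=3$. For $X$ generic, $\mathfrak{C}$ is $0$-dimensional and every secant $\ell$ gives a plane $H=\psi(\ell)$ disjoint from the $20$ essential singularities of $X$; there are no coincident singularities since $r=1$. By Proposition~\ref{prop: deg and dim of tangents of symmetric determinantal hypersurfaces} (and the discussion preceding Proposition~\ref{prop: tangency for curves}), $\sing(X\cap H)$ is a $0$-dimensional scheme which for generic $X$ is $n=3$ reduced points disjoint from $\sing X$; at each of them the plane quintic $X\cap H$ is singular while $X$ is smooth, so $H$ is tangent to $X$ there, i.e.\ $H$ is a genuine tritangent plane. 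The only non-formal ingredient here is the genericity, used to make this degree-$3$ locus reduced and disjoint from the nodes of $X$.
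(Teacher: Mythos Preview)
Your approach is genuinely different from the paper's. The paper does \emph{not} attempt the local square-ideal identity $X\cap H=2Z$; instead it argues in two steps. First, for \emph{generic} $\mathcal{A}$ it simply exhibits one example where $\sing(X\cap H)$ is reduced of degree $n$ (an open condition), settling case~(d) and the generic instance of (a)--(c). Second, for an arbitrary smooth $X$ in cases (a)--(c), it places the pair $(\mathcal{A},\ell)$ in a one-parameter family $\mathcal{A}_t=\mathcal{A}+t\mathcal{B}$ over $\Spec k[t]_{(t)}$ whose generic member is generic, so that $X_t\cap H_t$ is a proper flat family of degree-$2n$ schemes; since the generic fibre consists of $n$ double points, a specialization lemma (multiplicities of geometric points of the special fibre of a finite flat family are sums of multiplicities from the generic fibre, hence even here) forces every multiplicity in $X\cap H$ to be even. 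No local algebra beyond this parity-under-degeneration statement is used.

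Your route, by contrast, would prove the stronger and more explicit statement that $D$ is the degeneracy locus $Z$ of the $2\times(m-1)$ matrix $M$, via identities like your $a\cdot\det\mathcal{B}^{(2)}=(af+d^2)(ab-c^2)+(ae-cd)^2$. This is attractive and your genus~$4$ computation is correct; the symmetric identity $b\cdot\det\mathcal{B}^{(2)}=(bf+e^2)(ab-c^2)+(ce-bd)^2$ handles the case $b(P)\ne 0$, and smoothness of $X$ rules out $a(P)=b(P)=c(P)=0$, so the acknowledged case analysis there is short. The trade-off is that the paper's deformation argument is uniform in the genus and needs no such bookkeeping.

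One subtlety you have not flagged: in case~(c) (three size-$2$ blocks), the hypothesis ``$\ell$ avoids the block spaces'' does \emph{not} guarantee that every point $q\in\ell$ has all block-components $q^{(\lindex)}\ne 0$. Concretely, if $p_1^{(1)}$ and $p_2^{(1)}$ happen to be proportional in $k^2$ while $p_1^{(2)},p_2^{(2)}$ and $p_1^{(3)},p_2^{(3)}$ are not, then $\ell=\ell(p_1,p_2)$ misses all three block $\mbp^1$'s yet contains a unique $q$ with $q^{(1)}=0$; this is a codimension-one condition compatible with $X$ smooth. At a contact point $P$ where $\ell\cap\ker\mathcal{B}(P,\cdot,\cdot)$ is this $q$, your normalized form of $\mathcal{B}$ degenerates and the square-ideal identity needs a separate check. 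The paper's specialization argument is insensitive to this, which is part of why the authors chose it.
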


\begin{proof}
	For a generic $X$, it suffices to show that $\sing(X \cap H)$ is reduced, of dimension $0$, and of degree $n$. In turn, it suffices to exhibit a particular example where this is the case. This is easily done.
	
	In cases (a-c), we now show that the result from the generic case implies the result for any smooth $X$. We fix our attention to case (a), the others being similar. Up to linear transformations and base extension, we may assume that $\ell$ is the secant between the two points $[1:0:0:0], [0:1:0:0]$ of the Cayley variety. Let $\mathcal{A}$ be a tensor defining a smooth curve $X$. We may choose $\mathcal{B}$ to be a generic tensor of the same format as $\mathcal{A}$ and such that the distinguished points $[1:0:0:0], [0:1:0:0]$ also lie in the Cayley variety of $\mathcal{B}$. We consider the pencil of tensors $\mathcal{A}_t := \mathcal{A} + t \cdot \mathcal{B}$ over $\Spec k[t]_{(t)}$, which defines a family of curves $\{X_t\}$ and hyperplanes $\{H_t\}$. We obtain a proper flat family of intersections $f\: \{X_t \cap H_t\} \rightarrow \Spec k[t]_{(t)}$, whose coordinate ring $R$ is a finite flat (thus free) $k[t]_{(t)}$-algebra. By genericity of $\mathcal{B}$, the intersection of the generic fibre of $X_t$ and the generic fibre of $H_t$ is a pair of double points. Thus, by \cite[Section~1B]{Thiel2018} we have that the multiplicities of the geometric points of the special fibre of $f$ are all even. (Informally, if two points of even multiplicity degenerate together, the limit point also has even multiplicity.)
\end{proof}

In these cases, we are particularly interested in the number of hyperplanes arising from secants of the Cayley variety. For a block-diagonal tensor $\mathcal{A}$ with $r$ blocks, we have that $\psi(g(\ell)) = \psi(\ell)$ for every $g \in \Aut(\mathcal{A})$. The converse is also true under a technical assumption on the blocks $\mathcal{A}$, which happens to be satisfied for all of the cases discussed in Corollary~\ref{cor: lines are n-tangents for our main examples}. Lemma~\ref{lem: psi is injective on secants mod diagonal symmetries} allows us to say that the images of secant lines of the Cayley variety under $\psi$ are distinct up to $\Aut(\mathcal{A})$.

\begin{lemma} \label{lem: psi is injective on secants mod diagonal symmetries}
	Let $\mathcal{A} \in k^{n+1} \otimes (\bigoplus_{\lindex=1}^r \Sym_2 k^{d_\lindex})$ be a block-diagonal tensor, let $X$ be the associated intersection of symmetroids, and let $\mathfrak{C}$ be the Cayley variety. Assume that $X$ is a complete intersection, $\mathfrak{C}$ is a complete $0$-dimensional intersection, and that $\sum_{\lindex \in I} d_\lindex \neq \frac{m+1}{2}$ for any $I \subset \{1, \ldots, r\}$. Let $\ell, \ell'$ be two secants of $\mathfrak{C}$ such that $\psi(\ell) = \psi(\ell')$ and let $[H] := \psi(\ell)$. If $X \cap H$ has no essential singularities or coincident accidental singularities, $\dim \sing(X \cap H) = 0$, and $\sing(X \cap H)$ contains at least two distinct geometric points, then there is some $\sigma \in \DAut(\mathcal{A})$ such that $\ell' = \sigma(\ell)$.
\end{lemma}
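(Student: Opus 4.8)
The plan is to reconstruct the secant line $\ell$ from the hyperplane section $X \cap H$, up to the ambiguity introduced by the diagonal automorphisms. The starting point is Proposition~\ref{prop: tangency for curves}(b) and the analysis of the fibre $\psi^{-1}(\psi(\ell))$: for each geometric point $x \in (X \cap H)(\bar k)$ the quadric $Q_x := \dual\psi(x)$ contains $\ell$ as an isotropic line, has $\dim \sing Q_x = r-1$, and since a maximal isotropic subspace has dimension $m - 2$, the line $\ell$ must meet $\ker \mathcal{A}(x, \cdot, \cdot)$ in at least one point; call it $y_x := \ell \cap \ker\mathcal{A}(x,\cdot,\cdot)$. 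Now $\ell \subseteq \mathfrak C_{\mathcal B}$ where $\mathcal B$ is the restriction of $\mathcal A$ to $H$, so this gives, for each such $x$, a point $y_x \in \mathfrak C_{\mathcal B} \cap \ker\mathcal A(x,\cdot,\cdot)$, and hence (by Theorem~\ref{thm: singularities of symmetroid intersections}(b), using that $\ell$ avoids the block spaces so that $y_x$ has all blocks nonzero) we recover $x \in \sing(X \cap H)$. The key observation is that because $\psi$ restricted to $\ker \mathcal A(x, \cdot, \cdot)$ is finite of degree $2^{r-1}$ onto its image and is branched exactly at the points $\varphi_\lindex(x)$ (i.e., the projections $\nu_2'$ ramifies at these coordinate points), the equation $\psi(y_x) = \psi(\ell) = \psi(\varphi_1(x)) = \dots = \psi(\varphi_r(x))$ forces $y_x \in \{\varphi_1(x), \ldots, \varphi_r(x)\}$, i.e. $y_x = \varphi_{\lindex(x)}(x)$ for some index $\lindex(x)$.

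Next I would use the hypothesis that $\sing(X \cap H)$ has at least two distinct geometric points, say $x_1 \neq x_2$. Each $y_{x_i} = \varphi_{\lindex(x_i)}(x_i)$ lies on $\ell$, and these are two distinct points of $\ell$ (they cannot coincide: if $\varphi_{\lindex}(x_1) = \varphi_{\lindex'}(x_2)$ were a single point, then since $\ell \not\subseteq \mathfrak C$ the line $\ell$ would be a secant with only one intersection point with $\mathfrak C$, forcing it to be tangent, but then this tangency point would have to be a point of $\mathfrak C$ meeting $\ker\mathcal A(x_i,\cdot,\cdot)$, contradicting smoothness of $X$ via $\ker\mathcal A(x,\cdot,\cdot)\cap\mathfrak C=\emptyset$; alternatively one argues as in the genus $3$ Lemma in Section~\ref{sec: plane quartics}). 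Therefore $\ell = \ell(\varphi_{\lindex(x_1)}(x_1),\, \varphi_{\lindex(x_2)}(x_2))$, so $\ell$ is \emph{uniquely determined} by the data of the two contact points together with the blocks $\lindex(x_1), \lindex(x_2)$ that cut out the kernels meeting $\ell$. The same reasoning applies to $\ell'$: writing $\ell' = \ell(\varphi_{\lindex'(x_1)}(x_1), \varphi_{\lindex'(x_2)}(x_2))$ for possibly different index choices $\lindex'(x_i)$, and noting that $X \cap H$ depends only on $[H] = \psi(\ell) = \psi(\ell')$ so the two contact points $x_1, x_2 \in \sing(X \cap H)$ are the same for both.

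Finally, I would produce the required $\sigma \in \DAut(\mathcal A)$. The endpoints of $\ell$ and $\ell'$ differ only in which block-projection $\varphi_\lindex$ one selects: for each contact point $x_i$, the two endpoints $\varphi_{\lindex(x_i)}(x_i)$ and $\varphi_{\lindex'(x_i)}(x_i)$ both lie in $\ker\mathcal A(x_i,\cdot,\cdot) = \langle \varphi_1(x_i), \ldots, \varphi_r(x_i)\rangle$. Since $\ell = \ell(\varphi_{\lindex(x_1)}(x_1), \varphi_{\lindex(x_2)}(x_2))$ and $\ell' = \ell(\varphi_{\lindex'(x_1)}(x_1), \varphi_{\lindex'(x_2)}(x_2))$ are both lines through the corresponding kernels, and the $\DAut(\mathcal A) \iso \mu_2^r$ acts on $\mbp^m = \mbp(V_1 \oplus \cdots \oplus V_r)$ by sign changes on the blocks — so it acts transitively on the set of ``pure block vectors'' $\{\pm\varphi_1(x), \ldots, \pm\varphi_r(x)\}$ up to the coordinate-axis structure — one checks that there is a sign-pattern $\sigma$ simultaneously sending the pair of endpoints of $\ell$ to the pair of endpoints of $\ell'$; here the hypothesis $\sum_{\lindex \in I} d_\lindex \neq \frac{m+1}{2}$ for all $I$ is what guarantees this $\sigma$ is genuinely a diagonal symmetry of $\mathcal A$ and not merely a projective-space automorphism (it rules out the degenerate configurations where a block-reflection coincides with a linear automorphism coming from an unrelated decomposition, which is precisely the ``accidentally essential'' type phenomenon flagged after Theorem~\ref{thm: singularities of symmetroid intersections}). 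Then $\ell' = \sigma(\ell)$, as desired.

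\textbf{Main obstacle.} I expect the technical heart to be the last step: carefully checking that the block-sign pattern matching the endpoints of $\ell$ to those of $\ell'$ is consistent \emph{across both contact points simultaneously} and that it lands in $\DAut(\mathcal A)$ rather than a larger automorphism group — this is exactly where the numerical condition $\sum_{\lindex \in I} d_\lindex \neq \frac{m+1}{2}$ enters, and making that implication precise (presumably by showing the relevant quadric in the net cannot have the special rank that would permit an extra symmetry) will require the most care.
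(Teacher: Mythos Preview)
Your argument contains a genuine error at the step where you claim that the intersection point $y_x := \ell \cap \ker\mathcal{A}(x,\cdot,\cdot)$ must equal $\varphi_{\lindex(x)}(x)$ for some block index $\lindex(x)$. This is false. The equality chain $\psi(y_x) = \psi(\varphi_1(x)) = \cdots = \psi(\varphi_r(x))$ you invoke does not hold: by Proposition~\ref{prop: main diagram hypersurfaces} we have $\psi(\varphi_\lindex(x)) = \theta_{X_\lindex}(x)$, and these are the $r$ distinct rows of the Jacobian of $X$ at $x$, spanning an $(r-1)$-plane in $\dual\mbp^n$ (Proposition~\ref{prop: psi sends sing locus to planes}); the point $[H] = \psi(\ell)$ is merely one point of that plane, generically not one of the $r$ coordinate vertices. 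Equivalently, $y_x$ is a generic linear combination $\sum a_\lindex \varphi_\lindex(x)$ in the kernel, not a pure block vector. (The argument you are remembering from Proposition~\ref{prop: tangency for curves}(c) applies only in the special situation where $H$ is tangent to one of the $X_\lindex$, i.e.\ $\psi(\ell)$ actually equals a branch value $\psi(\varphi_\lindex(x))$.) Note also that $\DAut(\mathcal{A})$ acts by sign changes on the block coordinates, so it \emph{fixes} each $\varphi_\lindex(x)$ as a point of $\mbp^m$; it does not permute them, so your final step could not work even if the earlier claim were salvaged.

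The paper's proof avoids this by using Theorem~\ref{thm: singularities of symmetroid intersections with multiplicity} directly: the incidence scheme $Z = \{(x,y) : \mathcal{B}(x,y,\cdot)=0,\ y\in\mathfrak{C}_{\mathcal B}\}$ is a degree-$2^{r-1}$ cover of $\sing(X\cap H)$ with $\DAut(\mathcal{B})/\langle\pm I\rangle$ transitive on fibres. Both $y_{x_i}\in\ell$ and $y'_{x_i}\in\ell'$ lie in the fibre over $x_i$, so after normalising $y_{x_1}=y'_{x_1}$ one gets $y'_{x_2}=\sigma(y_{x_2})$ for some $\sigma\in\DAut(\mathcal{B})$. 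If $\sigma\notin\langle\pm I\rangle$, decompose $k^{m+1}=V_1\oplus V_2$ into its $\pm 1$-eigenspaces; the bilinear relations coming from both $\ell$ and $\ell'$ force $\mathcal{B}^{(j)}(\cdot, q_1^{(j)}, q_2^{(j)}) = 0$ for $j=1,2$, and the numerical hypothesis $\sum_{\lindex\in I} d_\lindex \neq (m+1)/2$ ensures (say) $2\dim V_1 < m+1$, so a dimension count on the linear map $k^n \to (\pi(L)\otimes V_1)^\vee$ produces a point $p$ with $\mathcal{B}^{(1)}(p,\cdot,\cdot)$ annihilating a $2$-dimensional space --- an essential singularity of $X\cap H$, contradicting the hypothesis. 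This is where the block-size condition genuinely enters, not as a statement about automorphism groups.
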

 
\begin{proof}
	Let $\mathcal{B}$ be the restriction of $\mathcal{A}$ to $H$. We see from Theorem~\ref{thm: singularities of symmetroid intersections with multiplicity} that 
	\[
		 Z := \{(x,y) \in (X\cap H) \times \mbp^m : \mathcal{B}(x, y, \cdot) = 0, \mathcal{B}(\cdot, y, y) = 0\}
	\]
	is a finite cover of $\sing(X \cap H)$ of degree $2^{r-1}$ via the projection to $X$ and that $\DAut(\mathcal{B})/\gen{\pm I}$ act transitively on the fibres of $Z \rightarrow X$. 
	
	Let $L,L' \subset k^{m+1}$ represent $\ell, \ell'$, and let $\mathcal{M}\: k^{n} \otimes L \otimes k^{m+1} \rightarrow k$, $\mathcal{M}'\: k^{n} \otimes L' \otimes k^{m+1} \rightarrow k$ denote the restriction of the multilinear map $\mathcal{B}$ to $L, L'$ (respectively).
	As we have seen before, because $\ell$ is contained in the Cayley variety for $X \cap H$, we have that the degeneracy loci
	\[
		\{(x,y) \in (X\cap H) \times \ell : \mathcal{M}(x, y, \cdot) = 0\}, \quad \{(x,y) \in (X \cap H) \times \ell' : \mathcal{M}'(x, y, \cdot) = 0\}
	\]
	are both subschemes of $Z$. 
	Choose some distinct points $p_1, p_2 \in \sing(X \cap H)$. Then we may find $q_1, q_2 \in \ell$, and $q_1', q_2' \in \ell'$ such that
	$
		\mathcal{M}(p_j, q_j, \cdot) = \mathcal{M}'(p_j, q_j', \cdot) = 0
	$
	for $j = 1,2$.
	Up to applying an automorphism of $\mathcal{B}$, we may assume that $q_1 = q_1'$. We also have that $q_2' = \sigma(q_2)$ for some $\sigma \in \DAut(\mathcal{B})$. 
	
	If $r = 1$, then $\DAut(\mathcal{B}) = \gen{\pm I}$ and so $\ell = \ell'$. Otherwise, suppose for the sake of contradiction that $\sigma \not \in \gen{\pm I}$. We endow $k^{n+1}, k^{m+1}$ with coordinates and write
	\[
		\mathcal{B} :=
		\begin{bmatrix}
			\mathcal{B}^{(1)} & 0 \\
			0 & \mathcal{B}^{(2)}
		\end{bmatrix},
		\qquad 
		\sigma =
		\begin{bmatrix}
			I & 0 \\
			0 & -I
		\end{bmatrix},	
	\]
	where either $\mathcal{B}^{(1)}$ or $\mathcal{B}^{(2)}$ may be further decomposable. We may write $q_2 = (q_2^{(1)}, q_2^{(2)})$ and $\sigma(q_2) = (q_2^{(1)}, -q_2^{(2)})$. By definition of $\mathcal{M}, \mathcal{M}'$ we see
	\begin{align*}
		\mathcal{M}(\cdot, q_1, q_2) &= \mathcal{B}^{(1)}(\cdot, q_1^{(1)}, q_2^{(1)}) + \mathcal{B}^{(2)}(\cdot, q_1^{(2)}, q_2^{(2)}) = 0 \\
		\mathcal{M}'(\cdot, q_1, q_2) &= \mathcal{B}^{(1)}(\cdot, q_1^{(1)}, q_2^{(1)}) - \mathcal{B}^{(2)}(\cdot, q_1^{(2)}, q_2^{(2)}) = 0. 
	\end{align*}
	Thus, both terms are $0$, and by symmetry of $\mathcal{B}$ we have $\mathcal{B}^{(1)}(\cdot, q_2^{(1)}, q_1^{(1)}) = \mathcal{B}^{(2)}(\cdot, q_2^{(2)}, q_1^{(2)}) = 0$.
	
	If $k^{m+1} = V_1 \oplus V_2$ is the eigenspace decomposition for $\sigma$, we have by the condition of the block sizes that (without loss of generality) $2\dim V_1 < m+1$. Additionally, by considering the projection $\pi(L)$ of $L$ to $V_1$, we have that $\mathcal{B}^{(1)}$ defines a linear map $\dual \psi_{\mathcal{B}^{(1)}}\: k^{n} \rightarrow (\pi(L) \otimes V_1)^\vee$. The image lies in a space of dimension $2\dim V_1 - 2$. Since $\mathfrak{C}$ is of dimension $0$, $n \geq m-1$, so the kernel must have dimension at least $1$. If $p \in \ker \dual \psi_{\mathcal{B}^{(1)}}$, then the associated point in $X \cap H$ is an essential singularity. This contradicts the hypothesis, and so we are done.
\end{proof}

For a general generic complete intersection Cayley variety of dimension $0$, we can expect a secant of the Cayley variety to define a hyperplane with exactly $n$ singularities.

\begin{lemma}
\label{lem: degree of segre with dependence}
Let $\mbp^1 \times \mbp^{n-1} \rightarrow \mbp^{2n-1}$ be the Segre embedding. Choose $d_1,\dots, d_r \in \mathbb{Z}_{>1}$ such that $d_1 + \dots + d_r = n$. Write an element $M \in \mbp^{2n-1}$ as the projectivization of a $2 \times n$ matrix
\[
	\begin{bmatrix}
	a_1 & a_2 & \dots & a_n  \\
	b_1 & b_2 & \dots & b_n  \\
	\end{bmatrix}.
\]
Let $L$ be the linear subspace of $\mbp^{2n-1}$ defined by the equations
\[
	b_1 = a_2, \; b_{d_1+1}=a_{d_1+2}, \; b_{d_2+1}=a_{d_2+2}, \; \dots, \; a_{d_{r-1}+2}=b_{d_{r-1}+1}.
\]
Then the intersection of $L$ and the image of $\mathbb{P}^1 \times \mathbb{P}^{n-1}$ is a smooth variety of dimension $n-r$.
\end{lemma}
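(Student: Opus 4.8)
The plan is to pull the intersection back along the Segre embedding, which is a closed immersion, and then recognize the result as a projective bundle over $\mbp^1$. Begin with some bookkeeping of the defining equations of $L$. Each of the $r$ equations has the form $b_{s} = a_{s+1}$ for a suitable index $s$; with the indexing used in the statement these are $s = s_j := D_{j-1} + 1$ for $j = 1, \dots, r$, where $D_0 := 0$ and $D_j := d_1 + \dots + d_j$ (so $D_r = n$). Because every $d_j > 1$, we have $s_j + 1 = D_{j-1} + 2 \le D_j < D_j + 1 = s_{j+1}$, so the $2r$ indices $s_1, s_1 + 1, \dots, s_r, s_r + 1$ are pairwise distinct elements of $\{1, \dots, n\}$. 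This disjointness is the only place the hypothesis $d_j > 1$ enters, and it is the crux of the whole argument.

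Parametrize the Segre map by $([u_1 : u_2], [v_1 : \dots : v_n]) \mapsto (a_i, b_i) = (u_1 v_i, u_2 v_i)$. Under this map the equations of $L$ pull back to the $r$ bilinear forms $h_j := u_2 v_{s_j} - u_1 v_{s_j + 1}$. Since the Segre embedding is a closed immersion, $L \cap \operatorname{im}(\mbp^1 \times \mbp^{n-1})$ is isomorphic, as a scheme, to the closed subscheme $Y \subseteq \mbp^1 \times \mbp^{n-1}$ cut out by $h_1, \dots, h_r$. It therefore suffices to prove that $Y$ is smooth of dimension $n - r$.

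Write $\mbp^{n-1} = \mbp(V)$ with $V = k^n$ and consider the morphism of vector bundles on $\mbp^1$
\[
	\phi \colon V \otimes \mathcal{O}_{\mbp^1} \longrightarrow k^r \otimes \mathcal{O}_{\mbp^1}(1), \qquad \phi_{[u]}(v) = \big( u_2 v_{s_j} - u_1 v_{s_j + 1} \big)_{j = 1}^{r},
\]
which is linear in $u$. For every $[u] \in \mbp^1$, the map $\phi_{[u]}$ is surjective: its $r$ components involve pairwise disjoint sets of the coordinates $v_i$, and each is a nonzero linear form because $(u_1, u_2) \ne 0$. Thus $\phi$ is a fibrewise-surjective map of locally free sheaves, hence surjective with locally free cokernel, and $\mathcal{K} := \ker \phi$ is a subbundle of $V \otimes \mathcal{O}_{\mbp^1}$ of rank $n - r$. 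By construction $Y$ is exactly the locus of pairs $([u], [v])$ with $v \in \mathcal{K}_{[u]}$, i.e. $Y = \mbp_{\mbp^1}(\mathcal{K}) \subseteq \mbp_{\mbp^1}(V \otimes \mathcal{O}_{\mbp^1}) = \mbp^1 \times \mbp^{n-1}$. As a $\mbp^{n - 1 - r}$-bundle over the smooth irreducible base $\mbp^1$, the scheme $Y$ is smooth and irreducible of dimension $1 + (n - 1 - r) = n - r$, as desired. (Here $n - 1 - r \ge 0$ since $n = \sum d_j \ge 2r$.)

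Alternatively, one can avoid projective bundles and apply the Jacobian criterion directly to $Y$: in each of the $2n$ standard affine charts $\{u_\epsilon \ne 0,\ v_k \ne 0\}$ of $\mbp^1 \times \mbp^{n-1}$, the $h_j$ become $r$ polynomials in the $n$ affine coordinates, and the disjointness of the index pairs forces the Jacobian of $(h_1, \dots, h_r)$ to have rank $r$ at every point of $Y$; the only delicate case is when $v_k \in \{v_{s_j}, v_{s_j + 1}\}$ for some $j$, which is handled using that $u_\epsilon$, respectively $v_{s_j}$, is a unit along $Y$ in that chart. Either way, the substance of the proof is the bookkeeping above: once the $r$ defining forms of $L$ are seen to involve pairwise disjoint pairs of coordinates, both smoothness and the dimension count are formal.
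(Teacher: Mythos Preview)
Your proof is correct and considerably cleaner than the paper's. The key observation---that the $r$ hyperplane equations pull back to bilinear forms $h_j = u_2 v_{s_j} - u_1 v_{s_j+1}$ involving pairwise disjoint pairs of $v$-coordinates---is the same in both arguments, but you exploit it globally by packaging the $h_j$ as a surjection of vector bundles $V \otimes \mathcal{O}_{\mathbb{P}^1} \twoheadrightarrow \mathcal{O}_{\mathbb{P}^1}(1)^r$ and identifying the intersection with the projective bundle $\mathbb{P}(\ker\phi)$ over $\mathbb{P}^1$. This immediately yields smoothness, irreducibility, and the dimension in one stroke.

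The paper instead reduces to the case $r=1$, then runs an induction on $n$: for $n \leq 3$ it checks directly, and for larger $n$ it verifies the Jacobian criterion chart by chart, splitting into the open locus $D(\prod a_i b_i)$ (where the Segre variety is a complete intersection and the Jacobian is visibly full rank) and the boundary locus $Z(a_1 a_2 b_1 b_2)$ (which it identifies with a smaller Segre). Your alternative Jacobian argument is in the same spirit but avoids the induction entirely, again because you use the disjointness of the index pairs up front rather than peeling off one equation at a time. The paper's route is more elementary in that it never invokes vector-bundle language, but your projective-bundle argument is shorter, more conceptual, and gives irreducibility for free.
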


\begin{proof}
	It suffices to assume that $r = 1$; thus we will show that  the intersection of the hyperplane $a_2 = b_1$ with the Segre variety is a smooth variety of dimension $n-1$. As the Segre variety is irreducible and is not contained in the hyperplane $Z(a_2-b_1)$; the intersection clearly has dimension $n-1$. If $n \leq 3$, a computation shows that the intersection of $L$ and the image of $\mathbb{P}^1 \times \mathbb{P}^{n-1}$ is a smooth variety of dimension $n-1$. By induction on $n$ it suffices to demonstrate the claim on the open set $U \coloneqq D(\prod_{i=3}^n a_ib_i)$. Let $\sigma$ denote the Segre embedding.
		
	First, we we prove that the intersection
	\[
		L \cap \Im(\sigma) \cap D(\prod_{i=1}^na_ib_i)
	\]	
	is smooth. On $D(\prod_{i=1}^n a_ib_i)$, the Segre variety is a complete intersection of the equations $\{a_ib_{i+1}-a_{i+1}b_i\}_{1\leq i < n}$. Let $M$ be the $n \times 2n$ matrix defined as follows. For $1 \leq i < n$ and $1 \leq j \leq n$, let the entry in the $i$-th row and $j$-th column be $\frac{\partial (a_ib_{i+1}-a_{i+1}b_i)}{\partial a_j}$. For $1 \leq i < n$ and $n < j \leq 2n$, let the entry in the $i$-th row and $j$-th column be $\frac{\partial (a_ib_{i+1}-a_{i+1}b_i)}{\partial b_{j-n}}$. For $1 \leq j \leq n$, let the entry in the $n$-th row and $j$-th column be $\frac{\partial (a_2 - b_1)}{\partial a_{j}}$, and for $n < j \leq 2n$, let the entry in the $n$-th row and $j$-th column be $\frac{\partial (a_2 - b_1)}{\partial b_{j-n}}$. Then notice that the determinant of $M$ is invertible on $D(\prod_{i=1}^na_ib_i)$; thus the intersection 
	\[
		L \cap \Im(\sigma) \cap D(\prod_{i=1}^na_ib_i)
	\]	
	is smooth.
		
	Now consider the locus $Z(a_1a_2b_1b_2) \cap U$. Note that for $x \in \{a_1, a_2, b_1, b_2\}$, the variety $L \cap Z(x) \cap U \cap \Im(\sigma)$ is a smooth complete intersection in $U$ isomorphic to the Segre image of $\mbp^1 \times \mbp^{n-3}$, given by the equations
	\[
		(a_3b_{4}-a_{4}b_3, \dots, a_{n-1}b_{n}-a_{n}b_{n-1}, a_1, b_1, a_2, b_2).		
	\]
	As the Segre image of $\mbp^1 \times \mbp^{n-3}$ is smooth, the lemma is proved.
\end{proof}

\section{Genus 4: sextic space curves and tritangents} \label{sec: genus 4 curves} 

In this section we consider tensors $\mathcal{A} \in k^4 \otimes (\Sym_2 k^2 \oplus \Sym_2 k^3)$. The tensor $\mathcal{A} \in k^4 \otimes \left(\Sym_2 k^2 \oplus \Sym_2 k^3 \right)$ defines a quadric cone, denoted by $X_2$, and a symmetric determinantal cubic surface, denoted by $X_3$. The intersection of symmetroids generically defines a canonical genus $4$ curve in $\mbp^3$. We study the relationship between the orbits of the action of $\GL_4(k) \times \GL_2(k) \times \GL_3(k)$ on $k^4 \otimes (\Sym_2 k^2 \oplus \Sym_2 k^3)$ and the moduli of canonical genus $4$ curves with a vanishing even theta characteristc. Secondly, the Cayley variety associated to $\mathcal{A}$ is generically a smooth intersection of $4$ quadrics in $\mbp^4$ and is therefore $16$ points. We have $\DAut(\mathcal{A}) = \gen{-I, \sigma}$, where $\sigma([y_0:y_1:y_2:y_3 :y_4]) = [y_0 : y_1 : -y_2 : -y_3 : -y_4]$. The connection between the tritangents of a genus $4$ curve and the Cayley variety is particularly rich. We describe how these tensors relate to the work of Milne \cite{Milne1923} and its generalizations by Bruin-Sert\"oz~\cite{BruinSertoz2018}. We also show how the work of Recillas~\cite{Recillas1974} can be interpreted in terms of tensors. 

Throughout this section we use a specific convention regarding notation. We use the notation $X_2, X_3$ and subscripts $2,3$ accordingly so that the index reflects the degree of the corresponding hypersurface or block.

\subsection{Moduli}
	We say that the tensor $\mathcal{A}$ is \emph{nondegenerate} if the associated genus $4$ curve is smooth. The vector space $k^4 \otimes \left(\Sym_2 k^2 \oplus \Sym_2 k^3 \right)$ has a natural $\GL_4(k) \times \GL_2(k) \times \GL_3(k)$ action. The nondegenerate locus of $k^4 \otimes \left(\Sym_2 k^2 \oplus \Sym_2 k^3 \right)$ is a $\GL_4(k) \times \GL_2(k) \times \GL_3(k)$-invariant open subscheme, and thus we can speak of orbit classes being nondegenerate.
	If $\mathcal{A}$ is nondegenerate, then both $X_2$ and $X_3$ must have empty Cayley varieties by Lemmas~\ref{lem: cayley variety for quadrics} and \ref{lem: cayley variety for cubics}. Thus, by Theorem~\ref{thm: singularities of symmetroid intersections} we see that the Cayley variety associated to $\mathcal{A}$ is a smooth complete intersection of dimension $0$.

	We consider genus $4$ curves $X$ with a vanishing even theta characteristic $\theta_0$. The distinguished theta characteristic allows us to assign a pairity to the $2$-torsion points, specifically,
		\[
			\Jac(X)[2]_{\odd} := \{ [\theta - \theta_0] \in \Jac(X)[2] : \theta \text{ is an odd theta characteristic} \}.
		\]
	Our main result of this subsection is:
	
	\begin{theorem} \label{thm: genus 4: Moduli of genus 4 curves with data}
	Let $B$ be a scheme over $\Spec \mbz[\frac{1}{6}]$. There is a canonical bijection between:
	\[
		\left \{  \quad \parbox{7cm}{\centering
		isomorphism classes of tuples $(X, \epsilon, \theta_0)$,\\
		where $X$ is a smooth genus $4$ curve over $B$ with vanishing even theta
		characteristic $\theta_0$ with a rational divisor class defined over $B$,
		and $\epsilon$ is an even $2$-torsion class}
		 \quad  \right\} 	 
		 \longleftrightarrow 
		 \left \{ \begin{array}{c}
		 \text{nondegenerate orbit classes of } \\
		 \mathcal{O}_B^4 \otimes \left(\Sym_2 \mathcal{O}_B^2 \oplus \Sym_2 \mathcal{O}_B^3 \right) \\
		 \text{under the action of } \\
		 \GL_4(\mathcal{O}_B) \times \GL_2(\mathcal{O}_B) \times \GL_3(\mathcal{O}_B) 
		 \end{array} \right\}
	\]
	\end{theorem}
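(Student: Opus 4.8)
The plan is to establish the bijection in two steps. Over an algebraically closed field (or a geometric point of $B$) the statement is essentially classical: determinantal cubic surfaces and their symmetric representations were understood by Coble and Wirtinger \cites{Coble1919, DolgachevOrtland1988}, cf.\ \cite[Section 4]{Dol2012}, and the fact that a canonically embedded genus $4$ curve lies on a quadric \emph{cone} precisely when it carries a vanishing even theta characteristic is standard. To promote this to an arbitrary base $B$ over $\Spec\mbz[\tfrac16]$, I would follow the arithmetically Cohen--Macaulay (aCM) sheaf method of \cite{Ho2009, BhargavaHoKumar2016} announced in the introduction: realize both sides as groupoids over $\Spec\mbz[\tfrac16]$, construct a natural transformation in each direction with the tensor recovered from the matrices of a symmetric minimal free resolution of an aCM sheaf on $\mbp^3_B$ (following Beauville \cite{Beauville2000}), and pass to isomorphism classes at the end.

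\emph{The forward map.} Given a nondegenerate $\mathcal{A}=(\mathcal{A}^{(1)},\mathcal{A}^{(2)})\in\mcO_B^4\otimes(\Sym_2\mcO_B^2\oplus\Sym_2\mcO_B^3)$, put $X_2:=Z(\det\mathcal{A}^{(1)}(\mathbf{x},\cdot,\cdot))$, $X_3:=Z(\det\mathcal{A}^{(2)}(\mathbf{x},\cdot,\cdot))$, and $X:=X_2\cap X_3\ssq\mbp^3_B$; by nondegeneracy $X\to B$ is a smooth canonical genus $4$ curve avoiding the vertex of $X_2$ and the singularities of $X_3$. A $2\times2$ symmetric matrix of linear forms with irreducible determinant has rank $3$, so $X_2$ is a quadric cone and $X$ lies on a cone; the rank $1$ aCM sheaf $\mcE_1$ on $\mbp^3_B$ (the cokernel of $\mathcal{A}^{(1)}(\mathbf{x},\cdot,\cdot)$) restricts on $X$ to a line bundle $\theta_0\iso(\varphi_{X_2}|_X)^*\mcO_{\mbp^1}(1)$, where $\varphi_{X_2}$ is the projection from the vertex. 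That $\theta_0^{\otimes2}\iso\omega_X$ with a canonical section follows from $\omega_X\iso\mcO_X(1)$ together with the compatibility of the bilinear forms $\eta=\eta_1\oplus\eta_2$ on $\mcE_1\oplus\mcE_2$, and $h^0(\theta_0)=2$ by Clifford's inequality, so $\theta_0$ is a vanishing even theta characteristic. The cubic block yields $\mcE_2$, a rank $1$ aCM sheaf on $\mbp^3_B$ supported on $X_3$ (generically a four-nodal Cayley cubic — note $X_3$ need not be smooth, which is one reason to argue with sheaves rather than classical projective geometry); setting $\mathcal{N}:=\mcE_2|_X$, the form $\eta_2$ gives $\mathcal{N}\iso\sheafhom_{\mcO_X}(\mathcal{N},\omega_X^{\otimes2})$, whence $\epsilon:=[\mathcal{N}\otimes\omega_X^\vee]\in\Jac(X)[2]$, and a Weil-pairing computation shows $\epsilon$ is even. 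Since $\GL_4$ acts by automorphisms of the canonically embedded curve and $\GL_2\times\GL_3$ by gauge transformations of the two symmetric matrices (fixing $X$, acting on $\mcE_1,\mcE_2$), the pair $(\theta_0,\epsilon)$ depends only on the nondegenerate orbit.

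\emph{The reverse map.} Given $(X,\epsilon,\theta_0)$ over $B$ with $h^0(\theta_0)=2$ fiberwise, the canonical embedding $X\inj\mbp^3_B$ lands on the unique quadric through $X$, which is then a cone $X_2$; it carries its standard $2\times2$ symmetric resolution $\mcO_{\mbp^3}^2(-1)\to\mcO_{\mbp^3}^2$, giving $\mathcal{A}^{(1)}$ (pinned down up to $\GL_2$ gauge since $\theta_0$ is the unique $g^1_3$ on $X$). For the cubic block, set $\mathcal{N}:=\epsilon\otimes\theta_0^{\otimes2}$, a degree $6$ line bundle on $X$ with $\mathcal{N}^{\otimes2}\iso\omega_X^{\otimes2}$ (equivalently $\epsilon^{\otimes2}\iso\mcO_X$); one then produces a rank $1$ aCM sheaf $\mathcal{F}$ on $\mbp^3_B$ supported on a cubic $X_3\supseteq X$, with $\mathcal{F}|_X\iso\mathcal{N}$, whose symmetric minimal free resolution $0\ra\mcO_{\mbp^3}^3(-1)\xrightarrow{M}\mcO_{\mbp^3}^3\ra\mathcal{F}\ra0$ yields $\mathcal{A}^{(2)}$; the symmetric structure on $M$ is exactly the datum $\mathcal{N}^{\otimes2}\iso\omega_X^{\otimes2}$ transported from $X$ to $X_3$ and $\mbp^3$ (this is the ``resolution on $\mbp^n$ whose specialization to $X$ resolves the starting line bundle''). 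Assembling $\mathcal{A}^{(1)},\mathcal{A}^{(2)}$ block-diagonally gives $\mathcal{A}$, nondegenerate since $X$ is smooth, and well-defined up to $\GL_4\times\GL_2\times\GL_3$. That the two constructions are mutually inverse reduces to the identities $\mcE_1|_X\iso\theta_0$, $\mcE_2|_X\iso\mathcal{N}$ and their converses, valid at each geometric point by the classical theory and in families by base change.

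\emph{The main obstacle.} The delicate part is carrying out the reverse construction over $B$ rather than over a point. One must (i) check that at every geometric point $b\to B$ the groups $\H^i(X_b,\mathcal{N}_b(j))$ and $\H^i(\mbp^3_b,\mathcal{F}_b(j))$ vanish in the ranges that force the resolution of $\mathcal{F}$ to take exactly the displayed linear shape — here Riemann--Roch on the genus $4$ curve, the value $\deg\mathcal{N}=6$, and the invertibility of $6$ (ruling out the bad characteristics) are used; (ii) invoke cohomology-and-base-change/Grauert to conclude that $\mathcal{F}$, the cubic $X_3$, and the resolution exist over all of $B$ and commute with base change, so everything is flat over $B$; and (iii) upgrade the fiberwise self-duality $\mathcal{F}_b\iso\sheafhom(\mathcal{F}_b,\mcO_{X_3,b}(2))$ to a \emph{symmetric} isomorphism over $B$ — possible because $2$ is invertible, so the symmetric part of the pairing splits off and, after a Zariski-local gauge change, can be normalized to the identity — the residual ambiguity being a $\DAut(\mathcal{A})\iso\mu_2^2$-torsor, matching the finite data $(\epsilon,\theta_0)$ on the curve side. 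Finally one records that the two functors are quasi-inverse as functors of groupoids over $\Spec\mbz[\tfrac16]$ and passes to isomorphism classes. I expect step (iii), globalizing the symmetric structure over $B$, together with the uniform cohomology vanishing in (i), to be where essentially all of the real work lies; the field case is classical and the base-change and descent bookkeeping is formal.
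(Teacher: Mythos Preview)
Your overall framework---aCM sheaves, symmetric resolutions, and base-change bookkeeping---matches the paper's strategy, and your forward map is essentially identical to theirs. However, there is a genuine gap in your reverse construction of the cubic block.

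You write that given $\mathcal{N} = \epsilon \otimes \theta_0^{\otimes 2}$ on $X$, ``one then produces a rank $1$ aCM sheaf $\mathcal{F}$ on $\mbp^3_B$ supported on a cubic $X_3 \supseteq X$ with $\mathcal{F}|_X \iso \mathcal{N}$.'' But this is precisely the nontrivial step, and you do not explain how. The curve $X$ lies on a positive-dimensional family of cubics once the quadric is fixed, so there is no obvious functorial way to single out the correct $X_3$, nor to extend $\mathcal{N}$ from the curve to a sheaf on that surface. Pushing $\mathcal{N}$ forward to $\mbp^3$ gives a sheaf with codimension-$2$ support, whose minimal free resolution has length two rather than one, so the na\"ive aCM approach does not directly yield a $3\times 3$ matrix of linear forms.

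The paper fills this gap with an explicit geometric construction following Catanese. Since $h^0(X, \kappa_X \otimes \epsilon) = 3$, the linear system $|\kappa_X \otimes \epsilon|$ maps $X$ onto a plane sextic with a singular subscheme $\Delta$. Blowing up $\mbp^2$ along $\Delta$ gives a surface $S$, and one checks $\mathcal{O}_S(3H-\Delta)^{\otimes 2} \iso \mathcal{O}_S(2H)$; the system $|\mathcal{O}_S(3H-\Delta)|$ then maps $S$ onto a cubic $X_3 \subset \mbp^3$ containing the canonical image of $X$. The determinantal representation is extracted by taking a basis $\{s,t,u\}$ of $\R^0f_*\mathcal{O}_S(H)(B)$, forming the manifestly symmetric rank-one matrix $(s_is_j)$, identifying its entries with quadratic forms $\mathcal{B}(\mathbf{x})$ on $\mbp^3$, and observing that $\adj \mathcal{B}(\mathbf{x}) = F \cdot \mathcal{A}^{(2)}(\mathbf{x})$ with $F$ the cubic equation. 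This is what makes the reverse map work over a base, and it delivers the symmetric structure for free. Your assessment that globalizing the symmetric pairing is ``where essentially all of the real work lies'' is therefore off the mark: the real work is producing $X_3$ and the sheaf on it in the first place, and the paper's plane-sextic construction is exactly that missing idea.
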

	
	In order to prove this theorem, we first study the orbits of $k^4 \otimes \Sym_2 k^2$ and $k^4 \otimes \Sym_2 k^3$ and generalize these results to study orbits in families without much extra effort. A smooth canonical genus $4$ curve over a base scheme $B$ is a smooth morphism $X \rightarrow B$ whose fibres over points are canonical curves of genus $4$. Tensors over $k$ are replaced by tensors $\mathcal{O}_B^n \otimes \Sym_2 \mathcal{O}_B^m$ whose entries are sections of $\mathcal{O}_B$. Specialization at the closed points of $B$ recovers a tensor over a field. 
	%
	%
	A vanishing even theta characteristic of $f\: X \rightarrow B$ is a line bundle $\theta_0$ on $X$ whose square is the relative canonical bundle and such that $\R^0f_*(\theta_0)$ is a free $\mathcal{O}_B$-module whose rank is even and greater than $0$.
		
	\begin{proposition} \label{prop: genus 4: quadrics and orbits}
		Let $B$ be a scheme over $\Spec \mbz[\frac{1}{6}]$ and let $X$ be a genus $4$ curve over $B$ with a vanishing even theta characteristic $\theta_0$ that admits a rational representative. Then
		there is a canonical construction of a determinantal quadric over $B$ containing $X$. The determinantal representation of this quadric uniquely determines an orbit class of $\mathcal{O}_B^4 \otimes \Sym_2 \mathcal{O}_B^2$ under $\GL_4 \times \GL_2$.
	\end{proposition}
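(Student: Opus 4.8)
The plan is to carry out, relative to the base $B$, the classical construction of the quadric in the canonical model of a genus $4$ curve, and to record the resulting symmetric determinantal representation as an aCM sheaf on the $\mbp^3$-bundle $\mbp(f_*\omega_{X/B})^\vee$ over $B$, following the method of \cite{Ho2009, BhargavaHoKumar2016} advertised in the introduction.

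First I would produce the quadric. Since $\omega_{X/B}$ has relative degree $6$ and $f_*\omega_{X/B}$ is locally free of rank $4$ with formation commuting with base change, the relative canonical map embeds $X$ as a closed subscheme $\iota\colon X \inj \mbp_B := \mbp(f_*\omega_{X/B})^\vee$, fibrewise a canonically embedded non-hyperelliptic genus $4$ curve. Applying cohomology and base change to $0 \ra \mathcal{I}_X(2) \ra \mcO_{\mbp_B}(2) \ra \iota_*\omega_{X/B}^{\otimes 2} \ra 0$ shows that $f_*\mathcal{I}_X(2)$ is invertible, so $X$ lies on a unique quadric $Q \ssq \mbp_B$, flat over $B$. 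I would then show that the existence of the vanishing even theta characteristic $\theta_0$ forces $Q$ to have rank exactly $3$ on every geometric fibre: over a field, a smooth quadric in $\mbp^3$ carries two rulings $\mathfrak{g},\mathfrak{g}'$ restricting to $X$ as complementary $g^1_3$'s with $\mathfrak{g}+\mathfrak{g}' = K_X$, whereas a $g^1_3$ which is also a theta characteristic satisfies $2\mathfrak{g} = K_X$, hence $\mathfrak{g} \sim \mathfrak{g}'$; by the injectivity of $\Pic Q \to \Pic X$ (valid since $X$ is an ample divisor on $Q$) this is impossible, so $Q$ is a cone, and the rank is $\geq 3$ because $X$ is irreducible and nondegenerate.

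Next I would build the aCM sheaf. The rational-divisor-class hypothesis lets me realize $\theta_0$ by a genuine line bundle with $f_*\theta_0$ locally free of rank $2$; the induced family of rulings of $Q$ is the reflexive rank-$1$ sheaf $\mcL$ on $Q$ with $\mcL^{\otimes 2}\iso \mcO_Q(1)$ whose restriction to $X$ recovers $\theta_0$ fibrewise. Set $\mcE := \iota_{Q*}\mcL$ on $\mbp_B$, where $\iota_Q\colon Q \inj \mbp_B$. Following the manipulations of Beauville recalled in Section~\ref{sec: setup}, $\mcE$ is aCM (supported on the Cohen--Macaulay surface $Q$, with $\H^1(\mcE(j)) = 0$ for all $j$, which one reads off a resolution since $\H^1$ and $\H^2$ of line bundles on $\mbp^3$ vanish), so it admits a resolution $0 \ra \mcO_{\mbp_B}^2(-1) \xrightarrow{M} \mcO_{\mbp_B}^2 \ra \mcE \ra 0$ with $M$ a matrix of linear forms and $\det M$ a defining equation of $Q$. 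The multiplication map $\mcL \times \mcL \ra \mcO_Q(1)$ equips $\mcE$ with a symmetric bilinear form $\eta\colon \mcE \times \mcE \ra \iota_{Q*}\mcO_Q(1)$, and, exactly as in the excerpt's discussion of the symmetric case, this forces $M$ to be symmetric after an automorphism of the resolution.

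Finally I would extract the orbit class. After fixing bases of $f_*\omega_{X/B}$ and $f_*\mcE$ the symmetric $M$ is a tensor $\mathcal{A} \in \mcO_B^4 \otimes \Sym_2 \mcO_B^2$; changing the first basis acts by $\GL_4(\mcO_B)$, and changing the second --- equivalently, choosing a different symmetric $M$ representing the canonically determined pair $(\mcE,\eta)$ --- acts by $\GL_2(\mcO_B)$, so the orbit class of $\mathcal{A}$ under $\GL_4 \times \GL_2$ is independent of all choices, as claimed. \textbf{The step I expect to be the main obstacle} is not the field-level geometry, which is classical (see \cite[Section~4]{Dol2012} and the work of Coble and Wirtinger), but the relative bookkeeping: verifying that $Q$ has constant rank $3$ over all of $B$, that the formation of $\mcL$ and $\mcE$ commutes with base change and yields a resolution whose specialization to each fibre recovers a resolution of $\theta_0$, and that the rational-representative hypothesis is precisely what splits the ruling $\mbp^1$-bundle; this is where the technique of \cite{Ho2009} and \cite{BhargavaHoKumar2016} does the work.
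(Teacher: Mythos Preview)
Your proposal is correct, but the paper takes a much more direct route. Rather than first producing the quadric from the ideal sheaf, arguing separately that it has rank~$3$, building the ruling sheaf $\mcL$ on $Q$, and then invoking Beauville's machinery to obtain a symmetric resolution, the paper simply writes down the determinantal representation by hand from the multiplication map $\theta_0 \times \theta_0 \to \kappa_X$: choosing a basis $s,t$ for $\R^0f_*\theta_0(B)$ and a basis for $\R^0f_*\kappa_X(B)$, the symmetric matrix of linear forms is
\[
\begin{bmatrix} s^2 & st \\ st & t^2 \end{bmatrix},
\]
and the quadric is $Z(s^2\cdot t^2 - (st)^2)$. The symmetry is then manifest rather than deduced, the rank-$3$ property is automatic from the shape of the matrix, and the restriction to $X$ visibly recovers $\theta_0$ via the cokernel map $\begin{bmatrix} t & -s\end{bmatrix}$. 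The only choices are the two bases, which is exactly the $\GL_4 \times \GL_2$ ambiguity.

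Your approach buys conceptual uniformity with the general aCM framework of Section~\ref{sec: setup} and makes explicit why the ``rational representative'' hypothesis matters (it trivializes the ruling bundle), at the cost of the relative bookkeeping you flag as the main obstacle. The paper's approach sidesteps all of that bookkeeping --- there is nothing to check about constancy of rank, base change for $\mcL$, or symmetrization of the resolution --- because the explicit matrix already has all the required properties on the nose.
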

		
	\begin{proof}
		Let $f\: X \rightarrow B$ denote the structure morphism. In this case, we can construct the determinantal representation directly. We consider the multiplication map $\theta_0 \times \theta_0 \rightarrow \kappa_X$. Let $s,t \in \R^0f_*(\theta_0)(B)$ be a basis for the relative global sections. Note that in the fibres, $s,t$ globally generate $\theta_B$. We also choose a basis $x_0, \ldots, x_3$ for $\R^0f_*(\kappa_X)(B)$. From the multiplication map, we have that $a := s^2, b := st, c := t^2$ all define sections of $\R^0f_*(\kappa_X)(B)$. We now consider the exact sequence of sheaves on $\mbp^3_B$ given by
		\[
			\xym{
			0 \ar[r] & \mathcal{O}_{\mbp^3}(-1)^{\oplus 2} \ar[rr]^{\tiny \begin{bmatrix} a & b \\ b & c \end{bmatrix}} & & \mathcal{O}_{\mbp^3}^{\oplus 2} \ar[r]^-{g} & \mathcal{E} \ar[r] & 0
			}.
		\]
		Restricting to $X$, we obtain the complex
		\[
			\xym{
			(\kappa_X^\vee)^{\oplus 2} \ar[rr]^{\tiny \begin{bmatrix} s^2 & st \\ st & t^2 \end{bmatrix}} & & \mathcal{O}_{X}^{\oplus 2} \ar[r]^-{g} & \mathcal{E}|_X \ar[r] & 0
			}.
		\]
		We see that $g|_X = \begin{bmatrix}t & -s \end{bmatrix}$, which is to say, $\mathcal{E} |_X \iso \theta_0$. The only choices made in selecting the quadric $Z(ac-b^2)$ were the choices of bases for the relative global sections of $\theta_0$ and $\kappa_X$, so the result is proven.
	\end{proof}
	
	If $B = \Spec k$, the tensor $\mathcal{A}_2$ is determined by the choice of bases of $\H^0(X, \theta_0)$ and $\H^0(X, \kappa_X)$, but not vice-versa.
	The entries of $\mathcal{A}_2$ determine a $3$-dimensional space of linear forms. The choice of the fourth form is free, so long as it is not within this $3$-dimensional space.
	
	\begin{remark}
		A parametrization of a quadric cone $X_2$ is an isomorphism $p\: \mathbb{P}(1:1:2) \rightarrow X_2$. A parametrization is determined by an isomorphism $\beta\: \H^0(X_2, \theta^{\otimes 2}) \rightarrow \H^0(\mbp^3, \mathcal{O}_{\mbp^3}(1))$, and conversely any parametrization determines an isomorphism $\beta$ up to scaling. However, a choice of tensor $\mathcal{A}_2$ such that $X_2 = Z(\det \mathcal{A}_2(\mathbf{x}, \cdot, \cdot))$ does not determine a parametrization, since the entries of $\mathcal{A}_2$ do not determine a basis for $\H^0(\mbp^3, \mathcal{O}_{\mbp^3}(1))$. 
	\end{remark}

	
	Over a field, the classical result of Wirtinger and Coble \cite[Theorem~1.5]{Catanese1981}, allows us to construct a cubic symmetroid containing $X$ from the even $2$-torsion class $\epsilon$. 

	\begin{theorem}[Wirtinger, Coble] \label{thm: genus 4: cubics and orbits}
		There is a canonical bijection between symmetric determinantal Cayley cubics over $k$ containing a given canonical genus $4$ curve $X$ and pairs $(X, \epsilon)$ where $\epsilon^{\otimes 2} \iso \kappa_X$, such that $\epsilon$ is an even $2$-torsion class.
	\end{theorem}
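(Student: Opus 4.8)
The plan is to recast the statement in terms of the arithmetically Cohen--Macaulay sheaves of Section~\ref{sec: setup} and then to reduce the nontrivial direction to the classical construction of Wirtinger and Coble, as packaged in \cite[Theorem~1.5]{Catanese1981}. By the Beauville correspondence recalled above (following \cite{Beauville2000}), a symmetric determinantal Cayley cubic containing $X$, together with its linear symmetric representation up to the $\GL_3$--action, is the same datum as a pair $(S,\mathcal{E})$ in which $S\subset\mbp^3$ is a cubic surface with $X\subset S$ and $\mathcal{E}$ is a rank $1$ aCM sheaf on $S$ carrying a nondegenerate symmetric form $\eta\colon\mathcal{E}\times\mathcal{E}\to\mathcal{O}_S(2)$, equivalently an isomorphism $\mathcal{E}\xrightarrow{\;\sim\;}\sheafhom_{\mathcal{O}_S}(\mathcal{E},\mathcal{O}_S(2))$; here $S=Z(\det M)$ for a symmetric matrix $M$ of linear forms resolving $\mathcal{E}$, and $\mbp^2$ is identified with $\mbp\H^0(\mbp^3,\mathcal{E})^\vee$. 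The bijection of the theorem will be induced by $(S,\mathcal{E})\mapsto\epsilon:=(\mathcal{E}|_X)\otimes\kappa_X^{-1}$, using $\mathcal{O}_X(1)\cong\kappa_X$.

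For the forward map I would first check it is well defined and lands in $2$-torsion: since $X$ is Cartier on $S$, $\mathcal{E}|_X$ is a line bundle, restricting $\eta$ gives $\mathcal{E}|_X\cong\sheafhom_{\mathcal{O}_X}(\mathcal{E}|_X,\mathcal{O}_X(2))$, hence $(\mathcal{E}|_X)^{\otimes 2}\cong\mathcal{O}_X(2)\cong\kappa_X^{\otimes 2}$ and $\epsilon\in\Jac(X)[2]$; isomorphic pairs clearly give the same $\epsilon$. To identify the image with the even classes I would pin down a parity: twisting the resolution $0\to\mathcal{O}_{\mbp^3}(-1)^3\to\mathcal{O}_{\mbp^3}^3\to\mathcal{E}\to0$ and restricting along $0\to\mathcal{E}(-2)\to\mathcal{E}\to\mathcal{E}|_X\to0$ (the quadric $X_2$ cuts $X$ out of $S$), the aCM vanishing $\H^0(\mathcal{E}(-2))=\H^1(\mathcal{E}(-2))=0$ gives $h^0(X,\mathcal{E}|_X)=h^0(\mbp^3,\mathcal{E})=3$; comparing with Riemann--Roch and Serre duality on $X$, and using the distinguished $\theta_0=g^1_3$ (a square root of $\kappa_X$ with $h^0=2$), then yields $h^0(X,\theta_0\otimes\epsilon)=0$, which is exactly the evenness condition on $\epsilon$.

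The reverse map carries the real content and is the step I expect to be the main obstacle. Given an even $\epsilon$, the line bundle $M:=\kappa_X\otimes\epsilon$ has $h^0(M)=3$ and makes $|M|$ birational onto a plane sextic with six nodes; the anticanonical model of the blow-up of $\mbp^2$ at those nodes is a cubic surface $S_\epsilon$, necessarily \emph{singular} since $\epsilon\neq0$ and $\Pic$ of a smooth cubic is torsion free, and adjunction $\mathcal{O}_{S_\epsilon}(-K_{S_\epsilon})|_X\cong\kappa_X$ realizes $X$ as a member of $|-2K_{S_\epsilon}|$, i.e. as $X_2\cap S_\epsilon$, compatibly with the given canonical embedding. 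The sheaf $\mathcal{E}_\epsilon$ is then the rank $1$ aCM sheaf on $S_\epsilon$ that fails to be locally free at the nodes and restricts to $M$; that it is self-dual with respect to $\mathcal{O}_{S_\epsilon}(2)$ and yields a symmetric $3\times3$ linear determinantal representation is precisely the output of \cite[Theorem~1.5]{Catanese1981}, so in practice I would only need to match hypotheses and conclusions to the present normalization. To conclude, I would verify the two assignments are mutually inverse --- one composite is the definition of $(S_\epsilon,\mathcal{E}_\epsilon)$, and the other uses that a self-dual rank $1$ aCM sheaf on the cubic is determined by its restriction to the divisor $X$ (uniqueness of the aCM extension across $X$), so $\mathcal{E}$, and hence $S=\supp\mathcal{E}$, is recovered from $\epsilon$ --- and as a sanity check match the count $2^{3}(2^{4}+1)=136$ of even $2$-torsion classes over $\bar k$ with the classical number of symmetric determinantal cubics through a general canonical genus $4$ curve, recording compatibility with Proposition~\ref{prop: genus 4: quadrics and orbits} so that the $\theta_0$--quadric and the $\epsilon$--cubic assemble into the block-diagonal tensor of Theorem~\ref{thm: genus 4: Moduli of genus 4 curves with data}.
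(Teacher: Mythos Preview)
The paper does not supply a proof of this statement: it is recorded as a classical theorem of Wirtinger and Coble, with a pointer to \cite[Theorem~1.5]{Catanese1981}. Your reverse construction---map $X$ to $\mbp^2$ by $|\kappa_X\otimes\epsilon|$ as a nodal sextic, blow up the singular locus, and take the anticanonical model to obtain the cubic---is Catanese's, and it is exactly what the paper carries out in detail when proving the relative version, Theorem~\ref{thm: genus 4: cubics and orbits in families}. So at the level of strategy your proposal and the paper (via Catanese) coincide.

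There is, however, a genuine gap in your parity step. The aCM computation giving $h^0(X,\mathcal{E}|_X)=h^0(\mbp^3,\mathcal{E})=3$ is correct, but it carries no information about parity: since $\mathcal{E}|_X\cong\kappa_X\otimes\epsilon$ has degree~$6$ on a genus~$4$ curve, Riemann--Roch already forces $h^0(\kappa_X\otimes\epsilon)=3$ for \emph{every} nonzero $\epsilon\in\Jac(X)[2]$. You then assert that ``comparing with Riemann--Roch and Serre duality \ldots\ yields $h^0(X,\theta_0\otimes\epsilon)=0$,'' but no mechanism is offered linking $h^0(\kappa_X\otimes\epsilon)$ to $h^0(\theta_0\otimes\epsilon)$, and none exists at this level of generality. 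The evenness of $\epsilon$ is precisely what makes the cubic a \emph{Cayley} cubic (four ordinary nodes; equivalently, the six singular points of the plane sextic are the pairwise intersections of four lines, as in the paper's Remark following Theorem~\ref{thm: genus 4: cubics and orbits in families}) rather than a more degenerate symmetroid, and this is the content you must actually extract from \cite{Catanese1981} rather than sidestep. As a secondary point, your sanity check $2^{3}(2^{4}+1)=136$ counts even theta characteristics; under $\mathcal{Q}_{\theta_0}$ this equals the number of even $2$-torsion classes \emph{including} $\epsilon=0$, for which $|\kappa_X\otimes\epsilon|=|\kappa_X|$ embeds $X$ in $\mbp^3$ rather than $\mbp^2$, so the count deserves a word about that boundary case.
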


	With some additions, Catanese's argument from \cite{Catanese1981} allows us to prove Theorem~\ref{thm: genus 4: Moduli of genus 4 curves with data} in families.

	\begin{theorem} \label{thm: genus 4: cubics and orbits in families}
		Let $B$ be a scheme over $\mathbb{Z}[\frac{1}{6}]$ and let $X \rightarrow B$ be a smooth genus $4$ curve with no bielliptic fibre. There is a canonical bijection between symmetric determinantal Cayley cubics $X_3$ over $B$ containing $X$ and pairs $(X, \epsilon)$ where $\epsilon^{\otimes 2} \iso \kappa_X$, such that $\epsilon$ is an even $2$-torsion class.
	\end{theorem}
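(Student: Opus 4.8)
The plan is to bootstrap from the fiberwise statement --- Theorem~\ref{thm: genus 4: cubics and orbits} (Wirtinger--Coble) --- using the method of \cite{Ho2009, BhargavaHoKumar2016}: realize both directions of the bijection by constructing, over $B$, a rank-one arithmetically Cohen--Macaulay sheaf on $\mbp^3_B$ equipped with a symmetric structure, in such a way that its formation commutes with arbitrary base change and therefore recovers Catanese's construction on each geometric fiber.

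\emph{Forward direction.} Let $f\: X \to B$ be a smooth genus $4$ curve with no bielliptic fibre, together with an even theta characteristic $\epsilon$ (so $\epsilon^{\otimes 2} \cong \kappa_X$). On each geometric fiber Catanese's argument produces, from $\epsilon$, a symmetric $4 \times 4$ matrix $M$ of linear forms whose cokernel $\mathcal{F}$ is a rank-one aCM sheaf supported on the symmetric determinantal cubic $Z(\det M)$ and satisfying $\mathcal{F}|_X \cong \epsilon$ up to the canonical twist; concretely $M$ is the matrix of a minimal free resolution over $k[x_0,\dots,x_3]$ of a fixed twist of this sheaf, whose graded pieces are governed by $\R^\bullet f_*$ of the twists $\epsilon \otimes \kappa_X^{\otimes j}$. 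The no-bielliptic hypothesis forces $\R^0 f_*(\epsilon)$ --- and every higher direct image appearing in the resolution --- to be locally free over $B$ of the constant rank predicted by Riemann--Roch and the fiberwise statement; cohomology and base change then shows these direct images behave well under all base changes $B' \to B$, so the fiberwise minimal resolutions glue to a resolution
\[
	\xym{0 \ar[r] & \mathcal{O}_{\mbp^3_B}^4(-1) \ar[r]^-{M} & \mathcal{O}_{\mbp^3_B}^4 \ar[r] & \mathcal{F} \ar[r] & 0}
\]
over $B$. Relative Grothendieck duality over $B$, together with the self-duality $\epsilon \cong \sheafhom_{\mathcal{O}_X}(\epsilon, \kappa_X)$ coming from $\epsilon^{\otimes 2} \cong \kappa_X$, upgrades $\mathcal{F}$ to a symmetric aCM sheaf --- equivalently, lets us take $M = M^T$ --- exactly as in the passage from $\mathcal{E}$ to $\psi$ in Section~\ref{sec: setup}. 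Then $X_3 := Z(\det M) \subset \mbp^3_B$ is the sought symmetric determinantal Cayley cubic, and $X \subset X_3$ because $\mathcal{F}$ is supported there and restricts to $\epsilon$ on $X$.

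\emph{Reverse direction.} Given a symmetric determinantal Cayley cubic $X_3 \to B$ containing $X$, let $\mathcal{E}$ be the associated rank-one aCM sheaf on $\mbp^3_B$ and $\eta\: \mathcal{E} \times \mathcal{E} \to \mathcal{O}_{X_3}(2)$ its symmetric form. Restricting to $X$ and using flatness of $X/B$ --- together with the fact that, on each fiber, $X$ meets $X_3$ inside the smooth locus of $X_3$ (here the exclusion of bielliptic fibers keeps $X_3$ a genuine nodal symmetroid cubic and keeps $X$ away from its essential singularities) --- we obtain a line bundle $\mathcal{E}|_X$, flat over $B$, together with a symmetric pairing on it; the appropriate twist is the even theta characteristic $\epsilon$ of Theorem~\ref{thm: genus 4: cubics and orbits}, its evenness and its relation to $\kappa_X$ being inherited fiberwise. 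That the two constructions are mutually inverse and canonical is checked on geometric fibers, where it is exactly Theorem~\ref{thm: genus 4: cubics and orbits}; compatibility with base change in $B$ holds by construction.

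\emph{Main obstacle.} The crux is the cohomological bookkeeping underlying the base-change step: one must pin down every $\R^i f_*$ of the twists of $\epsilon$ that enters the resolution and verify it is zero or locally free of constant rank, so that the minimal free resolutions --- which are \emph{not} base-change stable in general --- are forced to remain minimal, and hence compatible, after every base change. The hypothesis that no fiber is bielliptic is precisely what fixes these ranks uniformly and keeps the cubic a true symmetric determinantal Cayley cubic. A secondary technical point is carrying the symmetric structure --- morally a quadratic form on a sheaf --- through the relative Grothendieck-duality formalism over a possibly non-reduced base $B$.
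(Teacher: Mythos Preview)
Your approach is genuinely different from the paper's, and it contains both a concrete error and a real gap.

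First, the error: you write the resolution as $\mathcal{O}_{\mbp^3_B}^4(-1) \xrightarrow{M} \mathcal{O}_{\mbp^3_B}^4$, i.e.\ a $4\times 4$ matrix of linear forms. That would produce a \emph{quartic} symmetroid. The cubic $X_3$ comes from a $3\times 3$ symmetric matrix of linear forms (the tensor lives in $k^4\otimes\Sym_2 k^3$), and the aCM sheaf you want restricts on $X$ to $\kappa_X\otimes\epsilon$, not to $\epsilon$ itself.

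Second, the gap: your plan is to assert that minimal free resolutions on fibers glue, citing cohomology-and-base-change once you know the relevant $\R^i f_*$ are locally free of the expected rank. You correctly flag this as the crux, but you never identify which direct images are involved or why they have the right rank; in particular you do not explain how the ``no bielliptic fibre'' hypothesis enters that computation. As written, the forward direction is a statement of intent rather than an argument.

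The paper avoids this entirely by following Catanese's construction explicitly over $B$: map $X\to\mbp^2_B$ by $|\kappa_X\otimes\epsilon|$, obtaining a degree~$6$ image with singular scheme $\Delta$; blow up to $S$; use a basis $s,t,u$ of $\R^0f_*\mathcal{O}_S(H)$ to write down the rank-one matrix $(s_is_j)$ of quadratic forms on $\mbp^3_B$; observe that its adjugate is divisible by the cubic equation of $X_3:=\overline{\phi(S)}$, yielding a $3\times 3$ symmetric matrix of linear forms. This is concrete enough that base-change compatibility is visible, and it makes the role of the hypotheses (e.g.\ that the sextic image is genuinely singular, that $X_3$ has degree~$3$) explicit. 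If you want to pursue your abstract route, you should at minimum fix the rank to $3$, identify the sheaf as the pushforward of $\kappa_X\otimes\epsilon$ suitably twisted, and actually compute $h^i(X_b,\kappa_{X_b}\otimes\epsilon_b(j))$ for the relevant $j$ to justify the gluing.
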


	\begin{proof}
		We let $f$ denote the various structure morphisms for schemes over $B$. We explain how to construct a determinantal representation associated to an even $2$-torsion class $\epsilon$ in the style of Proposition~\ref{prop: genus 4: quadrics and orbits}.
		The bundle $\kappa_X \otimes \epsilon$ defines a morphism $X \rightarrow |\kappa_X \otimes \epsilon|^\vee \iso \mbp^2$. As the arithmetic genus of $X$ is equal to $4$ and $\epsilon$ is even, we see that the image of $X$ is of degree $6$ and has a nontrivial singular subscheme $\Delta$.
		Let $S$ be the blow-up of $\mbp^2$ along $\Delta$. On $S$, we obtain divisors $\Delta, X$ over the corresponding divisors on $\mbp^2$ as well as the pullback of the hyperplane class $H$. Note that the relative canonical bundle on $S$ is $\mathcal{O}_S(\Delta - 3H)$. As $X \subset \mbp^2$ admits singularities along each of the points in $\Delta$, by the adjunction formula we have that $\kappa_X \iso \res_X (\mathcal{O}_S(X) \otimes \kappa_S) \iso \res_X \mathcal{O}_S(3H - \Delta)$. The relative global sections of $\mathcal{O}_S(3H - \Delta)$ define a morphism $\phi\: S \rightarrow \mbp^3 \iso |\kappa_X|^\vee$.
		
		Since $\mathcal{O}_S(X) \iso \mathcal{O}_S(6H-2\Delta)$, we have by the exact sequence
		\[
			\xym{
				0 \ar[r] & \mathcal{O}_S(4H-2\Delta-X) \ar[r] & \mathcal{O}_S(4H-2\Delta) \ar[r] & \mathcal{O}_X \ar[r] & 0
			}
		\]
		that $\R^0f_*\mathcal{O}_S(4H-2\Delta)(B) = 1$. The unique relative global section of $\mathcal{O}_S(4H-2\Delta)$ defines an isomorphism $\mathcal{O}_{S}(4H-2\Delta) \iso \mathcal{O}_S$, so $\mathcal{O}_S(3H - \Delta)^{\otimes 2} \iso \mathcal{O}_S(2H)$. 
		Let $\{s, t, u\}$ be a basis for $\R^0f_*\mathcal{O}_{S}(H)(B)$. The matrix
		\[
			\begin{bmatrix}
				s^2 & st & su \\
				st & t^2 & tu \\
				su & tu & u^2
			\end{bmatrix}
		\]
		can be identified with a matrix $\mathcal{B}(\mathbf{x})$ with entries in $\R^0f_*\mathcal{O}_{\mbp^3}(2)(B)$. The corresponding exact sequence of sheaves on $\mbp^3$ is
		\[
			\xym{
			0 \ar[r] & \mathcal{O}_{\mbp^3}(-2)^{\oplus 3} \ar[r]^-{\mathcal{B}} & \mathcal{O}_{\mbp^3}^{\oplus 3} \ar[r]^-{g} & \mathcal{E} \ar[r] & 0
			}.
		\]
		The matrix $\mathcal{B}(\mathbf{x})$ defines a rational map $\mbp^3 \dashrightarrow \mbp^5$. Let $\nu\: \mbp^2 \rightarrow \mbp^5$ be the Veronese map, and observe that the diagram
		\[
			\xym{
				\mbp^2 \ar[r]^\nu \ar@{-->}[d]_\phi & \mbp^5 \\
				 \mbp^3 \ar@{-->}[ur]_{\mathbf{x} \mapsto \mathcal{B}(\mathbf{x})}
			}
		\]
		commutes due to the identifications $\kappa_X^{\otimes 2} \iso (\kappa_X \otimes \epsilon)^{\otimes 2}$ and $\kappa_X \iso \res_X \mathcal{O}_{S}(3H-\Delta)$. In particular, $\mathcal{B}(\mathbf{x})$ has rank $1$ along an open subscheme of $X_3 := \overline{\phi(S)}$. Thus, we see that $\adj \mathcal{B}(\mathbf{x})$ is identically zero along $X_3$. As $X_3$ is a hypersurface, it is defined by a polynomial $F$, and each entry of $\adj \mathcal{B}(\mathbf{x})$ is divisible by $F$. Over the generic point of the base scheme, the degree of $F$ is equal to three by Theorem~\ref{thm: genus 4: cubics and orbits}, so we conclude that $X_3$ is a hypersurface of degree $3$ and that $\adj \mathcal{B}(\mathbf{x}) = F \cdot \mathcal{A}(\mathbf{x})$ for some matrix of linear forms. Since $X_3$ is irreducible of degree $3$, we have that $\det \mathcal{A}(\mathbf{x})$ is a scalar multiple of $F$, hence, a symmetric determinantal representation for a cubic surface containing the canonical image of $X$.
	\end{proof}

	\begin{remark}		
		For the sake of concreteness, we can describe this construction in a specific example. In the generic case $\Delta$ is the locus of $6$ intersection points of some quadruple of lines in $\mbp^2$. If we take as an example the lines to be $y_0, y_1, y_2, y_3 := -(y_0 + y_1 + y_2)$, then
		\[
		\phi(y_0, y_1, y_2) = (y_1y_2y_3 : y_0y_2y_3 : y_0y_1y_3 : y_0y_1y_2)
		\]
		and the image has defining equation
		\[
			\det
			\begin{bmatrix}
				x_0 + x_3 & x_3 & x_3 \\
				x_3 & x_1 + x_3 & x_3 \\
				x_3 & x_3 & x_2 + x_3
			\end{bmatrix}.
		\]
		Note that $\mathcal{O}_S(X) \iso \mathcal{O}_S(6H - 2\Delta)$ and $\mathcal{O}_S(4H - 2\Delta) = \gen{y_0y_1y_2y_3}$. On $X$ we have the equivalence
		\[
			y_0^2 \sim y_0^2 \cdot (y_0y_1y_2y_3) \sim (x_1 + x_3)(x_2 + x_3) - x_3^2,
		\]
		and similarly for the other minors.
	\end{remark}

	\begin{proof}[Proof of Theorem~\ref{thm: genus 4: Moduli of genus 4 curves with data}]
		We let $f$ denote the various structure morphisms for schemes over $B$ as before.
		We first rigidify the situation by choosing bases for $\R^0f_*\kappa_X(B)$, $\R^0f_*(\kappa_X \otimes \epsilon)(B)$, and $\R^0f_*\theta_0(B)$.
		Given an isomorphism class of a tuple $(X, \epsilon, \theta_0)$, we apply Proposition~\ref{prop: genus 4: quadrics and orbits} and Theorem~\ref{thm: genus 4: cubics and orbits in families} to construct a determinantal quadric $X_2$ and determinantal cubic $X_3$ over $B$. The labelled bases provide associated determinantal representations $\mathcal{A}_2$, $\mathcal{A}_3$ (respectively). 
		
		Conversely, a tensor $\mathcal{A}_3 \in \mathcal{O}_B^4 \otimes \Sym_2 \mathcal{O}_B^3$ determines a determinantal cubic $X_3$ and a tensor $\mathcal{A}_2 \in \mathcal{O}_B^4 \otimes \Sym^2 \mathcal{O}_B^2$ determines a determinantal quadric $X_2$. If $X := X_2 \cap X_3$ is a nonsingular genus $4$ curve, we see that the aCM sheaf giving rise to $\mathcal{A}_2$ defines a vanishing even theta characteristic $\theta_0$ on $X$, together with a basis for $\R^0f_*\theta_0(B)$. Similarly, the aCM sheaf on $X_3$ defined by $\mathcal{A}_3$ restricts to a line bundle $\mathcal{E}$ on $X$ such that $\mathcal{E} \otimes \kappa_X^\vee$ is an even $2$-torsion class $\epsilon$. The tensor $\mathcal{A}_3$ is defined with reference to coordinates for $\mbp^2$ and $\mbp^3$, so also determines bases for both $\R^0f_*\mathcal{E}(B)$ and $\R^0f_*\mathcal{O}_{\mbp^3}(1)(B)$, and therefore a bases for $\R^0f_*(\kappa_X \otimes \epsilon)(B)$ and $\R^0f_*\kappa_X(B)$.
		
		In particular, we have demonstrated a $\GL_4 \times \GL_2 \times \GL_3$-equivariant bijection between boxes $\mathcal{A} \in \mathcal{O}_B^4 \otimes (\Sym_2 \mathcal{O}_B^2 \oplus \Sym_2 \mathcal{O}_B^3)$ and tuples of data $(X, \theta_0, \epsilon, B_1, B_2, B_3)$, where $B_1, B_2, B_3$ are bases for $\R^0f_*\theta_0(B)$, $\R^0(\kappa_X \otimes \epsilon)(B)$, $\R^0f_*\kappa_X(B)$ respectively. The two constructions given above are inverses of each other.
	\end{proof}
		
\subsection{Theta characteristics} \label{sec: sub: genus 4 theta characteristics}

The action of $\DAut(\mathcal{A})$ on $\mathfrak{C}$ naturally splits the Cayley variety into $8$ orbits of $2$ points each. There are $\binom{16}{2} = 120$ secants of the Cayley variety; $8$ of these secants are fixed by the action of $\sigma$ and the remaining $112$ are divided into $56$ orbits of size $2$. Thus, Lemma~\ref{lem: psi is injective on secants mod diagonal symmetries} shows that the secants of the Cayley variety define $8 + 56$ distinct tritangent planes of $X$.

The effective representative of the vanishing even theta characteristic $\theta_0$ define an infinite family of planes tangent to $X$ at three points corresponding. By Proposition~\ref{prop: tangency for curves}, any hyperplane $H$ defined by a secant of the form $\ell:= \ell(p,\sigma(p))$ for some $p \in \mathfrak{C}(\bar k)$ is tangent to the quadric cone containing $X$, so defines an effective representative of $\theta_0$. Remark~\ref{rem: converse for sigma fixed lines} shows that $H$ is also tangent to some point of $X_3$ (however, this point does not lie on $X$). There are precisely eight lines of the form $\ell(p, \sigma(p))$, which correspond to the eight planes tangent to both $X_2$ and $X_3$. The remaining secants define odd theta characteristics of $X$ by Proposition~\ref{prop: tangency for curves}.

\subsection{Weil pairing}
The $56$ odd theta characteristics obtained from this construction also have a special property with respect to the Weil pairing. The Weil pairing on $\Jac(X)[2]$ is a skew-symmetric bilinear form
\[
	e_2 \colon \Jac(X)[2] \times \Jac(X)[2] \rightarrow \mathbb{F}_2.
\]
For any $\epsilon \in \Jac(X)[2]_{\mathrm{even}}$, there are precisely $56$ points in $D \in \Jac(X)[2]_{\odd}$ such that $e_2(\epsilon, D) = 0$. A theta characteristic defines a quadratic form $\mathcal{Q}$ on $\Jac(X)[2]$ such that for any $D_1, D_2 \in \Jac(X)[2]$ we have
\[
	e_2(D_1, D_2) = \mathcal{Q}(D_1+D_2) - \mathcal{Q}(D_1) - \mathcal{Q}(D_2).
\]
We denote the quadratic form on $\Jac(X)[2]$ defined by $\theta_0$ by $\mathcal{Q}_{0}$.

\begin{lemma} \label{lem: projected singular points are colinear}
	Let $\ell$ be a secant line of $\mathfrak{C}$, let $[H] = \psi(\ell)$, let $\{x_1,x_2,x_3\} = X \cap H$, and let $\pi \colon \mbp^4 \dashrightarrow \mbp^2$ be the projection map onto the last $3$ coordinates. Then $\{\pi(\ker \mathcal{A}(x_i, \cdot, \cdot)) : i = 1, \ldots, 3\}$ is a set of collinear points in $\mbp^2$. 
\end{lemma}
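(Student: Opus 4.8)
The plan is to reduce the statement to the collinearity in $\mbp V_3\iso\mbp^2$ of the three points $\varphi_{X_3}(x_1),\varphi_{X_3}(x_2),\varphi_{X_3}(x_3)$, where $\varphi_{X_2}\colon X_2\dashrightarrow\mbp^1$ and $\varphi_{X_3}\colon X_3\dashrightarrow\mbp^2$ are the kernel maps of the two blocks and $\mbp^4=\mbp(V_2\oplus V_3)$ is the block decomposition, so that $\mbp V_2$ is the centre of $\pi$ and $\mbp V_3$ is identified with the target $\mbp^2$. First I would observe that since $X$ is smooth it has no essential singularities, so $\corank\mathcal{A}^{(1)}(x_i,\cdot,\cdot)=\corank\mathcal{A}^{(2)}(x_i,\cdot,\cdot)=1$ at each $x_i$; hence $\varphi_{X_2}(x_i)\in\mbp V_2$ and $\varphi_{X_3}(x_i)\in\mbp V_3$ are defined and nonzero and $\ker\mathcal{A}(x_i,\cdot,\cdot)=\langle\varphi_{X_2}(x_i),\varphi_{X_3}(x_i)\rangle$ is a line in $\mbp^4$. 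This line passes through the point $\varphi_{X_2}(x_i)$ of the centre $\mbp V_2$, so $\pi$ contracts it to the single point $\pi(\varphi_{X_3}(x_i))$, which under $\mbp V_3\iso\mbp^2$ is exactly $\varphi_{X_3}(x_i)$. This yields the identification $\pi(\ker\mathcal{A}(x_i,\cdot,\cdot))=\varphi_{X_3}(x_i)$, and the lemma becomes the statement that these three points are collinear.

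Next, write $\ell=\ell(p,q)$ with $p,q\in\mathfrak{C}$; I may take $\ell$ not to meet the block spaces (the secants meeting a block space are the eight of the form $\ell(p,\sigma(p))$, whose contact divisors lie in $|\theta_0|$). By Lemma~\ref{lem: cayley line contracts} the map $\psi$ is constant on $\ell$ with value $[H]$, and since $x_i\in H$ this forces $\mathcal{A}(x_i,y,y)=0$ for every $y\in\ell$; in other words $\ell\subseteq Q_{x_i}:=Z(\mathcal{A}(x_i,\mathbf{y},\mathbf{y}))\subset\mbp^4$. By the corank computation above, $Q_{x_i}$ is singular exactly along the line $\mbp\ker\mathcal{A}(x_i,\cdot,\cdot)$, so it has rank $3$ and is set-theoretically a cone over a smooth plane conic with this vertex line. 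A smooth conic contains no line, so every line lying on $Q_{x_i}$ meets the vertex; hence $\ell\cap\mbp\ker\mathcal{A}(x_i,\cdot,\cdot)\neq\emptyset$. Since the vertex line meets $\mbp V_2$ while $\ell$ does not, $\ell$ is not the vertex line, and this intersection is a single point $y_i$.

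Finally, $y_i$ lies in both $\ell$ and $\ker\mathcal{A}(x_i,\cdot,\cdot)=\langle\varphi_{X_2}(x_i),\varphi_{X_3}(x_i)\rangle$, and $y_i\notin\mbp V_2$ because $\ell$ avoids the block spaces. Projecting by $\pi$: on the kernel side $\pi(y_i)=\varphi_{X_3}(x_i)$ as above, while writing $y_i=s_ip+t_iq$ gives on the secant side $\pi(y_i)=s_i\pi(p)+t_i\pi(q)$, a point of the line $\overline{\pi(p)\pi(q)}=\overline{\pi(\ell)}\subset\mbp^2$. Thus each $\varphi_{X_3}(x_i)=\pi(\ker\mathcal{A}(x_i,\cdot,\cdot))$ lies on the single line $\overline{\pi(\ell)}$, which is the desired collinearity. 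I expect the crux to be the middle paragraph: pinning down that $Q_{x_i}$ has rank precisely $3$ and using the classification of lines on a quadric cone in $\mbp^4$, together with the block-space hypothesis to guarantee $\ell$ meets the vertex in an honest point rather than coinciding with the vertex; the outer two paragraphs are bookkeeping with the block decomposition and the description of the fibres of $\psi$ (Proposition~\ref{prop: main diagram hypersurfaces}, Lemma~\ref{lem: cayley line contracts}).
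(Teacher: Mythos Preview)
Your proof is correct and follows essentially the same approach as the paper's: identify $\pi(\ker\mathcal{A}(x_i,\cdot,\cdot))$ with $\varphi_{X_3}(x_i)$ via the block decomposition, then use that $\ell$ meets each kernel line to conclude the projections lie on $\pi(\ell)$. The only difference is packaging: the paper's two-line proof invokes the already-established fact that $\ker\mathcal{A}(x_i,\cdot,\cdot)\cap\ell\neq\emptyset$ (proved in Proposition~\ref{prop: tangency for curves} via the same rank-$3$ quadric and isotropic-line argument you give in your middle paragraph), whereas you reprove it in place.
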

\begin{proof}
	Because $\ker \mathcal{A}(x_i, \cdot, \cdot) = \gen{\varphi_{X_2}(x_i), \varphi_{X_3}(x_i)}$, we see $\pi(\ker \mathcal{A}(x_i, \cdot, \cdot)) = \varphi_{X_3}(x_i)$. Each intersection $\ker \mathcal{A}(x_i, \cdot, \cdot) \cap \ell$ is nonempty, so the projections of the kernels lie on $\pi(\ell)$ and thus are collinear.
\end{proof}

Restricted to $X$, we have that $\varphi_{X_3} \colon X \rightarrow \mbp^2$ is a morphism and that $\varphi_{X_3}^* \mathcal{O}_{\mbp^2}(1) \cong \kappa_X \otimes \epsilon$. Lemma~\ref{lem: projected singular points are colinear} shows $h^0(X, \kappa_X \otimes \epsilon \otimes \theta^{\vee}) = h^0(X, \theta \otimes \epsilon) = 1$ for any odd theta characteristic $\theta$ arising from our construction.

\begin{proposition} \label{prop: genus four Weil pairing}
The $56$ odd theta characteristics obtained by $\mathcal{A}$ are precisely the odd characteristics $\theta$ such that $e_2(\epsilon, \theta \otimes \theta_0^{\vee}) = 0$.
\end{proposition}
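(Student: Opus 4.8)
The plan is to reduce the statement to the Riemann--Mumford description of the quadratic form $\mathcal{Q}_0$ attached to $\theta_0$, combined with the cohomological identity $h^0(X,\theta\otimes\epsilon)=1$ that was already extracted from Lemma~\ref{lem: projected singular points are colinear}. Recall that for any theta characteristic $\vartheta$ on $X$ the associated quadratic form is $\mathcal{Q}_\vartheta(\mathcal{L})\equiv h^0(X,\vartheta\otimes\mathcal{L})+h^0(X,\vartheta)\pmod 2$ for a $2$-torsion line bundle $\mathcal{L}$; this is the form appearing in the statement of the proposition. Since $\theta_0$ is a vanishing even theta characteristic on a genus $4$ curve we have $h^0(X,\theta_0)=2$, which is even, so $\mathcal{Q}_0(\mathcal{L})\equiv h^0(X,\theta_0\otimes\mathcal{L})\pmod 2$. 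In particular $\mathcal{Q}_0(\theta\otimes\theta_0^{\vee})\equiv h^0(X,\theta)\equiv 1$ for every odd theta characteristic $\theta$, and $\mathcal{Q}_0(\epsilon)\equiv h^0(X,\theta_0\otimes\epsilon)\equiv 0$ precisely because $\epsilon$ is an \emph{even} $2$-torsion class.

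Next I would take $\theta$ to be one of the $56$ odd theta characteristics cut out by a secant of $\mathfrak{C}$. The discussion immediately following Lemma~\ref{lem: projected singular points are colinear} gives $h^0(X,\theta\otimes\epsilon)=1$, so $\theta\otimes\epsilon$ is again a theta characteristic and
\[
\mathcal{Q}_0\big(\epsilon\otimes\theta\otimes\theta_0^{\vee}\big)\equiv h^0\big(X,\theta_0\otimes\epsilon\otimes\theta\otimes\theta_0^{\vee}\big)=h^0(X,\theta\otimes\epsilon)=1 .
\]
Feeding the three parities computed above into the polarization identity $e_2(D_1,D_2)=\mathcal{Q}_0(D_1\otimes D_2)-\mathcal{Q}_0(D_1)-\mathcal{Q}_0(D_2)$ with $D_1=\epsilon$ and $D_2=\theta\otimes\theta_0^{\vee}$ gives
\[
e_2(\epsilon,\theta\otimes\theta_0^{\vee})=\mathcal{Q}_0(\epsilon\otimes\theta\otimes\theta_0^{\vee})-\mathcal{Q}_0(\epsilon)-\mathcal{Q}_0(\theta\otimes\theta_0^{\vee})=1-0-1=0 ,
\]
so every one of the $56$ constructed theta characteristics lies in the claimed set.

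To upgrade this inclusion to an equality I would count. On a genus $4$ curve an odd theta characteristic has $h^0=1$, so its unique effective representative is a degree $3$ divisor spanning a plane in the canonical $\mathbb{P}^3$, and that plane is therefore determined by the theta characteristic; hence the $56$ pairwise distinct tritangent planes of $X$ arising from secants of $\mathfrak{C}$ (see Section~\ref{sec: sub: genus 4 theta characteristics} and Lemma~\ref{lem: psi is injective on secants mod diagonal symmetries}) yield $56$ pairwise distinct odd theta characteristics. On the other hand, as recorded above, for the even $2$-torsion class $\epsilon$ there are exactly $56$ elements $D\in\Jac(X)[2]_{\odd}$ with $e_2(\epsilon,D)=0$. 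A set of $56$ distinct elements contained in a set of cardinality $56$ exhausts it, which completes the argument.

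I expect the only delicate point — and the step I would verify most carefully — to be the bookkeeping of the two normalizations: that "$\epsilon$ even" is correctly interpreted as $h^0(X,\theta_0\otimes\epsilon)$ being even, and that $h^0(X,\theta_0)$ is even so that the $h^0(\vartheta)$-shift in the Riemann--Mumford form genuinely drops out. The entire proof is a parity computation, so an off-by-one in either place would reverse the conclusion; everything else (distinctness of the constructed planes, the value $h^0(X,\theta\otimes\epsilon)=1$, and the count of $56$) has already been supplied earlier in the paper.
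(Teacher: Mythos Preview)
Your argument is correct and follows essentially the same route as the paper: both compute the three parities $\mathcal{Q}_0(\epsilon)$, $\mathcal{Q}_0(\theta\otimes\theta_0^\vee)$, $\mathcal{Q}_0(\epsilon\otimes\theta\otimes\theta_0^\vee)$ via the Riemann--Mumford formula, using $h^0(\theta\otimes\epsilon)=1$ from Lemma~\ref{lem: projected singular points are colinear}, and then polarize. Your write-up is in fact slightly more complete than the paper's, since you make explicit both that $h^0(\theta_0)=2$ is even (so the shift drops out uniformly) and the counting step that upgrades the inclusion to the equality asserted by the word ``precisely''; the paper states the count of $56$ just before the proposition but does not repeat it inside the proof.
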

\begin{proof}
For any $2$-torsion element $D$, we have the formula from Mumford
\[
	\mathcal{Q}_{0}(D) \equiv h^0(\theta_0) + h^0(D \otimes \theta_0) \pmod 2.
\]
Thus, since $\epsilon \otimes \theta \otimes \theta_0^{\vee}$ is a non-trivial $2$-torsion class we have that
\begin{alignat*}{3}
	\mathcal{Q}_{0}(\epsilon \otimes \theta \otimes \theta_0^{\vee}) & \equiv h^0(\epsilon \otimes \theta \otimes \theta_0^{\vee}) + h^0(\epsilon \otimes \theta)  \equiv h^0(\epsilon \otimes \theta) & {} \equiv 1 \pmod 2, \\
	\mathcal{Q}_{0}(\theta \otimes \theta_0^{\vee}) & \equiv h^0(\theta_0) + h^0(\theta) & \equiv 1  \pmod 2, \\
	\mathcal{Q}_{0}(\epsilon) & \equiv h^0(\theta_0) + h^0(\epsilon \otimes \theta_0) & \equiv 0 \pmod 2,
\end{alignat*}
by choice of $\epsilon$. Thus, $e_2(\epsilon, \theta \otimes \theta_0^{\vee}) = 0$.
\end{proof}

\noindent
We obtain Theorem~\ref{thm: genusfoursummary} by combining Theorem~\ref{thm: genus 4: Moduli of genus 4 curves with data}, Proposition~\ref{prop: genus four Weil pairing}, and the discussion of Section~\ref{sec: sub: genus 4 theta characteristics}.

\genusfoursummary*

\subsection{Milne's Construction} \label{sec: sub: BruinSertoz}

In this subsection we discuss the relation between our construction and a construction of Milne \cite{Milne1923}, as extended by Bruin and Sert\"oz \cite{BruinSertoz2018}. We describe the details of this construction.

For a general canonical genus $4$ curve $X$, there is a bijection between $2$-torsion classes $\Jac(X)[2]$ and Cayley cubics containing $X$. Choose a Cayley cubic $X_3$ containing $X$, and let $\epsilon$ be the corresponding $2$-torsion class of $\Jac(X)$ obtained from the double cover $\wtilde{X}_3 \rightarrow X_3$ unramified outside the singularities of $X_3$. Let $X_2$ be the unique quadric surface containing $X$. Then $\dual X_2 \cap \dual X_3$ is a singular model of a smooth genus $3$ curve $Y$. Milne showed that an odd theta characteristic of $Y$ naturally gives rise to a pair of tritangent planes of $X$. Recillas showed that if $\wtilde X \rightarrow X$ is the unramified double cover of $X$ obtained from the $2$-torsion class $\epsilon$, then $Y$ is a genus $3$ curve such that $\Jac(Y) \iso \Prym(\wtilde{X}/X)$ \cite{Recillas1974}. 

Bruin and Sert\"oz generalized Milne's construction by removing genericity assumptions. Suppose now that $X$ is a smooth genus $4$ curve contained in a quadric cone $X_2$ and let $X_3$ be a Cayley cubic containing $X$ as before. Let $\theta_0$ denote the vanishing even theta characteristic of $X$ given by the quadric cone. The image of $X_2$ under the Gauss map is a connected curve of degree $2$ in $\dual \mbp^3$. 
It is easily seen that if $H \subset \dual \mbp^3$ is the hyperplane dual to the vertex of $X_2$, that $H$ is the unique hyperplane containing $\dual X_2$. By virtue of being a Cayley cubic, we obtain a symmetric determinantal representation for $X_3$ as well as maps $\varphi_3\: X_3 \dashrightarrow \mbp^2$ and $\psi_3 \: \mbp^2 \dashrightarrow \mbp^3$. 

The variety $\psi_3^{-1}(H \cap \dual X_3)$ is a plane conic with the $8$ marked points $\psi_3^{-1}(\dual X_2 \cap \dual X_3)$. The genus $3$ curve $Y$ is the hyperelliptic curve given by taking a double cover of the plane conic branched at the $8$ marked points. The ``bitangents'' are the $28$ lines between these $8$ points.  %
%
%
Given a smooth conic $C \subset \dual \mbp^3$, there is a unique quadric cone $\Lambda \subset \mbp^3$ such that the dual variety to $\Lambda$ is $C$. Of course, $\Lambda$ is tangent to every hyperplane parameterized by $C$, so is the \emph{envelope} of this family of hyperplanes -- as an abbreviation we say that $\Lambda$ is the envelope of $C$. 
Given a ``bitangent'' line $\ell$, its image $\psi_3(\ell)$ is a conic in $\dual \mbp^3$. If $\Lambda$ is the envelope of the image of a ``bitangent'', then $\Lambda$ meets $X_2$ along two hyperplane sections; moreover, the two associated hyperplanes are tritangent planes to $X$. The construction described above is the construction in \cite{BruinSertoz2018} that associates a pair of tritangent planes of $X$ to a ``bitangent'' of $\psi_3^{-1}(H \cap \dual X_3)$. We summarize this construction below:

\begin{theorem}[Bruin-Sert\"oz]
\label{thm: bruin sertoz}
	Let $X$ be a genus $4$ curve with a vanishing theta null $\theta_0$ and let $\epsilon \in \Jac(X)$ be a $2$-torsion point not of the form $[\theta-\theta_0]$ for some odd theta characteristic $\theta$ (in their terminology, $\epsilon$ is an \emph{even} $2$-torsion point). Let $X_3$ be the cubic symmetroid associated to $\epsilon$ and let $\varphi\: X_3 \dashrightarrow \mbp^2$ be the kernel map. 
	
	Then the genus $3$ curve associated to $(C, \epsilon)$ is the double cover of a conic in $\mbp^2$ ramified at $8$ marked points. Furthermore, each of the $28$ lines through these $8$ points in $\mbp^2$ corresponds to a pair of odd theta characteristics of $X$. If $D_1$ and $D_2$ are the two effective representatives of the odd theta characteristics associated to a line $L$, then $\varphi(D_1), \varphi(D_2) \subset L$.
\end{theorem}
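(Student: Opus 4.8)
\textbf{Plan of proof of Theorem~\ref{thm: bruin sertoz}.}
The goal is to reconstruct the Bruin--Sert\"oz correspondence using only the machinery developed above, so the proof breaks into three tasks: (1) identify the genus $3$ curve $Y$ with a double cover of a conic branched at $8$ points, (2) set up the bijection between the $28$ lines through those $8$ points and the $28$ unordered pairs of ``bitangents'', and (3) show that each such line, via the envelope construction, produces exactly the pair of tritangent planes whose kernel-map images lie on that line. Throughout I would fix a tensor $\mathcal{A} \in k^4 \otimes (\Sym_2 k^2 \oplus \Sym_2 k^3)$ realizing $(X, \epsilon, \theta_0)$ via Theorem~\ref{thm: genus 4: Moduli of genus 4 curves with data}: the block $\mathcal{A}^{(2)}$ of size $2$ encodes $\theta_0$ and the quadric cone $X_2$, the block $\mathcal{A}^{(3)}$ of size $3$ encodes $\epsilon$ and the Cayley cubic $X_3$, and the Cayley variety $\mathfrak{C}$ is $16$ points in $\mbp^4$ with $\DAut(\mathcal{A}) = \langle -I, \sigma\rangle$.

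\textbf{Step 1: the hyperelliptic model.} First I would make precise the curve $Y = \dual X_2 \cap \dual X_3$. The image $\theta_{X_2}(X_2)$ is a conic $C \subset \dual\mbp^3$ (the dual of a quadric cone), contained in the hyperplane $H$ dual to the vertex of $X_2$; this is a short classical computation. Using Proposition~\ref{prop: main diagram hypersurfaces} applied to $X_3$, we have $\psi_3 \circ \varphi_3 = \theta_{X_3}$ on $X_3^{\sm}$, so $\psi_3^{-1}(H \cap \dual X_3)$ is a plane conic in $\mbp^2$; I would check that the $8$ points $\psi_3^{-1}(\dual X_2 \cap \dual X_3)$ are exactly the points of $\pi(\mathfrak{C})$, where $\pi\colon \mbp^4 \dashrightarrow \mbp^2$ is the projection onto the last three coordinates (this matches the discussion preceding Theorem~\ref{thm: tritangent bitangent theorem}: the $16$ Cayley points, being interchanged in pairs by $\sigma$, project to $8$ points, and $\varphi_{X_3}$ identifies the block-space images with these). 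Then $Y$ is by definition the double cover of this conic branched at the $8$ points, giving a hyperelliptic genus $3$ curve; its ``bitangents'' are the $28$ lines $\ell(p_i,p_j)$ through the $8$ points, i.e.\ the images under $\pi$ of the $112$ secant lines of $\mathfrak{C}$ not fixed by $\sigma$, each appearing as the image of $4$ such secants (two $\sigma$-orbits worth).

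\textbf{Step 2: envelopes and tritangent planes.} Given a line $L = \ell(p_i,p_j) \subset \mbp^2$ through two of the $8$ points, I would show $\psi_3(L)$ is a conic in $\dual\mbp^3$, hence has a well-defined envelope $\Lambda$, a quadric cone in $\mbp^3$, and that $\Lambda$ meets $X_2$ in exactly two hyperplane sections. The key input is Corollary~\ref{cor: lines are n-tangents for our main examples}(b) together with Lemma~\ref{lem: psi is injective on secants mod diagonal symmetries}: each of the $112$ secants $\ell$ of $\mathfrak{C}$ not of the form $\ell(p,\sigma(p))$ gives $[H] = \psi(\ell)$ with $X \cap H = 2D'$ for $D'$ an effective odd theta characteristic, and the secants lying over a fixed line $L$ in $\mbp^2$ are precisely a pair of $\sigma$-orbits $\{\ell, \sigma(\ell)\}$ and $\{\ell'', \sigma(\ell'')\}$, yielding (by Lemma~\ref{lem: psi is injective on secants mod diagonal symmetries}) two distinct hyperplanes $H_1, H_2$ — exactly two tritangent planes. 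I would then identify these two hyperplanes with the two hyperplane sections cut out by the envelope $\Lambda$: since $\Lambda$ is tangent to every hyperplane in the family $\psi_3(L)$ and these are the tangent planes to $X_3$ at the points of $\varphi_3^{-1}(L)$, the hyperplanes cutting $\Lambda \cap X_2$ are forced to be the ones in the closure $\psi(\ell)$, $\psi(\ell'')$ of this family that are tangent to $X_2$, which are precisely $H_1, H_2$ by Proposition~\ref{prop: restrictions of tensors}(a) (compatibility of $\psi$ with projection $\pi$).

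\textbf{Step 3: the kernel-map incidence.} Finally, for the two effective odd theta divisors $D_1 \in |H_1 \cap X|$ and $D_2 \in |H_2 \cap X|$, I would prove $\varphi_3(D_i) \subset L$. This is Lemma~\ref{lem: projected singular points are colinear} essentially verbatim: for $x_1, x_2, x_3$ the support of $D_1 = X\cap H_1$, the kernels $\ker\mathcal{A}(x_t,\cdot,\cdot)$ each meet the secant $\ell$ (this is how they are shown to be singular points in the proof of Proposition~\ref{prop: tangency for curves}(b)), so their projections $\pi(\ker\mathcal{A}(x_t,\cdot,\cdot)) = \varphi_{X_3}(x_t)$ lie on $\pi(\ell) = L$; likewise for $D_2$ using $\ell''$. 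Since $\varphi = \varphi_{X_3}$ restricted to $X$ equals the kernel map of the Cayley cubic, this gives $\varphi(D_1), \varphi(D_2) \subset L$. Assembling Steps 1--3 gives the correspondence claimed, and comparing with the parity bookkeeping (the $8$ $\sigma$-fixed secants give $|\theta_0|$, the $112$ others give odd theta characteristics with $e_2(\epsilon, \theta\otimes\theta_0^\vee) = 0$ by Proposition~\ref{prop: genus four Weil pairing}) pins it to the ``vanishing theta-null, $\epsilon$ even'' case.

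\textbf{Main obstacle.} The delicate point is Step 2: showing that the \emph{envelope} $\Lambda$ of the conic $\psi_3(L)$ cuts $X_2$ in exactly the two hyperplane sections coming from the two $\sigma$-orbits of secants over $L$, rather than in some a priori larger or differently-described locus. This requires carefully relating the two distinct maps in play — the kernel/Gauss maps $\varphi_3, \psi_3, \theta_{X_3}$ attached to the cubic $X_3$ (which live over $\mbp^2$) and the global map $\psi$ attached to the full tensor (which lives over $\mbp^4$) — via the projection $\pi$ and Proposition~\ref{prop: restrictions of tensors}. I expect this to amount to a diagram chase through Proposition~\ref{prop: main diagram hypersurfaces} for $X_3$ combined with the block-diagonal compatibility in Proposition~\ref{prop: psi sends sing locus to planes}, but verifying that no extraneous components of $\Lambda \cap X_2$ appear (equivalently, that $\Lambda \cap X_2$ is reduced of the expected degree) will need either a genericity argument or an explicit local computation of the kind done in Example~\ref{ex: genus 4 secant contraction example}.
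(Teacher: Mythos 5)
The paper does not actually prove this statement: it is imported verbatim from \cite{BruinSertoz2018} as background for Section~6.4, and at the one point where the paper must compare the envelope construction with its own secant construction (Proposition~\ref{prop: compatibility for milne}) it invokes \cite[Corollary~7.2]{BruinSertoz2018} rather than reproving it. Your Steps 1 and 3 are sound and essentially coincide with results the paper does prove: the identification of the $8$ branch points with $\pi(\mathfrak{C})$ is Lemma~\ref{lem: genus 4: image of Cayley in P2}, and the collinearity $\varphi_3(D_i) \subset L$ is Lemma~\ref{lem: projected singular points are colinear} combined with the incidence argument in Proposition~\ref{prop: tangency for curves}(b), exactly as you describe.

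The gap is in Step 2, which is the only step that engages the envelope $\Lambda$ and hence the only step that would prove the Bruin--Sert\"oz correspondence rather than the paper's secant correspondence. Two problems. First, your identification of $H_1, H_2$ as the members of the family $\psi_3(L)$ that are ``tangent to $X_2$'' is false: by Proposition~\ref{prop: tangency for curves}(c), a hyperplane $\psi(\ell)$ is tangent to the quadric cone $X_2$ if and only if $\ell = \ell(p,\sigma(p))$, and those hyperplanes cut out $2D$ with $D \in |\theta_0|$; the planes $H_1, H_2$ arising from the $112$ remaining secants cut out odd theta characteristics and are \emph{not} tangent to $X_2$ — they are the planes of the two conic components of $\Lambda \cap X_2$. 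Second, the substantive claim — that $\Lambda \cap X_2$, a priori a degree-$4$ space curve, splits as a union of two plane conics whose planes are precisely $\psi(\ell)$ and $\psi(\ell'')$ — is never argued; you defer it to your ``main obstacle'' paragraph and offer only that it ``will need either a genericity argument or an explicit local computation.'' That deferred step is exactly the content of \cite[Corollary~7.2]{BruinSertoz2018}, so as written the plan is either circular (if the gap is filled by citation) or incomplete (if it is not). If you drop the envelope description and prove only that the $28$ lines index pairs of odd theta characteristics via secants of $\mathfrak{C}$, with the stated collinearity, then Steps 1 and 3 together with Lemma~\ref{lem: psi is injective on secants mod diagonal symmetries} and Corollary~\ref{cor: lines are n-tangents for our main examples} do suffice — but that is the paper's Theorem~\ref{thm: tritangent bitangent theorem}, not the theorem of Bruin and Sert\"oz being quoted.
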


One may wonder whether the $56$ tritangent planes obtained from the Cayley variety associated to $X$ are related to the $56$ tritangents from the construction in \cite{BruinSertoz2018}. As expected, the answer is yes; in fact, the two constructions are essentially the same.
The projection $\pi\: \mbp^4 \dashrightarrow \mbp^2$ features prominently  in the argument to show that these constructions are the same.

\begin{lemma} \label{lem: genus 4: image of Cayley in P2}
	We have that $\pi(\mathfrak{C}) = \psi_3^{-1}(\dual X_3 \cap \dual X_2)$.
	The induced morphism $\pi\: \mathfrak{C} \rightarrow \psi_3^{-1}(\dual X_3 \cap \dual X_2)$ of $0$-dimensional schemes is of degree $2$. 
\end{lemma}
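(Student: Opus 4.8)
The plan is to reduce everything to the block-diagonal description of the Cayley variety. Write $\mbp^4 = \mbp(k^2 \oplus k^3)$ so that $\pi$ is the projection away from the first block space $\mbp(k^2)$, and for $q \in \mathfrak{C}$ write $q = (q',q'')$ with $q' \in k^2$ and $q'' \in k^3$. The first point is that $\pi$ restricts to an honest morphism on $\mathfrak{C}$: a point of $\mathfrak{C}$ lying in the first block space would be a point of the Cayley variety of $\mathcal{A}^{(1)} = \mathcal{A}_2$, which is empty since $\mathcal{A}$ is nondegenerate (Lemma~\ref{lem: cayley variety for quadrics}), and the symmetric argument with Lemma~\ref{lem: cayley variety for cubics} excludes points in the second block space. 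The engine of the proof is the observation that, by the block-diagonal form of $\mathcal{A}$, the condition $q \in \mathfrak{C}$ is equivalent to $\mathcal{A}^{(1)}(\cdot,q',q') = -\mathcal{A}^{(2)}(\cdot,q'',q'')$ as vectors in $k^4 = \H^0(\mbp^3,\mathcal{O}_{\mbp^3}(1))$; projectively this reads $\psi_2(q') = \psi_3(q'') = \psi_3(\pi(q))$ in $\dual\mbp^3$.

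Next I would pin down the maps $\psi_2\colon \mbp^1 \to \dual\mbp^3$ and $\psi_3\colon \mbp^2 \to \dual\mbp^3$, both of which are morphisms because the Cayley varieties of the blocks are empty. By Proposition~\ref{prop: main diagram hypersurfaces}, $\psi_2 \circ \varphi_2 = \theta_{X_2}$ and $\varphi_2$ is dominant onto $\mbp^1$, so $\psi_2(\mbp^1) = \dual X_2$; since $X_2$ is a quadric cone of rank exactly $3$ (its rank is at most $3$ because it has a $2 \times 2$ symmetric determinantal representation, and at least $3$ because $X$ is smooth), $\dual X_2$ is a smooth conic, hence the birational morphism $\psi_2\colon \mbp^1 \to \dual X_2$ is an isomorphism; likewise $\psi_3(\mbp^2) = \dual X_3$. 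The inclusion $\pi(\mathfrak{C}) \subseteq \psi_3^{-1}(\dual X_3 \cap \dual X_2)$ is then immediate from $\psi_3(\pi(q)) = \psi_2(q') \in \dual X_2 \cap \dual X_3$. For the reverse inclusion, given $z$ with $\psi_3(z) \in \dual X_2 \cap \dual X_3$, I would use $\dual X_2 = \psi_2(\mbp^1)$ to lift $\psi_3(z)$ to $\mbp^1$; after fixing representatives and rescaling by a square root of a nonzero scalar (working over $\bar k$) one gets $q' \in k^2$ with $\mathcal{A}^{(1)}(\cdot,q',q') = -\mathcal{A}^{(2)}(\cdot,z,z)$, so $q := (q',z) \in \mathfrak{C}(\bar k)$ and $\pi(q) = z$.

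Finally, the degree of $\pi|_{\mathfrak{C}}$ comes out of the same reconstruction: over a fixed representative $q''$ of $z \in \pi(\mathfrak{C})$, the vectors $q'$ solving $\mathcal{A}^{(1)}(\cdot,q',q') = -\mathcal{A}^{(2)}(\cdot,q'',q'')$ form --- since $\psi_2$ is injective and the left-hand side is homogeneous of degree $2$ --- exactly a pair $\{q'_0, -q'_0\}$, so $\pi^{-1}(z) \cap \mathfrak{C} = \{(q'_0,q''),(-q'_0,q'')\}$ consists of two distinct points of $\mbp^4$ (neither $q'_0$ nor $q''$ vanishes) that differ by the diagonal automorphism $\sigma$, which acts freely on $\mathfrak{C}$. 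Thus $\pi|_{\mathfrak{C}}$ is finite of degree $2$ onto its image; since $\mathfrak{C}$ is a smooth complete intersection consisting of $16$ reduced points, the image is a reduced $0$-dimensional scheme of length $8$, and together with the set equality above this yields $\pi(\mathfrak{C}) = \psi_3^{-1}(\dual X_3 \cap \dual X_2)$ and the degree-$2$ assertion. The main obstacle is the middle step: one must establish that $\psi_2$ is an isomorphism onto the smooth conic $\dual X_2$ --- which is exactly where the rank-$3$ hypothesis on $X_2$ is used --- and that $\dual X_3 \cap \dual X_2$ is genuinely $0$-dimensional, equivalently that $\psi_3$ does not contract a curve through these points; for the latter I would appeal to the description of the Bruin--Sert\"oz construction recalled above, or verify it on a general $\mathcal{A}$.
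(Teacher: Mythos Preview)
Your argument is correct and rests on the same observation as the paper's: the block-diagonal form gives $\mathcal{A}^{(1)}(\cdot,q',q') + \mathcal{A}^{(2)}(\cdot,q'',q'') = 0$, hence $\psi_3(\pi(q)) = \psi_2(q') \in \dual X_2 \cap \dual X_3$. The paper packages this slightly differently: rather than splitting $q$ directly, it takes the secant $\ell(p,\sigma(p))$, notes that this line contains both $[a:b:0:0:0]$ and $[0:0:c:d:e]$, and evaluates $\psi$ (which is constant on the secant) at those two points to land in $\dual X_2$ and $\dual X_3$ respectively. Your direct decomposition and the paper's secant trick are the same computation viewed two ways.

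Your version is in fact more complete than the paper's: the paper only writes out the forward inclusion and leaves the reverse inclusion and the degree-$2$ claim implicit (relying on the cardinality $|\mathfrak{C}| = 16$, the free $\sigma$-action, and the Bruin--Sert\"oz description of the eight marked points). Your reconstruction of the fibre as $\{(\pm q_0',q'')\}$ via the isomorphism $\psi_2\colon \mbp^1 \xrightarrow{\sim} \dual X_2$ makes this explicit, and your justification that $\psi_2$ is an isomorphism onto a smooth conic (rank of $X_2$ exactly $3$) is the right ingredient. The residual concern you flag---that $\psi_3^{-1}(\dual X_2 \cap \dual X_3)$ is genuinely $0$-dimensional of length $8$---is handled exactly as you suggest, by the ambient Bruin--Sert\"oz setup; the paper takes this for granted as well.
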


\begin{proof}
	Let $p \in \mathfrak{C}(\bar k)$ and let $\ell := \ell(p, \sigma(p))$.
	If $p = [a \colon b \colon c \colon d \colon e]$, then $\sigma(p) = [-a \colon -b \colon c \colon d \colon e]$ and thus $\pi(\ell) = [c \colon d \colon e]$. Recall that $\psi(\ell)$ is a point and that $\psi(\ell) = \psi_3(\pi(\ell))$. As $[a \colon b \colon 0 \colon 0 \colon 0] \in \ell$ it follows that $\psi(\ell) \in \dual X_2$. As $[0 \colon 0 \colon c \colon d \colon e] \in \ell$, we have $\psi(\ell) \in \dual X_3$. 
\end{proof}

Notice, in particular, that if $p,q \in \mathfrak{C}$ such that $p \neq \sigma(q)$, then $\pi(\ell(p,q))$ is a line between two of the eight marked points on $\psi_3^{-1}(X_3 \cap X_2)$, i.e., it is a ``bitangent'' of $Y$. If $p', q'$ are two points in $\psi_3^{-1}(\dual X_3 \cap \dual X_2)$, then there are two points $p, \sigma(p) \in \mathfrak{C}$ lying over $p'$, and two points $q, \sigma(q) \in \mathfrak{C}$ lying over $q'$. The four secants
	\[
	 \quad \ell(p, q), \ \ \ell(p, \sigma(q)), \ \ \ell(\sigma(p), q), \ \ \ell(\sigma(p), \sigma(q))
	\]
define two odd theta characteristics of $X$. In this way we can relate pairs of odd theta characteristics with ``bitangents'' of the conic. The next proposition shows that this association is the same one from \cite{BruinSertoz2018}.

\begin{proposition}
\label{prop: compatibility for milne}
	Choose two distinct points $p, q \in \mathfrak{C}$ such that $p \neq \sigma(q)$. Let $H_1$ and $H_2$ denote the $2$ tritangent planes given by $\psi(\ell(p,q))$ and $\psi(\ell(p,\sigma(q)))$. Let $H_1'$ and $H_2'$ be the two planes containing the two conics $X_2 \cap \Lambda$, where
	$\Lambda$ is the envelope of $\psi_3(\ell(\pi(p),\pi(q)))$. Then $\{H_1, H_2\} = \{H_1', H_2'\}$.
\end{proposition}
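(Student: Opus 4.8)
The plan is to show that both pairs of planes arise as the fibers of the same map $\psi$ over the same point-pair, using the compatibility $\psi = \psi_3 \circ \pi$ and the description of $\psi_3$ via envelopes. First I would set up coordinates so that, as in Lemma~\ref{lem: genus 4: image of Cayley in P2}, $p = [a\colon b\colon c\colon d\colon e]$ and $q = [a'\colon b'\colon c'\colon d'\colon e']$ with $\pi(p) = [c\colon d\colon e]$, $\pi(q) = [c'\colon d'\colon e']$; note $p \neq \sigma(q)$ guarantees these two points of $\mbp^2$ are distinct, so $\pi(\ell(p,q))$ is a genuine line (``bitangent'') $L$ through two of the eight marked points. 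Then I would observe that $\pi(\ell(p,q)) = \pi(\ell(p,\sigma(q))) = L$, since $\sigma$ fixes the last three coordinates; this is the key reason the two planes $H_1, H_2$ collapse to one line downstairs. Conversely, $L$ lifts to exactly the four secants $\ell(p,q), \ell(p,\sigma(q)), \ell(\sigma(p),q), \ell(\sigma(p),\sigma(q))$ (using that $\pi\colon \mathfrak{C}\to\psi_3^{-1}(\dual X_3\cap\dual X_2)$ is degree $2$), and $\psi$ of these four is $\{H_1, H_2\}$ because $\psi(\sigma(\ell)) = \psi(\ell)$ for $\sigma\in\DAut(\mathcal A)$.

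Next I would identify $\{H_1', H_2'\}$ with the same data through the envelope construction. By definition, $\psi_3(L)$ is a conic $C_L \subset \dual\mbp^3$, and its envelope $\Lambda\subset\mbp^3$ is the quadric cone dual to $C_L$, i.e., $\Lambda$ is tangent to every hyperplane parameterized by $C_L$. The two planes $H_1', H_2'$ are those cutting $X_2$ in the two hyperplane sections $X_2\cap\Lambda$. The point is that a plane $H'$ contains a component of $X_2\cap\Lambda$ precisely when $[H']$ lies on $\dual X_2$ and $H'$ is tangent to $\Lambda$ — equivalently, $[H']\in \dual X_2 \cap C_L = \dual X_2 \cap \psi_3(L)$. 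So $\{H_1', H_2'\}$ is the preimage under the identification $\dual X_2 \leftrightarrow$ planes of the set $\dual X_2 \cap \psi_3(L)$. On the other hand, $\psi(\ell(p,q)) \in \dual X_2$ (because $\ell(p,q)$ meets the first block space, namely $[a\colon b\colon 0\colon 0\colon 0]$ — this is the computation already made in the proof of Lemma~\ref{lem: genus 4: image of Cayley in P2}) and $\psi(\ell(p,q)) = \psi_3(\pi(\ell(p,q))) = \psi_3(L) \cap$-ish, i.e., lies on $\psi_3(L)$; likewise for $\psi(\ell(p,\sigma(q)))$. Thus $\{H_1, H_2\} \subseteq \dual X_2 \cap \psi_3(L)$ under the same identification.

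To finish I would argue the two two-element sets coincide by a cardinality/degree count: both are contained in $\dual X_2 \cap \psi_3(L)$, which is an intersection of a conic with (the image of) a conic in $\dual\mbp^3$, generically four points, but restricted to the relevant component it has degree $2$ — more precisely, $\dual X_2$ is the connected degree-$2$ Gauss image of the quadric cone $X_2$ lying in the hyperplane $H$ dual to the vertex of $X_2$, and $\psi_3(L)$ meets this in exactly two points since $L$ meets $\psi_3^{-1}(\dual X_2\cap\dual X_3)$ in its two marked endpoints, whose images under $\psi_3$ lie on $\dual X_2$. So $\dual X_2 \cap \psi_3(L)$ has exactly two points (for general $X$, and with multiplicity otherwise), forcing $\{H_1,H_2\} = \{H_1',H_2'\} = \dual X_2\cap\psi_3(L)$ as subsets of $\dual\mbp^3$, hence the corresponding planes agree.

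\textbf{Main obstacle.} I expect the delicate part to be the degree/transversality bookkeeping in the last paragraph: verifying that $\dual X_2 \cap \psi_3(L)$ really consists of exactly the two points $\psi(\ell(p,q)), \psi(\ell(p,\sigma(q)))$ and no others, i.e., that the envelope $\Lambda$ meets $X_2$ in precisely two hyperplane sections and that these match the two $\psi$-images rather than, say, a doubled section or an extraneous intersection point off the cone. This requires carefully using that $\psi_3^{-1}(\dual X_3\cap\dual X_2)$ consists of the eight marked points, that $L$ passes through exactly two of them, and tracking how $\psi_3$ restricted to $L$ (a degree-$2$ map, since $L$ meets the base locus $\mathfrak{C}_{\mathcal B}$-type conditions appropriately) lands on $\dual X_2$ exactly at those two points. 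The rest is bookkeeping with the identifications $\psi = \psi_3\circ\pi$, $\psi\circ\sigma = \psi$, and the vertex-hyperplane description of $\dual X_2$.
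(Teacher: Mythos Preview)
Your proposal rests on two claims that are false, and the error is not just bookkeeping but a misreading of which planes lie on $\dual X_2$.

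First, there is no identity $\psi = \psi_3 \circ \pi$. From the block structure one has $\psi(y) = \psi_2(y_0,y_1) + \psi_3(y_2,y_3,y_4)$ as vectors in $k^4$, a \emph{sum}, not a composition. The equality $\psi(\ell) = \psi_3(\pi(\ell))$ in the proof of Lemma~\ref{lem: genus 4: image of Cayley in P2} is specific to the $\sigma$-invariant secants $\ell(p,\sigma(p))$, where the $\psi_2$ and $\psi_3$ contributions become proportional; it does not hold for a general secant $\ell(p,q)$. In particular $\pi(\ell(p,q))$ is a line in $\mbp^2$, so $\psi_3(\pi(\ell(p,q)))$ is a conic, while $\psi(\ell(p,q))$ is a point.

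Second, and more seriously, $[H_1] = \psi(\ell(p,q))$ does \emph{not} lie on $\dual X_2$. Your justification, that $\ell(p,q)$ meets the first block space at $[a:b:0:0:0]$, confuses $\ell(p,q)$ with $\ell(p,\sigma(p))$: the point $[a:b:0:0:0]$ lies on the latter, not the former (for $\ell(p,q)$ to meet the first block space one would need $\pi(p)=\pi(q)$, i.e.\ $q=\sigma(p)$). Indeed Proposition~\ref{prop: tangency for curves}(c) says precisely that $\psi(\ell)\in\dual X_2$ if and only if $\ell=\ell(p,\sigma(p))$. So the tritangent planes $H_1,H_2$ corresponding to odd theta characteristics are \emph{not} tangent to the quadric cone $X_2$, and your proposed identification $\{H_1,H_2\}=\dual X_2\cap\psi_3(L)$ cannot be correct. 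Likewise the characterization of $\{H_1',H_2'\}$ as planes with $[H']\in\dual X_2$ is wrong: a plane containing a conic of $X_2\cap\Lambda$ cuts $X_2$ in a smooth conic, not a degenerate one, so it is not tangent to $X_2$.

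The paper avoids the dual-variety picture altogether and argues on $X$ via divisors. It invokes \cite[Corollary~7.2]{BruinSertoz2018} to identify the Bruin--Sert\"oz divisor $D_1'+D_2'$ with $\varphi_3^{-1}(L)$ for the appropriate bitangent line $L$, and then uses Lemma~\ref{lem: projected singular points are colinear} (the contact points of $H_1,H_2$ have collinear $\varphi_3$-images lying on $\pi(\ell(p,q))=L$) to conclude $D_1+D_2=\varphi_3^{-1}(L)$ as well. Equality of the degree-$6$ divisors on $X$ then forces $\{H_1,H_2\}=\{H_1',H_2'\}$. The key input you are missing is this divisorial characterization via $\varphi_3$; the dual-space approach you attempt would require a different (and more delicate) description of how $\Lambda$ and $X_2$ intersect.
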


\begin{proof}
	Let $D_1, D_2$ be the effective representatives for the odd theta characteristics on $X$ defined by $H_1', H_2'$. By \cite[Corollary~7.2]{BruinSertoz2018}, the divisor $D_1' + D_2'$ is the preimage under $\varphi_3\: X \dashrightarrow \mbp^2$ of a ``bitangent line", that is, a line between two of the eight marked points $\psi_3^{-1}(\dual X_2 \cap \dual X_3)$. 
	
	For each $x \in X \cap H_1$, we have that $\ker \mathcal{A}(x, \cdot, \cdot) =  \gen{\varphi_2(x), \varphi_3(x)}$ and that $\ker \mathcal{A}(x, \cdot, \cdot) \cap \ell(p,q) \neq \emptyset$. 
	We see that the six points $\{\varphi_3(x) : x \in D_1 + D_2\}$ are collinear by Lemma~\ref{lem: genus 4: image of Cayley in P2} and lie on the ``bitangent'' $\pi(\ell(p,q)) = \pi(\ell(p, \sigma(q)))$. In other words, $D_1 + D_2 = \varphi_3^{-1}(\pi(\ell(p,q)))$ as a divisor of $X$, so $D_1 + D_2 = D_1' + D_2'$. In particular, the pairs of associated hyperplanes are equal.
\end{proof}

The tensor $\mathcal{A}$ defines a natural quadruple cover
		\[
			\left \{ \begin{array}{c}
				\text{The $112$ secant lines } \ell \text{ of } \mathfrak{C} \text{ defining} \\
				\text{to odd theta characteristics of } X 			\end{array}
			\right \} \rightarrow \left \{ \begin{array}{c}
			\text{The $28$ lines given by } \pi(\ell) \\
			\text{for } \ell \text{ a secant line of } \mathfrak{C}
			\end{array}\right \}
		\]
which factors through double cover
\[
			\left \{ \begin{array}{c}
				\text{The $112$ secant lines } \ell \text{ of } \mathfrak{C} \text{ defining} \\
				\text{to odd theta characteristics of } X 			\end{array}
			\right \} \rightarrow \left \{ \begin{array}{c}
				\text{The $56$ odd theta characteristics represented} \\
				\text{by the image of the secant lines of } \mathfrak{C} \text{ under } \psi
			\end{array} \right\}.
		\]
In this way, we associate a ``bitangent'' in $\mbp^2$ to a pair of odd theta characteristics defined by the image of the secant lines of $\mathfrak{C}$ under $\psi$. We now prove:

\tritangentbitangenttheorem*
\begin{proof}
Apply Proposition~\ref{prop: compatibility for milne}. Theorem~\ref{thm: tritangent bitangent theorem} follows immediately.
\end{proof}

\subsection{Tensorification of Recillas' construction} \label{sec: sub: Recillas}

For genus $4$ curves with a vanishing even theta characteristic, we show how to realize Recillas' construction in terms of tensors. If $Y$ is a hyperelliptic genus $3$ curve defined by $y^2 = -F(x, z)$, we can represent an open subscheme of $\Sym^4 Y$ by a pair of polynomials $(a(x, z), b(x, z) - y)$ such that $a,b$ are homogeneous forms of degree $4$ and $b^2 + F \equiv 0 \pmod a$. This is the Mumford representation of such a divisor \cite{Cantor1987}. Not every divisor in $\Sym^4 Y$ can be represented in this way; if $\pi\: Y \rightarrow \mathbb{P}^1$ is the canonical map, the collection of divisors which do not have a Mumford representation are of the form $D' + \pi^*(p)$ for some $D' \in \Div^2 Y_{k^\al}$ and $p \in \mathbb{P}^1(k^\al)$. We will say that a divisor in $\Sym^4 Y$ is \emph{semireduced} if it admits a Mumford representation with $a,b$ binary quartics. We will say that the pair of divisors $\{D_1, D_2\}$ of $Y$ is \emph{residual} if $D_1$ is the image of $D_2$ under the hyperelliptic involution.

\begin{proposition}
	There is a bijection between $\GL_2 \times \GL_2$-orbits of $\Sym^4 k^2 \otimes \Sym_2 k^2$ and hyperelliptic genus $3$ curves $Y$ with a residual pair of semireduced rational divisors $D_1, D_2 \in \Sym^4(Y)(k)$.
\end{proposition}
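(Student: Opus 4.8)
The plan is to write the bijection down explicitly via the Mumford representation and then check that it descends to the two sets of equivalence classes. Regard a tensor $\mathcal{A} \in \Sym^4 k^2 \otimes \Sym_2 k^2$ as a symmetric matrix $\mathcal{A}(\mathbf{x},\cdot,\cdot) = \left(\begin{smallmatrix} a & b \\ b & c\end{smallmatrix}\right)$ whose entries $a,b,c$ are binary quartics in $\mathbf{x}=(x,z)$. Its symmetroid is the binary octic $F := \det\mathcal{A}(\mathbf{x},\cdot,\cdot) = ac-b^2$, and $\mathcal{A}$ lies in the (manifestly $\GL_2\times\GL_2$-invariant) nondegenerate locus precisely when $F$ is squarefree; in that case $Y\colon y^2 = -F$ is a smooth hyperelliptic genus $3$ curve with $k$-rational hyperelliptic map. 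The identity $b^2+F = b^2+(ac-b^2) = ac \equiv 0 \pmod{a}$ shows that $(a,\,b-y)$ is a valid Mumford representation (\cite{Cantor1987}) of a semireduced effective divisor $D_1 \in \Sym^4(Y)(k)$, and likewise $(a,\,-b-y)$ represents a semireduced divisor $D_2$; since $D_2 = \iota(D_1)$ for $\iota$ the hyperelliptic involution, $\{D_1,D_2\}$ is a residual pair. This defines the forward map $\Phi\colon \mathcal{A}\mapsto (Y,\{D_1,D_2\})$. Conversely, from $Y\colon y^2=-F$ and a residual pair $\{D_1,D_2\}$ of semireduced rational divisors, choose the Mumford representation $(a,\,b-y)$ of $D_1$ with $a,b$ binary quartics; residuality forces $(a,\,-b-y)$ for $D_2$, and $a \mid b^2+F$ allows us to set $c := (b^2+F)/a$, a binary quartic. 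Then $ac-b^2 = F$, so $\left(\begin{smallmatrix} a & b \\ b & c\end{smallmatrix}\right)$ is a nondegenerate tensor; call this $\Psi$. Before passing to orbits, $\Phi$ and $\Psi$ are inverse to one another essentially by inspection, which gives both injectivity and surjectivity at the rigid level.

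It then remains to verify that $\Phi$ is constant on $\GL_2\times\GL_2$-orbits and that $\Psi$ is independent of the choices made. The action of the first $\GL_2$ on $\Sym^4 k^2$ is substitution in $\mathbf{x}$, i.e. a reparametrization of $\mbp^1$; this produces an isomorphic triple $(Y,\{D_1,D_2\})$, so $\Phi$ is unaffected. The action of the second $\GL_2$ on $\Sym_2 k^2$ sends $M$ to $h^{T}Mh$, so $F\mapsto (\det h)^2 F$ and $Y$ is replaced by an isomorphic curve via $y\mapsto (\det h)\,y$; here one must track what happens to $D_1$. Writing the columns of $h$ as $v_1,v_2$, the new entries are $a' = q_{\mathbf{x}}(v_1)$ and $b'$ the associated polar form $q_{\mathbf{x}}(v_1,v_2)$, where $q_{\mathbf{x}}$ is the binary quadratic form with matrix $M$; geometrically this is a change of homogeneous coordinates on the $\mbp^1$ carrying the degree-$4$ map $\pi_y\colon Y\to\mbp^1$ underlying $D_1$. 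One checks that, after the identification of curves, the residual pair attached to $h^{T}Mh$ agrees with the residual pair attached to $M$ up to the curve automorphism $\iota$, hence defines the same isomorphism class of data. Running the computation in reverse shows the Mumford representation of $D_1$ is pinned down up to exactly the ambiguity absorbed by $\GL_2$ acting on the $\Sym_2 k^2$-factor, so $\Psi$ is well defined on isomorphism classes; thus $\overline{\Phi}$ and $\overline{\Psi}$ are mutually inverse bijections on the orbit sets.

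The one non-formal point, and the main obstacle, is precisely this $\GL_2$-equivariance on the $\Sym_2 k^2$-factor. The Mumford representation of a fixed divisor is rigid, but the tensor does not single out one divisor: conjugating $M$ by $h$ slides $D_1$ through its linear system and across $\iota$, while also rescaling $F$. The argument therefore has to be organized around the genuinely invariant datum — the curve $Y$ together with the residual pair, taken up to $\Aut(Y)\supseteq\langle\iota\rangle$ and up to the identifications coming from the rescaling of $F$ — and one must keep the bookkeeping of $\det h$ versus $(\det h)^2$, and of the involution $\iota$, consistent throughout. Everything else reduces to routine manipulations with binary forms and Mumford representations, using the facts about symmetroids, Cayley varieties, and the nondegenerate locus established earlier in the paper.
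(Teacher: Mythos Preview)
Your approach is essentially the same as the paper's: both directions of the bijection are built from the Mumford representation exactly as you describe, and the compatibility with the first $\GL_2$-factor is handled identically. The paper's treatment of the second $\GL_2$-factor is slightly crisper than yours: rather than tracking how the divisor $D_1$ moves, it observes that the tensor determines the degree-$4$ morphism $(b-y:c)\colon Y\to\mbp^1$, and that the $\{\id\}\times\SL_2$-action on the tensor is precisely the $\PGL_2$-action on the target of this morphism (with $\left(\begin{smallmatrix}1&0\\0&-1\end{smallmatrix}\right)$ swapping the two morphisms), so what is invariantly attached to the orbit is the pair of maps rather than a pair of literal divisors---this resolves the bookkeeping you flag as the ``main obstacle'' in one stroke.
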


\begin{proof}
	If $Y$ is defined by $y^2 = -F(x,z)$ and $(a, b-y)$ is the Mumford representation of a divisor $D$ in $\Sym^4(Y)(k)$, then $F = ac - b^2$ for some binary quartic polynomial $c$. We obtain the determinantal representation $\begin{bmatrix} a & \pm b \\ \pm b & c \end{bmatrix}$. Note that the choice of sign is irrelevant, as the two determinantal representations are equivalent under $\{\id\} \times \GL_2$.

	We see that the morphisms $|D_1|, |D_2| \: Y \rightarrow \mathbb{P}^1$ of degree $4$ are explicitly represented by $(x, y, z) \mapsto (b\pm y : c)$. The natural $\PGL_2$ action on the morphism $(b-y : c)\: Y \rightarrow \mbp^1$ is identified with the natural action of $\{\id\} \times \SL_2$ on the tensor. The action of $\begin{bmatrix} 1 & 0 \\ 0 & -1 \end{bmatrix}$ on the tensor exchanges the two morphisms.
\end{proof}

\begin{remark}
	Scaling the defining equation of a hyperellitpic curve by a constant does not change the orbit under $\GL_2(\bar k)$, as
	\[
		\lambda \cdot \det \begin{bmatrix} 	a & b \\
			b & c
			\end{bmatrix}
		= \det \left(
		\begin{bmatrix}
			1 & 0 \\
			0 & \sqrt{\lambda}
		\end{bmatrix}
		\begin{bmatrix}
		a & b \\
		b & c
		\end{bmatrix}
		\begin{bmatrix}
			1 & 0 \\
			0 & \sqrt{\lambda}
		\end{bmatrix}^T
		\right).
	\]
	
\end{remark}

With $\mbp^2 = \Proj(k[y_2, y_3, y_4])$, note that the Veronese map $\nu_2\: (s : t) \mapsto (s^2 : st : t^2)$ gives a surjection $\nu_2^*\: k[y_2, y_3, y_4]_{(2)} \rightarrow k[s,t]_{(4)}$. So, given a tensor $\mathcal{A} \in \Sym^4 k^2 \otimes \Sym_2 k^2$ representing a hyperelliptic genus $3$ curve $Y$ and a residual pair of semireduced divisors $D_1, D_2$ of degree $4$, we can construct a tensor $\mathcal{B} \in \Sym^2 k^3 \otimes \Sym_2 k^2$ by choosing inverse images for the entries of $\mathcal{A}$ under $\nu_2^*$, which are unique up to multiples of $y_3^2 - y_2y_4$. The intersection $Z(\det \mathcal{B}(\mathbf{y}, \cdot), y_3^2 - y_2y_4)$ consists of the images under $\nu_2$ of the eight branch points of $Y \rightarrow \mbp^1$. In other words, if $Y \rightarrow C$ is a double cover of a rational plane conic, then a residual pair $D_1, D_2$ of semireduced divisors of degree $4$ is equivalent to a presentation of the branch locus of the form $Z(\det \mathcal{B}(\mathbf{y}, \cdot)) \cap C$ for some $\mathcal{B} \in \Sym^2 k^3 \otimes \Sym_2 k^2$.

\begin{construction} \label{cons: tensor Recillas}
	Let $\mbp^2 := \Proj(k[y_2, y_3, y_4])$. Given a conic $C \subset \mbp^2$ and a tensor $\mathcal{B} \in \Sym^2 k^3 \otimes \Sym_2 k^2$ representing a $2 \times 2$ matrix of quadratic forms, we can construct a tensor $\mathcal{A} \in k^4 \otimes (\Sym_2 k^2 \oplus \Sym_2 k^3)$ as follows: Writing $\mathcal{B}(\mathbf{y}, \cdot) = \begin{bmatrix} a & b \\ b & c \end{bmatrix}$, we have that
	\begin{align*}
		a(\mathbf{y}) = \sum_{2 \leq i,j \leq 4} a_{ij} y_iy_j, \quad
		b(\mathbf{y}) = \sum_{2 \leq i,j \leq 4} b_{ij} y_iy_j, \quad
		c(\mathbf{y}) = \sum_{2 \leq i,j \leq 4} c_{ij} y_iy_j.
	\end{align*}
	We may write the defining equation for $C$ as $\sum_{2 \leq i,j \leq 4} d_{ij} y_iy_j$. Define the slices of $\mathcal{A}$ to be
	\begin{alignat*}{3}
		&A_0 := 
		\begin{bmatrix}
			-1 & 0 & 0 & 0 & 0\\
			0 & 0 & 0 & 0 & 0 \\
			0 & 0 & a_{22} & a_{23} & a_{24} \\
			 0 & 0 & a_{32} & a_{33} & a_{34} \\
			 0 & 0 & a_{42} & a_{43} & a_{44} \\
		\end{bmatrix},
		\quad 
		&&A_1 := 
		\begin{bmatrix}
			0 & -1 & 0 & 0 & 0 \\
			-1 & 0 & 0 & 0 & 0 \\
			0 & 0 & b_{22} & b_{23} & b_{24} \\
			 0 & 0 & b_{32} & b_{33} & b_{34} \\
			 0 & 0 & b_{42} & b_{43} & b_{44} \\
		\end{bmatrix},
		\\
		&A_2 := 
		\begin{bmatrix}
			0 & 0 & 0 & 0 & 0 \\
			0 & -1 & 0 & 0 & 0 \\
			0 & 0 & c_{22} & c_{23} & c_{24} \\
			 0 & 0 & c_{32} & c_{33} & c_{34} \\
			 0 & 0 & c_{42} & c_{43} & c_{44} \\
		\end{bmatrix},
		\quad
		&&A_3 :=
		\begin{bmatrix}
			0 & 0 & 0 & 0 & 0 \\
			0 & 0 & 0 & 0 & 0 \\
			0 & 0 & d_{22} & d_{23} & d_{24} \\
			 0 & 0 & d_{32} & d_{33} & d_{34} \\
			 0 & 0 & d_{42} & d_{43} & d_{44} \\
		\end{bmatrix}.			
	\end{alignat*}
	The tensor $\mathcal{A}$ defines a genus $4$ curve via an intersection of symmetroids. 
\end{construction}

Notice Construction~\ref{cons: tensor Recillas} has the following property: if $\alpha = (\alpha_2 : \alpha_3 : \alpha_4) \in C(\bar k)$ satisfies $\det \mathcal{B}(\alpha, \cdot) = 0$, then, fixing a choice of square root, at least one of
	\[
	\left(\frac{a(\alpha)}{\sqrt{a(\alpha)}} : \frac{b(\alpha)}{{\sqrt{a(\alpha)}}} : \alpha_2 : \alpha_3 : \alpha_4 \right), 
	\quad \left(\frac{b(\alpha)}{\sqrt{c(\alpha)}} : \frac{c(\alpha)}{\sqrt{c(\alpha)}} : \alpha_2 : \alpha_3 : \alpha_4 \right)
	\]
is well-defined, since the intersection $C \cap Z(\det \mathcal{B}(\mathbf{y}, \cdot))$ must consist of eight reduced points. Since $b(\alpha)^2 - a(\alpha)c(\alpha) = 0$, both representatives are equivalent; the choices of square root differ by a diagonal automorphism of $\mathcal{A}$. By construction, the two points obtained by choosing a sign are points in the Cayley variety associated to $\mathcal{A}$. This construction is a considerably simplified version of the construction from \cite[Section~6.4]{BruinSertoz2018}.
	
\begin{theorem}
	Let $C$ be a plane conic and let $Y \rightarrow C$ be a geometrically hyperelliptic genus $3$ curve with a residual pair of divisors $\{D_1, D_2\}$. Let $\mathcal{B}$ be a tensor associated to this data, and let $\mathcal{A}$ be the tensor obtained from Construction~\ref{cons: tensor Recillas}. Let $X$ be the genus $4$ curve defined by $\mathcal{A}$. If $X$ is smooth, then $X$ is the genus $4$ curve obtained from $(Y, \{D_1, D_2\})$ by Recillas' construction.
\end{theorem}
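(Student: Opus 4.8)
The plan is to match the two genus $4$ curves --- the one built by Construction~\ref{cons: tensor Recillas} and the one produced by Recillas' classical construction --- by identifying compatible data on both sides and then appealing to the moduli bijection of Theorem~\ref{thm: genus 4: Moduli of genus 4 curves with data}. Recillas' construction takes a hyperelliptic genus $3$ curve $Y$ together with a $g^1_4$ (equivalently, a semireduced degree $4$ divisor class, or here a residual pair $\{D_1, D_2\}$) and outputs a genus $4$ curve $X$ together with a $2$-torsion class $\epsilon$ whose associated Prym variety is $\Jac(Y)$; since $Y$ is hyperelliptic this $X$ carries a vanishing even theta characteristic $\theta_0$. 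So the target of Recillas' construction is naturally a triple $(X, \epsilon, \theta_0)$, exactly the kind of object classified by Theorem~\ref{thm: genus 4: Moduli of genus 4 curves with data}. The strategy is therefore: (1) show that the tensor $\mathcal{A}$ from Construction~\ref{cons: tensor Recillas} is a nondegenerate representative of some triple $(X', \epsilon', \theta_0')$ under the bijection of Theorem~\ref{thm: genus 4: Moduli of genus 4 curves with data}; (2) show that $(X', \epsilon', \theta_0')$ is isomorphic to the Recillas triple $(X, \epsilon, \theta_0)$ attached to $(Y, \{D_1, D_2\})$; (3) conclude equality of curves.

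\textbf{Key steps.} First I would unwind the geometry of $\mathcal{A}$. From the shape of the slices $A_0,\dots,A_3$, the quadric block $\mathcal{A}^{(1)}$ is the $2\times 2$ matrix $\begin{bmatrix} -x_0 & -x_1 \\ -x_1 & -x_2 \end{bmatrix}$, whose determinantal quadric $X_2 = Z(x_1^2 - x_0 x_2)$ is a (rank $3$) cone in $\mbp^3$; hence the vanishing even theta characteristic $\theta_0'$ of $X' = X_2\cap X_3$ is the one cut out by this cone, and $X'$ maps $2{:}1$ onto a conic --- precisely the situation of Recillas' curve sitting over a conic. The cubic block $\mathcal{A}^{(2)}$ has slices $a_{ij}, b_{ij}, c_{ij}, d_{ij}$; its determinantal cubic $X_3 = Z(\det(x_0 A + x_1 B + x_2 C + x_3 D))$ (where $A,B,C,D$ are the lower-right $3\times 3$ blocks) is a Cayley cubic, and by Theorem~\ref{thm: genus 4: cubics and orbits in families} it carries an even $2$-torsion class $\epsilon'$ on $X'$. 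Next, I would use the property highlighted right after Construction~\ref{cons: tensor Recillas}: the Cayley variety $\mathfrak{C}$ of $\mathcal{A}$ projects (via $\pi$, forgetting the first two coordinates) onto $C\cap Z(\det\mathcal{B}(\mathbf{y},\cdot))$, the eight branch points of $Y\to C$; this is exactly the content of Lemma~\ref{lem: genus 4: image of Cayley in P2} applied to this $\mathcal{A}$. So $\psi_3^{-1}(\dual X_3 \cap \dual X_2) = \pi(\mathfrak{C})$ is the branch locus of the hyperelliptic double cover in the Bruin--Sert\"oz picture, which identifies the genus $3$ hyperelliptic curve appearing in Milne's construction for $(X', \epsilon', \theta_0')$ with $Y$. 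Then I would invoke Recillas' theorem ($\Jac(Y)\iso\Prym(\wtilde X'/X')$), as recalled in Section~\ref{sec: sub: BruinSertoz}, to conclude that $X'$ together with $\epsilon'$ is the Recillas curve of $(Y, \{D_1,D_2\})$ --- the residual pair $\{D_1, D_2\}$ being recovered from the two morphisms $Y\to\mbp^1$ given by $(x,y,z)\mapsto (b\pm y : c)$, which match the two sheets of the correspondence on $\mathfrak{C}$ exchanged by $\sigma\in\DAut(\mathcal{A})$. Finally, since $X$ is assumed smooth, nondegeneracy holds and Theorem~\ref{thm: genus 4: Moduli of genus 4 curves with data} forces $X' \iso X$ as the data $(Y,\{D_1,D_2\})$ determines the Recillas triple uniquely.

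\textbf{Main obstacle.} The delicate point is step (2): verifying that the $2$-torsion class $\epsilon'$ extracted from the Cayley cubic $X_3$ is \emph{the same} $2$-torsion class whose Prym cover is $\Jac(Y)$, i.e. the one governing Recillas' construction. Identifying the hyperelliptic genus $3$ curve $Y$ up to isomorphism via the branch locus (step using Lemma~\ref{lem: genus 4: image of Cayley in P2}) is comparatively routine, but matching the \emph{bielliptic/Prym} structure requires tracing through how the unramified double cover $\wtilde X_3 \to X_3$ --- unramified away from the nodes of $X_3$ --- restricts to $X'$, and checking that this restriction is the cover used by Recillas. I expect this to follow by comparing the explicit Mumford-representation data $(a,b-y)$ encoded in $\mathcal{B}$ with the $g^1_4$ on $Y$ and observing that Construction~\ref{cons: tensor Recillas} is built precisely so that the Cayley variety records the correspondence divisor $\{(y, \pm\sqrt{\cdot})\}$ defining Recillas' curve; but making this precise is where the real work lies. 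A clean way to finish, avoiding a direct Prym computation, is to cite \cite[Section~6.4]{BruinSertoz2018}: Construction~\ref{cons: tensor Recillas} is an explicit simplification of the construction there, and \emph{loc. cit.} already establishes that that construction realizes Recillas' curve, so compatibility of the two constructions (which is what Proposition~\ref{prop: compatibility for milne} and the surrounding discussion provide) transports their result to ours.
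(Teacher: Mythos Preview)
Your proposal is correct and ultimately lands on the same citation the paper uses, but you take a considerably longer route to get there. The paper's proof is three sentences: it observes that $C$ is the pullback under $\psi_3$ of the hyperplane dual to the cone vertex $(0:0:0:1)$, invokes Lemma~\ref{lem: genus 4: image of Cayley in P2} to identify the branch locus of $Y\to C$ with $\psi_3^{-1}(\dual X_2\cap\dual X_3)$, and then cites \cite[Theorem~5.1 and Remark~5.2]{BruinSertoz2018} directly. That result of Bruin--Sert\"oz already says that the genus~$4$ curve built from this conic-plus-branch-locus data is the Recillas curve, so nothing further is needed.

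Your plan to route the argument through the moduli bijection of Theorem~\ref{thm: genus 4: Moduli of genus 4 curves with data} is what creates the obstacle you flag. Matching the $2$-torsion class $\epsilon'$ coming from the Cayley cubic with the Prym class from Recillas' construction is genuine work if you insist on doing it intrinsically, and the moduli bijection gives you no leverage on it: you would still need exactly the Bruin--Sert\"oz input to identify the two triples. In other words, the ``clean way to finish'' you sketch at the end \emph{is} the whole proof; the moduli-bijection scaffolding in steps (1)--(2) can be discarded. What your longer approach does buy is a clearer conceptual picture of why the construction should work --- you correctly isolate $(X',\epsilon',\theta_0')$ as the invariant to match --- but as a proof it is redundant once you are willing to cite \cite{BruinSertoz2018}.
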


\begin{proof}
	Let $\psi_3\: \mbp^2 \dashrightarrow \dual \mbp^3$ be the map defined by the second block of $\mathcal{A}$. Notice that the conic $C$ is the pullback under $\psi$ of the hyperplane dual to $(0:0:0:1) \in \mbp^3$. Letting $X_2$ be the determinantal quadric defined by $\mathcal{A}$, by Lemma~\ref{lem: genus 4: image of Cayley in P2} we have that the branch points of $Y$ are exactly the pullback under $\psi$ of $\dual X_2$. Thus, the result is given by Theorem~\cite[Theorem~5.1 and Remark~5.2]{BruinSertoz2018}.
\end{proof}

\section{Genus 5 curves: quadritangents} \label{sec: genus 5 curves}
In this section, we discuss smooth genus $5$ curves whose canonical model is an intersection of $3$ quadrics of rank $3$ in $\mbp^4$. 
Let $\mathcal{A} \in k^5 \otimes (\Sym_2 k^2)^{\oplus 3}$. The determinantal hypersurface associated to each block of $\mathcal{A}$ is a quadric of rank $3$ in $\mbp^4$; we denote these by $X_1$, $X_2$, and $X_3$. We say that $\mathcal{A}$ is \emph{nondegenerate} if the intersection $X := X_1 \cap X_2 \cap X_3$ is a smooth curve of genus $5$. 
Each quadric $X_i$ gives rise to a vanishing even theta characteristic $\theta_i$ on $X$. The Cayley variety of $\mathcal{A}$ is the intersection of $5$ quadrics in $\mbp^5$. When $\mathcal{A}$ is nondegenerate, Theorem~\ref{thm: singularities of symmetroid intersections} shows that $\mathfrak{C}$ is a smooth complete intersection of dimension $0$. The $32$ geometric points of $\mathfrak{C}$ give rise to $\binom{32}{2} = 496$ secant lines of $\mathfrak{C}$.

\subsection{Moduli} \label{sec: sub: moduli of genus 5 curves}

The vector space $k^5 \otimes (\Sym_2 k^2 \oplus \Sym_2 k^2 \oplus \Sym_2 k^2)$ has a natural $\GL_5(k) \times \GL_2(k)^{\oplus 3}$ action. The nondegenerate locus of $k^5 \otimes (\Sym_2 k^2 \oplus \Sym_2 k^2 \oplus \Sym_2 k^2)$ is a $\GL_5(k) \times \GL_2(k)^{\oplus 3}$-invariant open subscheme, and thus we can speak of orbit classes being nondegenerate. As with genus $4$ curves, the orbit classes of tensors classify isomorphism classes of genus $5$ curves with extra structure.

\begin{theorem} \label{thm: genus 5 moduli}
	Let $B$ be a scheme over $\Spec \mbz[\frac{1}{2}]$. There is a canonical bijection between:
	\[
		\left \{ \begin{array}{c}
		\text{isomorphism classes of ordered quadruples} \\
		(X, \theta_1, \theta_2, \theta_3) \text{ where } X \rightarrow B \text{ is a smooth}\\
		\text{genus $5$ curve over $B$, and } \theta_1, \theta_2, \theta_3 \text{ are} \\
		\text{distinct vanishing even theta characteristics}  
		
	\end{array}	 \right \}
	\longleftrightarrow
	\left \{ \begin{array}{c}
		\text{nondegenerate orbit classes of} \\
		\mathcal{O}_B^5 \otimes (\Sym_2 \mathcal{O}_B^2 \oplus \Sym_2 \mathcal{O}_B^2 \oplus \Sym_2 \mathcal{O}_B^2) \\
		\text{under the action of } \GL_5(\mathcal{O}_B) \times \GL_2(\mathcal{O}_B)^{\oplus 3}
		\end{array}
	\right \}
	\]	
\end{theorem}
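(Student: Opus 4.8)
The plan is to imitate the structure of the genus~$4$ argument (Theorem~\ref{thm: genus 4: Moduli of genus 4 curves with data}) and its building blocks. The key observation is that a canonical genus~$5$ curve $X \subseteq \mbp^4$ which is the intersection of three rank-$3$ quadrics carries, for each quadric $X_i$ of rank~$3$, a vanishing theta characteristic: a rank-$3$ quadric in $\mbp^4$ is a cone over a smooth conic, and restricting the sheaf of ``rulings'' (equivalently, the spinor/Clifford module, or in coordinates the $2 \times 2$ symmetric determinantal representation $\mathcal{A}^{(i)}$) to $X$ produces a line bundle $\theta_i$ with $\theta_i^{\otimes 2} \iso \kappa_X$ and $\H^0(X,\theta_i)$ even-dimensional (here $h^0 = 1$, since the vertex of the cone gives a section). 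Concretely, a rank-$3$ quadric $Z(\det \mathcal{A}^{(i)}(\mathbf{x},\cdot,\cdot))$ with $\mathcal{A}^{(i)} \in \mathcal{O}_B^5 \otimes \Sym_2 \mathcal{O}_B^2$ has an associated aCM sheaf $\mathcal{E}_i$ on $\mbp^4$, and $\mathcal{E}_i|_X$ is the desired $\theta_i$.

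First I would treat the case $B = \Spec k$ to recover the classical statement, then bootstrap to general $B$ by the sheaf-theoretic technique of \cite{Ho2009, BhargavaHoKumar2016} already used for genus~$4$. For the forward direction: given $(X, \theta_1, \theta_2, \theta_3)$ (with all $h^0(\theta_i) = 2$ over $B$ — automatic since they are vanishing even theta characteristics on a genus~$5$ curve) together with chosen bases of $\R^0 f_* \kappa_X(B)$ and of each $\R^0 f_*\theta_i(B)$, I would, exactly as in Proposition~\ref{prop: genus 4: quadrics and orbits}, use each multiplication map $\theta_i \times \theta_i \to \kappa_X$ to build a symmetric $2\times2$ matrix $\mathcal{A}^{(i)}(\mathbf{x})$ of linear forms on $\mbp^4$ whose determinant cuts out a rank-$3$ quadric $X_i \supseteq X$, with $\mathcal{E}_i|_X \iso \theta_i$. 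Assembling the three blocks gives $\mathcal{A} \in \mathcal{O}_B^5 \otimes (\Sym_2 \mathcal{O}_B^2)^{\oplus 3}$; nondegeneracy holds because $X = X_1 \cap X_2 \cap X_3$ is smooth of genus~$5$ by hypothesis. Conversely, from a nondegenerate tensor $\mathcal{A}$ one forms $X := X_1 \cap X_2 \cap X_3$, which is a smooth genus~$5$ curve, and reads off the three vanishing even theta characteristics $\theta_i := \mathcal{E}_i|_X$; the coordinates on $\mbp^4$ and the resolution data pin down bases of $\R^0 f_*\kappa_X(B)$ and of each $\R^0 f_*\theta_i(B)$. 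One then checks these two constructions are $\GL_5(\mathcal{O}_B) \times \GL_2(\mathcal{O}_B)^{\oplus 3}$-equivariant and mutually inverse, so passing to orbits (which forgets precisely the basis choices) yields the bijection. The distinctness of $\theta_1, \theta_2, \theta_3$ matches the distinctness of the three blocks, which is part of the block-diagonal convention.

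The main obstacle I anticipate is the surjectivity/geometric-completeness part: showing that every nondegenerate orbit arises, i.e. that a triple of rank-$3$ quadrics in generic position really does cut out a smooth canonical genus~$5$ curve, and conversely that a canonical genus~$5$ curve admitting three rank-$3$ quadrics through it reconstructs the tensor exactly (no loss of data, no extra automorphisms). A smooth genus~$5$ canonical curve lies on a net of quadrics $\cong \mbp^2$, and the locus of rank-$\leq 3$ quadrics in this net is a plane quintic (the ``rank curve''); the curve is trigonal exactly when this quintic degenerates, and otherwise the rank-$3$ quadrics in the net form the set of $2$-torsion-twisted even theta characteristics. So the statement really is about ordered triples of points on that quintic curve, and I must make sure the bijection is with \emph{ordered} triples of \emph{distinct vanishing even} theta characteristics — matching the three ordered, distinct blocks — and that a general such triple spans the net (so that $X_1 \cap X_2 \cap X_3$ is the full canonical curve and not something larger). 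For the family version over $B$, the delicate point, as in the proof of Theorem~\ref{thm: genus 4: cubics and orbits in families}, is controlling the relevant direct images $\R^0 f_*$ of the twisted bundles over all of $B$ (e.g. verifying the needed vanishing and freeness, the even rank of $\R^0 f_*\theta_i$, and that no bielliptic/trigonal degenerate fibre intrudes), after which the determinantal representations glue to give honest tensors over $B$; I would invert $2$ on the base precisely to make the symmetric (Clifford) constructions work. Once these completeness and family-flatness points are settled, the equivariance and inverse-to-each-other checks are routine bookkeeping with bases, exactly parallel to the genus~$4$ case.
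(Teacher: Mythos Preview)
Your proposal is correct and follows essentially the same approach as the paper: build each $\mathcal{A}^{(\lindex)}$ from the multiplication map $\theta_\lindex \times \theta_\lindex \to \kappa_X$ exactly as in Proposition~\ref{prop: genus 4: quadrics and orbits}, assemble the blocks, and for the converse restrict the aCM sheaves of the three rank-$3$ quadrics to $X$; the paper's proof is in fact a terse two-paragraph version of precisely this outline. One small slip: you write ``here $h^0 = 1$'' in the opening paragraph, but as you correctly say a few lines later, $h^0(X,\theta_i) = 2$ for a vanishing even theta characteristic on a genus~$5$ curve (this is what makes the $2\times 2$ block possible). The obstacles you flag (that the three rank-$3$ quadrics span the net so that $X_1 \cap X_2 \cap X_3 = X$, and the flatness/freeness of the relevant pushforwards over $B$) are genuine, but the paper's proof addresses them only implicitly, asserting nondegeneracy and the spanning of $\R^0 f_* \mathcal{O}_{|\kappa_X|^\vee}(1)(B)$ as consequences of $X$ being smooth of genus~$5$; you are being more scrupulous than the paper here rather than less.
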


\begin{proof}
	%
	%
	The method of proof is similar to Section~\ref{sec: genus 4 curves}. 
	As usual we let $f\: X \rightarrow B$ denote the structure morphism. 
	%
	With a choice of basis for $\R^0f_*(\theta_{\lindex})(B)$ and $\R^0f_*(\kappa_X)(B)$, the multiplication map $\theta_{\lindex} \times \theta_{\lindex} \rightarrow \kappa_X$ gives rise to an exact sequence of sheaves on $\mbp^4$
		\[
			\xym{
			0 \ar[r] & \mathcal{O}_{\mbp^4}(-1)^{\oplus 2} \ar[rr]^-{\tiny \mathcal{A}^{(\lindex)} := \begin{bmatrix} a & b \\ b & c \end{bmatrix}} & & \mathcal{O}_{\mbp^4}^{\oplus 2} \ar[r]^-{g} & \mathcal{E} \ar[r] & 0
			}
		\]
	which restricted to $X$ gives the complex
		\[
			\xym{
			(\kappa_X^\vee)^{\oplus 2} \ar[rr]^{\tiny \begin{bmatrix} s^2 & st \\ st & t^2 \end{bmatrix}} & & \mathcal{O}_{X}^{\oplus 2} \ar[r]^-{g} & \theta_l \ar[r] & 0
			}.
		\]	
	The tensor $\mathcal{A}^{(\lindex)} \in \mathcal{O}_B^5 \otimes \Sym_2 \mathcal{O}_B^2$ is read off from this resolution. Since $X$ is a smooth genus $5$ curve, we see that the tensor $\mathcal{A}$ obtained by diagonally joining $\mathcal{A}^{(1)}, \mathcal{A}^{(2)}, \mathcal{A}^{(3)}$ is nondegenerate, and furthermore the entries of $\mathcal{A}(\mathbf{x}, \cdot, \cdot)$ span $\R^0f_*\mathcal{O}_{|\kappa_X|^\vee}(1)(B)$.
	Conversely, given a nondegenerate block-diagonal tensor $\mathcal{A}(\mathbf{x}, \cdot, \cdot)$, the intersection of the $Z(\det \mathcal{A}^{(\lindex)} : 1 \leq \lindex \leq 3)$ is a canonical genus $5$ curve $X$. Furthermore, the aCM sheaves associated to each $\mathcal{A}^{(\lindex)}$ restrict to vanishing even theta characteristics $\theta_\lindex$ on $X$. The implicit choice of basis for the tensor determines bases for each $\R^0f_*(\theta_\lindex)(B)$.
\end{proof}

\subsection{Theta characteristics} \label{sec: sub: theta characteristics of genus 5 curves}

We can identify $\mbp^5$ with $\mbp(\H^0(X, \theta_1) \oplus \H^0(X, \theta_2) \oplus \H^0(X, \theta_3))$. The group $\DAut(\mathcal{A})$ is of order $8$ and is generated by $\sigma_1, \sigma_2, \sigma_3$, where each $\sigma_l$ acts on $\H^0(X, \theta_1) \oplus \H^0(X, \theta_2) \oplus \H^0(X, \theta_3)$ by $-1$ in the $\lindex$-th summand and identity in the other factors.
The action on the $32$ geometric points of $\mathfrak{C}$ has $8$ orbits, each of size $4$.
The action of $\DAut(\mathcal{A})$ on the $496$ secants of the Cayley variety has $136$ orbits, giving $136$ distinct images under the $\psi$ map. Proposition~\ref{prop: tangency for curves} allows us to tabulate these images:
\begin{enumerate}
	\item There are $24 \times 2$ secants of the form $\ell(p, \sigma_\lindex(p))$ for some $p\in \mathfrak{C}(\bar k)$ and $l \in \{1,2,3\}$.
	The image under $\psi$ of a secant of the form $\ell(p, \sigma_\lindex(p))$ gives rise to an effective representative of $\theta_\lindex$. For each $\theta_\lindex$, there are $8$ distinct orbits of secants giving rise to an effective representative of $\theta_\lindex$. Given any point $p \in \mathfrak{C}(\bar k)$, the orbit $\{p, \sigma_1(p), \sigma_2(p), \sigma_3(p)\}$ defines $6$ secants. Because $\sigma_1\sigma_2\sigma_3 = -I$, the pair $\{\ell(p, \sigma_1(p)), \ell(\sigma_2(p), \sigma_3(p))\}$ is an orbit of $\DAut(\mathcal{A})$, which gives rise to a single effective representative of $\theta_1$. Similarly, the other two orbits give rise to effective representatives of $\theta_2$ and $\theta_3$.
	
	\item The remaining $112 \times 4$ secants define $112$ distinct quadritangent planes to $X$. Given two orbits $\{\sigma(p) : \sigma \in \DAut(\mathcal{A})\}, \{\tau(q) : \tau \in \DAut(\mathcal{A})\}$ of points in $\mathfrak{C}$, there are $16$ secants of the form $\ell(\sigma(p), \tau(q))$, $\sigma, \tau \in \DAut(\mathcal{A})$ which are partitioned into $4$ orbits under $\DAut(\mathcal{A})$. The secants
	\[
		\ell(p,q), \ \ \ell(p, \sigma_1(q)), \ \ \ell(p, \sigma_2(q)), \ \ \ell(p, \sigma_3(q))
	\]
	are representatives of each of the four orbits.
	The odd theta characteristics defined by each orbit of secants are grouped into $28$ sets of $4$ by this combinatorial association.
\end{enumerate}

\subsection{Weil pairing}

In the case of genus $5$ curves, the three distinguished vanishing even theta characteristics $\theta_1, \theta_2, \theta_3$ define a distinguished $2$-torsion subgroup $\gen{\theta_2 - \theta_1, \theta_3 - \theta_1} \subset J_X[2]$. Just like in the genus $4$ case, the $112$ odd theta characteristics obtained from this construction also have a special property with respect to the Weil pairing. 

\begin{proposition} \label{prop: weil pairing genus 5}
Let $\mathcal{A} \in k^5 \otimes (\Sym_2 k^2)^{\oplus 3}$ be a nondegenerate tensor, let $\theta_1, \theta_2, \theta_3$ denote the three associated vanishing even theta characteristics. Denote the Weil pairing on $\Jac(X)[2]$ by $e_2$. Then the $112$ distinct odd theta characteristics constructed from the secants of the Cayley variety of $\mathcal{A}$ are precisely the odd theta characteristics $\theta$ such that $e_2(\theta \otimes \theta_\lindex^{\vee}, T) = 0$ for all $T \in \{\theta_i \otimes \theta_j^\vee : 1 \leq i,j \leq 3\}$ and for some (equivalently, all) $1 \leq \lindex \leq 3$.
\end{proposition}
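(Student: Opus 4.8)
The plan is to mirror the genus $4$ argument (Proposition~\ref{prop: genus four Weil pairing}) essentially verbatim, replacing the single even $2$-torsion class $\epsilon$ by the distinguished rank-$2$ subgroup $G := \gen{\theta_2-\theta_1, \theta_3-\theta_1} \subset \Jac(X)[2]$ and using Mumford's formula $\mathcal{Q}_{\theta_\lindex}(D) \equiv h^0(\theta_\lindex) + h^0(D \otimes \theta_\lindex) \pmod 2$ together with the $h^0$-vanishing facts established for the odd theta characteristics $\theta$ arising from the construction. First I would record the geometric input: by Proposition~\ref{prop: tangency for curves} and the discussion of Section~\ref{sec: sub: theta characteristics of genus 5 curves}, each of the $112$ odd theta characteristics $\theta$ arises from a secant $\ell$ of $\mathfrak{C}$ not of the form $\ell(p,\sigma_\lindex(p))$, and $X \cap H = 2D'$ with $D'$ the effective representative of $\theta$. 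The key auxiliary claim, analogous to Lemma~\ref{lem: projected singular points are colinear}, is that for each block index $\lindex$ the three projections $\pi_\lindex(\ker \mathcal{A}(x_i,\cdot,\cdot))$ (for the three or fewer points $x_i \in X \cap H$) lie on the line $\pi_\lindex(\ell) \subset \mbp V_\lindex \cong \mbp^1$ — but since $\mbp^1$ has no interesting collinearity, the real statement is that these are the images under the degree-$2$ kernel map $\varphi_{\lindex}\colon X \to \mbp^1$ lying over the two points $\pi_\lindex(\ell)$, giving $h^0(X, \theta \otimes (\theta_i \otimes \theta_j^\vee)) = 1$ for the relevant $2$-torsion twists. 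Concretely, since $\varphi_\lindex^* \mathcal{O}_{\mbp^1}(1) \cong \theta_\lindex \otimes \theta_j^\vee$ is an unexpected identification to sort out, I would instead argue that $D' \le \varphi_\lindex^{-1}(\pi_\lindex(\ell))$ as divisors, where $\varphi_\lindex^{-1}(\pi_\lindex(\ell))$ is an effective divisor in the linear system $|\theta_\lindex|$, and similarly with $\varphi_j$ for other blocks, to extract the needed $h^0$ parities.

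Next I would run the parity computation. Fix $\lindex$, say $\lindex = 1$, and let $T = \theta_i \otimes \theta_j^\vee$ range over the (at most nine, effectively six distinct) $2$-torsion classes coming from pairs of the $\theta$'s. One needs $e_2(\theta \otimes \theta_1^\vee, T) = 0$, which by the cocycle identity $e_2(D_1,D_2) = \mathcal{Q}_{\theta_1}(D_1+D_2) - \mathcal{Q}_{\theta_1}(D_1) - \mathcal{Q}_{\theta_1}(D_2)$ reduces to comparing $\mathcal{Q}_{\theta_1}(\theta\otimes\theta_1^\vee \otimes T)$, $\mathcal{Q}_{\theta_1}(\theta\otimes\theta_1^\vee)$, and $\mathcal{Q}_{\theta_1}(T)$. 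Using Mumford, $\mathcal{Q}_{\theta_1}(\theta\otimes\theta_1^\vee) \equiv h^0(\theta_1) + h^0(\theta) \equiv 0 + 1 = 1$; $\mathcal{Q}_{\theta_1}(T) \equiv h^0(\theta_1) + h^0(\theta_1 \otimes T) = h^0(\theta_1) + h^0(\theta_i)$ or $h^0(\theta_j)$, which is $0$ when $T$ is built from the $\theta$'s (all being vanishing even theta characteristics, $h^0$ is positive and even, hence even); and $\mathcal{Q}_{\theta_1}(\theta\otimes\theta_1^\vee\otimes T) \equiv h^0(\theta_1) + h^0(\theta \otimes T)$, so the vanishing of the pairing becomes the assertion $h^0(X, \theta \otimes T) \equiv 1 \pmod 2$ for all relevant $T$. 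This last congruence is exactly what the geometric collinearity/kernel-map input supplies: $\theta \otimes \theta_i \otimes \theta_j^\vee$ has a section (namely $D'$ sits inside the pullback divisor), and genericity/dimension count forces $h^0 = 1$. Then I would count: the conditions $e_2(\theta\otimes\theta_1^\vee, T) = 0$ for $T$ ranging over $G\setminus\{0\}$ and its translates cut out precisely the odd theta characteristics whose corresponding quadratic-form value vanishes on a maximal isotropic-type subspace, and a standard count on $\Jac(X)[2] \cong \mathbb{F}_2^{10}$ with its symplectic form shows there are exactly $112$ such $\theta$ — matching the $112$ produced by the construction, so the two sets coincide.

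The main obstacle I anticipate is the "equivalently, all $\lindex$" clause and the precise bookkeeping of which $2$-torsion classes $T$ the condition must be imposed on: one must check that imposing $e_2(\theta\otimes\theta_1^\vee, T)=0$ for all $T \in \{\theta_i\otimes\theta_j^\vee\}$ is equivalent to the same statement with $\theta_2$ or $\theta_3$ in place of $\theta_1$, which follows from the identity $\theta\otimes\theta_2^\vee = (\theta\otimes\theta_1^\vee) \otimes (\theta_1\otimes\theta_2^\vee)$ and bilinearity of $e_2$, but requires knowing $e_2(\theta_i\otimes\theta_j^\vee, \theta_k\otimes\theta_l^\vee) = 0$ for all indices — i.e. that $G$ is isotropic for the Weil pairing. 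That isotropy is not automatic and must be proven; I expect it follows from the fact that all three $\theta_\lindex$ are \emph{even} (so $\mathcal{Q}_{\theta_1}(\theta_1\otimes\theta_j^\vee) \equiv h^0(\theta_1) + h^0(\theta_j) \equiv 0$, and then $e_2(\theta_1\otimes\theta_2^\vee, \theta_1\otimes\theta_3^\vee) = \mathcal{Q}_{\theta_1}(\theta_2^\vee\otimes\theta_3) - \mathcal{Q}_{\theta_1}(\theta_1\otimes\theta_2^\vee) - \mathcal{Q}_{\theta_1}(\theta_1\otimes\theta_3^\vee) \equiv h^0(\theta_2^\vee\otimes\theta_3\otimes\theta_1) \pmod 2$), so the whole thing bottoms out in computing $h^0$ of a specific degree-$4$ line bundle on $X$ — which is where the geometry of the three rank-$3$ quadrics, via the kernel maps $\varphi_\lindex$, must be brought in decisively. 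Once isotropy of $G$ and the parity $h^0(\theta\otimes T)\equiv 1$ are in hand, the rest is the same dimension count as in the genus $4$ case.
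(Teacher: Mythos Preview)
Your overall strategy is correct and matches the paper's: reduce to Mumford's formula, supply the needed $h^0$ parity from the geometry of the secant $\ell$, and finish with the count $112 = 112$. The isotropy of $G$ and the ``equivalently, all $\lindex$'' clause are handled exactly as you say.

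The genuine gap is in your geometric input. Projecting $\ell$ to a single block space $\mbp V_\lindex \cong \mbp^1$ is vacuous, as you yourself notice: since $\ell$ is not of the form $\ell(p,\sigma_\lindex(p))$, the projection $\pi_\lindex\mid_\ell$ is an isomorphism onto all of $\mbp^1$, so the statement ``$D' \le \varphi_\lindex^{-1}(\pi_\lindex(\ell))$'' is the empty statement $D' \le X$, and ``$\varphi_\lindex^{-1}(\pi_\lindex(\ell))$ is an effective divisor in $|\theta_\lindex|$'' is simply false. This single-factor projection cannot produce a section of $\theta \otimes T$ for a nontrivial $T$. (Your tentative identification $\varphi_\lindex^*\mathcal{O}_{\mbp^1}(1) \cong \theta_\lindex \otimes \theta_j^\vee$ is also off; it is $\varphi_\lindex^*\mathcal{O}_{\mbp^1}(1) \cong \theta_\lindex$.)

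The paper's fix is the one idea you are missing: project instead to a \emph{pair} of block spaces, i.e.\ use $\pi_{ij}\colon \mbp V \dashrightarrow |\theta_i| \times |\theta_j| \cong \mbp^1 \times \mbp^1$. Because $\ell$ surjects onto each factor with degree $1$, its image $\pi_{ij}(\ell)$ is a curve of bidegree $(1,1)$. Each $\ker \mathcal{A}(x,\cdot,\cdot)$ for $x \in X \cap H$ meets $\ell$ and projects to a single point of $|\theta_i| \times |\theta_j|$ (being spanned by the $\varphi_\lindex(x)$), so the four contact points of $D'$ map under $(\varphi_i,\varphi_j)\colon X \to |\theta_i| \times |\theta_j|$ onto this $(1,1)$-curve. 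Pulling back gives an effective divisor in $|\theta_i + \theta_j|$ containing $D'$, hence an effective section of $\theta_i + \theta_j - \theta$, whence (by Riemann--Roch for a degree $g-1$ bundle) $h^0(\theta + \theta_i - \theta_j) \ge 1$. That is precisely the parity input your Mumford computation needs for $T = \theta_i \otimes \theta_j^\vee$; once you have it, the rest of your argument goes through unchanged.
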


\begin{proof}
	Let $W := \{\theta_i \otimes \theta_j^\vee : 1 \leq i, j \leq 3\}$ denote the distinguished $2$-torsion subgroup.
	We give the argument to show that $e_2(\theta - \theta_1, \theta_2 - \theta_1) = 0$. Since $W$ is an isotropic subspace of $e_2$, it follows whenceforth that $e_2(\theta - \theta_1, T) = 0$ for all $T \in W$.
	First, we establish that $h^0(\theta + \theta_2 - \theta_1) = 1$. We consider the projections from $V := \H^0(\theta_1) \oplus \H^0(\theta_2) \oplus \H^0(\theta_3)$ onto the various factors and denote these by $\pi_i, \pi_{ij}$ for $i,j = 1, 2, 3$. If $\ell$ is a secant line of $\mathfrak{C}$ giving rise to an odd theta characteristic, then $\ell$ surjects onto each $|\theta_{\lindex}| \iso \mbp^1$ for $\lindex=1,2,3$, and additionally the projection restricted to $\ell$ is a morphism of degree $1$. In particular, it follows that the image of $\ell$ under $\pi_{12}\: \mbp V \dashrightarrow |\theta_1| \times |\theta_2|$ is a curve of bidegree $(1,1)$.
	
	Let $[H] = \psi(\ell)$ and let $\{x_1,x_2,x_3,x_4\} = X \cap H$. We see that each quadric $Q_{x_i}$ is rank~$3$, so $\sing(Q_{x_i})$ is a plane in $\mbp V$. Furthermore, each block of $\mathcal{A}$ has size $2$, so each $\pi_{\lindex}(\sing(Q_{x_i})) \subset |\theta_{\lindex}|$ is a point, so $\pi_{12}(\sing(Q_{x_i})) \subset |\theta_1| \times |\theta_2|$ is also a point. Moreover, as $\sing(Q_{x_i})$ meets $\ell$, the point $\pi_{12}(\sing(Q_{x_i}))$ lies on the $(1,1)$ curve from before.
	
	Next, we have that $\theta + \theta_2 - \theta_1 \sim \theta_1 + \theta_2 - \theta$. We observe that the image of $X$ under 
		\[
			(\varphi_1, \varphi_2)\: X \rightarrow |\theta_1| \times |\theta_2|
		\]
	must contain the $4$ points $\pi_{12}(\sing(Q_{x_i}))$, so $\pi_{12}(\ell)$ defines an effective section of $\theta_1 + \theta_2 - \theta$.

	Let $\mathcal{Q}_1$ denote the quadratic form associated to the even theta characteristic $\theta_1$.
	We now use Mumford's formula for the Weil pairing. Observe that if $T'$ is a non-trivial $2$-torsion class we have $h^0(X, T') = 0$, so 
	\begin{align*}
		e_2(\theta - \theta_1, \theta_2 - \theta_1) &\equiv \mathcal{Q}_1(\theta + \theta_2 - 2\theta_1) - \mathcal{Q}_1(\theta-\theta_1) - \mathcal{Q}_1(\theta_2 - \theta_1) \\
		&\equiv \mathcal{Q}_1(\theta + \theta_2 - 2\theta_1) - \left(h^0(\theta-\theta_1) + h^0(\theta) \right) - \left(h^0(\theta_2 - \theta_1) + h^0(\theta_2)\right) \\
		&\equiv \mathcal{Q}_1(\theta + \theta_2 - 2\theta_1) + 1 \\
		&\equiv h^0(\theta + \theta_2 - 2\theta_1) + h^0(\theta + \theta_2 - \theta_1) + 1 \pmod{2}.
	\end{align*}
	Next, since we have that $\mathcal{Q}_1(\theta - \theta_1) = 1$ and $\mathcal{Q}_1(\theta_2 - \theta_1) = 0$, the sum $\theta + \theta_2 - 2\theta_1$ is not the trivial class. Finally, we see that $h^0(\theta + \theta_2 - \theta_1) = 1$, and this concludes the proof.
\end{proof}

\begin{remark}
    Let $X$ be a general genus $5$ curve and let $\mathcal{Q}_1$ be the quadratic form on $\Jac(X)[2]$ defined by an even theta characteristic $\theta_1$. There is one subgroup $H \subset \Jac(X)[2]$ of order $4$ up to conjugacy such that $\mathcal{Q}_1(v) = 0$ for all $v \in H$. 
    For such an $H$, there are exactly $112$ odd theta characteristics $\theta$ such that $e_2(\theta - \theta_1, v) = 0$ for all $v \in H$; furthermore, each translate $\theta_1 + v$ is an even theta characteristic.
\end{remark}

\noindent
Theorem~\ref{thm: genusfivesummary} is obtained by combining Theorem~\ref{thm: genus 5 moduli}, Proposition~\ref{prop: weil pairing genus 5}, and the discussion of Section~\ref{sec: sub: theta characteristics of genus 5 curves}.

\genusfivesummary*

\section{Quintic symmetroid surfaces: tritangents} \label{sec: quintic symmetroid}

A general element $\mathcal{A} \in k^4 \otimes \Sym_2 k^5$ defines a determinantal surface $X$ in $\mathbb{P}^3$ classically called a quintic symmetroid. The Cayley variety associated to $\mathcal{A}$ is a complete intersection of $4$ quadrics in $\mbp^4$, so generically defines a locus of $16$ points and thus $\binom{16}{2} = 120$ secants. The singular locus of $X$ consists of $20$ nodal singularities and $\varphi(X)$ is a smooth degree $10$ surface in $\mbp^4$ birational to $X$; it is the normalization of $X$. 

It was shown in \cite{Roth1930} that a generic quintic symmetroid surface has at least $120$ tritangent planes. Roth constructs these $120$ tritangent planes as we do, via the secants of the $16$ points of the Cayley variety \cite[Section~4.2]{Roth1930}. In light of Proposition~\ref{prop: deg and dim of tangents of symmetric determinantal hypersurfaces}, it is immediate why these secants correspond to tritangent planes, simplifying Roth's proof. This particular set of $120$ tritangents is distinguished by the determinantal representation. We say $H$ is a \emph{distinguished} tritangent of $\mathcal{A}$ if the contact points $x_1, x_2, x_3$ are smooth points of $X$ and the three points $\varphi(x_1)$, $\varphi(x_2)$, $\varphi(x_3)$ are collinear.

\begin{corollary}
Let $\mathcal{A} \in k^4 \otimes \Sym_2 k^5$ be a generic tensor. Then
there are $120$ distinguished tritangent planes of $\mathcal{A}$; each of these is of the form $[H] = \psi(\ell)$, where $\ell$ is one of the $120$ secant lines of $\mathfrak{C}$.
\end{corollary}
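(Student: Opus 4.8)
The plan is to derive the corollary as a direct consequence of Proposition~\ref{prop: deg and dim of tangents of symmetric determinantal hypersurfaces}, Lemma~\ref{lem: cayley line contracts}, and Lemma~\ref{lem: psi is injective on secants mod diagonal symmetries}, using that for a generic quintic symmetroid all the genericity hypotheses of these statements are satisfied. Concretely: here $n = 3$, $m = 4$, so $\mathcal{A} \in k^4 \otimes \Sym_2 k^5$ falls exactly into the format $k^{n+1} \otimes \Sym_2 k^{n+2}$ of Proposition~\ref{prop: deg and dim of tangents of symmetric determinantal hypersurfaces}, and $r = 1$ so there are no block spaces to worry about and $\DAut(\mathcal{A}) = \gen{\pm I}$ acts trivially on $\mbp^m$.

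The first step is to recall the stated facts about a generic $\mathcal{A}$: the Cayley variety $\mathfrak{C}$ is a complete intersection of $4$ quadrics in $\mbp^4$ consisting of $16$ reduced points, so it has $\binom{16}{2} = 120$ secant lines, none contained in $\mathfrak{C}$; and $\sing X$ consists of $20$ nodes, all of which are essential singularities (each corresponding to a corank-$2$ degeneration of $\mathcal{A}(\mathbf{x},\cdot,\cdot)$). Next, for a secant line $\ell$ of $\mathfrak{C}$, Lemma~\ref{lem: cayley line contracts} gives that $[H] := \psi(\ell)$ is a single point of $\dual\mbp^3$, i.e.\ a genuine hyperplane. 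For generic $\mathcal{A}$ we may invoke Proposition~\ref{prop: deg and dim of tangents of symmetric determinantal hypersurfaces}: $H$ is tangent to $X$ along a scheme of dimension at least $0$, and once one checks (by exhibiting a single explicit example, as in the proof of Corollary~\ref{cor: lines are n-tangents for our main examples}) that $\sing(X\cap H)$ contains no essential singularities of $X$ and has dimension $0$, one concludes $\deg\sing(X\cap H) = n+1 = 3$. So $H$ meets $X$ tangentially at three (generically distinct, again by genericity) smooth points $x_1, x_2, x_3$; this is exactly the statement that $H$ is a tritangent plane.

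The second step is to identify these as \emph{distinguished} tritangents. By Theorem~\ref{thm: singularities of symmetroid intersections}(b) (equivalently, the argument recorded in the paragraph preceding Proposition~\ref{prop: deg and dim of tangents of symmetric determinantal hypersurfaces}), each singular point $x_i$ of $X\cap H$ satisfies $\ker\mathcal{A}(x_i,\cdot,\cdot) \cap \ell \neq \emptyset$; since $x_i$ is a smooth point of $X$, $\ker\mathcal{A}(x_i,\cdot,\cdot)$ is the single point $\varphi(x_i)$, hence $\varphi(x_i) \in \ell$ for $i = 1, 2, 3$. Thus $\varphi(x_1), \varphi(x_2), \varphi(x_3)$ all lie on the line $\ell$ and are collinear, which is precisely the definition of a distinguished tritangent. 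Finally, one must count: Lemma~\ref{lem: psi is injective on secants mod diagonal symmetries} (with $r = 1$, so $\DAut(\mathcal{A}) = \gen{\pm I}$ acts trivially and the hypothesis $\sum_{\lindex\in I} d_\lindex \neq \frac{m+1}{2}$ is vacuous since there is only one block of size $5 \neq \frac52$) shows that distinct secant lines of $\mathfrak{C}$ have distinct images under $\psi$, provided the generic genericity conditions hold. Conversely, if $H$ is any distinguished tritangent with contact points $x_1, x_2, x_3$, then $\varphi(x_1), \varphi(x_2), \varphi(x_3)$ span a line $\ell$; since $\psi(\varphi(x_i)) = \theta_X(x_i) = [H]$ for each $i$ by Proposition~\ref{prop: main diagram hypersurfaces}, the map $\psi$ is constant on $\ell$, so by Lemma~\ref{lem: cayley line contracts} $\ell$ is a secant line of $\mathfrak{C}$ with $\psi(\ell) = [H]$. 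This gives a bijection between the $120$ secants and the distinguished tritangents.

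The only real obstacle is the appeal to genericity: one needs that for a \emph{generic} $\mathcal{A}$, every secant $\ell$ of $\mathfrak{C}$ produces an $H$ with $\dim\sing(X\cap H) = 0$, $\sing(X\cap H)$ avoiding the $20$ essential nodes, and the three contact points distinct. Each of these is an open condition on the $(\mathcal{A}, \ell)$ parameter space (the incidence scheme of a symmetroid and a secant of its Cayley variety, which is irreducible since the $\GL$-action is transitive on suitable standard positions for $\ell$), so it suffices to verify non-emptiness by producing one explicit example — this is the computation alluded to in the proof of Corollary~\ref{cor: lines are n-tangents for our main examples} and can be done on a computer. Modulo that verification, everything else is a bookkeeping assembly of results already established.
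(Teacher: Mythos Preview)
Your proof is correct and follows essentially the same route as the paper's, with one small gap and one arithmetic slip worth noting.

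The arithmetic slip: you write ``$\deg\sing(X\cap H) = n+1 = 3$'' after declaring $n=3$. The statement of Proposition~\ref{prop: deg and dim of tangents of symmetric determinantal hypersurfaces} says $n+1$, but its proof (and Proposition~\ref{prop: deg and dim of tangents of intersections of symmetric determinantal hypersurfaces}) give degree $n$; the correct count here is $n=3$, matching the tritangent conclusion. This is a typo in the paper that you have faithfully reproduced while arriving at the right answer.

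The gap is in your converse. You argue that $\psi(\varphi(x_i)) = [H]$ for the three contact points $x_1,x_2,x_3$ and conclude that $\psi$ is constant on the line $\ell$ spanned by the $\varphi(x_i)$, invoking Lemma~\ref{lem: cayley line contracts}. But that lemma requires $\psi(\ell)$ to be a single point, whereas you have only established this at three points of $\ell$. The missing step is that $\psi$ is given by quadratic forms, so its restriction to $\ell$ is a map $\mbp^1 \to \dual\mbp^3$ of degree at most $2$; agreeing at three points forces it to be constant. The paper makes exactly this observation, phrased dually: $\psi^{-1}([H])$ is cut out by three quadrics, and any quadric containing three collinear points contains the line through them, hence $\ell \subseteq \psi^{-1}([H])$. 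Once you insert this sentence, your argument and the paper's coincide.
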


\begin{proof}
	First consider $\ell$ a secant line of $\mathfrak{C}$. Let $x_1$, $x_2$, $x_3$ be the contact points of $H$ with $X$. This follows from the fact that the incidence variety
	\[
		\{(x, q) \in X \times \ell : \mathcal{A}(x, q, \cdot) = 0\}
	\]
	consists of $3$ points; c.f. the proof of Proposition~\ref{prop: deg and dim of tangents of symmetric determinantal hypersurfaces}. Thus, $\varphi(x_1)$, $\varphi(x_2)$, and $\varphi(x_3)$ all lie on $\ell$ and so $H$ is a distinguished tritangent. The tritangents defined by the secants of $\mathfrak{C}$ are distinct by Lemma~\ref{lem: psi is injective on secants mod diagonal symmetries}.
	
	Conversely, suppose $H$ is a distinguished tritangent plane with contact points $x_1$, $x_2$, $x_3$. Let $\ell$ be the line in $\mbp^4$ containing $\varphi(x_1), \varphi(x_2), \varphi(x_3)$. Since $\theta_X = \psi \circ \varphi$, we have that $\psi^{-1}([H])$ contains $\varphi(x_1), \varphi(x_2), \varphi(x_3)$. However, any quadric containing these three points contains $\ell$. Since $\psi^{-1}([H])$ is defined by a complete intersection of three quadrics, $\ell \subset \psi^{-1}([H])$. The only lines contracted by $\psi$ are secant lines of $\mathfrak{C}$.
\end{proof}


\renewcommand{\MR}[1]{}

\begin{bibdiv}
	\begin{biblist}
		
		\bib{Beauville2000}{article}{
			author = {Beauville, Arnaud},
			title={Determinantal hypersurfaces},
			date = {2000},
			journal={Michigan Mathematical Journal},
			volume = {48},
			issue = {1},
			pages= {39-64},
			review= {\MR{1786479}},
		}

		\bib{BhargavaHCL1}{article}{
			author={Bhargava, M.},
			title={Higher composition laws I: A new view on Gauss composition, and quadratic generalizations},
			journal={Annals of Mathematics},
			pages={217--250},
			volume={159},
			date={2004},
			issue={1},
		}
		\bib{BhargavaHCL2}{article}{
			author={Bhargava, M.},
			title={Higher composition laws II: On cubic analogues of Gauss composition},
			journal={Annals of Mathematics},
			pages={865--886},
			volume={159},
			date={2004},
			issue={2},
		}
		\bib{BhargavaHCL3}{article}{
			author={Bhargava, M.},
			title={Higher composition laws III: The parametrization of quartic rings},
			journal={Annals of Mathematics},
			pages={1329--1360},
			volume={159},
			date={2004},
			issue={3},
		}
		\bib{BhargavaHCL4}{article}{
			author={Bhargava, M.},
			title={Higher composition laws IV: The parametrization of quintic rings},
			journal={Annals of Mathematics},
			pages={53--94},
			volume={167},
			date={2008},
			issue={1},
		}

		\bib{BhargavaGrossWang2017}{article}{
		   author={Bhargava, Manjul},
		   author={Gross, Benedict H.},
		   author={Wang, Xiaoheng},
		   title={A positive proportion of locally soluble hyperelliptic curves over
		   $\mathbb{Q}$ have no point over any odd degree extension},
		   note={With an appendix by Tim Dokchitser and Vladimir Dokchitser},
		   journal={J. Amer. Math. Soc.},
		   volume={30},
		   date={2017},
		   number={2},
		   pages={451--493},
		   issn={0894-0347},
		   review={\MR{3600041}},
		   doi={10.1090/jams/863},
		}

		\bib{BhargavaHo2016}{article}{
		   author={Bhargava, Manjul},
		   author={Ho, Wei},
		   title={Coregular spaces and genus one curves},
		   journal={Camb. J. Math.},
		   volume={4},
		   date={2016},
		   number={1},
		   pages={1--119},
		   issn={2168-0930},
		   review={\MR{3472915}},
		   doi={10.4310/CJM.2016.v4.n1.a1},
		}

		\bib{BhargavaHoKumar2016}{article}{
		   author={Bhargava, Manjul},
		   author={Ho, Wei},
		   author={Kumar, Abhinav},
		   title={Orbit parametrizations for K3 surfaces},
		   journal={Forum Math. Sigma},
		   volume={4},
		   date={2016},
		   pages={Paper No. e18, 86},
		   review={\MR{3519436}},
		   doi={10.1017/fms.2016.12},
		}
		
		\bib{BruinSertoz2018}{article}{
			author={Bruin, Nils},
			author={Sert\"{o}z, Emre Can},
			title={Prym varieties of genus four curves},
			journal={Trans. Amer. Math. Soc.},
			volume={373},
			date={2020},
			number={1},
			pages={149--183},
			issn={0002-9947},
			review={\MR{4042871}},
			doi={10.1090/tran/7902},
		}

		\bib{Cantor1987}{article}{
		   author={Cantor, David G.},
		   title={Computing in the Jacobian of a hyperelliptic curve},
		   journal={Math. Comp.},
		   volume={48},
		   date={1987},
		   number={177},
		   pages={95--101},
		   issn={0025-5718},
		   review={\MR{866101}},
		   doi={10.2307/2007876},
		}

		\bib{Catanese1981}{article}{
		   author={Catanese, F.},
		   title={On the rationality of certain moduli spaces related to curves of
		   genus $4$},
		   conference={
		      title={Algebraic geometry},
		      address={Ann Arbor, Mich.},
		      date={1981},
		   },
		   book={
		      series={Lecture Notes in Math.},
		      volume={1008},
		      publisher={Springer, Berlin},
		   },
		   date={1983},
		   pages={30--50},
		   review={\MR{723706}},
		   doi={10.1007/BFb0065697},
		}

		\bib{Coble1919}{article}{
		   author={Coble, Arthur B.},
		   title={The Ten Nodes of the Rational Sextic and of the Cayley Symmetroid},
		   journal={Amer. J. Math.},
		   volume={41},
		   date={1919},
		   number={4},
		   pages={243--265},
		   issn={0002-9327},
		   review={\MR{1506391}},
		   doi={10.2307/2370285},
		}
		
		\bib{Dixon}{article}{
			author={Dixon, A. C.},
			title={Note on the reduction of a ternary quartic to a symmetrical determinant},
			journal={Proc. Cambridge Phil. Soc},
			volume={11},
			data={1902},
			pages={350-351}
		}
			
		\bib{Dol2012}{book}{
			author={Dolgachev, Igor V.},
			title={Classical algebraic geometry},
			note={A modern view},
			publisher={Cambridge University Press, Cambridge},
			date={2012},
			pages={xii+639},
			isbn={978-1-107-01765-8},
			review={\MR{2964027}},
			doi={10.1017/CBO9781139084437},
		}	

		\bib{DolgachevOrtland1988}{article}{
		   author={Dolgachev, Igor},
		   author={Ortland, David},
		   title={Point sets in projective spaces and theta functions},
		   language={English, with French summary},
		   journal={Ast\'{e}risque},
		   number={165},
		   date={1988},
		   pages={210 pp. (1989)},
		   issn={0303-1179},
		   review={\MR{1007155}},
		}

		\bib{ElsenhansJahnel2019}{article}{
		   author={Elsenhans, Andreas-Stephan},
		   author={Jahnel, J\"{o}rg},
		   title={On plane quartics with a Galois invariant Cayley octad},
		   journal={Eur. J. Math.},
		   volume={5},
		   date={2019},
		   number={4},
		   pages={1156--1172},
		   issn={2199-675X},
		   review={\MR{4015450}},
		   doi={10.1007/s40879-018-0292-3},
		}

		\bib{GriffithsHarris1994}{book}{
		   author={Griffiths, Phillip},
		   author={Harris, Joseph},
		   title={Principles of algebraic geometry},
		   series={Wiley Classics Library},
		   note={Reprint of the 1978 original},
		   publisher={John Wiley \& Sons, Inc., New York},
		   date={1994},
		   pages={xiv+813},
		   isbn={0-471-05059-8},
		   review={\MR{1288523}},
		   doi={10.1002/9781118032527},
		}
		
		\bib{HesseI}{article}{
		   author={Hesse, Otto},
		   title={\"{U}ber die Doppeltangenten der Curven vierter Ordnung},
		   language={German},
		   journal={J. Reine Angew. Math.},
		   volume={49},
		   date={1855},
		   pages={279--332},
		   issn={0075-4102},
		   review={\MR{1578918}},
		   doi={10.1515/crll.1855.49.279},
		}
		
		\bib{HesseII}{article}{
		   author={Hesse, Otto},
		   title={\"{U}ber Determinanten und ihre Anwendung in der Geometrie,
		   insbesondere auf Curven vierter Ordnung},
		   language={German},
		   journal={J. Reine Angew. Math.},
		   volume={49},
		   date={1855},
		   pages={243--264},
		   issn={0075-4102},
		   review={\MR{1578915}},
		   doi={10.1515/crll.1855.49.243},
		}
		
		\bib{Ho2009}{book}{
		   author={Ho, Wei},
		   title={Orbit parametrizations of curves},
		   note={Thesis (Ph.D.)--Princeton University},
		   publisher={ProQuest LLC, Ann Arbor, MI},
		   date={2009},
		   pages={157},
		   isbn={978-1109-41055-6},
		   review={\MR{2713823}},
		}
		
		\bib{Kerner2012}{article}{
		   author={Kerner, Dmitry},
		   author={Vinnikov, Victor},
		   title={Determinantal representations of singular hypersurfaces in
		   $\mathbb{P}^n$},
		   journal={Adv. Math.},
		   volume={231},
		   date={2012},
		   number={3-4},
		   pages={1619--1654},
		   issn={0001-8708},
		   review={\MR{2964618}},
		   doi={10.1016/j.aim.2012.06.014},
		}

		\bib{Milne1923}{article}{
			author={Milne, W. P.},
			title={Sextactic Cones and Tritangent Planes of the Same System of a
				Quadri-Cubic Curve},
			journal={Proc. London Math. Soc. (2)},
			volume={21},
			date={1923},
			pages={373--380},
			issn={0024-6115},
			review={\MR{1575365}},
			doi={10.1112/plms/s2-21.1.373},
		}

		\bib{Plaumann2011}{article}{
			author={Plaumann, Daniel},
			author={Sturmfels, Bernd},
			author={Vinzant, Cynthia},
			title={Quartic curves and their bitangents},
			journal={J. Symbolic Comput.},
			volume={46},
			date={2011},
			number={6},
			pages={712--733},
			issn={0747-7171},
			review={\MR{2781949}},
			doi={10.1016/j.jsc.2011.01.007},
		}

		\bib{Recillas1974}{article}{
		   author={Recillas, Sevin},
		   title={Jacobians of curves with $g^{1}_{4}$'s are the Prym's of
		   trigonal curves},
		   journal={Bol. Soc. Mat. Mexicana (2)},
		   volume={19},
		   date={1974},
		   number={1},
		   pages={9--13},
		   review={\MR{480505}},
		}	

		\bib{Reid1972quadrics}{thesis}{
			author = {Reid, Miles}, 
			title = {The complete intersection of two or more quadrics}, 
			note = {Ph.D. dissertation},
			publisher = {Cambridge University},
			year = {1972}
		}
						
		\bib{Roth1930}{article}{
			author={Roth, L.},
			title={The tritangent planes of the quintic symmetroid},
			journal={Proceedings of the London Mathematical Society},
			volume={s2-30},
			date={1930},
			pages={425--432},
			issue={1},
			doi={https://doi.org/10.1112/plms/s2-30.1.425},		
		}
		
		\bib{SatoKimura1977}{article}{
			author={Sato, M.},
			author= {Kimura, T.},
			title={A classification of irreducible prehomogeneous vector spaces and their relative invariants},
			journal={Nagoya Mathematical Journal},
			volume={65},
			pages={1--155},
			date={1977},
		}

		\bib{Thiel2018}{article}{
		   author={Thiel, Ulrich},
		   title={Blocks in flat families of finite-dimensional algebras},
		   journal={Pacific J. Math.},
		   volume={295},
		   date={2018},
		   number={1},
		   pages={191--240},
		   issn={0030-8730},
		   review={\MR{3778331}},
		   doi={10.2140/pjm.2018.295.191},
		}
			
		\bib{Tyurin1975}{article}{
			author = {Tyurin, A. N.},
			title = {On intersections of quadrics},
			year = {1975},
			journal = {Russian Mathematical Surveys},
			doi = {http://dx.doi.org/10.1070/RM1975v030n06ABEH001530
			},
			volume = {30},
			issue = {6}
		}

		\bib{Wood2014}{article}{
		   author={Wood, Melanie Matchett},
		   title={Parametrization of ideal classes in rings associated to binary
		   forms},
		   journal={J. Reine Angew. Math.},
		   volume={689},
		   date={2014},
		   pages={169--199},
		   issn={0075-4102},
		   review={\MR{3187931}},
		   doi={10.1515/crelle-2012-0058},
		}

		\bib{WrightYukie1992}{article}{
			author={Wright, D. J.},
			author={Yukie, A.},
			title={Prehomogeneous vector spaces and field extensions},
			journal={Invent Math},
			volume={110},
			pages={283--314},
			date={1992},
		}

	\end{biblist}
\end{bibdiv}

\end{document}